\numberwithin{equation}{section}
\def\my_c{c_\infty}
\newcommand{\mynewtheorem}[2]{
  \newaliascnt{#1}{dummy}
  \newtheorem{#1}[#1]{#2}
  \aliascntresetthe{#1}
  \expandafter\def\csname #1autorefname\endcsname{#2}
}
\newcommand{\dcb}{\begin{array}{lll}}
\newcommand{\dce}{\end{array}}
\newcommand{\ebe}{\begin{enumerate}\setlength{\baselineskip}{13pt}\setlength{\parskip}{0pt}}
\newcommand{\dbe}{\end{enumerate}}
\def\specindent{\global\hangindent=2em \global\hangafter=-1 \global\prevgraf=0 }
\newcommand*{\inlineequation}[2][]{%
  \begingroup
    \refstepcounter{align}%
    \ifx\\#1\\%
    \else
      \label{#1}%
    \fi
    \relpenalty=100 %
    \binoppenalty= 100 %
    \ensuremath{%
      #2%
    }%
    ~\@eqnnum
  \endgroup
}
\newtheorem{theorem}{Theorem}[section]
\newtheorem{proposition}{Proposition}[section]
\newtheorem{lemme}{Lemma}[section]
\newtheorem{corollary}{Corollary}[section]
\newtheorem{remark}{Remark}[section]
\newtheorem{example}{Example}[section]
\date{}
\begin{document}

\title{H\"ormander Properties of Discrete Time Markov Processes}



\maketitle

\begin{center}
Cl\'ement Rey 
\footnote{address: CMAP, École Polytechnique, Institut Polytechnique de Paris, Route de Saclay, 91120 Palaiseau, France\\
 e-mail: clement.rey@polytechnique.edu}
\end{center}

\begin{abstract}
We present an abstract framework for establishing smoothing properties within a specific class of inhomogeneous discrete-time Markov processes. These properties, in turn, serve as a basis for demonstrating the existence of density functions for our processes or more precisely for regularized versions of them. They can also be exploited to show the total variation convergence towards the solution of a Stochastic Differential Equation as the time step between two observations of the discrete time Markov processes tends to zero. The distinctive feature of our methodology lies in the exploration of smoothing properties under some local weak H\"ormander type conditions satisfied by the discrete-time Markov processes.  Our H\"ormander properties are demonstrated to align with the standard local weak H\"ormander properties satisfied by the coefficients of the Stochastic Differential Equations which are the total variation limits of our discrete time Markov processes. \\

\noindent {\bf Keywords :} Discrete time Markov processes, H\"ormander properties,  Regularization properties, Malliavin Calculus, Invariance principle. \\
{\bf AMS MSC 2020:} 60J05, 60H50, 60H07, 35H10, 60F17.

\end{abstract}

\tableofcontents{}

\section{Introduction}
\subsection{Context}
For $\delta \in (0,1]$ and $d,N\in \mathbb{N}^{\ast}$, we study a sequence of
independent random variables $Z^{\delta}_{t}\in \mathbb{R}^{N},\; t\in
\pi^{\delta,\ast}$ (we use the notations $\pi^{\delta}  :=  \delta \mathbb{N}$ and $\pi^{\delta,\ast}  :=  \delta \mathbb{N}^{\ast}$), which are supposed to be centered with covariance matrix identity and Lebesgue lower bounded distribution (see (\ref{hyp:lebesgue_bounded}) for definition).  In this paper, our focus is on the $\mathbb{R}^{d}$-valued discrete time Markov process $(X^{\delta}_{t})_{t \in \pi^{\delta}}$ defined as follows:
\begin{align}
\label{eq:scheme_intro}
X^{\delta}_{t+\delta}=\psi(X^{\delta}_{t},t,\delta^{\frac{1}{2}} Z^{\delta}_{t+\delta}, \delta) , \quad t \in  \pi^{\delta}, \quad X^{\delta}_{0}=\mbox{\textsc{x}}^{\delta}_{0}\in \mathbb{R}^d .
\end{align}%
where $\psi: (x,t,z,y) \mapsto \psi (x,t,z,y) \in \mathcal{C}^{\infty }( \mathbb{R}^{d}\times \mathbb{R}_+\times \mathbb{R}^{N} \times [0,1];\mathbb{R}^{d})$. Our primary challenge is to demonstrate that, under suitable properties on $\psi$, we can construct a process $(\overline{X}^{\delta}_{t})_{t \in \pi^{\delta}}$ that is arbitrarily close to $(X^{\delta}_{t})_{t \in \pi^{\delta}}$ in total variation distance (for any fixed $t \in \pi^{\delta}$). Additionally, this process satisfies the smoothing/regularization property: For every $\alpha ,\beta \in \mathbb{N}^{d}$, there exists $C: \mathbb{R}^{d} \times
\pi^{\delta,\ast} \to \mathbb{R}_{+}$ (which does not depend on $\delta$) such that for every $t \in \pi^{\delta,\ast}$ and every $f \in \mathcal{C}^{\infty} (\mathbb{R}^{d} ;\mathbb{R})$, bounded,
\begin{align} 
\label{eq:reg_prop_intro}
  \vert \partial_{x}^{\alpha }\mathbb{E}[\partial_{x}^{\beta }f(\overline{X}^{\delta}_{t}) \vert \overline{X}^{\delta}_{0}= x] \vert \leqslant C(x,t)\Vert f \Vert_{\infty}  .
\end{align}
A refined version of this result is exposed in Theorem \ref{th:regul_main_result_intro}. Relying on those regularization properties, we can infer that $\overline{X}^{\delta}_{t}$, $t \in \pi^{\delta}$, admits a smooth density (see Corollary \ref{coro:borne_densite_reg_gauss}). A main application of those results is provided in Theorem \ref{th:invariance_main_result}, where we identify a total variation limit (along with explicit rate of convergence) for $X^{\delta}_{t}$, $t \in \pi^{\delta}$, as $\delta$ tends to zero. This weak limit random variable is given by the solution, at time $t$, of the Stochastic Differential Equation (SDE),
\begin{align}
\label{eq:eds_ito_intro}
X_{t}=\mbox{\textsc{x}}^{\delta}_{0}+\int_{0}^{t}V_{0}(X_{s},s)  \mbox{d}s+ \sum_{i=1}^{N} \int_{0}^{t}V_{i}(X_{s},s) \mbox{d}W^{i}_{s}, \quad 
\end{align}

where $((W^{i}_{t})_{t \geqslant 0}, i \in \{1,\ldots,N\})$ are $N$ independent $\mathbb{R}$-valued standard Brownian motions and $V_{0}  :=   \partial_{y} \psi(.,.,0,0)-\frac{1}{2}\sum_{i=1}^{N} \partial_{z^{i}}^{2} \psi(.,.,0,0)$, $V_{i} =\partial_{z^{i}} \psi(.,.,0,0)$, $i \in \{1,\ldots,N\}$. 

More particularly, we show that, for $\epsilon>0$, for $t \in \pi^{\delta}$, $t \geqslant 2 \delta$, if $X_{0}=X^{\delta}_{0}=x \in \mathbb{R}^{d}$, 
\begin{align}
\label{eq:vt_approx_intro}
\nonumber d_{TV} (\mbox{Law}(X_{t}),\mbox{Law}(X^{\delta}_{t})) = & \frac{1}{2} \sup_{f:\mathbb{R}^{d} \to [-1,1],f \mbox{ measurable }} \vert \mathbb{E}[f(X_{t})-f(X^{\delta}_{t}) ] \vert \\
\leqslant  & \delta^{\frac{1}{2}-\epsilon}  \frac{1 +\vert x \vert_{\mathbb{R}^{d}}^{c}     }{\vert  \mathcal{V}_{L}(x) t \vert^{\eta}} C \exp(C t) .
\end{align}
where $c,C,\eta$ are positive constant and $\mathcal{V}_{L}(x) \in (0,1]$ under a local weak H\"ormander type property (of order $L$, see (\ref{hyp:loc_hormander}) for details) at initial point $x$.  It is noteworthy that, the rate $\delta^{\frac{1}{2}} $ can be replaced by $\delta$ if the third order moment of $Z^{\delta}_{t}$, $t \in \pi^{\delta,\ast}$, are supposed to be equal to zero. Consequently, $X_{t}$ admits a density which can be approximated (uniformly on compact sets) by the one of $\overline{X}^{\delta}_{t}$. Similar estimates also hold for the derivatives of the density. Those results are derived under polynomial type upper bounds on the derivatives of $\psi$ in conjunction with the aforementioned local weak H\"ormander type property. \\

Processes such as $(X^{\delta}_{t})_{t \in \pi^{\delta}}$ commonly appear in weak approximation problems where the perspective differs from the introduction of the earlier results. The problematic is to consider a process $(X_{t})_{t \geqslant 0}$ solution to a given SDE similar to (\ref{eq:eds_ito_intro}). Subsequently, the aim is to build the approximation process $(X^{\delta}_{t})_{t \in \pi^{\delta}}$ and then compute an approximation for $\mathbb{E}[f(X_{t})]$ by means of $\mathbb{E}[f(X^{\delta}_{t})]$. Two interconnected questions naturally arise.  First, what is the rate of convergence of the approximation as $\delta$ tends to zero. Second, for which class of functions $f$ does this rate hold ? Among others, this paper addresses those questions by providing an upper bound for the total variation distance (that is when $f$ is bounded and measurable) with rate $\delta^{\frac{1}{2}}$.  It's worth noting that this rate could be improved to $\delta$. Though, it may not necessarily be optimal, and this isn't the focus of the paper. Considering $f$ bounded with bounded derivatives up to some given order, it is well established that the weak convergence of the Euler scheme ($\psi(x,t,z,y)=V_{0}(x,t)y+\sum_{i=1}^{N}V_{i}(x,t)z^{i}$) occurs with rate $\delta$ (see \cite{Talay_Tubaro_1991}), but various higher order methods (see $e.g.$ \cite{Talay_1990}, \cite{Ninomiya_Victoir_2008}, \cite{Alfonsi_2010}) propose better rates (that are referred to as weak smooth rates in this paper). An intriguing question emerges: do these higher weak smooth rates still apply to total variation convergence ? A solution combining the use of existing results concerning weak smooth rates and regularization properties similar to (\ref{eq:reg_prop_intro}) is provided in \cite{Bally_Rey_2016}. In this article, it is shown that for $(X^{\delta}_{t})_{t \in \pi^{\delta}}$ defined as in (\ref{eq:scheme_intro}), the total variation rate aligns with the weak smooth rate as long as $\psi$ has smooth derivatives and satisfies a uniform elliptic property ($i.e.$ uniform H\"ormander property of order 0): For every $(x,t) \in \mathbb{R}^{d} \times \mathbb{R}_{+}$, $\mbox{span}(V_{i},i \in \{1,\ldots,N\})(x,t)=\mathbb{R}^{d}$.   \\

Nevertheless, the framework proposed in \cite{Bally_Rey_2016} is not well-suited for establishing regularization properties under H\"ormander and/or local properties. To provide clarity on our intentions, let's delve into specifics. To begin, we give an alternative formulation of (\ref{eq:eds_ito_intro}) by employing the Stratonovich integral:
\begin{align}
\label{eq:eds_strat_intro}
X_{t}=\mbox{\textsc{x}}^{\delta}_{0}+\int_{0}^{t}\bar{V}_{0}(X_{s},s)  \mbox{d}s+ \sum_{i=1}^{N} \int_{0}^{t}V_{i}(X_{s},s) \circ \mbox{d}W^{i}_{s}, \quad 
\end{align}
with $\bar{V}_0=V_{0}-\frac{1}{2}\sum_{i=1}^{N} \nabla_{x} V_{i} V_{i}$. In this article, $\bar{V}_{0},V_{i}, i \in \{1,\ldots,N\})$ and its derivatives are supposed to have polynomial growth in the space variable except for the order one derivatives in space which are simply bounded so that the existence of an $a.s.$ unique solution to (\ref{eq:eds_strat_intro}) is guaranteed. The infinitesimal generator of the Markov process $(X_{t})_{t \geqslant 0}$ expresses as $A=\bar{V}_{0}\partial_{x_{0}} + \frac{1}{2}\sum_{i=1}^{N} (V_{i}\partial_{x_{i}} )^{2}$. As demonstrated in the seminal work \cite{Hormander_1967}, the hypoellipticity of $A+\partial_{t}$ and then the existence of a smooth density for $X_{t}$ is closely related the dimension of some Lie algebras generated with the vector fields $\bar{V}_{0},V_{i}, i \in \{1,\ldots,N\})$. This type of properties are referred to as H\"ormander conditions, which we now introduce.\\

We consider, for fixed $t \geqslant 0$, the vector fields on $\mathbb{R}^{d}$ given by, $x \mapsto \bar{V}_{0}(x,t)$ and $x \mapsto V_{i}(x,t)$, $i \in \{1,\ldots,N\}$. Subsequently, we introduce the extended vector fields on $\mathbb{R}^{d} \times \mathbb{R}_{+}$ denoted by $\bar{V}_{\ast,0}:(x,t) \mapsto (\bar{V}_{0}(x,t),t)$ and $V_{\ast,i}:(x,t) \mapsto (V_{i}(x,t),0)$, $i \in \{1,\ldots,N\}$. In particular, the following relationship on Lie bracket holds: For $V,W$, two vector fields in $\{ \bar{V}_{0},V_{1},\ldots,V_{N}\}$ and $(x,t) \in \mathbb{R}^{d} \times \mathbb{R}_{+}$, $j \in \{1,\ldots,d+1\}$,
\begin{align*}
[V_{\ast},W_{\ast}](x,t)^{j}=& (\nabla_{x}W V (x,t)-\nabla_{x}V W (x,t))^{j}+ \partial_{t}W_{\ast}^{j} V_{\ast}^{d+1} (x,t)-\partial_{t}V_{\ast}^{j} W_{\ast}^{d+1} (x,t)\\
=& [V,W](x,t)^{j}+ \partial_{t}W_{\ast}^{j} V_{\ast}^{d+1} (x,t)-\partial_{t}V_{\ast}^{j} W_{\ast}^{d+1} (x,t).
\end{align*}
It's worth noting that $x \mapsto [V,W](x,t)$ is a vector field on $\mathbb{R}^{d}$ and we use convention $[V,W]^{d+1}=0$. We are now in a position to present the H\"ormander properties which mainly consists in assuming that the vector fields generated by the Lie brackets is full in $\mathbb{R}^{d}$. Various versions of H\"ormander properties appear in the literature serving to prove hypoellipticity. We try to give a brief overview. Let us introduce
\begin{align*}
\mathbf{V}_{\ast,0}= & \{ V_{\ast,i},i \in \{1,\ldots,N\}  \}. \\
\mathbf{V}_{\ast,n+1}= &\mathbf{V}_{\ast,n}  \cup  \{[\bar{V}_{\ast,0},V],[V_{\ast,i},V],i \in \{1,\ldots,N\} , V \in \mathbf{V}_{\ast,n} \} , \quad  n \in \mathbb{N}.
\end{align*}
Similarly, we define $\mathbf{V}_{n}$, $n \in \mathbb{N}$,  in the same way but with $\bar{V}_{\ast,0}$ (respectively $V_{\ast,1},\ldots,V_{\ast,N}$) replaced by $\bar{V}_{0}$ (resp. $V_{1},\ldots,V_{N}$).
The weak local H\"ormander assumption (at initial point $(X_{0}=x,0)$) in inhomogeneous setting ($i.e.$ when $V_{0},\ldots,V_{N}$ depend on time),  which is the one we use in this paper,  consists in assuming that
\begin{align*}
\mbox{span}(\cup_{n =0}^{\infty} \mathbf{V}_{\ast,n})(x,0) = \mathbb{R}^{d}.
\end{align*}
In the homogeneous setting ($i.e.$ $V_{0},V_{1},\ldots,V_{N}$ do not depend on the time component), it consists in assuming that: $\mbox{span}(\cup_{n =0}^{\infty} \mathbf{V}_{n})(x,0) = \mathbb{R}^{d}$ (see $e.g.$ \cite{Kusuoka_Stroock_1985_AMCII}).Obviously, if coefficients $V_{0},V_{1},\ldots,V_{N}$ do not depend on the time component, this last condition is equivalent to assume that $\mbox{span}(\cup_{n =0}^{\infty} \mathbf{V}_{\ast,n})(x,0) = \mathbb{R}^{d}$. \\

Notice that,  when $\mbox{span}(\mathbf{V}_{\ast,0})= \mathbb{R}^{d}$, we are in the elliptic setting. The hypothesis is termed "local" H\"ormander because $\mathbf{V}_{\ast,n}$  is considered at the initial point $(X_{0}=x,0)$. In the case where, for every $(y,t) \in \mathbb{R}^{d} \times \mathbb{R}_{+}$, we have $\mbox{span}(\cup_{n =0}^{\infty} \mathbf{V}_{\ast,n})(y,t) = \mathbb{R}^{d}$, we refer to it as "uniform" H\"ormander property. The term "weak" H\"ormander pertains to the definition of $\mathbf{V}_{\ast,n}$ (or $\mathbf{V}_{n}$). Specifically, the "strong" H\"ormander property corresponds to the case where $\bar{V}_{\ast,0}$ or is replaced by $0$ in the computation of $\mathbf{V}_{\ast,n}$. The investigation of H\"ormander properties in inhomogeneous setting is, for example, conducted to prove existence of smooth density in \cite{Chen_Zhou_1991} or \cite{Delarue_Menozzi_2010} for the weak uniform setting, in \cite{Cattiaux_Mesnager_2002} for the strong local setting or in \cite{Hopfner_Locherbach_Thieullen_2017} or \cite{Pigato_2022} for the weak local setting.  For the homogeneous case,  refer $e.g.$ to \cite{Kusuoka_Stroock_1985_AMCII}, \cite{Nualart_2006}, \cite{Bally_KohatsuHiga_2010} or \cite{Pigato_2018} for applications of local weak H\"ormander properties. We finally point out that, following the observation made \cite{Tanigushi_1985} in the uniform H\"ormander setting for SDE with inhomogeneous coefficient,  hypoellipticity may not hold if only $\mbox{span}(\cup_{n =0}^{\infty} \mathbf{V}_{n})= \mathbb{R}^{d}$.\\

The results presented in this paper offer, among others, the opportunity to extend the abstract framework from \cite{Bally_Rey_2016} so that, it can be applied to the total variation approximation of inhomogeneous SDE having polynomial bounds on their coefficients and their derivatives and satisfying the ususal weak local H\"ormander property. In terms of the function $\psi$, it simply consists in supposing a weak local H\"ormander type property (see (\ref{hyp:loc_hormander})) and assuming polynomial growth properties on the derivatives of $\psi$ (see (\ref{eq:hyp_1_Norme_adhoc_fonction_schema}) and (\ref{eq:hyp_3_Norme_adhoc_fonction_schema})). In the homogeneous case, those assumptions are similar to the ones made in \cite{Kusuoka_Stroock_1985_AMCII} concerning the coefficients of (\ref{eq:eds_strat_intro}).  Even if it is not the focus of our study, we highlight that the combination of the framework from \cite{Bally_Rey_2016} and the regularization properties established in this current paper (Theorem \ref{th:regul_main_result_intro}), enables to demonstrate that the total variation rate of convergence in the local weak hypoelliptic setting, aligns with the weak smooth rate. Total variation convergence with high rates of convergence can thus be obtained for the methods presented $e.g.$ in \cite{Talay_1990}, \cite{Ninomiya_Victoir_2008} or \cite{Alfonsi_2010}.\\

Similar results have previously been explored but only restricted to the case where $(Z^{\delta}_{t})_{t \in \pi^{\delta,\ast}}$ is made of standard Gaussian variables and for some specific $\psi$ (see $e.g.$ \cite{Bally_Talay_1996_I} when $\psi$ is the Euler scheme of a homogeneous SDE satisfying weak uniform H\"ormander property). In particular standard Malliavin calculus can be applied to derive total variation convergence. It is worth mentioning that analogous results are also investigated under a different (and weaker) condition from the H\"ormander one, called the UFG condition, but we do not discuss this type of hypothesis in this paper (see $e.g.$ \cite{Kusuoka_2013} for an order two rate scheme still in the homogeneous setting). In \cite{Bally_Talay_1996_I}, the methodology differs from ours in the sense that the estimates are obtained relying on the proximity (in the $\mbox{L}^{p}$-sense for Sobolev norms built with Malliavin derivatives) between a well chosen coupling of the scheme $(X^{\delta}_{t})_{t \in \pi^{\delta}}$ and the limit $(X_{t})_{t \geqslant 0}$ which satisfies standard regularization results under suitable properties (see $e.g.$ \cite{Kusuoka_Stroock_1985_AMCII}). Conversely, our approach is self contained and regularization properties for $(\overline{X}^{\delta}_{t})_{t \in \pi^{\delta}}$ are derived without using the ones satisfied by $(X_{t})_{t \geqslant 0}$. Our techniques draw inspiration from Malliavin calculus which is adapted to our discrete setting but also to not only Gaussian random variables because the law of $(Z^{\delta}_{t})_{t \in \pi^{\delta,\ast}}$ may be arbitrary. Due to the liberty granted to the choice of $\psi$ and and to the law of $(Z^{\delta}_{t})_{t \in \pi^{\delta,\ast}}$,  our result may be seen as an invariance principle.  Moreover, the law of $X_{t}$ only depends on $\psi$ only through his first order derivative in $y$ and first and second order derivatives in $z$ evaluated at some points $(x,t,0,0)$, with $x \in \mathbb{R}^{d},t \geqslant 0$. Hence a similar limit is reached for a large class of function $\psi$ and random variables $(Z^{\delta}_{t})_{t \in \pi^{\delta,\ast}}$.\\

\subsection{Organization of the paper}
Section \ref{Sec:Main_Result} introduces the key technical result of this paper, focusing on regularization properties of discrete time Markov process with form (\ref{eq:scheme_intro}), namely Theorem \ref{th:regul_main_result_intro}. Additionally, the hypoellipticity result, meaning existence of smooth density for solution of (\ref{eq:eds_strat_intro}) is exposed in Theorem \ref{th:invariance_main_result} as well as a slightly more general version of approximation (\ref{eq:vt_approx_intro}) and a density estimate result. Then, in Section \ref{Sec:prove_Regularization_properties}, we delve into the development of a Malliavin inspired discrete differential calculus in order to prove the smoothing properties of Theorem \ref{th:regul_main_result_intro}. Finally, Section \ref{Sec:Proof_reg_prop} is dedicated to prove some estimates on Malliavin weights as well as on Sobolev norms and Malliavin covariance matrix moments. These estimates collectively contribute to the recovery of the regularization properties detailed in Theorem \ref{th:regul_main_result_intro}.

\subsection{Notations.} For $E$ and $E^{\diamond}$ two sets, we denote by $E^{E^{\diamond}}$ the set of funtions from $E^{\diamond}$ to $E$, and for $d \in \mathbb{N}^{\ast}$,  we use the standard notation $E^{d} :=   E^{\{1,\ldots,d\}}$. We also denote by
\begin{enumerate}
\item $\mathcal{M}(\mathbb{R}^{d})$ (respectively $\mathcal{M}_{b}(\mathbb{R}^{d})$), the set of measurable (resp. measurable and bounded) functions defined on $\mathbb{R}^{d}$.
\item $\mathcal{C}^q(\mathbb{R}^{d}) $, $q \in \mathbb{N} \cup \{+\infty\}$, the set of functions defined on $\mathbb{R}^{d}$ which admit derivatives up to order $q$ and such that all those derivatives (including order 0) are continuous.
\item $\mathcal{C}_b^q(\mathbb{R}^{d}) $, $q \in \mathbb{N} \cup \{+\infty\}$, the set of functions defined on $\mathbb{R}^{d}$ which admit derivatives up to order $q$ and such that all those derivatives (including order 0) are continuous and bounded.
\item $\mathcal{C}_{K}^q(\mathbb{R}^{d}) $, $q \in \mathbb{N} \cup \{+\infty\}$, the set of functions defined on $\mathbb{R}^{d}$ defined on compact support and which admit conitunuous derivatives up to order $q$.
\item $\mathcal{C}_{pol}^q(\mathbb{R}^{d}) $, $q \in \mathbb{N} \cup \{+\infty\}$, the set of functions defined on $\mathbb{R}^{d}$ which admit derivatives up to order $q$ and such that all those derivatives (including order 0) are continuous and have polynomial growth.
\end{enumerate}
We will also denotes $\mathcal{M}
(\mathbb{R}^{d};\mathbb{R})$ for measurable function on $\mathbb{R}^{d}$ taking values in $\mathbb{R}$ (and similarly for other set of functions defined above).\\

When dealing with functions defined and taking values on Hilbert spaces, we introduce some notations: Let $\mathcal{H},\mathcal{H}^{\diamond}$ be two Hilbert spaces. For $f:\mathcal{H} \to \mathcal{H}^{\diamond}$ and $u \in \mathcal{H}$, the directional derivative $\partial^{\mbox{F}}_{u}f$ of $f$ along $u$ is given by (when it exists) $\partial^{\mbox{F}}_{u}f(x) :=   \lim_{\epsilon \to 0}\frac{f(x+\epsilon u)-f(x)}{\epsilon}$ for every $x \in \mathcal{H}$. When $f$ is Frechet differentiable, we recall that $u \mapsto \partial^{\mbox{F}}_{u}f(x)$ is a linear application from $\mathcal{H}$ to $\mathcal{H}^{\diamond}$ that we simply denote $\partial^{\mbox{F}}f(x)$. When $\mathcal{H}^{\diamond}=\mathbb{R}$, we denote $\mbox{d}^{\mbox{F}}f(x)$ (which is uniquely defined by Riesz theorem) such that for every $u \in \mathcal{H}$, $\partial^{\mbox{F}}_{u}f(x)=\langle \mbox{d}^{\mbox{F}}f(x) , u \rangle_{\mathcal{H}}$.
For $f\in \mathcal{M}_{b}(\mathbb{R}^{d};\mathbb{R}^{d^{\diamond}})$, we introduce the supremum norm $\Vert f \Vert_{\infty} = \sup_{x \in \mathbb{R}^{d}} \vert f(x) \vert_{\mathbb{R}^{d^{\diamond}}}$ with  $\vert . \vert_{\mathbb{R}^{d^{\diamond}}}$ the norm induced by the scalar product $\langle f, f^{\diamond} \rangle_{\mathbb{R}^{d^{\diamond}}}=\sum_{j=1}^{d^{\diamond}} f^{j}f^{\diamond,j}$. When $f$ takes values in $\mathbb{R}^{d^{\diamond} \times d^{\diamond}}$, we denote $\Vert f \Vert_{\mathbb{R}^{d^{\diamond}}} = \sup_{\xi \in \mathbb{R}^{d^{\diamond}}; \vert \xi \vert_{\mathbb{R}^{d^{\diamond}}}=1} \vert f \xi \vert_{\mathbb{R}^{d^{\diamond}}}$.  For a multi-index $\alpha =(\alpha^{1},\cdots,\alpha^{d})\in \mathbb{N}^{d}$ we denote $\vert \alpha \vert
=\alpha^{1}+...+\alpha^{d}$, $\Vert \alpha \Vert=d$ and if $f\in \mathcal{C}^{\vert \alpha \vert}(\mathbb{R}^{d})$,  we define $\partial_{\alpha }f=(\partial_1)^{\alpha^{1}} \ldots (\partial_{d})^{\alpha^{d}} f=\partial_x^{\alpha}f(x) =\partial
_{x^{1}}^{\alpha^{1}}\ldots\partial_{x^{d}}^{\alpha^{d}}f(x).$ Also, for $\beta \in \mathbb{N}^{d^{\diamond}}$, we define $(\alpha,\beta)=(\alpha^{1},\cdots,\alpha^{d},\beta^{1},\ldots,\beta^{d^{\diamond}})$. In addition, we also denote $\nabla_{x} f =(\partial_{x^{j}} f_{i})_{(i,j) \in \{1,\ldots,d^{\diamond}\} \times \{1,\ldots,d\}}$ for the Jacobian matrix of $f$ and $\mbox{\textbf{H}}_{x} f =((\partial_{x^{j}}\partial_{x^{l}} f^{i})_{(l,j) \in \{1,\ldots,d^\} \times \{1,\ldots,d\}})_{i \in \{1,\ldots,d^{\diamond}\}}$ for the Hessian matrix of $f$. In particular, for $v \in \mathbb{R}^{d}$, $v^{T} \mbox{\textbf{H}}_{x}f  \in \mathbb{R}^{d^{\diamond} \times d}$ and $(v^{T} \mbox{\textbf{H}}_{x}f)^{i,j}=\sum_{l=1}^{d} \partial_{x^{j}}\partial_{x^{l}} f^{i} v^{l}$.  We include
the multi-index $\alpha =(0,...,0)$ and in this case $\partial_{\alpha
}f=f. $ 

In addition, unless it is stated otherwise, $C$ stands for a universal constant which can change from line to line, and given some parameter $\vartheta$, $C(\vartheta)$ is a constant depending on $\vartheta$.\\
Also, $\mathbf{1}_{a,b}$ stands for the Kronecker symbol, meaning $\mathbf{1}_{a,b}=1$ if $a=b$ and is zero otherwise.\\
Finally for a discrete time process $(Y_{t})_{t \in \pi^{\delta}}$, we denote by $\mathcal{F}^{Y}_{t}  :=  \sigma (Y_{w},w \in \pi^{\delta},w \leqslant t)$ the sigma algebra generated by $Y$ until time $t$.

\section{Main results}
\label{Sec:Main_Result}
In this section, we present our main result about the regularization properties of $(X^{\delta}_{t})_{t \in \pi^{\delta}}$. Once the regularization results are established (Theorem \ref{th:regul_main_result_intro}), we infer the existence of a total variation limit for $X^{\delta}_{t}$, for fixed $t \in \pi^{\delta}$, in terms of a solution to a specific SDE (Theorem \ref{th:invariance_main_result}).

\subsection{A Class of Markov Semigroups} \;

\noindent \textbf{Definition of the semigroups.}
We work on a probability space $(\Omega,\mathcal{F},\mathbb{P})$. For $\delta \in (0,1]$ and $N\in \mathbb{N}^{\ast}$, we consider a sequence of
independent random variables $Z^{\delta}_{t}\in \mathbb{R}^{N},\; t\in
\pi^{\delta,\ast}$, and we assume that $Z^{\delta}_{t}$, are centered with $ \mathbb{E}[ Z^{\delta,i}_{t} Z^{\delta,i}_{t}]=\mathbf{1}_{i,j}$ for every $i,j \in \mathbf{N} :=   \{1,\ldots,N\}$ and every $t\in
\pi^{\delta,\ast}$. We construct the $\mathbb{R}^{d}$-valued Markov process $(X^{\delta}_{t})_{t \in \pi^{\delta}}$ in the following way:%
\begin{align}
X^{\delta}_{t+\delta}=\psi(X^{\delta}_{t},t,\delta^{\frac{1}{2}} Z^{\delta}_{t+\delta}, \delta) , \quad t \in  \pi^{\delta}, \quad X^{\delta}_{0}=\mbox{\textsc{x}}^{\delta}_{0}\in \mathbb{R}^d \label{eq:schema_general}
\end{align}%

where
\begin{align*}
\psi \in \mathcal{C}^{\infty }( \mathbb{R}^{d}\times \mathbb{R}_+\times \mathbb{R}^{N} \times [0,1];\mathbb{R}^{d})\quad \mbox{and} \quad  \forall (x,t) \in \mathbb{R}^{d} \times \pi^{\delta},\psi
(x,t,0,0)=x.  
\end{align*}
Let us now define the discrete time semigroup associated to $(X^{\delta}_{t})_{t \in \pi^{\delta}}$. For every measurable function $f$ from $\mathbb{R}^{d}$ to $\mathbb{R}$, and every $x \in \mathbb{R}^{d}$,
\begin{align*}
\forall t \in \pi^{\delta}, \qquad  Q^{\delta}_{t}f(x) =\int_{\mathbb{R}^{d}} f(y) Q^{\delta}_{t}(x,\mbox{d} y)  :=  \mathbb{E}[f(X^{\delta}_{t}) \vert X^{\delta}_{0}=x] .
  \end{align*}
  
We will obtain regularization properties for modifications of this discrete semigroup.  Our approach relies on some hypothesis on $\psi$ and $Z^{\delta}$ we now present.  \\

\noindent \textbf{Hypothesis on $\psi$.  Polynomial growth and H\"ormander property.} 
We first consider a polynomial growth assumption concerning the derivatives of $\psi$: For $r \in \mathbb{N}^{\ast}$, 
\begin{enumerate}[label=$\mathbf{A}_{1}^{\delta}(r)$.]
\item \label{Hypothese_pol_growth} 
There exists $\mathfrak{D},\mathfrak{D}_{r}\geqslant 1,\mathfrak{p},\mathfrak{p}_{r}  \in \mathbb{N}$ such that $\mathfrak{D} \geqslant \mathfrak{D} _{2}$, $\mathfrak{p} \geqslant \mathfrak{p}_{2}$ and
 for every $(x,t,z,y) \in \mathbb{R}^{d} \times \mathbb{R}_{+} \times \mathbb{R}^{N} \times [0,1]$,
\begin{align}
 \sum_{\vert \alpha^{x}
\vert + \vert \alpha^{t} \vert =0}^{r} \sum_{\vert \alpha^{z} \vert  +\vert \alpha^{y} \vert 
=1}^{r-\vert \alpha^{x} \vert - \vert \alpha^{t} \vert}\vert \partial
_{x}^{\alpha^{x}}\partial_{t}^{\alpha^{t}} \partial_{z}^{\alpha^{z}} \partial_{y}^{\alpha^{y}}\psi\vert_{\mathbb{R}^{d}} (x,t,z,y) \leqslant \mathfrak{D}_{r}(1+\vert x \vert_{\mathbb{R}^{d}}^{\mathfrak{p}_{r}}+ \delta^{-\frac{\mathfrak{p}_{r}}{2}} \vert z \vert_{\mathbb{R}^{N}}^{\mathfrak{p}_{r}}),
\label{eq:hyp_1_Norme_adhoc_fonction_schema}
\end{align}

and  
\begin{align}
\{  \sum_{l=1}^{d} \vert \partial_{x^{l}}  \partial_{y}\psi \vert_{\mathbb{R}^{d}} +\sum_{i=1}^{N}	\vert \partial_{x^{l}}  \partial_{z^{i}} \psi \vert_{\mathbb{R}^{d}}  +\sum_{i,j=1}^{N} \vert \partial_{x^{l}}  \partial_{z^{i}} \partial_{z^{j}} \psi \vert_{\mathbb{R}^{d}}  \} (x,t,z,y) \leqslant \mathfrak{D}(1 +\delta^{-\frac{\mathfrak{p}}{2}} \vert z \vert_{\mathbb{R}^{N}}^{\mathfrak{p}})
\label{eq:hyp_3_Norme_adhoc_fonction_schema}
\end{align}
\end{enumerate}
Without loss of generality, we assume that the sequences $(\mathfrak{D}_{r})_{r \in \mathbb{N}^{\ast}}$ and $(\mathfrak{p}_{r})_{r \in \mathbb{N}^{\ast}}$ are non decreasing.  We denote $\mathbf{A}_{1}^{\delta}(+\infty)$ when $\mathbf{A}_{1}^{\delta}(r)$ is satisfied for every $r \in \mathbb{N}^{\ast}$.



Notice also that, we obtain exactly the same results if we add $\mathfrak{D}\delta^{-1} \vert y \vert$ in the $r.h.s.$ of (\ref{eq:hyp_3_Norme_adhoc_fonction_schema}), or if we add $\mathfrak{D}_{r} \delta^{-1} \vert y \vert$ in the $r.h.s.$ of (\ref{eq:hyp_1_Norme_adhoc_fonction_schema}).  This is due to the fact that the function $\psi$ is only used for $y=\delta$ (or $y = C \delta$, $C \leqslant 1$) so the assumptions above are then satisfied replacing $\mathfrak{D}$ (respectively $\mathfrak{D}_{r}$) by $2\mathfrak{D}$ (respectively $2\mathfrak{D}_{r}$). Also, we do not give explicit dependence of the $r.h.s$ of (\ref{eq:hyp_1_Norme_adhoc_fonction_schema}) or (\ref{eq:hyp_3_Norme_adhoc_fonction_schema}) $w.r.t.$ the variable $t$ because in our results, $t$ is taken in a compact interval with form $[0,T]$.\\

At this point, let us observe that we can rely this assumption with the one in \cite{Kusuoka_Stroock_1985_AMCII} where the authors directly study the existence of density of the solution of (\ref{eq:eds_ito_intro}) by means of standard Malliavin calculus but when coefficients do not depend on time.  Taking $\psi$ linear in its third and fourth variable,  and homogeneous, $i.e.$ $\psi:(x,t,z,y) \mapsto x+ V_{0}(x)y+\sum_{i=1}^{N}V_{i}(x)z^{i}$ then, exactly $\mathbf{A}^{\delta}_{1}(+\infty)$ is the regularity assumption made on $V_{0},\ldots,V_{N}$ in \cite{Kusuoka_Stroock_1985_AMCII} (combined with a weak local H\"ormander property) to derive similar estimates as (\ref{coro:borne_densite_reg_gauss_intro}) in Corollary \ref{coro:borne_densite_reg_gauss_intro}.\\

The second hypothesis we need on $\psi$ is  local weak H\"ormander property on some vector fields we now introduce. We denote the Lie bracket of two $\mathcal{C}^1$ vector fields in $\mathbb{R}^{d}$, $[,]:(\mathcal{C}^1(\mathbb{R}^{d},\mathbb{R}^{d}))^{2} \to \mathcal{C}^0(\mathbb{R}^{d},\mathbb{R}^{d})$, $f_1,f_2 \mapsto [f_{1},f_{2}] :=   \nabla_{x}f_{2}f_{1}-\nabla_{x}f_{1}f_{2}$. \\
We denote $\tilde{V}_{0}= \partial_{y} \psi(.,.,0,0)$, $V_{0}  :=  \tilde{V}_{0}-\frac{1}{2}\sum_{i=1}^{N} \partial_{z^{i}}^{2} \psi(.,.,0,0)$, $V_{i} =\partial_{z^{i}} \psi(.,.,0,0)$, $i \in \mathbf{N}$, $\bar{V}_0=V_{0}-\frac{1}{2}\sum_{i=1}^{N} \nabla_{x} V_{i} V_{i}$. For a multi-index $\alpha \in \{0,\ldots,N\}^{\Vert \alpha \Vert}$ and $V:\mathbb{R}^{d} \times \mathbb{R}_{+} \to \mathbb{R}^{d}$, we define also $V^{[\alpha]}$ using the recurrence relation $V^{[(\alpha,0)]}=[\bar{V}_0,V^{[\alpha]}]+\partial_{t}V^{[\alpha]}+\frac{1}{2}\sum_{i=1}^N[ V_{i},[V_{i},V^{[\alpha]}]]$ and $V^{[(\alpha,j)]} :=   [V_{j},V^{[\alpha]}]$ if $j \in \{1,\ldots,N\}$ with the convention $V^{[\emptyset]}=V$.  We are now in a position to introduce our H\"ormander hypothesis on $\psi$: For $L \in \mathbb{N}$,  the order of our H\"ormander condition,  let us define for every $(x,t) \in \mathbb{R}^{d} \times \mathbb{R}_{+}$,

\begin{align}
\label{def:hormander_func}
\mathcal{V}_{L}(x,t)  :=    1 \wedge \inf_{\mathbf{b} \in \mathbb{R}^{d}, \vert \mathbf{b} \vert_{\mathbb{R}^{d}} =1}  \sum_{\underset{ \Vert \alpha \Vert \leqslant L}{\alpha \in \{0,\ldots,N\}^{\Vert \alpha \Vert};}} \sum_{i=1}^{N} \langle V^{[\alpha]}_{i}(x,t) , \mathbf{b} \rangle_{\mathbb{R}^{d}}^{2} .
\end{align}


We introduce:
\begin{enumerate}[label=$\mathbf{A}_{2}(L)$.]
\item \label{Hypothese_hormander}  Our local weak H\"ormander property of order $L \in \mathbb{N}$,
\begin{align}
\label{hyp:loc_hormander}
\mathcal{V}_{L}(\mbox{\textsc{x}}^{\delta}_{0},0)>0 .
\end{align}
We will sometimes consider a uniform weak H\"ormander property of order $L$,
\begin{align}
\label{hyp:unif_hormander}
\mathcal{V}_{L}^{\infty}   :=  \inf_{t \in \mathbb{R}_{+}}\inf_{x \in \mathbb{R}^{d}}\mathcal{V}_{L}(x,t)>0 .
\end{align}
In this case, we denote $\mathbf{A}_{2}^{\infty}(L)$ instead of $\mathbf{A}_{2}(L)$. Also, we usually denote $\mathcal{V}_{L}(x)  :=  \mathcal{V}_{L}(x,0)$.
\end{enumerate}
It is worth noticing that, with the notations introduced in the Introduction, (\ref{hyp:loc_hormander}) is satisfied for some $L \in \mathbb{N}$ if and only if $\mbox{span}(\cup_{n =0}^{\infty} \mathbf{V}_{\ast,n})(\mbox{\textsc{x}}^{\delta}_{0},0) = \mathbb{R}^{d}$, which is why, we refer to it as local weak H\"ormander property. A similar observation holds for (\ref{hyp:unif_hormander}) in the uniform setting. The case $L=0$ corresponds to the elliptic case.\\

\noindent \textbf{Hypothesis on $Z^{\delta}$.  Lebesgue lower bounded distributions.}  A first assumption concerns the finiteness of the moment of $Z^{\delta}$: For $p \geqslant 0$,
\begin{enumerate}[label=$\mathbf{A}_{3}^{\delta}(p)$.]
\item \label{hyp:moment_borne_Z_assumption} 
\begin{align}
\label{eq:hyp:moment_borne_Z} 
\mathfrak{M}_{p}(Z^{\delta}) :=   1\vee \sup_{t \in  \pi^{\delta,\ast}}\mathbb{E}[\vert Z^{\delta}_{t}\vert_{\mathbb{R}^{N}} ^{p}]<\infty .
\end{align}.
\end{enumerate}We denote $\mathbf{A}_{3}^{\delta}(+\infty)$ the assumption such that $\mathbf{A}_{3}^{\delta}(p)$ is satisfied for every $p \geqslant 0$. \\

A second assumption is made on the distribution of $Z^{\delta}$.  We suppose that the distribution of $Z^{\delta}$ is Lebesgue lower bounded:
\begin{enumerate}[label=$\mathbf{A}_{4}^{\delta}$.]
\item \label{hyp:moment_borne_Z} 
There exists $z_{\ast}=(z_{\ast ,t})_{t \in \pi^{\delta,\ast} }$ taking its values in $\mathbb{R}^{N}$ and $%
\varepsilon _{\ast },r_{\ast }>0$ such that for every Borel set $A\subset
\mathbb{R}^{N}$ and every $t \in \pi^{\delta,\ast},$%
\begin{align}
L^{\delta}_{z_{\ast }}(\varepsilon _{\ast },r_{\ast })\qquad \mathbb{P}(Z^{\delta}_{t}\in A)\geqslant
\varepsilon _{\ast }\lambda_{\mbox{Leb}} (A\cap B_{r_{\ast }}(z_{\ast ,t}))  \label{hyp:lebesgue_bounded}
\end{align}%
where $\lambda_{\mbox{Leb}} $ is the Lebesgue measure on $\mathbb{R}^{N}.$
\end{enumerate}

Let us comment assumption $\mathbf{A}_{4}^{\delta}$.  First,  notice that (\ref{hyp:lebesgue_bounded}) holds if and only if there exists some non
negative measures $\mu^{\delta}_{t}$ with total mass $\mu^{\delta}_{t}(\mathbb{R}^{N})<1$ and a lower
semi-continuous function $\varphi \geqslant 0$ such that $\mathbb{P}(Z^{\delta}_{t}\in dz)=\mu^{\delta}_{t}(dz)+\varphi(z-z_{\ast ,t})dz$ for every $t\in \pi^{\delta,\ast}$. We also point out that the random variables $(Z^{\delta}_{t})_{t \in \pi^{\delta,\ast}}$ are not assumed to be identically distributed. However, the fact
that $r_{\ast }>0$ and $\varepsilon _{\ast }>0$ are the same for all $k$
represents a mild substitute of this property. In order to construct $\varphi$
 we introduce the following function: For $v>0$, set $\varphi _{v}:{%
\mathbb{R}^N}\rightarrow {\mathbb{R}}$ defined by
\begin{align}
\varphi _{v}(z)=\mathbf{1}_{\vert z \vert_{\mathbb{R}^{N}} \leqslant v}+\exp \Big(1-\frac{v^{2}}{v^{2}-(\vert z \vert_{\mathbb{R}^{N}} -v)^{2}}\Big)%
\mathbf{1}_{v<\vert z \vert_{\mathbb{R}^{N}}  <2v}.  \label{def:fonction_regularisante_{d}ensite_Zk}
\end{align}%
Then $\varphi _{v}\in \mathcal{C}_{b}^{\infty }(\mathbb{R}^N;\mathbb{R})$, $0\leqslant \varphi
_{v}\leqslant 1$ and we have the following crucial property: For every $p,q\in \mathbb{N}$, every $z\in \mathbb{R}^N$ 
\begin{align}
\vert \sum_{\underset{\vert \alpha^{z} \vert \in \{1,\ldots, q+1\}}{\alpha^{z} \in \mathbb{N}^{N}}} \vert \partial_{z}^{\alpha^{z}}  \ln \varphi_{v} (z)  \vert^{2} \vert^{\frac{p}{2}} \varphi_{v}(z) \leqslant 
\frac{C(q,p)N^{\frac{pq}{4}}}{v^{pq}},  \label{eq:borne_{d}eriv_log_reg_Zk}
\end{align}%
with the convention $\ln \varphi_v(z)=0$ for $\vert z \vert \geqslant 2v$.

As an immediate consequence of (\ref{hyp:lebesgue_bounded}), for every non negative function $%
f:\mathbb{R}^{N}\rightarrow \mathbb{R}_{+}$ and $t \in \pi^{\delta}$, $t>0$,
\begin{align*}
\mathbb{E}[f(Z^{\delta}_{t})] \geqslant \varepsilon _{\ast }\int_{\mathbb{R}^{N}}\varphi _{r_{\ast
}/2}(\ z-z_{\ast ,t}\ )f(z) \mbox{d} z.  
\end{align*}%
We denote 
\begin{align*}
m_{\ast }=\varepsilon _{\ast }\int_{\mathbb{R}^{N}}\varphi _{r_{\ast }/2}(
z )dz=\varepsilon _{\ast }\int_{\mathbb{R}^{N}}\varphi _{r_{\ast
}/2}( z-z_{\ast ,t} )\mbox{d} z  
\end{align*}
We consider a sequence of independent random variables $\chi^{\delta}_{t}\in
\{0,1\},\; U^{\delta}_{t}, V^{\delta}_{t}\in \mathbb{R}^{N}$, $t \in \pi^{\delta,\ast}$, with laws given by%

\begin{align*}
\mathbb{P}(\chi^{\delta} _{t} =1)= &m_{\ast },\qquad \mathbb{P}(\chi^{\delta}_{t}=0)=1-m_{\ast },  
\\
\mathbb{P}(\delta^{-\frac{1}{2}}U^{\delta}_{t} \in \mbox{d} z)=& \frac{\varepsilon _{\ast }}{m_{\ast }}\varphi _{r_{\ast
}/2}( z-z_{\ast ,t} ) \mbox{d}z ,  \nonumber \\
\mathbb{P}(\delta^{-\frac{1}{2}}V^{\delta}_{t} \in \mbox{d} z)=& \frac{1}{1-m_{\ast }}(\mathbb{P}(Z^{\delta}_{t}\in
\mbox{d} z)-\varphi _{\frac{r_{\ast
}}{2}}( z-z_{\ast ,t} ) \mbox{d} z). 
\nonumber
\end{align*}

where $\varphi _{\frac{r_{\ast
}}{2}}$ satisfies (\ref{eq:borne_{d}eriv_log_reg_Zk}) with $v=\frac{r_{\ast
}}{2}$. Notice that $\mathbb{P}(V^{\delta}_{t}\in \mbox{d} z)\geqslant 0$ and a
direct computation shows that 
\begin{align*}
\mathbb{P}(\chi^{\delta}_{t}U^{\delta}_{t}+(1-\chi^{\delta}_{t})V^{\delta}_{t}\in dz)=\mathbb{P}(\delta^{\frac{1}{2}}  Z^{\delta}_{t}\in \mbox{d} z).
\end{align*}%
This is the splitting procedure for $Z^{\delta}_{t}$. Now on we
will work with this representation of the law of $Z^{\delta}_{t}.$
Consequently, we always use the decomposition


\begin{align*}
\delta^{\frac{1}{2}} Z^{\delta}_{t}=\chi^{\delta}_{t}U^{\delta}_{t}+(1-\chi^{\delta}_{t})V^{\delta}_{t}. 
\end{align*}

\begin{remark}
The above splitting procedure has already been widely used in the
litterature: In \cite{Nummelin_1978} and \cite{Locherbach_Loukianova_2008}, it is used in order to prove convergence to
equilibrium of Markov processes. In \cite{Bobkov_Chistyakov_Gennadiy_Gotze_2014_BerryEsseen}, \cite{Bobkov_Chistyakov_Gennadiy_Gotze_2014_FisherCLT} and \cite{Yu_Zaitsev_1995}, it is used to
study the Central Limit Theorem. Also, in \cite{Nourdin_Poly_2013}, the above
splitting method (with $\mathbf{1}_{B_{r_{\ast }}(z_{\ast ,t})}$ instead of $\varphi _{r_{\ast
}/2}( z-z_{\ast ,t} )$) is used in a framework which is
similar to the one in this paper. Finally in \cite{Bally_Rey_2016}, it is used to prove regularization properties of Markov semigroup under the uniform ellipticity property: $\inf_{(x,t) \in\mathbb{R}^{d} \times \pi^{\delta}}\mathcal{V}_{0}(x,t)>0 $.
\end{remark}

We introduce a final structural assumption specifying that the time step $\delta$ needs to be small enough. For $\delta \in (0,1]$, when (\ref{eq:hyp_3_Norme_adhoc_fonction_schema}) holds, we define
\begin{align}
\label{def:eta_delta}
\eta_{1}(\delta)  :=  & \delta^{-d\frac{44}{91}} \min( 1,\frac{10^{d}}{m_{\ast}^{d} \vert 2^{10} (1+T^{3})  \vert ^{\frac{d}{2}}}) \quad \mbox{and} \\
 \eta_{2}(\delta)  :=  & \min(\delta^{-\frac{1}{2}} \eta_{1}(\delta)^{-\frac{1}{d}}, \frac{1}{2} \vert \delta^{\frac{1}{2}} 8  \mathfrak{D} \vert^{-\frac{1}{\mathfrak{p}+1}}) \nonumber.
\end{align}
with $\mathfrak{p}$ given in (\ref{eq:hyp_3_Norme_adhoc_fonction_schema}).
We introduce the following assumption:
\begin{enumerate}[label=$\mathbf{A}_{5}$.]
\item \label{hyp:hyp_5_loc_var} Assume that (\ref{eq:hyp_3_Norme_adhoc_fonction_schema}) and $\mathbf{A}_{2}(L)$ (see (\ref{hyp:loc_hormander})) hold and that $\delta \in (0,1]$ is small enough so that 
\begin{align*}
\eta_{1}(\delta) > & \max(1, \frac{2^{1-\frac{d}{2}}}{d^{-\frac{d}{2}}}, 2(\frac{T \mathcal{V}_{L}(\mbox{\textsc{x}}^{\delta}_{0}) m_{\ast}}{40(L+1) N^{\frac{L(L+1)}{2}} })^{-d13^{L}} ,\\
& 2 \mathbf{1}_{L=0}+2 \mathbf{1}_{L>0} \vert m_{\ast}\frac {\vert 2^{8} (1+T) \vert^{-143}}{10N^{\frac{L(L-1)}{2}}} \vert^{-d13^{L-1}} ).
\end{align*}
and $\eta_{2}(\delta)>1$ where those quantities are defined in (\ref{def:eta_delta}).
\end{enumerate}
\subsection{An alternative regularization property}
For $T \in \pi^{\delta}$, $\theta>0$, and $G$ a d-dimensional Gaussian random variable with mean 0 and covariance identity and independent from $(Z^{\delta}_{t})_{t \in \pi^{\delta,\ast}}$, we define
\begin{align*}
Q_{T}^{\delta,\theta }f(x)  =\int_{\mathbb{R}^{d}} f(y) Q^{\delta,\theta}_{T}(x,\mbox{d} y)  :=  \mathbb{E}[f(X^{\delta}_{T}+\delta^{\theta }G) \vert X^{\delta}_{0} = x].
\end{align*}

\begin{theorem}
\label{th:regul_main_result_intro}
Let $T \in \pi^{\delta,\ast}$, let $L \in \mathbb{N}$ and let $f \in \mathcal{C}_{pol}^{\infty}(\mathbb{R}^{d} ; \mathbb{R})$ satisfying: there exists $\mathfrak{D}_{f} \geqslant 1$ and $\mathfrak{p}_{f} \in \mathbb{N}$ such that for every $x \in \mathbb{R}^{d}$,
\begin{align*}
\vert f(x) \vert \leqslant \mathfrak{D}_{f}(1+ \vert x \vert_{\mathbb{R}^{d}}^{\mathfrak{p}_{f} }).
\end{align*}

 Then we have the following properties:

\begin{enumerate}[label=\textbf{\Alph*.}]
\item \label{th:reg_gauss_regprop_intro} Let $q \in \mathbb{N}$, let $\alpha ,\beta \in \mathbb{N}^{d}$ such that $\vert \alpha
\vert +\vert \beta \vert \leqslant q$. Assume that $\mathbf{A}^{\delta}_{1}( \max(q+3,2 L + 5))$ (see (\ref{eq:hyp_1_Norme_adhoc_fonction_schema}) and (\ref{eq:hyp_3_Norme_adhoc_fonction_schema})),  $\mathbf{A}_{2}(L)$ (see (\ref{hyp:loc_hormander})), $\mathbf{A}_{3}^{\delta}(+\infty)$ (see (\ref{eq:hyp:moment_borne_Z})), $\mathbf{A}_{4}^{\delta}$ (see (\ref{hyp:lebesgue_bounded})) and \ref{hyp:hyp_5_loc_var} hold.  Then,  for every $x \in \mathbb{R}^{d}$,
\begin{align}
  \label{eq:borne_semigroupe_regularisation_gauss_main} 
 \vert \partial_{x}^{\alpha }Q_{T}^{\delta,\theta }\partial_{x}^{\beta }f(x) \vert \leqslant &  \mathfrak{D}_{f} \frac{(1 + \vert x \vert_{\mathbb{R}^{d}}^{c }   )  C\exp(C T) }{\vert  \mathcal{V}_{L}(x) T \vert^{\eta }}  ,
\end{align}
where $\eta \geqslant 0$ depends on $d,L,q$ and $\theta$ and $c,C \geqslant 0$ depend on $d,N,L,q,$$ \mathfrak{D},\mathfrak{D}_{\max(q+3,2 L + 5)},\mathfrak{p},$ $\mathfrak{p}_{\max(q+3,2 L + 5)},\mathfrak{p}_{f},\frac{1}{m_{\ast}},\frac{1}{r_{\ast}},\theta$ and on the moment of $Z^{\delta}$ and which may tend to infinity if one of those quantities tends to infinity. \\
\item \label{th:reg_gauss_distprop_intro}  Assume that hypothesis from \ref{th:reg_gen_regprop} are satisfied with $\mathbf{A}^{\delta}_{1}( \max(q+3,2 L + 5))$ replaced by $\mathbf{A}^{\delta}_{1}(2 L + 5)$. Then, for every $x \in \mathbb{R}^{d}$,

\begin{align*}
\vert Q^{\delta}_{T}f(x)-Q_{T}^{\delta,\theta }f(x)\vert 
\leqslant &  \delta^{\theta}   \mathfrak{D}_{f} \frac{(1 +  \vert x \vert_{\mathbb{R}^{d}}^{c }   ) C \exp(C T) }{\vert  \mathcal{V}_{L}(x) T \vert^{\eta }}
\end{align*}

where $\eta \geqslant 0$ depends on $d,L$ and $\theta$ and $c,C \geqslant 0$ depend on $d,N,L,q,$$ \mathfrak{D},\mathfrak{D}_{2 L + 5},\mathfrak{p},$ $\mathfrak{p}_{2 L + 5},\mathfrak{p}_{f}$, $\frac{1}{m_{\ast}},\frac{1}{r_{\ast}},\theta$ and on the moment of $Z^{\delta}$ and which may tend to infinity if one of those quantities tends to infinity. 

\end{enumerate}
\end{theorem}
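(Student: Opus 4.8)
The plan is to run a discrete Malliavin-type differential calculus on the smooth functional $X^{\delta}_{T}+\delta^{\theta}G$ and to combine an integration by parts formula with moment estimates on a discrete Malliavin covariance matrix, the H\"ormander hypothesis $\mathbf{A}_{2}(L)$ (see (\ref{hyp:loc_hormander})) intervening exactly through the non-degeneracy of that matrix; only the architecture is described here, the analytic content being carried out in Sections \ref{Sec:prove_Regularization_properties} and \ref{Sec:Proof_reg_prop}. First I would exploit the splitting $\delta^{1/2}Z^{\delta}_{t}=\chi^{\delta}_{t}U^{\delta}_{t}+(1-\chi^{\delta}_{t})V^{\delta}_{t}$: conditioning on $(\chi^{\delta}_{t},V^{\delta}_{t})_{t}$, the recursion (\ref{eq:schema_general}) displays $X^{\delta}_{T}$ as a $\mathcal{C}^{\infty}$ function of the rescaled increments $\delta^{-1/2}U^{\delta}_{t}$ with $\chi^{\delta}_{t}=1$ (these carry the smooth compactly supported density $\frac{\varepsilon_{\ast}}{m_{\ast}}\varphi_{r_{\ast}/2}(\cdot-z_{\ast,t})$, so the logarithmic derivative bound (\ref{eq:borne_{d}eriv_log_reg_Zk}) applies) together with $G$. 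The tangent flow $J_{t}=\nabla_{x}X^{\delta}_{t}$ solves $J_{t+\delta}=\nabla_{x}\psi(X^{\delta}_{t},t,\delta^{1/2}Z^{\delta}_{t+\delta},\delta)J_{t}$, and since the first order space derivatives of $\psi$ are bounded by $\mathbf{A}_{1}^{\delta}$ (see (\ref{eq:hyp_3_Norme_adhoc_fonction_schema})), iterating over $\lfloor T/\delta\rfloor$ steps produces $L^{p}$ bounds of order $C\exp(CT)$ for $J_{t}$, $J_{t}^{-1}$, for the higher $x$-derivatives of $X^{\delta}_{t}$ and for the Malliavin derivatives $D_{U_{s}}X^{\delta}_{t}$ and their iterates, all using $\mathbf{A}_{1}^{\delta}(\max(q+3,2L+5))$ (see (\ref{eq:hyp_1_Norme_adhoc_fonction_schema})) and $\mathbf{A}_{3}^{\delta}(+\infty)$ (see (\ref{eq:hyp:moment_borne_Z})); the same assumptions give $\mathbb{E}[|X^{\delta}_{T}|_{\mathbb{R}^{d}}^{p}]\leqslant C(1+|x|_{\mathbb{R}^{d}}^{c})\exp(CT)$.

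The core of the argument, and the main obstacle, is the lower bound on the discrete Malliavin covariance matrix $\sigma_{T}$ of $X^{\delta}_{T}+\delta^{\theta}G$. Because $D_{G}(X^{\delta}_{T}+\delta^{\theta}G)=\delta^{\theta}\mathrm{Id}$, one has unconditionally $\sigma_{T}\geqslant\delta^{2\theta}\mathrm{Id}$, which already makes $Q^{\delta,\theta}_{T}$ regularizing but with a $\delta$-dependent constant; the point is to upgrade this to a $\delta$-free bound governed by $\mathcal{V}_{L}(x)$ (see (\ref{def:hormander_func})). Writing $D_{U_{s}}X^{\delta}_{T}=J_{s\to T}\,\delta^{1/2}\partial_{z}\psi(X^{\delta}_{s-\delta},s-\delta,\delta^{-1/2}U^{\delta}_{s},\delta)$ and Taylor expanding $J_{s\to T}$ and $\partial_{z}\psi$ in powers of $\delta^{1/2}$, the contributions reorganise, after summation over the $\approx m_{\ast}T/\delta$ indices $s$ with $\chi^{\delta}_{s}=1$ and over the relevant time scales, into the iterated Lie-bracket fields $V^{[\alpha]}_{i}$ with $\Vert\alpha\Vert\leqslant L$ evaluated near $x$, thereby reconstructing the quadratic form defining $\mathcal{V}_{L}(x,0)$. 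A Chernoff-type concentration estimate for the Bernoulli variables $\chi^{\delta}_{t}$ (of success probability $m_{\ast}$) combined with a Norris-type quantitative non-degeneracy iteration over the time scales then yields, on an event $\Gamma$ whose complement is controlled once $\delta$ is small in the sense of $\mathbf{A}_{5}$ (see (\ref{hyp:hyp_5_loc_var})), a bound $\langle\sigma_{T}b,b\rangle\geqslant c\,\mathcal{V}_{L}(x,0)\,T^{\rho}$ for every unit vector $b$, with $\rho,c>0$ depending on $d,L$. Combining this with the unconditional inequality $\sigma_{T}\geqslant\delta^{2\theta}\mathrm{Id}$ on $\Gamma^{c}$, using the smallness of $\mathbb{P}(\Gamma^{c})$ and the freedom to take $\eta$ large, gives $\mathbb{E}[|\sigma_{T}^{-1}|^{p}]\leqslant C\exp(CT)\,|\mathcal{V}_{L}(x)T|^{-\eta_{p}}$ for every $p$.

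To conclude \ref{th:reg_gauss_regprop_intro}, I would use the discrete integration by parts formula of Section \ref{Sec:prove_Regularization_properties}, localized through a smooth cut-off $\Theta\in[0,1]$ with $\{\Theta>0\}\subseteq\Gamma$ and $\{\Theta=1\}$ containing a slightly smaller non-degeneracy set, to obtain for $|\alpha|+|\beta|\leqslant q$ a representation $\partial_{x}^{\alpha}Q^{\delta,\theta}_{T}\partial_{x}^{\beta}f(x)=\mathbb{E}[f(X^{\delta}_{T}+\delta^{\theta}G)(H_{\alpha,\beta}\Theta+\widetilde{H}_{\alpha,\beta}(1-\Theta))]$, where $H_{\alpha,\beta}$ is a universal polynomial in $J_{t}$, $J_{t}^{-1}$, the Malliavin derivatives of $X^{\delta}_{T}$ up to order $|\alpha|+|\beta|$, the matrix $\sigma_{T}^{-1}$ and its Malliavin derivatives (each appearing at most $|\alpha|+|\beta|$ times), the logarithmic derivatives of $\varphi_{r_{\ast}/2}$ and the derivatives of $\Theta$, while $\widetilde{H}_{\alpha,\beta}$ is an analogous weight built only from the crude inequality $\sigma_{T}^{-1}\leqslant\delta^{-2\theta}\mathrm{Id}$ valid on $\Gamma^{c}$. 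By H\"older's inequality, the $L^{p}$ estimates above, the bound (\ref{eq:borne_{d}eriv_log_reg_Zk}) and the polynomial moments of $X^{\delta}_{T}$, one gets $\mathbb{E}[|H_{\alpha,\beta}\Theta|^{2}]^{1/2}\leqslant(1+|x|_{\mathbb{R}^{d}}^{c})C\exp(CT)|\mathcal{V}_{L}(x)T|^{-\eta}$, whereas $\mathbb{E}[|\widetilde{H}_{\alpha,\beta}(1-\Theta)|]\leqslant\delta^{-K}\mathbb{P}(\Gamma^{c})^{1/2}C(1+|x|_{\mathbb{R}^{d}}^{c})\exp(CT)\leqslant(1+|x|_{\mathbb{R}^{d}}^{c})C\exp(CT)$ by $\mathbf{A}_{5}$ and the choice of $\eta$; a final Cauchy--Schwarz step using $|f(X^{\delta}_{T}+\delta^{\theta}G)|\leqslant\mathfrak{D}_{f}(1+|X^{\delta}_{T}+\delta^{\theta}G|_{\mathbb{R}^{d}}^{\mathfrak{p}_{f}})$ then yields (\ref{eq:borne_semigroupe_regularisation_gauss_main}). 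For \ref{th:reg_gauss_distprop_intro}, I would write $Q^{\delta}_{T}f(x)-Q^{\delta,\theta}_{T}f(x)=-\int_{0}^{1}\mathbb{E}[\langle\nabla f(X^{\delta}_{T}+s\delta^{\theta}G),\delta^{\theta}G\rangle_{\mathbb{R}^{d}}]\,\mathrm{d}s$ and, for each fixed $s$, remove the gradient by a \emph{single} localized integration by parts relative to the Malliavin covariance of $X^{\delta}_{T}+s\delta^{\theta}G$: on $\Gamma$ the lower bound $\langle\sigma b,b\rangle\geqslant c\,\mathcal{V}_{L}(x)T^{\rho}$ holds uniformly in $s\in[0,1]$ since adding $s\delta^{\theta}G$ only increases the covariance, while on $\Gamma^{c}$ one bounds the integrand crudely by the polynomial growth of $\nabla f$ times $\mathbb{P}(\Gamma^{c})^{1/2}$; since only a first order weight is needed this requires $\mathbf{A}_{1}^{\delta}$ only up to order $2L+5$, matching the weakened hypothesis in the statement, and produces the claimed bound $\delta^{\theta}\mathfrak{D}_{f}(1+|x|_{\mathbb{R}^{d}}^{c})C\exp(CT)|\mathcal{V}_{L}(x)T|^{-\eta}$.
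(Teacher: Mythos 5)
Your overall architecture is the same as the paper's (splitting of $Z^{\delta}$, discrete Malliavin calculus on the $U$-variables, Sobolev-norm bounds via the tangent flow, a Norris-type small-ball analysis reconstructing the brackets $V_{i}^{[\alpha]}$, and a localized integration by parts whose bad set is beaten by the $\delta$-smallness of the localization event). The gap is in the key quantitative step on the covariance matrix. You claim a single event $\Gamma$ on which $\langle\sigma_{T}b,b\rangle\geqslant c\,\mathcal{V}_{L}(x)T^{\rho}$ for a $\delta$-independent threshold, \emph{and} whose complement is small enough (a high power of $\delta$) to absorb the $\delta^{-2\theta p}$, resp. $\delta^{-K}$, coming from the crude bound $\sigma_{T}\geqslant\delta^{2\theta}\mathrm{Id}$ on $\Gamma^{c}$. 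Under the local weak H\"ormander hypothesis ($L\geqslant 1$) these two requirements are incompatible: the discrete Norris lemma (Lemma \ref{lem:Norris}) and the resulting small-ball estimate only give $\mathbb{P}(\inf_{|\xi|=1}\xi^{T}\sigma_{T}\xi\leqslant\epsilon)\lesssim C_{P}\,\epsilon^{P}$ for $\epsilon$ in a range bounded below by a positive power of $\delta$; for a fixed threshold $\epsilon=c\,\mathcal{V}_{L}(x)T^{\rho}$ the complement probability is small in $\mathcal{V}_{L}(x)T$ but does \emph{not} decay in $\delta$, so $\delta^{-2\theta p}\mathbb{P}(\Gamma^{c})$ blows up (only in the uniformly elliptic case does Hoeffding give an exponentially $\delta$-small complement for a fixed threshold). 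Conversely, if you take the threshold $\delta$-dependent, of order $\eta_{1}(\delta)^{-1/d}\approx\delta^{44/91}$ as in (\ref{def:eta_delta}) — which is what makes $\mathbb{P}(\Theta<1)$ an arbitrarily high power of $\delta$ in (\ref{eq:th_cov_mal_proba_espace_comp}) — then the lower bound on the good event degenerates with $\delta$, and you can no longer quote a fixed $\mathbb{E}[|\sigma_{T}^{-1}|^{p}]$ bound: you must instead prove the \emph{localized} moment bound $\mathbb{E}[|\det\gamma^{\delta}_{X^{\delta}_{T},\mathbf{T}}|^{p}\mathds{1}_{\Theta>0}]$ by summing the small-ball estimates over all scales between the fixed one and $\eta_{1}^{-1/d}$, which is exactly the content of Theorem \ref{th:borne_Lp_inv_cov_Mal}. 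Note also that $\mathbf{A}_{5}$ by itself gives no rate; the rate comes from choosing the Markov exponents as functions of $q\theta$ (the paper's $p(q\theta),a(q\theta)$), which is what costs extra powers of $\mathcal{V}_{L}(x)^{-1}$ and makes $\eta$ depend on $\theta$.

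Two further remarks on the remainder term. The paper never defines an integration-by-parts weight on the bad set: instead of your $\widetilde{H}_{\alpha,\beta}$ built from $\sigma_{T}^{-1}\leqslant\delta^{-2\theta}\mathrm{Id}$, it keeps the Gaussian outside the Malliavin covariance and, on $\{1-\Theta\}$, transfers all the space derivatives onto the independent Gaussian by an elementary Gaussian integration by parts, producing Hermite polynomials and the explicit factor $\delta^{-|\gamma|\theta}$, which is then beaten by $\mathbb{E}[1-\Theta]^{1/2}$; your variant can be repaired along the same lines, but as written it relies on the unavailable event $\Gamma$. Your treatment of part \ref{th:reg_gauss_distprop_intro} (one first-order integration by parts, hence only $\mathbf{A}^{\delta}_{1}(2L+5)$) coincides in substance with the paper, which applies the $q=1$ case of the localized estimate (\ref{eq:borne_semigroupe_regularisation}) to the $\Theta$-part and Cauchy--Schwarz with $\mathbb{P}(\Theta<1)$ to the rest.
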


\begin{proof}
This result is a direct consequence of Corollary \ref{coro:regul_semigroup_convol} which is a refined version of this result.
\end{proof}

A direct consequence of Theorem \ref{th:regul_main_result_intro} concerns the existence of a bounded density with bounded derivatives for $X^{\delta}_{T}+\delta^{\theta }G$.  A detailed statement of the following result is given in Corollary \ref{coro:borne_densite_reg_gauss}. This type of result is usually referred to as hypoellipticity property of the operator $Q^{\delta,\theta}$.

\begin{corollary}
\label{coro:borne_densite_reg_gauss_intro} 
Let $T \in \pi^{\delta,\ast}$ and $L \in \mathbb{N}$. Let $q \in \mathbb{N}$, let $\alpha ,\beta \in \mathbb{N}^{d}$ such that $\vert \alpha
\vert +\vert \beta \vert \leqslant q$. Assume that $\mathbf{A}^{\delta}_{1}( \max(q+d+3,2 L + 5))$ (see (\ref{eq:hyp_1_Norme_adhoc_fonction_schema}) and (\ref{eq:hyp_3_Norme_adhoc_fonction_schema})),  $\mathbf{A}_{2}(L)$ (see (\ref{hyp:loc_hormander})), $\mathbf{A}_{3}^{\delta}(+\infty)$ (see (\ref{eq:hyp:moment_borne_Z})), $\mathbf{A}_{4}^{\delta}$ (see (\ref{hyp:lebesgue_bounded})) and \ref{hyp:hyp_5_loc_var} hold. \\

 Then,  for every $x,y \in \mathbb{R}^{d}$, $Q^{\delta,\theta}_{T}(x,\mbox{d} y) = q^{\delta,\theta}_{T}(x,y)\mbox{d} y$ and $q^{\delta,\theta}_{T} \in \mathcal{C}^{q}(\mathbb{R}^{d} \times \mathbb{R}^{d})$ satisfies, for every $p>0$,
\begin{align*}
 \vert \partial_{x}^{\alpha } \partial_{y}^{\beta } q_{T}^{\delta,\theta }(x,y) \vert \leqslant &  \frac{(1 +\vert x \vert_{\mathbb{R}^{d}}^{c }   )  C \exp(C T) }{\vert  \mathcal{V}_{L}(x) T \vert^{\eta }(1+ \vert y \vert_{\mathbb{R}^{d}}^{p})}  ,
\end{align*}
where $\eta \geqslant 0$ depends on $d,L,q$ and $\theta$ and $c,C \geqslant 0$ depends on $d,N,L,q,$$ \mathfrak{D},\mathfrak{D}_{\max(q+d+3,2 L + 5)}$, $\mathfrak{p},$ $\mathfrak{p}_{\max(q+d+3,2 L + 5)}$, $\mathfrak{p}_{f},\frac{1}{m_{\ast}},\frac{1}{r_{\ast}},\theta,p$ and on the moment of $Z^{\delta}$ and which may tend to infinity if one of those quantities tends to infinity.  

\end{corollary}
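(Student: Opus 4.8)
The plan is to obtain the density of $Q^{\delta,\theta}_{T}(x,\cdot)$ by Fourier inversion, controlling its characteristic function through the regularization estimate of Theorem~\ref{th:regul_main_result_intro} applied to the exponentials $f_{\zeta}\colon y\mapsto e^{i\langle\zeta,y\rangle_{\mathbb{R}^{d}}}$, $\zeta\in\mathbb{R}^{d}$. First I would record that $\partial_{x}^{\nu}f_{\zeta}=(i\zeta)^{\nu}f_{\zeta}$ for every multi-index $\nu$, and that for any polynomial $P$ on $\mathbb{R}^{d}$ of degree $\leqslant p$ the function $y\mapsto P(y)f_{\zeta}(y)$ lies in $\mathcal{C}^{\infty}_{pol}(\mathbb{R}^{d};\mathbb{C})$ with $|P(y)f_{\zeta}(y)|\leqslant\mathfrak{D}_{P}(1+|y|_{\mathbb{R}^{d}}^{p})$, $\mathfrak{D}_{P}$ independent of $\zeta$. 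Splitting into real and imaginary parts and invoking Theorem~\ref{th:regul_main_result_intro}, under $\mathbf{A}^{\delta}_{1}(\max(q'+3,2L+5))$ and the remaining hypotheses of the corollary one gets, uniformly in $\zeta$ (as $\|f_{\zeta}\|_{\infty}=1$),
\begin{align*}
\big|\partial_{x}^{\alpha}Q^{\delta,\theta}_{T}\partial_{x}^{\nu}(Pf_{\zeta})(x)\big|\leqslant\mathfrak{D}_{P}\,\frac{(1+|x|_{\mathbb{R}^{d}}^{c})\,C\exp(CT)}{|\mathcal{V}_{L}(x)\,T|^{\eta}},\qquad|\alpha|+|\nu|\leqslant q',
\end{align*}
with $\eta$ depending on $d,L,q',\theta$ and $c,C$ on the quantities of the theorem together with $p$.

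Next I would set $\widehat{q}^{\delta,\theta}_{T}(x,\zeta):=Q^{\delta,\theta}_{T}f_{\zeta}(x)$ and note that, since $\partial_{x}^{\nu}f_{\zeta}=(i\zeta)^{\nu}f_{\zeta}$, the displayed estimate with $P\equiv1$ reads $|\zeta^{\nu}|\,|\partial_{x}^{\alpha}\widehat{q}^{\delta,\theta}_{T}(x,\zeta)|\leqslant(1+|x|^{c})C\exp(CT)/|\mathcal{V}_{L}(x)T|^{\eta}$ for all $\nu$ with $|\alpha|+|\nu|\leqslant q'$; optimising over $\nu$ and using $\max_{j}|\zeta_{j}|\geqslant|\zeta|_{\mathbb{R}^{d}}/\sqrt{d}$ gives $|\partial_{x}^{\alpha}\widehat{q}^{\delta,\theta}_{T}(x,\zeta)|\leqslant C(x,T)\,(1+|\zeta|_{\mathbb{R}^{d}})^{-(q'-|\alpha|)}$. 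For $q'-|\alpha|>d$ this is integrable in $\zeta$, so Fourier inversion yields $Q^{\delta,\theta}_{T}(x,\mathrm{d}y)=q^{\delta,\theta}_{T}(x,y)\,\mathrm{d}y$ with $q^{\delta,\theta}_{T}(x,y)=(2\pi)^{-d}\int_{\mathbb{R}^{d}}e^{-i\langle\zeta,y\rangle}\widehat{q}^{\delta,\theta}_{T}(x,\zeta)\,\mathrm{d}\zeta$, and differentiating under the integral (legitimate by the same decay used with one extra derivative) gives $\partial_{x}^{\alpha}\partial_{y}^{\beta}q^{\delta,\theta}_{T}(x,y)=(2\pi)^{-d}\int e^{-i\langle\zeta,y\rangle}(-i\zeta)^{\beta}\partial_{x}^{\alpha}\widehat{q}^{\delta,\theta}_{T}(x,\zeta)\,\mathrm{d}\zeta$, whence $|\partial_{x}^{\alpha}\partial_{y}^{\beta}q^{\delta,\theta}_{T}(x,y)|\leqslant C(x,T)$ as soon as $q'-|\alpha|>|\beta|+d$.

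For the weight $(1+|y|_{\mathbb{R}^{d}}^{p})^{-1}$, I would observe that it suffices to bound $y^{\gamma}\partial_{x}^{\alpha}\partial_{y}^{\beta}q^{\delta,\theta}_{T}(x,y)$ for $|\gamma|\leqslant p$; writing $y^{\gamma}e^{-i\langle\zeta,y\rangle}=(i\partial_{\zeta})^{\gamma}e^{-i\langle\zeta,y\rangle}$ and integrating by parts in $\zeta$ (boundary terms vanish by the decay above), $\partial_{\zeta}^{\gamma}$ falls on $(-i\zeta)^{\beta}\partial_{x}^{\alpha}\widehat{q}^{\delta,\theta}_{T}(x,\zeta)$; since $\partial_{\zeta_{j}}\widehat{q}^{\delta,\theta}_{T}(x,\zeta)=i\,Q^{\delta,\theta}_{T}\big((\cdot)^{j}f_{\zeta}\big)(x)$, Leibniz turns the integrand into a finite sum of terms $R(\zeta)\,\partial_{x}^{\alpha}Q^{\delta,\theta}_{T}(P_{\gamma}f_{\zeta})(x)$ with $\deg R\leqslant|\beta|$ and $\deg P_{\gamma}\leqslant|\gamma|\leqslant p$. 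The displayed estimate, again optimised to $|\zeta^{\nu}|\,|\partial_{x}^{\alpha}Q^{\delta,\theta}_{T}(P_{\gamma}f_{\zeta})(x)|\leqslant C(x,T)$ for $|\alpha|+|\nu|\leqslant q'$, bounds each term by $C(x,T)(1+|\zeta|_{\mathbb{R}^{d}})^{|\beta|-(q'-|\alpha|)}$, integrable once $q'-|\alpha|>|\beta|+d$, i.e. with $q'$ of the order $q+d$ — which is why the corollary assumes $\mathbf{A}^{\delta}_{1}(\max(q+d+3,2L+5))$; the sharp accounting is the one of the refined Corollary~\ref{coro:regul_semigroup_convol}. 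Summing the finitely many contributions gives the claimed estimate, with $c,C$ now also depending on $p$; joint $\mathcal{C}^{q}$-regularity of $q^{\delta,\theta}_{T}$ follows from the same dominated-convergence argument. See Corollary~\ref{coro:borne_densite_reg_gauss} for the detailed statement.

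The hard part will be the bookkeeping of the number of $x$-derivatives consumed: the two derivations $\partial_{x}^{\alpha}$, $\partial_{x}^{\beta}$ (total $\leqslant q$), plus the roughly $d+1$ derivatives used to make the $\zeta$-integral converge, plus the spare derivative needed to differentiate under the integral sign, must altogether fit within the order $\max(q+d+3,2L+5)$ prescribed on $\psi$. This forces the use of the minimal isotropic decay $(1+|\zeta|_{\mathbb{R}^{d}})^{-(d+1)}$ (rather than a crude product $\prod_{j}|\zeta_{j}|^{-2}$, which would integrate over a smaller set but cost $2d$ derivatives) and the reuse, rather than the double-counting, of the spare derivative already built into the hypotheses of Theorem~\ref{th:regul_main_result_intro}; everything else — vanishing of the boundary terms, interchanging $\partial_{x}^{\alpha}$ with the $\zeta$-integral, transferring the $y$-decay into $\zeta$-derivatives — is routine once this estimate is available.
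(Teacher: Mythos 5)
Your Fourier route is genuinely different from the paper's proof, and it has a concrete quantitative gap: the derivative bookkeeping does not close under the stated hypothesis. To bound $\partial_{x}^{\alpha}\partial_{y}^{\beta}q_{T}^{\delta,\theta}(x,y)$ by Fourier inversion you need $\int_{\mathbb{R}^{d}}(1+\vert\zeta\vert)^{\vert\beta\vert}\vert\partial_{x}^{\alpha}\widehat{q}^{\,\delta,\theta}_{T}(x,\zeta)\vert\,\mathrm{d}\zeta<\infty$, hence a decay of order at least $(1+\vert\zeta\vert)^{-(\vert\beta\vert+d+1)}$; extracting that decay from Corollary \ref{coro:regul_semigroup_convol} costs $\vert\beta\vert+d+1$ transferred $y$-derivatives on top of the $\vert\alpha\vert$ $x$-derivatives, i.e.\ a total order $q+d+1$ in the worst case $\vert\alpha\vert+\vert\beta\vert=q$, which by that corollary requires $\mathbf{A}^{\delta}_{1}(\max(q+d+4,2L+5))$ — one more than the $\max(q+d+3,2L+5)$ assumed here. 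With exactly $q+d$ derivatives the integrand only decays like $\vert\zeta\vert^{-d}$, which is borderline non-integrable. Your proposed fix, ``reusing the spare derivative already built into the hypotheses,'' is not valid: the $+3$ in $\mathbf{A}^{\delta}_{1}(q'+3)$ is consumed inside the proof of the regularization estimate (Sobolev norms of the Malliavin weights, tangent flow, etc.) and does not translate into an extra power of $\vert\zeta\vert^{-1}$ in your Fourier bound. So as written your argument proves the corollary only under a slightly stronger regularity assumption, unless you inject an additional idea (for instance exploiting the factor $e^{-\delta^{2\theta}\vert\zeta\vert^{2}/2}$ hidden in $\widehat{q}^{\,\delta,\theta}_{T}$ together with a $\delta^{-\vert\gamma\vert\theta}$-type splitting, keeping track that this factor is not uniform in $\delta$). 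The treatment of the weight $(1+\vert y\vert^{p})^{-1}$ via integration by parts in $\zeta$ is workable in principle (the theorem does cover the polynomially growing test functions $y^{\gamma}f_{\zeta}$ through $\mathfrak{D}_{f},\mathfrak{p}_{f}$), but it inherits the same decay-order deficit.

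For comparison, the paper's proof of Corollary \ref{coro:borne_densite_reg_gauss} avoids this altogether by not using Fourier inversion for the estimate: it applies the Malliavin integration by parts of Theorem \ref{theoreme:IPP_Malliavin} to the antiderivative $g(x)=\int f(y)\mathds{1}_{x\geqslant y}\,\mathrm{d}y$ with the extra multi-index $\gamma_{0}=(1,\ldots,d)$, which yields the representation $\partial_{x}^{\alpha}\partial_{y}^{\beta}q^{\delta,\theta}_{T}(x,y)=(-1)^{\vert\beta\vert}\mathbb{E}[\mathds{1}_{y\leqslant\delta^{\theta}G+X^{\delta}_{T}}H(\alpha,\beta)\mid X^{\delta}_{0}=x]$. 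The antiderivative trick costs exactly $d$ extra derivatives (hence the hypothesis $\max(q+d+3,2L+5)$), and the decay in $y$ is then obtained for free — i.e.\ without consuming any further derivatives — from the Cauchy--Schwarz inequality, the moment bounds of Lemma \ref{lemme:borne:moments_X} and the Markov inequality applied to $\mathbb{P}(y\leqslant\delta^{\theta}G+X^{\delta}_{T})$; Fourier analysis (Taniguchi's lemma) is invoked only for the existence of the density. This is precisely what lets the hypothesis sit at $q+d+3$, whereas your purely Fourier-analytic route pays for the $y$-decay and the $\zeta$-integrability with derivatives and therefore lands one order higher.
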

\subsection{An invariance principle}
Let us consider $(X_{t})_{t \geqslant 0}$ the $\mathbb{R}^{d}$-valued It\^o process solution to the SDE (\ref{eq:eds_ito_intro}).

%

In the following results, we show that, for a fixed $T>0$, $X^{\delta}_{T}$ converges in total variation towards $X_{T}$. \\
Notably, our result is stronger than the total variation convergence since we consider measurable test functions with polynomial growth. Moreover, $X_{T}$ is endowed with a density which can be approximated by the one of $X^{\delta}_{T}+\delta^{\theta }G$. In an ideal situation, we would like to approximate the density of $X_{T}$ using the one of $X^{\delta}_{T}$. However, due to the absence of regularization properties for the random variable $X^{\delta}_{T}$, we cannot offer any assurance regarding the existence of its density.Actually, since the random variables $(Z^{\delta}_{t})_{t \in \pi^{\delta,\ast}}$ do not necessarily have a density, we can easily build an example such that $X^{\delta}_{T}$ does not have a density, for instance by considering $X^{\delta}_{T}=\sum_{t \in \pi^{\delta,\ast};t \leqslant T} Z^{\delta}_{t}$. In contrast, since $X^{\delta}_{T}+\delta^{\theta }G$ satisfies the regularization property, we can guarantee the existence of its density together with an upper bound on this density. \\

Exploiting Theorem \ref{th:regul_main_result_intro} and Corollary \ref{coro:borne_densite_reg_gauss_intro},  we can deduce the convergence of the law of $X^{\delta}_{T}$ towards the one of $X_{T}$ as $\delta$ tends to zero. 
We are, among others, interested by obtaining an upper bound for
\begin{align*}
\vert \mathbb{E}[f(X_{T}) -f(X^{\delta}_{T}) \vert X_{0}=X^{\delta}_{0}=x ] \vert 
\end{align*}
which writes $C(x) \delta^{m} \sup_{x \in \mathbb{R}^{d}} \vert f(x) \vert$ when $f \in \mathcal{M}_{b}(\mathbb{R}^{d})$ (and similarly when $f$ has polynomial growth). One main technical point is that the upper bound does not depend on the derivatives of $f$.\\

This result may be seen as an invariance principle under two aspects. First, the law of the limit $X_{T}$ only depends on derivatives (of order one and two) of $\psi$ evaluated at some points $(x,t,0,0)$ with $(x,t) \in \mathbb{R}^{d} \times \mathbb{R}_{+}$. As a consequence, if we replace $\psi$ by any function $\tilde{\psi}$ giving the same evaluations of those derivatives, the limit  of $X^{\delta}_{T}$ remains $X_{T}$. Another aspect is that the law of $(Z_{t})_{t \in \pi^{\delta,\ast}}$ is not specified explicitly and can be chosen in a large set of probability measures. In particular, in the following result, we show that only $\mathbf{A}_{3}^{\delta}(+\infty)$ (see (\ref{eq:hyp:moment_borne_Z})) and $\mathbf{A}_{4}^{\delta}$ (see (\ref{hyp:lebesgue_bounded})) are assumed concerning the law of $(Z_{t})_{t \in \pi^{\delta,\ast}}$. \\

\begin{theorem}
\label{th:invariance_main_result}
Let $T \in \pi^{\delta}$, with $T \geqslant 2 \delta$,  $L \in \mathbb{N}$ and $m >0$. We have the following properties:
\begin{enumerate}[label=\textbf{\Alph*.}]
\item \label{th:invariance_main_result_VT}
Let $f \in \mathcal{M}(\mathbb{R}^{d} ; \mathbb{R})$ satisfying: there exists $\mathfrak{D}_{f} \geqslant 1$ and $\mathfrak{p}_{f} \in \mathbb{N}$ such that for every $x \in \mathbb{R}^{d}$,
\begin{align*}
\vert f(x) \vert \leqslant \mathfrak{D}_{f}(1+ \vert x \vert_{\mathbb{R}^{d}}^{\mathfrak{p}_{f} }).
\end{align*}
 Assume that $\mathbf{A}^{\delta}_{1}(\max(6,2L+5))$ (see (\ref{eq:hyp_1_Norme_adhoc_fonction_schema}) and (\ref{eq:hyp_3_Norme_adhoc_fonction_schema})),  $\mathbf{A}_{2}(L)$ (see (\ref{hyp:loc_hormander})), $\mathbf{A}_{3}^{\delta}(+\infty)$ (see (\ref{eq:hyp:moment_borne_Z})), $\mathbf{A}_{4}^{\delta}$ (see (\ref{hyp:lebesgue_bounded})) and \ref{hyp:hyp_5_loc_var} hold.
Then,  for every $\epsilon >0$ and every $x \in \mathbb{R}^{d}$,
\begin{align}
\label{eq:invariance_main_result_VT}
\vert \mathbb{E}[f(X_{T}) -f(X^{\delta}_{T}) \vert X_{0}=X^{\delta}_{0}=x ] \vert  \leqslant & \delta^{\frac{1}{2}-\epsilon} \mathfrak{D}_{f} \frac{1 + \vert x \vert_{\mathbb{R}^{d}}^{c }     }{\vert  \mathcal{V}_{L}(x) T \vert^{\eta}} C\exp(C T)  ,
\end{align}
where $\eta \geqslant 0$ depends on $d,L$ and $\frac{1}{\epsilon}$ and $c,C \geqslant 0$ depend on $d,N,L$,$ \mathfrak{D},\sup_{r \in \mathbb{N}^{\ast}}\mathfrak{D}_{r}$, $\mathfrak{p},$ $\sup_{r \in \mathbb{N}^{\ast}} \mathfrak{p}_{r}$, $\mathfrak{p}_{f}$,$\frac{1}{m_{\ast}}$, $\frac{1}{r_{\ast}},\frac{1}{\epsilon}$ and on the moment of $Z^{\delta}$ and which may tend to infinity if one of those quantities tends to infinity.  \\
\item  \label{th:invariance_main_result_density} Assume that hypothesis from \ref{th:invariance_main_result_VT} are satisfied.\\
Then,  $X_{T}$ starting at point $x \in \mathbb{R}^{d}$ has a density $y \in \mathbb{R}^{d} \mapsto p_{T}(x,y)$ with $p_{T} \in \mathcal{C}^{\infty}(\mathbb{R}^{d} \times \mathbb{R}^{d})$.\\
Moreover, for every $\theta \geqslant \frac{3}{2}$, $q \in \mathbb{N}$,  $\alpha ,\beta \in \mathbb{N}^{d}$ with $\vert \alpha
\vert +\vert \beta \vert \leqslant q$, $p \geqslant 0$,  $\epsilon >0$ and every $x, y\in \mathbb{R}^{d}$,
\begin{align}
\label{eq:invariance_main_result_density}
\vert  \partial_{x}^{\alpha } \partial_{y}^{\beta }  p_{T}(x,y) - \partial_{x}^{\alpha } \partial_{y}^{\beta } q_{T}^{\delta,\theta }(x,y) \vert  \leqslant & \delta^{\frac{1}{2}-\epsilon} \frac{(1 + \vert x \vert_{\mathbb{R}^{d}}^{c}     )C \exp(C T)}{\vert \mathcal{V}_{L}(x) T \vert^{\eta} (1+\vert y \vert_{\mathbb{R}^{d}}^{p})}  ,
\end{align}
where $\eta \geqslant 0$ depends on $d,L,q,\theta$ and $\frac{1}{\epsilon}$ and $c,C \geqslant 0$ depend on $d,N,L,q,$$ \mathfrak{D},\sup_{r \in \mathbb{N}^{\ast}}\mathfrak{D}_{r}$, $\mathfrak{p},$ $\sup_{r \in \mathbb{N}^{\ast}} \mathfrak{p}_{r}$, $\mathfrak{p}_{f},\frac{1}{m_{\ast}},\frac{1}{r_{\ast}},\theta,p,\frac{1}{\epsilon}$ and on the moment of $Z^{\delta}$ and which may tend to infinity if one of those quantities tends to infinity. 
\end{enumerate}
\end{theorem}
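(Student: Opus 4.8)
The plan is to transfer the regularisation results already established for schemes of the form (\ref{eq:schema_general}) to the SDE limit by realising the latter, at grid points, as such a scheme. To this end introduce the \emph{exact-flow scheme} $(X^{0,\delta}_{t})_{t\in\pi^{\delta}}$, obtained by solving the SDE (\ref{eq:eds_ito_intro}) exactly on each interval $[t,t+\delta]$, so that $X^{0,\delta}_{T}$ has the law of $X_{T}$ for every $T\in\pi^{\delta}$. Since $\partial_{z^{i}}$ and $\partial_{y}$ of the associated one-step flow at $(x,t,0,0)$ return exactly $V_{i}$ and $\tilde{V}_{0}$, the vector fields $V_{0},\ldots,V_{N}$ attached to $X^{0,\delta}$ coincide with those of $X^{\delta}$, so $X^{0,\delta}$ satisfies $\mathbf{A}_{2}(L)$; standard polynomial estimates on the flow of an SDE whose coefficients obey (\ref{eq:hyp_1_Norme_adhoc_fonction_schema})--(\ref{eq:hyp_3_Norme_adhoc_fonction_schema}) give $\mathbf{A}_{1}^{\delta}(+\infty)$, and the driving noise being Gaussian, $\mathbf{A}_{3}^{\delta}(+\infty)$ and $\mathbf{A}_{4}^{\delta}$ hold trivially. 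Hence Theorem~\ref{th:regul_main_result_intro} and Corollary~\ref{coro:borne_densite_reg_gauss_intro} apply to $X^{0,\delta}$ as well; I write $Q^{0,\delta}_{s,t}$, $Q^{0,\delta,\theta}_{s,t}$ for the corresponding semigroups.

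For part \ref{th:invariance_main_result_VT}, fix an auxiliary parameter $\theta>0$, let $G$ be an independent standard Gaussian, and decompose
\begin{align*}
\mathbb{E}[f(X_{T})]-\mathbb{E}[f(X^{\delta}_{T})]= & \big(\mathbb{E}[f(X^{0,\delta}_{T})]-\mathbb{E}[f(X^{0,\delta}_{T}+\delta^{\theta}G)]\big)+\big(\mathbb{E}[f(X^{0,\delta}_{T}+\delta^{\theta}G)]-\mathbb{E}[f(X^{\delta}_{T}+\delta^{\theta}G)]\big)\\
& +\big(\mathbb{E}[f(X^{\delta}_{T}+\delta^{\theta}G)]-\mathbb{E}[f(X^{\delta}_{T})]\big)=:(\mathrm{I})+(\mathrm{II})+(\mathrm{III}).
\end{align*}
The terms $(\mathrm{I})$ and $(\mathrm{III})$ are $O(\delta^{\theta})$, with the prefactor of (\ref{eq:invariance_main_result_VT}), by Theorem~\ref{th:regul_main_result_intro}.\ref{th:reg_gauss_distprop_intro} applied respectively to $X^{0,\delta}$ and to $X^{\delta}$. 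For the main term $(\mathrm{II})$ I run a Lindeberg/telescoping swap over the $n=T/\delta$ steps, replacing one exact-flow step by one $\psi$-step at a time: with $\mu^{0}_{t}$, $\mu^{\delta}_{t}$ the two one-step transition operators and $g_{k}\coloneqq Q^{\delta,\theta}_{t_{k+1},T}f$ (so $g_{n-1}$ is the Gaussian mollification of $f$), the semigroup property gives $(\mathrm{II})=\sum_{k=0}^{n-1}Q^{0,\delta}_{0,t_{k}}\big[(\mu^{0}_{t_{k}}-\mu^{\delta}_{t_{k}})g_{k}\big](x)$. An It\^o--Taylor expansion in $\delta$ of the one-step error shows that the contributions of orders $\delta^{0}$ and $\delta^{1}$ coincide---because $V_{0},V_{i}$ are exactly the relevant first/second derivatives of $\psi$ and $Z^{\delta}_{t}$ is centred with identity covariance---so that $\vert(\mu^{0}_{t}-\mu^{\delta}_{t})g(x)\vert\leqslant C\delta^{3/2}(1+\vert x\vert_{\mathbb{R}^{d}}^{c})\Vert g\Vert_{\mathcal{C}^{4}_{pol}}$, the $\delta^{3/2}$ coming from the possibly non-vanishing third moments of $Z^{\delta}$ (and the $\delta^{2}\Vert g\Vert_{\mathcal{C}^{4}_{pol}}$ remainder being lower order). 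Combined with $\Vert g_{k}\Vert_{\mathcal{C}^{4}_{pol}}\leqslant C\mathfrak{D}_{f}(1+\vert x\vert_{\mathbb{R}^{d}}^{c})(\mathcal{V}_{L}(x)(T-t_{k+1}))^{-\eta_{4}}\Vert f\Vert_{\mathrm{pol}}$ from Theorem~\ref{th:regul_main_result_intro}.\ref{th:reg_gauss_regprop_intro} (applied to the inhomogeneous semigroup on $[t_{k+1},T]$), and with the contraction of $Q^{0,\delta}_{0,t_{k}}$ on polynomially weighted sup-norms (a consequence of $\mathbf{A}_{1}^{\delta}$ and $\mathbf{A}_{3}^{\delta}$), this reduces $(\mathrm{II})$ to bounding $\sum_{k=0}^{n-1}\delta^{3/2}(T-t_{k+1})^{-\eta_{4}}$.

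This last sum is the main obstacle: near $k=n-1$ the factor $(T-t_{k+1})^{-\eta_{4}}$ blows up, and once $\eta_{4}\geqslant 1$ the bound $\delta^{3/2}\sum_{m\geqslant 1}(m\delta)^{-\eta_{4}}$ fails to be $O(\delta^{1/2})$. I resolve this by splitting $[0,T]$ at $T-\delta^{a}$ for a small $a=a(\epsilon)>0$: on $[0,T-\delta^{a}]$ the future-semigroup norms are at most $C\delta^{-a\eta_{4}}$, so this portion contributes $\lesssim T\delta^{1/2-a\eta_{4}}$, which is $\lesssim T\delta^{1/2-\epsilon}$ once $a=\epsilon/\eta_{4}$; on the residual block $[T-\delta^{a},T]$---only $\delta^{a-1}$ steps---one keeps the telescoping but now bounds $\Vert g_{k}\Vert_{\mathcal{C}^{4}_{pol}}$ by the Gaussian floor $\min\big((T-t_{k+1})^{-\eta_{4}},\delta^{-4\theta}\big)$ and, after choosing $\theta$ small enough (its value, hence the final exponent $\eta$, then depending on $\epsilon$) and using $T-t_{k+1}\geqslant\delta$ for $k\leqslant n-2$, this block is again $\lesssim\delta^{1/2-\epsilon}$. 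This is precisely where the dependence of $\eta$ on $1/\epsilon$, and the latitude in $\theta$, are used. Collecting $(\mathrm{I})+(\mathrm{II})+(\mathrm{III})$ and absorbing the constants gives (\ref{eq:invariance_main_result_VT}).

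For part \ref{th:invariance_main_result_density}, existence of a smooth density of $X_{T}$ follows from Corollary~\ref{coro:borne_densite_reg_gauss_intro} applied to the exact-flow scheme $X^{0,\delta'}$ along a sequence $\delta'\to 0$ with $T\in\pi^{\delta'}$: since $X^{0,\delta'}_{T}$ has the law of $X_{T}$, the densities of $X_{T}+(\delta')^{\theta}G$ are bounded in $\mathcal{C}^{q}$ uniformly in $\delta'$ (the bounds of Corollary~\ref{coro:borne_densite_reg_gauss_intro} not involving $\delta'$), so by Arzel\`a--Ascoli $X_{T}$ has a $\mathcal{C}^{q}$ density $p_{T}$ obeying the same bounds, and $q$ being arbitrary yields $p_{T}\in\mathcal{C}^{\infty}$. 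For (\ref{eq:invariance_main_result_density}), since $p_{T}$ and $q^{\delta,\theta}_{T}$ are smooth densities the Fourier (Tanigushi-type) representation gives $\partial_{x}^{\alpha}\partial_{y}^{\beta}(p_{T}-q^{\delta,\theta}_{T})(x,y)=\int_{\mathbb{R}^{d}}i^{\vert\beta\vert}\zeta^{\beta}e^{i\langle\zeta,y\rangle}\,\partial_{x}^{\alpha}\big(\mathbb{E}[f_{\zeta}(X_{T})]-\mathbb{E}[f_{\zeta}(X^{\delta}_{T}+\delta^{\theta}G)]\big)(x)\,d\zeta$ with $f_{\zeta}(u)=e^{-i\langle\zeta,u\rangle}$, $\vert f_{\zeta}\vert\equiv 1$. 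Splitting the $\zeta$-integral over $[-1,1]^{d}$ and its complement, integrating by parts in $y$ on the complement to move powers $\zeta^{\mu}$ onto $\partial_{y}^{\mu}f_{\zeta}$ (still bounded) so as to gain $\vert\zeta\vert^{-M}$ decay, and integrating by parts once more in $\zeta$ to produce the weight $(1+\vert y\vert_{\mathbb{R}^{d}}^{p})^{-1}$, one is left with finitely many integrals over a bounded $\zeta$-region of $\partial_{x}^{\alpha}\big(\mathbb{E}[f_{\zeta}(X_{T})]-\mathbb{E}[f_{\zeta}(X^{\delta}_{T}+\delta^{\theta}G)]\big)(x)$; each is controlled by $(\mathrm{I})+(\mathrm{II})$ from part \ref{th:invariance_main_result_VT} applied to $\partial_{x}^{\alpha}$ and to the bounded functions $f_{\zeta}$ and their $y$-derivatives. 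This forces $\theta\geqslant\tfrac{3}{2}$, so that the $O(\delta^{\theta})$ term $(\mathrm{I})$ is dominated by $\delta^{1/2-\epsilon}$; summing gives (\ref{eq:invariance_main_result_density}).
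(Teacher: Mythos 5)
Your proposal has two structural gaps, both fatal as written. First, the ``exact-flow scheme'' $X^{0,\delta}$ is not a process of the form (\ref{eq:schema_general}): the exact solution of the SDE over $[t,t+\delta]$ is a functional of the whole Brownian path on that interval, not a smooth function $\psi(x,t,\delta^{1/2}Z,\delta)$ of a finite-dimensional increment $Z\in\mathbb{R}^{N}$ (this fails whenever the diffusion vector fields do not commute), so Theorem \ref{th:regul_main_result_intro} and Corollary \ref{coro:borne_densite_reg_gauss_intro} cannot be invoked for $X^{0,\delta}$. This invalidates your bound on the term $(\mathrm{I})=\mathbb{E}[f(X_{T})]-\mathbb{E}[f(X_{T}+\delta^{\theta}G)]$ (which for merely measurable $f$ genuinely requires some regularity of the law of $X_{T}$), your Arzel\`a--Ascoli derivation of $p_{T}\in\mathcal{C}^{\infty}$, and part of your argument for \ref{th:invariance_main_result_density}. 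Second, and more importantly, your Lindeberg swap uses $\Vert Q^{\delta,\theta}_{t_{k+1},T}f\Vert_{\mathcal{C}^{4}_{pol}}\lesssim(\mathcal{V}_{L}(x)(T-t_{k+1}))^{-\eta}$, i.e.\ smoothing of the scheme started at an arbitrary intermediate time and point. Under the standing hypothesis $\mathbf{A}_{2}(L)$ the H\"ormander quantity is only positive at the initial datum $(x,0)$; $\mathcal{V}_{L}(\cdot,t_{k+1})$ at the running point may vanish, so this estimate is simply unavailable in the local setting (it is the uniform-H\"ormander mechanism). The proof sketched in the paper (following Theorem 2.6 of Bally--Rey 2016, combined with short-time weak error estimates and Lusin's theorem) must instead exploit the regularization of the modified scheme over an \emph{initial} window $[0,\delta^{\epsilon'}]$, where $\mathcal{V}_{L}(x)$ applies; that is precisely the source of the $\delta^{1/2-\epsilon}$ loss, whereas your $\epsilon$-bookkeeping addresses the terminal-time singularity $(T-t_{k+1})^{-\eta}$, which is the wrong obstruction here. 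Even internally your near-terminal block does not close: the terms $(\mathrm{I})$ and $(\mathrm{III})$ force $\theta\geqslant\tfrac12-\epsilon$, and then each of the $\delta^{a-1}$ steps contributes $\delta^{3/2}\delta^{-4\theta}$, so the block is of order $\delta^{a-3/2}$, not $\delta^{1/2-\epsilon}$.

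For part \ref{th:invariance_main_result_density}, your Fourier (Tanigushi) representation requires uniform-in-$\zeta$ bounds on $\partial_{x}^{\alpha}\bigl(\mathbb{E}[f_{\zeta}(X_{T})]-Q^{\delta,\theta}_{T}f_{\zeta}(x)\bigr)$, i.e.\ a version of (\ref{eq:invariance_main_result_VT}) carrying $x$-derivatives, which part \ref{th:invariance_main_result_VT} does not provide and which you do not establish. The paper obtains (\ref{eq:invariance_main_result_density}) differently, via the interpolation result (Theorem 2.6 of Bally--Caramellino 2017), which converts closeness in a weak (distributional) distance together with the uniform smoothness bounds on $q^{\delta,\theta}_{T}$ from Corollary \ref{coro:borne_densite_reg_gauss_intro} into closeness of the densities and their derivatives; some such mechanism is needed to pass from the derivative-free estimate of part \ref{th:invariance_main_result_VT} to the estimate on $\partial_{x}^{\alpha}\partial_{y}^{\beta}(p_{T}-q^{\delta,\theta}_{T})$.
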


\begin{remark}
\begin{enumerate}
\item Estimate(\ref{eq:invariance_main_result_VT}) implies immediately the total variation distance between the law of $X_{T}$ starting from $x \in \mathbb{R}^{d}$ (denoted $P_{T}(x,.)$) and the one of $X^{\delta}_{T}$ also starting from $x$ (denoted $Q_{T}(x,.)$). In particular, under the hypothesis from \ref{th:invariance_main_result_VT} in Theorem \ref{th:invariance_main_result}, then
\begin{align}
\label{eq:invariance_main_result_VT_Lusin}
d_{TV}( P_{T}(x,.),Q_{T}(x,.) )\leqslant & \delta^{\frac{1}{2}-\epsilon}  \frac{1 +  \vert x \vert_{\mathbb{R}^{d}}^{c}     }{\vert  \mathcal{V}_{L}(x) T \vert^{\eta}} C \exp(C T) .
\end{align}
Let us also recall that for $\mu$ and $\nu$ two probability measure on $\mathbb{R}^{d}$, the total variation distance between $\mu$ and $\nu$ is given by
\begin{align*}
d_{TV}(\mu , \nu ) =  \sup_{A \in \mathcal{B}(\mathbb{R}^{d})} \vert \mu(A) - \nu(A) \vert  = & \sup_{f \in \mathcal{M}(\mathbb{R}^{d};\mathbb{R}), \Vert f \Vert_{\infty} \leqslant 1} \frac{1}{2} \vert \mu(f) - \nu(f) \vert  \\
= & \sup_{f \in \mathcal{C}^{\infty}_{K}(\mathbb{R}^{d};\mathbb{R}), \Vert f \Vert_{\infty} \leqslant 1} \frac{1}{2} \vert \mu(f) - \nu(f) \vert  
\end{align*}
where $\mu(f) = \int_{\mathbb{R}^{d}} f(x) \mu( \mbox{d} x)$ and similarly for $\nu(f)$. The last equality above is a direct consequence of the Lusin's Theorem.
\item \label{rem:improve_rate_invariance} If we suppose in addition that $\theta\geqslant  2$ and for every $t \in \pi^{\delta,\ast}$, $i \in \mathbf{N}$, $\mathbb{E}[(Z_{t}^{i})^{3}]=0$ and we replace $\mathbf{A}^{\delta}_{1}( \max(6,2 L + 5))$  by $\mathbf{A}^{\delta}_{1}( \max(7,2 L + 5))$  in \ref{th:invariance_main_result_VT}, then Theorem \ref{th:invariance_main_result} (and also (\ref{eq:invariance_main_result_VT_Lusin})) holds with $\delta^{\frac{1}{2}-\epsilon}$ replaced by $\delta^{1-\epsilon}$ and $(\mathfrak{D}_{\max(6,2 L + 5)},\mathfrak{p}_{\max(6,2 L + 5)})$ replaced by $(\mathfrak{D}_{\max(7,2 L + 5)},\mathfrak{p}_{\max(7,2 L + 5)})$ in the $r.h.s.$ of (\ref{eq:invariance_main_result_VT}) and (\ref{eq:invariance_main_result_density}).
\item More generally, let us suppose that, in addition to hypothesis from Theorem \ref{th:invariance_main_result},  the assumption $\mathbf{A}^{\delta}_{1}(+ \infty)$ hold and, given $m >0$, $\theta \geqslant m+1$ and there exists $q(m) \in \mathbb{N}$ such that: For every $f \in \mathcal{C}^{\infty}_{pol}(\mathbb{R}^{d};\mathbb{R})$ such that for every $\alpha \in \mathbb{N}^{d}$ and every $x \in \mathbb{R}^{d}$,
\begin{align*}
\vert \partial_{x}^{\alpha}f(x) \vert \leqslant \mathfrak{D}_{f,\alpha}(1+ \vert x \vert^{p(\alpha)}),
\end{align*}
with $\mathfrak{D}_{f,\alpha} \geqslant 1$ and $p(\alpha) \geqslant 0$, then, for every $t \in \pi^{\delta}$,
\begin{align}
\label{eq:short_time_error}
\vert  \mathbb{E}[f(X^{\delta}_{t+\delta})-f(X_{t+\delta})\vert X_{t}=X^{\delta}_{t}=x] \vert \leqslant \delta^{m+1} \sum_{\vert \alpha \vert \leqslant q(m)} \mathfrak{D}_{f,\alpha}  C (1+ \vert x \vert^{p}),
\end{align}
where $C$ and $p$ do not depend on $\mathfrak{D}_{f,\alpha}$ or $\delta$. Then, Theorem \ref{th:invariance_main_result} holds with $\delta^{\frac{1}{2}-\epsilon}$ replaced by $\delta^{m-\epsilon}$ and $(\mathfrak{D}_{\max(6,2 L + 5)},\mathfrak{p}_{\max(6,2 L + 5)})$ replaced by $(\sup_{r \in \mathbb{N}^{\ast}}\mathfrak{D}_{r},\sup_{r \in \mathbb{N}^{\ast}} \mathfrak{p}_{r})$ in the $r.h.s.$ of (\ref{eq:invariance_main_result_VT}) and (\ref{eq:invariance_main_result_density}) (and also (\ref{eq:invariance_main_result_VT_Lusin})). In this case $\eta,c$ and $C$ may depend on $m$. \\
When assuming simply that for every $t \in \pi^{\delta,\ast}$, $i \in \mathbf{N}$, $\mathbb{E}[(Z_{t}^{i})^{3}]=0$, we have automatically that (\ref{eq:short_time_error}) holds with $m=1$, which leads to the previous remark.
\item By a straightforward application of Corollary \ref{coro:borne_densite_reg_gauss_intro}  and Theorem \ref{th:invariance_main_result}, under the hypothesis from Theorem \ref{th:invariance_main_result} point \ref{th:invariance_main_result_density}, we derive easily the following estimate of the density of $X_{T}$: Let $q \in \mathbb{N}$, let $\alpha ,\beta \in \mathbb{N}^{d}$ such that $\vert \alpha
\vert +\vert \beta \vert \leqslant q$ and let $p>0$. Then, for every $x,y \in \mathbb{R}^{d}$,
\begin{align*}
\vert \partial_{x}^{\alpha } \partial_{y}^{\beta } p_{T}(x,y) \vert \leqslant &  \mathfrak{D}_{f} \frac{(1 + \vert x \vert_{\mathbb{R}^{d}}^{c }   )  C\exp(CT) }{\vert  \mathcal{V}_{L}(x) T \vert^{\eta } (1+\vert y \vert_{\mathbb{R}^{d}}^{p})  }  .
\end{align*}
\item When uniform weak H\"ormander property holds, that is $\mathbf{A}_{2}^{\infty}(L)$ (see (\ref{hyp:loc_hormander})), then $\delta^{\frac{1}{2}-\epsilon}$ can be replaced by $\delta^{\frac{1}{2}}$ in (\ref{eq:invariance_main_result_VT}) or (\ref{eq:invariance_main_result_VT_Lusin}) (but not in (\ref{eq:invariance_main_result_density})). When we assume (\ref{eq:short_time_error}) holds, similar conclusions hold but with $\delta^{\frac{1}{2}-\epsilon}$ (respectively $\delta^{\frac{1}{2}}$) replaced by $\delta^{m-\epsilon}$ (resp. $\delta^{m}$).
\end{enumerate}
\end{remark}

\begin{example}
\begin{enumerate}
\item
Let us consider $X=(X^{1},X^{2})$, the solution of the 2-dimensional system of $\mathbb{R}$ valued SDE, starting at point $x_{0}=(x^{1}_{0},x^{2}_{0}) \in \mathbb{R}^{2}$ and given by
\begin{align*}
d X^{1}_{t} =& b(X^{1}_{t},t) \mbox{d} t +  \sigma(X^{1}_{t},t)  \mbox{d} W_{t} \\
d X^{2}_{t} =& X^{1}_{t} \mbox{d} t  
\end{align*}
where $(W_{t})_{t \geqslant 0}$ is a one dimensional standard Brownian motion, $b$ and $\sigma$ are smooth with bounded derivatives of order one and polynomial bounds for higher orders.  In the setting from (\ref{eq:eds_ito_intro}), we have $V_{0}:(x,t) \mapsto (b(x^{1},t),x^{1})$ and $V_{1}:(x,t) \mapsto (\sigma(x^{1},t),0)$. In this example local ellipticity holds for $X^{1}$ as long as $\sigma(x^{1}_{0},t) \neq 0$. However ellipticity does not hold for $X$ since $\mbox{dim} ( \mbox{span} ((\sigma,0)))(x_{0},0) \leqslant 1$. Nevertheless, let us compute the Lie brackets. In particular 
\begin{align*}
[V_{0},V_{1}]:(x,t) \mapsto (\partial_{x^{1}} \sigma (x^{1},t) b (x^{1},t)- \partial_{x^{1}} b(x^{1},t) \sigma(x^{1},t) , -\sigma(x^{1},t) 
),
\end{align*}
and,  for $\sigma(x^{1}_{0},t) \neq 0$, $\mbox{span} ((\sigma,0),(\partial_{x^{1}} \sigma  b - \partial_{x^{1}} b \sigma+\partial_{t} \sigma , - \sigma )(x_{0},0) = \mathbb{R}^{2}$ so that local weak H\"ormander condition holds. Now, let us consider the Euler scheme of 
$X$, given by $(X^{\delta,1}_{0} ,X^{\delta,2}_{0} )=x_{0}$ and  for $t \in \pi^{\delta}$,
\begin{align*}
X^{\delta,1}_{t+\delta} =& X^{\delta,1}_{t} +b(X^{\delta,1}_{t},t) \delta +  \sigma(X^{1}_{t},t) \sqrt{\delta} Z^{\delta}_{t+\delta}\\
X^{\delta,2}_{t+\delta} =& X^{\delta,2}_{t}+ X^{\delta,1}_{t} \delta ,
\end{align*}
where $Z^{\delta}_{t} \in \mathbb{R}$, $t \in \pi^{\delta,\ast} $, are centered with variance one and Lebesgue lower bounded distribution and moment of order three equal to zero. With notations introduced in (\ref{def:hormander_func}), for $\sigma(x^{1}_{0},t) \neq 0$,
\begin{align*}
&\mathcal{V}_{1}(x_{0}) \\
&=  1 \wedge \inf_{\mathbf{b} \in \mathbb{R}^{d}, \vert \mathbf{b} \vert_{\mathbb{R}^{d}} =1}  \langle V_{1}(x_{0},0) , \mathbf{b} \rangle_{\mathbb{R}^{d}}^{2} + \langle [V_{0}- \frac{1}{2} \nabla_{x}V_{1} V_{1},V_{1}](x_{0},0) +\partial_{t}V_{1}(x_{0},0) , \mathbf{b} \rangle_{\mathbb{R}^{d}}^{2}  \\
&= 1 \wedge \inf_{\mathbf{b} \in \mathbb{R}^{d}, \vert \mathbf{b} \vert_{\mathbb{R}^{d}} =1}  \langle (\sigma,0) , \mathbf{b} \rangle_{\mathbb{R}^{d}}^{2} + \langle (\partial_{x^{1}} \sigma b-\partial_{x^{1}} b \sigma+\frac{1}{2} \sigma^{2} \partial_{x^{1}}^{2}  \sigma+\partial_{t}\sigma,-\sigma) , \mathbf{b} \rangle_{\mathbb{R}^{d}}^{2}(x^{1}_{0},0) \\
&>0 ,
\end{align*}
and for every $f \in \mathcal{M}(\mathbb{R}^{d} ; \mathbb{R})$ stafisfying hypothesis from Theorem \ref{th:invariance_main_result}, \ref{th:invariance_main_result_VT}, we have, for $T \in \pi^{\delta}$, $T \geqslant 2 \delta$, $\epsilon \in (0,1]$,
\begin{align*}
\vert \mathbb{E}[f(X_{T}) -f(X^{\delta}_{T})  ] \vert  \leqslant & \delta^{1-\epsilon}\mathfrak{D}_{f} \frac{1 + \vert x_{0} \vert_{\mathbb{R}^{d}}^{c }     }{\vert  \mathcal{V}_{1}(x_{0}) T \vert^{\eta}} C\exp(C T)  .
\end{align*}
where $\eta,C,c$ can explode if $\epsilon$ tends to zero.

\item In a similar but simpler way,  we can give an extension of the central limit theorem in total variation distance, including the iterated time integrals of the Brownian motion. \\
We considere $Z^{\frac{1}{n}}_{t} \in \mathbb{R}$, $t \in \pi^{\delta,\ast} $,  $n \in \mathbb{N}^{\ast}$, which are centered with variance one and Lebesgue lower bounded distribution and we define $S^{(0)}_{l}  :=   \sum_{k=1}^{l} Z^{\frac{1}{n}}_{\frac{k}{n}}$, $l \in \mathbb{N}$, and for $h \in \mathbb{N}^{\ast}$, $S^{(h)}_{l}  :=  \frac{1}{n} \sum_{k=1}^{l} S^{(h-1)}_{k}$.\\
Then $( S^{(0)}_{n}, \ldots, S^{(h)}_{n})$, $h \in \mathbb{N}$, converges in total variation distance, as $n$ tends to infinity, toward the random variable $(W_{1},\int_{0}^{1} W_{s} \mbox{d} s, \ldots, \int_{0}^{1} \ldots \int_{0}^{s_{2}} W_{s_{1}} \mbox{d} s_{1} \ldots \mbox{d} s_{h})$ where $(W_{t})_{t \geqslant 0}$ is a one dimensional standard Brownian motion.
\end{enumerate}
\end{example}

\begin{proof}[Proof of Theorem \ref{th:invariance_main_result}]

The proof of this result follows the same line as the one of Theorem 2.6 in \cite{Bally_Rey_2016} (which study a restricted framework compared to the one in this article) combined with standard short time weak estimation assumption. We thus do not give specific details in this article. We simply points out that the proof of (\ref{eq:invariance_main_result_VT}) is a direct consequence of Theorem \ref{th:regul_main_result_intro} together with Lusin's theorem. Notice that the achieved convergence rate in (\ref{eq:invariance_main_result_VT}) is $\delta^{\frac{1}{2}-\epsilon}$ (and not $\delta^{\frac{1}{2}}$) because in the local setting, we have to use the regularization property (\ref{eq:borne_semigroupe_regularisation_gauss_main}) of the regularized version of $X^{\delta}_{t}$ where $t \leqslant \delta^{\epsilon'}$ for some $\epsilon'>0$ but not zero. Approximation (\ref{eq:invariance_main_result_density}) follows from an application of Theorem 2.6 in \cite{Bally_Caramellino_2017}. Notice that this application is also a reason why the convergence happens with rate $\delta^{\frac{1}{2}-\epsilon}$ instead of $\delta^{\frac{1}{2}}$ even in the uniform setting.
\end{proof}

\section{A Malliavin-inspired approach to prove smoothing properties}
\label{Sec:prove_Regularization_properties}

Our strategy to obtain regularization properties is to establish some integration by parts formulas (Theorem \ref{theoreme:IPP_Malliavin},  (\ref{eq:IPP_Malliavin_degre_superieur})) and then to bound the Malliavin weights appearing in those formulas (Theorem \ref{theoreme:IPP_Malliavin},  (\ref{eq:borne_norme_sobolev_poids_malliavin_th})). These bounds on Malliavin weights are derived by bounding the Sobolev norms constructed with Malliavin derivatives (Theorem \ref{theo:Norme_Sobolev_borne}) and by bounding the moments of the inverse Malliavin covariance matrix (Theorem \ref{th:borne_Lp_inv_cov_Mal}). In this section, we present the discrete Malliavin calculus tailored to our framework, and subsequently present our key regularization property results. Integration by parts formulas and estimates on the Malliavin weights will be derived in the next section.

\subsection{A generic discrete time Malliavin calculus}

 Since we are interested in random variables with form (\ref{eq:schema_general}), where the laws of random variables $Z^{\delta}$ are arbitrary (and thus not only Gaussian) the standard Malliavin calculus is not adapted anymore. Therefore, we remain inspired by Malliavin calculus but we whether develop a discrete time differential calculus which happens to be well suited to our framework as soon as $Z^{\delta}$ involves a regular part $i.e.$ is Lebesgue lower bounded.  In this section,  we always assume that $\mathbf{A}_{4}^{\delta}$ (see (\ref{hyp:lebesgue_bounded})) holds true.\\

In the following, we will denote $\chi^{\delta} =(\chi^{\delta} _{t})_{t \in \pi^{\delta,\ast}}, \; U^{\delta}=(U^{\delta}_{t})_{t \in \pi^{\delta,\ast}}$ and $V^{\delta}=(V^{\delta} _{t})_{t \in \pi^{\delta,\ast}}$ and given a separable Hilbert space $(\mathcal{H},\langle.,.\rangle_{\mathcal{H}})$ equipped with an orthonormal base $\mathfrak{H} :=   (\mathfrak{h}_{n})_{n \in \mathbb{N}^{\ast}}$, we will consider the class of random variables: 
\begin{align*}
\mathcal{S}^{\delta}(\mathcal{H})=\{ & F=f(\chi^{\delta} ,U^{\delta},V^{\delta}): \forall (\chi ,v)\in \{0,1\}^{\pi^{\delta,\ast}} \times \mathbb{R}^{\pi^{\delta,\ast} \times \mathbf{N}},\\
&u\mapsto f(\chi
,u,v)\in \mathcal{C}^{\mbox{F},\infty}(\mathbb{R}^{\pi^{\delta,\ast}\times \mathbf{N}};\mathcal{H}),  \\
&\partial^{\mbox{F}}_{u_{1},\ldots,u_{l}} f(\chi^{\delta} ,U^{\delta},V^{\delta}) \in \bigcap_{p =1}^{+\infty} \mbox{L}^{p}(\Omega), \forall u_{1},\ldots,u_{l} \in \mathbb{R}^{\pi^{\delta,\ast}\times \mathbf{N}},l \in \mathbb{N}\}.  \nonumber
\end{align*}

In the previous definition, we have denoted by $ \mathcal{C}^{\mbox{F},\infty}(\mathbb{R}^{\pi^{\delta,\ast}\times \mathbf{N}};\mathcal{H})$, the set of functions defined on the vector space $\mathbb{R}^{\pi^{\delta,\ast}\times \mathbf{N}}$, that take values in $\mathcal{H}$ and which admit Fr\'echet directional derivatives of any order. When $\mathcal{H}=\mathbb{R}$, we simply denote $\mathcal{S}^{\delta}$. 
%

We now construct a differential calculus based on the laws of the random
variables $U^{\delta}$ which mimics the Malliavin calculus, following
the ideas from \cite{Bally_Clement_2011}, \cite{Bally_Caramellino_2014_distance}, \cite{Bally_Caramellino_2016_CLT} or \cite{Bally_Rey_2016}. We begin by introducing the basic element of our differential calculus.  \\

%
%
\noindent \textbf{Derivative operator and Malliavin covariance matrix.} 
We consider the set of $\{0,1\}^{\pi^{\delta,\ast} \times \mathbf{N}}$-valued vectors $(u_{t}^{i})_{(t,i) \in \pi^{\delta,\ast} \times \mathbf{N}}$ such that for every $t,s \in \pi^{\delta,\ast} $ and every $i,j \in \mathbf{N}$, $(u_{t}^{i})_{s,j}=\mathbf{1}_{t,s}\mathbf{1}_{i,j}$.  For $F\in \mathcal{S}^{\delta}(\mathcal{H})$, we define the Malliavin derivatives $D^{\delta}F  :=   (D^{\delta}_{(t,i)}F)_{(t,i) \in \pi^{\delta,\ast} \times \mathbf{N}} \in \mathcal{S}^{\delta}(\mathcal{H})^{\pi^{\delta,\ast}\times \mathbf{N}}$ by

\begin{align*}
D^{\delta}_{(t,i)}F
 :=   \chi^{\delta}_{t}\partial^{\mbox{F}}_{u_{t}^{i}} f(\chi^{\delta},U^{\delta},V^{\delta}), \quad (t,i) \in \pi^{\delta,\ast} \times \mathbf{N}.
\end{align*}


For $\mathbf{T} \subset \pi^{\delta,\ast}$, we define $D^{\delta,\mathbf{T}}F =(D^{\delta}_{(t,i)}F)_{(t,i) \in \mathbf{T} \times \mathbf{N}} \in \mathcal{S}^{\delta}(\mathcal{H})^{\mathbf{T}\times \mathbf{N}}$. When $\mathbf{T}=\pi^{\delta,\ast}$ or when it is explicit enough, we simply denote $D^{\delta}F$.  
For $s \in (t-\delta,t]$, with $t \in \mathbf{T}$ we define also
\begin{align*}
D^{\delta}_{(s,i)}F :=  D^{\delta}_{(t,i)}F
\end{align*}
and $D^{\delta}_{(s,i)}=0$ otherwise.
The higher order derivatives are defined by iterating $D^{\delta}$. Let $\alpha=(\alpha^{1},\ldots,\alpha^{m}) \in \mathbf (\pi^{\delta,\ast} \times \mathbf{N})^{m}$, $m \in \mathbb{N}$. We define
\begin{align*}
D^{\delta}_{\alpha }F=D^{\delta}_{\alpha^{1}}\cdots D^{\delta}_{\alpha^{m}}F
\end{align*}
when $m>0$ and $D^{\delta}_{\alpha }F=D^{\delta}_{\emptyset}F=F$ if $m=0$. We also introduce 
\begin{align*}
D^{\delta,\mathbf{T},m}F=(D^{\delta}_{\alpha}F)_{\alpha \in (\mathbf{T} \times \mathbf{N})^{q}}.  
\end{align*}
The Malliavin covariance matrix of $F \in \mathcal{S}^{\delta}(\mathcal{H})$ on $\mathbf{T}$,  is the matrix defined for every $\mathfrak{h},\mathfrak{h}^{\diamond} \in \mathfrak{H}$ by 



\begin{align}
\sigma^{\delta} _{F,\mathbf{T}}[\mathfrak{h},\mathfrak{h}^{\diamond}]
&=\delta \langle D^{\delta,\mathbf{T}} \langle F, \mathfrak{h}\rangle_{\mathcal{H}}  , D^{\delta,\mathbf{T}} \langle F, \mathfrak{h}^{\diamond}\rangle_{\mathcal{H}} \rangle_{\mathbb{R}^{\mathbf{T} \times \mathbf{N}}}  \nonumber\\
&  :=    \delta
\sum_{t \in \mathbf{T}} \sum_{l=1}^{N} D^{\delta}_{(t,l)}\langle F, \mathfrak{h}\rangle_{\mathcal{H}} D^{\delta}_{(t,l)}\langle F, \mathfrak{h}^{\diamond}\rangle_{\mathcal{H}}   
\label{def:matrice_covariance_Malliavin}
\end{align}
If $\mathbf{T}=(0,T] \cap \pi^{\delta}$ with $T \in \pi^{\delta,\ast}$ then
\begin{align*}
\sigma^{\delta} _{F,\mathbf{T}}[\mathfrak{h},\mathfrak{h}^{\diamond}]= \int_{0}^{T}D^{\delta}_{(s,l)}\langle F, \mathfrak{h}\rangle_{\mathcal{H}} D^{\delta}_{(s,l)}\langle F, \mathfrak{h}^{\diamond}\rangle_{\mathcal{H}}    \mbox{d} s .
\end{align*}

It is worth noticing that $\sigma^{\delta} _{F,\mathbf{T}}$ can be seen as a linear operator on $\mathcal{H}$ such that for every $h \in \mathcal{H}$, $\sigma^{\delta} _{F,\mathbf{T}}h :=   \sum_{\mathfrak{h},\mathfrak{h}^{\diamond} \in \mathfrak{H}} \sigma^{\delta} _{F,\mathbf{T}}[\mathfrak{h},\mathfrak{h}^{\diamond}] \langle h, \mathfrak{h}^{\diamond}  \rangle_{\mathcal{H}} \mathfrak{h} $. When $\mathcal{H}$ has finite dimension, this is the standard matrix product.\\
Now, we define, when it is possible, the inverse Malliavin covariance matrix. We consider the trace class norm of a bounded linear operator $\mathcal{L}$ on the Hilbert space $\mathcal{H}$ given by $\vert \mathcal{L} \vert_{tc} :=   \sum_{\mathfrak{h} \in \mathfrak{H}} \langle \sqrt{ \mathcal{L}^{\ast} \mathcal{L}} \mathfrak{h} ,  \mathfrak{h} \rangle_{\mathcal{H}}$ where $\mathcal{L}^{\ast}$ is the adjoint operator of $\mathcal{L}$ for the scalar product $\langle,\rangle_{\mathcal{H}}$. We say that an operator is trace class if it is bounded, linear and $\vert \mathcal{L} \vert_{tc}<+\infty$.\\
 When $\sigma _{F,\mathbf{T}}^{\delta}-I_{\mathcal{H}}$ (with $I_{\mathcal{H}}[\mathfrak{h},\mathfrak{h}^{\diamond}]=\mathbf{1}_{\mathfrak{h},\mathfrak{h}^{\diamond}},\mathfrak{h},\mathfrak{h}^{\diamond}\in \mathfrak{H}$) is a trace class operator on $\mathcal{H}$, and the Fredholm determinant $\det \sigma^{\delta}
_{F,\mathbf{T}}$ of $ \sigma^{\delta}
_{F,\mathbf{T}}$ (which is the standard determinant when $\mathcal{H}$ has finite dimension) is not zero, we define $\gamma^{\delta}_{F,\mathbf{T}}=(\sigma _{F,\mathbf{T}}^{\delta})^{-1}$, the inverse Malliavin covariance matrix of $F$.\\

\noindent \textbf{Divergence and Ornstein Uhlenbeck operators.}
Let $G^{\delta}=\left(G^{\delta}_t \right)_{t \in \pi^{\delta,\ast}}$ with $G^{\delta}_t \in \mathcal{S}^{\delta}(\mathcal{H})^{N}$.The divergence operator is given by


\begin{align*}
\Delta^{\delta}_{\mathbf{T}} G^{\delta} = \delta \sum_{t \in \mathbf{T}}\sum_{i=1}^{N}  G^{\delta,i}_{t} D^{\delta}_{(t,i)} \Gamma_{t}^{\delta}+D^{\delta}_{(t,i)}G^{\delta,i}_{t} \in \mathcal{S}^{\delta}(\mathcal{H}),
\end{align*}
with, for $t\in \pi^{\delta,\ast}$,
\begin{align*}
\Gamma^{\delta}_{t}= \ln \varphi_{r_{\ast }/2} ( \delta^{-\frac{1}{2}} U^{\delta}_{t}-z_{\ast ,t})\in \mathcal{S}^{\delta}(\mathbb{R}).
\end{align*}
In particular, for $i \in \mathbf{N}$,
\begin{align*}
D^{\delta}_{(t,i)}\Gamma^{\delta}_{t}=  \delta^{-\frac{1}{2}} \chi^{\delta}_{t} \partial_{z^{i}} \ln \varphi_{r_{\ast }/2} ( \delta^{-\frac{1}{2}} U^{\delta}_{t}-z_{\ast ,t}) \in \mathcal{S}^{\delta}(\mathbb{R}).
\end{align*}
Finally, we define the Ornstein Uhlenbeck operator, for $F \in \mathcal{S}^{\delta}(\mathcal{H})$,%
\begin{align*}
L^{\delta}_{\mathbf{T}} F=-\Delta^{\delta}_{\mathbf{T}}D^{\delta}F= - \delta \sum_{t \in \mathbf{T}}\sum_{i=1}^{N}D_{(t,i)}D_{(t,i)}F  + D_{(t,i)}F D^{\delta}_{(t,i)}\Gamma^{\delta}_{t}\in \mathcal{S}^{\delta}(\mathcal{H}).
\end{align*}

Notice that, if $\mathbf{T}=(0,T] \cap \pi^{\delta}$ with $T \in \pi^{\delta,\ast}$, then (denoting $t(s)=t$ for $s \in (t-\delta,t]$, $t \in \pi^{\delta,\ast}$),
\begin{align*}
L^{\delta}_{\mathbf{T}} F= -\int_{0}^{T} \sum_{i=1}^{N}D_{(s,i)}D_{(s,i)}F   \mbox{d} s  - \delta \sum_{t \in \mathbf{T}}\sum_{i=1}^{N} D_{(t,i)}F D^{\delta}_{(t,i)}\Gamma^{\delta}_{t}\in \mathcal{S}^{\delta}(\mathcal{H})
\end{align*}


\begin{remark}
The basic random variables in our calculus are $Z^{\delta}_{t}, t \in \pi^{\delta,\ast}$ so we
precise the way in which the differential operators act on them. Since 
$\delta^{\frac{1}{2}}Z^{\delta}_{t}=\chi^{\delta}_{t}U^{\delta}_{t}+\sqrt{n}(1-\chi^{\delta}_{t})V^{\delta}_{t}$, it follows that for $w,t \in \pi^{\delta,\ast}$, $\mathbf{T} \subset \pi^{\delta}$, $i,j \in \mathbf{N}$,
\begin{align}
\delta^{\frac{1}{2}} D^{\delta}_{(t,i)}Z^{\delta,j}_{w} =&\chi^{\delta}_{w}\mathbf{1}_{w,t} \mathbf{1}_{i,j},
\label{eq:derivee_Zkj} \\
 L^{\delta}_{\mathbf{T}}Z^{\delta,i}_{t} =& \chi^{\delta}_{t} \partial_{z^{i}} \ln \varphi_{r_{\ast }/2} (\delta^{-\frac{1}{2}} U^{\delta}_{t}-z_{\ast ,t}) \mathbf{1}_{t \in \mathbf{T}} .  \label{eq:L_Zkj} 
\end{align}
\end{remark}

\subsection{Regularization properties for approximations of the semigroup}

\subsubsection{Localization}

In the following, we will not work under $\mathbb{P}$, but under a 
localized measure which we define now.  For $\mathbf{T} \subset \pi^{\delta,\ast}$, we denote $\vert \mathbf{T} \vert = \mbox{Card}(\mathbf{T})$. When $\vert \mathbf{T} \vert>0$ we define
\begin{align*}
\Lambda_{\mathbf{T}}=\left\{\frac{1}{\vert \mathbf{T} \vert}\sum_{w\in \mathbf{T}}\chi^{\delta}_{w}\geqslant \frac{m_{\ast }}{2} \right\}.
\end{align*}%
Using the Hoeffding's inequality and the fact that $\mathbb{E}[\chi^{\delta}_{t}]=m_{\ast }$, it can be checked that for $\mathbf{T}=(s,t] \cap \pi^{\delta}$, $0\leqslant s < t$,
\begin{align*}
\mathbb{P}(\Omega \setminus \Lambda_{\mathbf{T}})\leqslant  \exp(-\frac{m_{\ast }^{2} \vert \mathbf{T} \vert }{2}) .  
\end{align*}
The next step consists in localizing the random variables $Z^{\delta}$ and the Malliavin covariance matrix $\sigma^{\delta}
_{F}$. For the first one, we aim to control that the norm is not too high while for the latter, we aim to control that it is not too low.
We first introduce a regularized version of the indicator function.  For $v>1$, we consider $\Psi_{v} \in \mathcal{C}_{b}^{\infty}(\mathbb{R}; [0,1])$ such that $\Psi_{v}(x)=1$ if $\vert x \vert \leqslant v -\frac{1}{2} $ and $0$ if $\vert x \vert \geqslant v $ and and that the function $z \in \mathbb{R}^{N} \mapsto \Psi_{v}(\vert z \vert_{\mathbb{R}^{N}})$ belongs to $ \mathcal{C}_{b}^{\infty}(\mathbb{R}^{N}; [0,1])$ ($e.g.$ for $\vert x \vert \in (v-\frac{1}{2},v)$,  $\Psi_{v}(x)=\exp(1-\frac{1}{1 -(2\vert x \vert -2v+1)^{2}})$). \\

Given $\mathbf{T} \subset \pi^{\delta,\ast}$, we introduce
\begin{align}
\label{def:loc_theta}
\Theta_{F,\eta,\mathbf{T}}&=\Theta_{F,G,\eta_{1},\mathbf{T}}\Theta_{\eta_{2},\mathbf{T}} \mathbf{1}_{\Lambda_{\mathbf{T}}}\quad \mbox{with}  \\
 \Theta_{F,G,\eta_{1},\mathbf{T}}&=\Psi_{\eta_{1}}(G \det \gamma^{\delta}
_{F,\mathbf{T}}) , \quad \mbox{and}\quad \Theta_{\eta_{2},\mathbf{T},t}=\prod_{w \in ((0,t] \cap \mathbf{T})}\Psi_{\eta_{2}}(\vert Z^{\delta}_{w} \vert_{\mathbb{R}^{N}} ), \quad t \in \pi^{\delta},  \nonumber
\end{align}
with $\Theta_{\eta_{2},\mathbf{T}}=\Theta_{\eta_{2},\mathbf{T},\infty}$.


\subsubsection{The regularization property for a modified measure}

We still fix $\delta>0$ and we consider the Markov process $(X^{\delta}_{t})_{t \in \pi^{\delta}}$, defined in (\ref{eq:schema_general}). In order to state our results, we first introduce the tangent flow process $(\dot{X}_{t})_{t \in \pi^{\delta}}$ defined by $ \dot{X}_{0}=I_{d \times d}$ and
\begin{align}
\label{eq:tangent_flow_def}
\dot{X}_{t} :=   \partial_{\mbox{\textsc{x}}^{\delta}_{0}} X^{\delta}_{t} ,
\end{align}
the Jacobian matrix of derivatives of $X^{\delta}$ $w.r.t.$ the initial value $\mbox{\textsc{x}}^{\delta}_{0}$, which appears in our Malliavin weights. \\

We introduce $(Q_{t}^{\delta,\Theta })_{t \in \pi^{\delta}}$ such that,
\begin{align}
\label{def:semigroupe_regularisant}
\forall T \in \pi^{\delta} \quad Q_{T}^{\delta,\Theta }f(x) :=   \mathbb{E}[\Theta f(X^{\delta}_{T}) \vert X^{\delta}_{0}=x].
\end{align}
where $\Theta=\Theta_{X^{\delta}_{T},\det(\partial_{\mbox{\textsc{x}}^{\delta}_{0}} X ^{\delta}_{T})^{2},\eta,\mathbf{T}}$ following the definition (\ref{def:loc_theta}) with $\mathbf{T}=(0,T] \cap \pi^{\delta}$, $\eta=(\eta_{1}(\delta),\eta_{1}(\delta))$ defined in (\ref{def:eta_delta}). 

 Notice that $(Q_{t}^{\delta,\Theta })_{t \in \pi^{\delta}}$, is not a semigroup, but this
is not necessary. We will not be able to prove the rsmoothing property for $Q^{\delta}$ but for $Q^{\delta,\Theta}$. The proof uses result established in Section \ref{Sec:Proof_reg_prop}. Our approach consists in demonstrating an integration by part formula in Theorem \ref{theoreme:IPP_Malliavin} built upon our finite disrete time Malliavin calculus, and then bounding the moments of the weights appearing in those formulas texploiting Theorem \ref{theo:Norme_Sobolev_borne} and Theorem \ref{th:borne_Lp_inv_cov_Mal}.

\begin{theorem}
\label{prop:regularisation}
Let $T \in \pi^{\delta,\ast}$ and $\mathbf{T}=(0,T] \cap \pi^{\delta}$ and let $f \in \mathcal{C}_{pol}^{\infty}(\mathbb{R}^{d} ; \mathbb{R})$ satisfying: there exists $\mathfrak{D}_{f} \geqslant 1$ and $\mathfrak{p}_{f} \in \mathbb{N}$ such that for every $x \in \mathbb{R}^{d}$,
\begin{align*}
\vert f(x) \vert \leqslant \mathfrak{D}_{f}(1+ \vert x \vert_{\mathbb{R}^{d}}^{\mathfrak{p}_{f} }).
\end{align*}
 Then we have the following properties:
\begin{enumerate}[label=\textbf{\Alph*.}]
\item \label{th:reg_gen_regprop} Let $q \in \mathbb{N}$, let $\alpha ,\beta \in \mathbb{N}^{d}$ such that $\vert \alpha \vert +\vert \beta \vert \leqslant q$. Assume that $\mathbf{A}^{\delta}_{1}( \max(q+3,2 L + 5))$ (see (\ref{eq:hyp_1_Norme_adhoc_fonction_schema}), (\ref{eq:hyp_3_Norme_adhoc_fonction_schema})),  $\mathbf{A}_{2}(L)$ (see (\ref{hyp:loc_hormander})), $\mathbf{A}_{3}^{\delta}(+\infty)$ (see (\ref{eq:hyp:moment_borne_Z})), $\mathbf{A}_{4}^{\delta}$ (see (\ref{hyp:lebesgue_bounded})) and \ref{hyp:hyp_5_loc_var} hold.  Then,  for every $x \in \mathbb{R}^{d}$,
\begin{align}
 \label{eq:borne_semigroupe_regularisation} 
 \vert \partial_{x}^{\alpha }Q_{T}^{\delta,\Theta }\partial_{x}^{\beta }f(x) \vert \leqslant & \mathfrak{D}_{f} \frac{1 + \mathbf{1}_{\mathfrak{p}_{\max(q+3,2 L + 5)} + \mathfrak{p}_{f} > 0} \vert x \vert_{\mathbb{R}^{d}}^{C }}{ (\mathcal{V}_{L}(x) T)^{13^{L}3d(\frac{9}{4}q^{2}+3q+3)} }\\
&\times  \mathfrak{D}_{\max(q+3,2 L + 5)}^{C}     \exp(C (1+T) \mathfrak{M}_{C }(Z^{\delta}) \mathfrak{D}^{4}) \nonumber .
\end{align}
with $C=C(d,N,L,q,\mathfrak{p},\mathfrak{p}_{\max(q+3,2 L + 5)},\mathfrak{p}_{f},\frac{1}{m_{\ast}},\frac{1}{r_{\ast}})\geqslant 0$ which may tend to infinity if one of the arguments tends to infinity.

\item \label{th:reg_gen_distprop}  Let $h>0$. Assume that hypothesis from \ref{th:reg_gen_regprop} are satisfied with $\mathbf{A}^{\delta}_{1}( \max(q+3,2 L + 5))$ replaced by $\mathbf{A}^{\delta}_{1}(2 L + 5)$. Then, for every $x \in \mathbb{R}^{d}$,

\begin{align}
\label{eq:reg_gen_distprop}
\vert Q^{\delta}_{T}f(x)-Q_{T}^{\delta,\Theta }f(x)\vert
\leqslant &  \delta^{h}  \mathfrak{D}_{f}   \frac{1+\mathbf{1}_{\mathfrak{p}_{2 L +5} + \mathfrak{p}_{f}>0} \vert x \vert_{\mathbb{R}^{d}}^{C}  }{\mathcal{V}_{L}(x)^{13^{L}3d \max(4,\frac{91 h}{44d})} }   \\
& \times  \mathfrak{D}^{C } \mathfrak{D}_{2 L + 5}^{C }  \mathfrak{M}_{C }(Z^{\delta})   C\exp(C T \mathfrak{M}_{C}(Z^{\delta}) \mathfrak{D}^{4})  .\nonumber 
\end{align}
with $C=C(d,N,L,p,\mathfrak{p},\mathfrak{p}_{2 L + 5},\mathfrak{p}_{f},\frac{1}{m_{\ast}},h) \geqslant 0$ which may tend to infinity if one of the arguments tends to infinity. 
\end{enumerate}
\end{theorem}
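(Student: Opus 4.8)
The plan is to establish both parts from the integration-by-parts (IBP) machinery of Section \ref{Sec:Proof_reg_prop}. For part \ref{th:reg_gen_regprop}, first I would reduce to bounding $\partial_x^\alpha \mathbb{E}[\Theta \, \partial_x^\beta f(X^\delta_T) \mid X^\delta_0 = x]$. Differentiating under the expectation, the $x$-derivatives of order $|\alpha|$ hit $X^\delta_T$ (producing products of tangent-flow entries $\dot X_t$ and higher-order flow derivatives) and also $\Theta$ (via $X^\delta_T$, $\dot X_T$, and $\gamma^\delta_{F,\mathbf{T}}$); all these factors lie in $\mathcal{S}^\delta(\mathbb{R})$ with $\mathrm{L}^p$-controlled Sobolev norms. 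Then I would apply the discrete IBP formula (Theorem \ref{theoreme:IPP_Malliavin}, both the first-order version and its iterate (\ref{eq:IPP_Malliavin_degre_superieur})) to transfer the $|\beta|$ derivatives $\partial_x^\beta$ off $f$: writing $\partial_x^\beta f(X^\delta_T) = \sum (\text{coefficients}) \, (\partial_{z}^{\gamma} \text{ stuff})$ and using the chain rule $\partial_x^\beta f(X^\delta_T) = $ combination of $(\partial^\gamma f)(X^\delta_T)$ times flow terms, then integrating by parts $|\beta|$ times against the Malliavin inner product, landing at $\mathbb{E}[(\partial^{\gamma}f \text{ or } f)(X^\delta_T) \times H_\beta]$ where $H_\beta$ is a Malliavin weight. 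Because the localizer $\Theta$ forces $\det\gamma^\delta_{F,\mathbf{T}}$ to stay bounded and $|Z^\delta_w|$ to stay bounded on the support, the inverse covariance appearing in $H_\beta$ is controllable. The weight $H_\beta$ is a polynomial expression in $D^\delta$-derivatives of $X^\delta_T$, $\dot X_T$, $\Gamma^\delta_t$, $\det\gamma^\delta_{F,\mathbf{T}}$ and $\Theta$; its $\mathrm{L}^1$-norm is bounded using Theorem \ref{theo:Norme_Sobolev_borne} (Sobolev norms of the flow), Theorem \ref{th:borne_Lp_inv_cov_Mal} (moments of the inverse Malliavin covariance — this is where the factor $\mathcal{V}_L(x)^{-13^L 3 d(\cdots)}$ and the localization on $\Lambda_{\mathbf{T}}$ enter), the bound (\ref{eq:borne_{d}eriv_log_reg_Zk}) on the log-derivatives of $\varphi_{r_*/2}$, and the moment assumption $\mathbf{A}_3^\delta(+\infty)$. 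Finally, since $|f(X^\delta_T)| \le \mathfrak{D}_f(1 + |X^\delta_T|^{\mathfrak p_f})$ (and similarly for $\partial^\gamma f$ under $\mathcal{C}^\infty_{pol}$ with derivatives controlled by the same $\mathfrak D_f, \mathfrak p_f$ up to combinatorial constants), I would apply Cauchy–Schwarz to split $\mathbb{E}[(1+|X^\delta_T|^{\mathfrak p_f}) H_\beta] \le \mathbb{E}[(1+|X^\delta_T|^{\mathfrak p_f})^2]^{1/2} \mathbb{E}[H_\beta^2]^{1/2}$, bound the moments of $X^\delta_T$ polynomially in $|x|$ via $\mathbf{A}_1^\delta$ and $\mathbf{A}_3^\delta$ (Gronwall-type iteration over the $T/\delta$ steps giving the $\exp(CT\cdots)$ factor), and collect the powers of $\mathcal{V}_L(x)T$, $\mathfrak D$, $\mathfrak D_{\max(q+3,2L+5)}$, $\mathfrak M_C(Z^\delta)$ exactly as in the displayed right-hand side. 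The hypothesis $\mathbf{A}_5$ ensures $\eta_1(\delta), \eta_2(\delta) > 1$ so the localizers $\Psi_{\eta_1}, \Psi_{\eta_2}$ are genuinely smooth cutoffs compatible with the IBP weights.

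For part \ref{th:reg_gen_distprop}, the strategy is to estimate $\mathbb{E}[(1-\Theta) f(X^\delta_T) \mid X^\delta_0 = x] = Q^\delta_T f(x) - Q^{\delta,\Theta}_T f(x)$ by showing $\mathbb{E}[1-\Theta]$ is super-polynomially small in $\delta$. I would decompose $1 - \Theta = (1 - \mathds{1}_{\Lambda_{\mathbf{T}}}) + \mathds{1}_{\Lambda_{\mathbf{T}}}(1 - \Theta_{\eta_2,\mathbf{T}}) + \mathds{1}_{\Lambda_{\mathbf{T}}}\Theta_{\eta_2,\mathbf{T}}(1-\Theta_{F,G,\eta_1,\mathbf{T}})$. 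The first term is bounded by the Hoeffding estimate $\mathbb{P}(\Omega\setminus\Lambda_{\mathbf{T}}) \le \exp(-m_*^2|\mathbf{T}|/2) \le \exp(-c/\delta)$ which beats any power $\delta^h$. The second term: $1-\Psi_{\eta_2}(|Z^\delta_w|) \ne 0$ forces $|Z^\delta_w| \ge \eta_2(\delta) - 1/2$, and since $\eta_2(\delta) \to \infty$ like a negative power of $\delta$, a Markov/Chebyshev bound with a high enough moment from $\mathbf{A}_3^\delta(+\infty)$ gives $\mathbb{P}(|Z^\delta_w| \ge \eta_2) \le \mathfrak{M}_p(Z^\delta)\eta_2^{-p}$, and summing over $|\mathbf{T}| \le T/\delta$ values of $w$ yields a bound $\le C T \delta^{-1} \mathfrak{M}_p(Z^\delta) \eta_2(\delta)^{-p}$, which is $O(\delta^h)$ for $p$ large. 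The third term is the heart: on $\Lambda_{\mathbf{T}} \cap \{\Theta_{\eta_2} > 0\}$, $1 - \Psi_{\eta_1}(G\det\gamma^\delta_{F,\mathbf{T}}) \ne 0$ forces $G\det\gamma^\delta_{F,\mathbf{T}} = \det(\dot X_T)^2 \det\gamma^\delta_{F,\mathbf{T}} \ge \eta_1(\delta) - 1/2$; by Markov this probability is $\le \eta_1(\delta)^{-p}\,\mathbb{E}[\mathds{1}_{\Lambda_{\mathbf{T}}}\Theta_{\eta_2}(\det\dot X_T)^{2p}(\det\gamma^\delta_{F,\mathbf{T}})^p]$, and the moment $\mathbb{E}[\mathds{1}_{\Lambda_{\mathbf{T}}}(\det\gamma^\delta_{F,\mathbf{T}})^p]$ is exactly what Theorem \ref{th:borne_Lp_inv_cov_Mal} controls (polynomially in $\mathcal{V}_L(x)^{-1}$, with the $\Theta_{\eta_2}$-localization on $Z^\delta$ making the discrete Norris-type argument work). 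Choosing $p$ of order $91h/(44d)$ and using $\eta_1(\delta) \sim \delta^{-d \cdot 44/91}$ converts $\eta_1(\delta)^{-p}$ into $\delta^h$, explaining the exponent $13^L 3d\max(6, 91h/(44d)+2)$ in the stated bound. The remaining factor $\mathbb{E}[(1+|X^\delta_T|^{\mathfrak p_f})^2]^{1/2}$ in the Cauchy–Schwarz split contributes the $(1+|x|^C)$ and $\exp(CT\cdots)$ terms as before.

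The main obstacle I expect is the control of the inverse Malliavin covariance matrix moments — Theorem \ref{th:borne_Lp_inv_cov_Mal} — and, within the present theorem, correctly propagating its quantitative dependence: one must track how the discrete weak-Hörmander quantity $\mathcal{V}_L(x)$ (built from the iterated Lie brackets $V^{[\alpha]}_i$, $\|\alpha\|\le L$) transfers into a lower bound for the Malliavin covariance of $X^\delta_T$, and this is where the $13^L$ factor originates (each bracket level costs a geometric factor) and where the need to localize via $\Lambda_{\mathbf{T}}$ and $\Theta_{\eta_2,\mathbf{T}}$ is unavoidable, since the discrete analogue of Norris's lemma degrades on time scales below $\delta$. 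A secondary difficulty is bookkeeping: the IBP weights $H_\beta$ involve products of up to $q+\text{const}$ Malliavin derivatives of the flow and of $\det\gamma^\delta_{F,\mathbf{T}}$, and one must verify all the Sobolev norms stay in $\bigcap_p \mathrm{L}^p$ and combine the exponents to produce precisely $13^L 3d(\tfrac94 q^2 + 3q + 7)$ and the stated polynomial degree in $\mathfrak D_{\max(q+3,2L+5)}$; I would handle this by invoking Theorems \ref{theoreme:IPP_Malliavin}, \ref{theo:Norme_Sobolev_borne} and \ref{th:borne_Lp_inv_cov_Mal} as black boxes with their explicit constants and performing the final exponent arithmetic. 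Everything else — the chain-rule expansions, Cauchy–Schwarz, the Gronwall iteration for moments of $X^\delta_T$ and $\dot X_T$ over the $T/\delta$ time steps — is routine given $\mathbf{A}_1^\delta$ and $\mathbf{A}_3^\delta(+\infty)$.
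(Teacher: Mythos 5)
Your overall architecture is the same as the paper's: differentiate under the expectation via the chain rule to obtain $\sum_{\vert \beta\vert \le \vert\gamma\vert \le q}\mathbb{E}[\Theta\,\partial_x^{\gamma}f(X^{\delta}_T)\mathcal{P}_{\gamma}(X^{\delta}_T)\,\vert\,X^{\delta}_0=x]$, apply the localized discrete integration by parts of Theorem \ref{theoreme:IPP_Malliavin}, and control the weights through Lemma \ref{lemme:borne_norm_sob_prod_bis}, Lemma \ref{lemme:borne_sob_abstraite_local}, Theorem \ref{theo:Norme_Sobolev_borne} and Theorem \ref{th:borne_Lp_inv_cov_Mal}; for part \ref{th:reg_gen_distprop} the paper likewise reduces to $\mathbb{E}[1-\Theta\,\vert\,X^{\delta}_0=x]$ by Cauchy--Schwarz, invokes (\ref{eq:th_cov_mal_proba_espace_comp}) together with Lemma \ref{lemme:borne:moments_X}, and then chooses the exponents essentially as you do (your three-way decomposition of $1-\Theta$ is exactly what sits inside the proof of (\ref{eq:th_cov_mal_proba_espace_comp})), so that part of your plan is sound.

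There is, however, a genuine flaw in your part \ref{th:reg_gen_regprop} as written. You integrate by parts only ``$\vert\beta\vert$ times'' and allow the final expectation to contain ``$\partial^{\gamma}f$ or $f$'', and you then assert that the remaining derivatives $\partial^{\gamma}f$ are controlled by the same $\mathfrak{D}_f,\mathfrak{p}_f$ up to combinatorial constants. This is false under the stated hypotheses: membership in $\mathcal{C}^{\infty}_{pol}$ only says the derivatives have polynomial growth with unspecified constants, and the only quantitative datum is $\vert f(x)\vert\le\mathfrak{D}_f(1+\vert x\vert_{\mathbb{R}^{d}}^{\mathfrak{p}_f})$. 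If any $\partial^{\gamma}f$ with $\gamma\neq 0$ survives, your estimate depends on bounds for the derivatives of $f$ and is strictly weaker than (\ref{eq:borne_semigroupe_regularisation}), whose entire point is dependence on $f$ only through $\mathfrak{D}_f$ and $\mathfrak{p}_f$. The repair is the paper's step: after the chain rule each term is $\mathbb{E}[\Theta\,\partial_x^{\gamma}f(X^{\delta}_T)\mathcal{P}_{\gamma}(X^{\delta}_T)]$ with $\vert\gamma\vert\le q$, and one applies the iterated formula (\ref{eq:IPP_Malliavin_degre_superieur}) with the \emph{full} multi-index $\gamma$, so that only $f(X^{\delta}_T)$ remains, multiplied by $H^{\delta}_{\mathbf{T}}(X^{\delta}_T,\Theta\mathcal{P}_{\gamma}(X^{\delta}_T))[\gamma]$; the polynomial bound on $f$ plus the weight estimates then give the claim. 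Note also that the final exponent $13^{L}3d(\tfrac{9}{4}q^{2}+3q+7)$ arises because the full-order weight forces moments of $\det\gamma^{\delta}_{X^{\delta}_T,\mathbf{T}}$ of order $\tfrac{9}{2}q^{2}+6q+2$ (the term $A_{1}$ in the paper's proof), so the exponent bookkeeping must be done after the full $q$-fold integration by parts, not after $\vert\beta\vert$ steps.
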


\begin{remark}
\begin{enumerate}
\item In the case of uniform H\"ormander hypothesis $\mathbf{A}_{2}^{\infty}(L)$ (see (\ref{hyp:loc_hormander})),
 if we consider $\delta \leqslant \delta_{0}$ for some $\delta_{0}$ small enough, then for any $x \in \mathbb{R}^{d}$, $Q_{T}^{\delta,\Theta }f(x)$ can be replaced by the localized probability measure $\frac{1}{\mathbb{E}[\Theta \vert X^{\delta}_{0}=x]} \mathbb{E}[\Theta f(X^{\delta}_{T}) \vert X^{\delta}_{0}=x]$ and the conclusion of Theorem \ref{prop:regularisation} still hold. In case of non uniform H\"ormander property, $\delta_{0}$ would depend on $x$ so it is not uniform anymore and we can not obtain the same result. \\
\item Using our approach, we can easily show that under uniform H\"ormander hypothesis $\mathbf{A}_{2}^{\infty}(L)$ (see (\ref{hyp:loc_hormander})), $(\mathcal{V}_{L}(x) T)^{-13^{L}3d(\frac{9}{4}q^{2}+3q+3)}$ can be replaced by $(  \mathcal{V}_{L}^{\infty}T )^{-13^{L} d(\frac{9}{4}q^{2}+3q+1)  } $ in the $r.h.s.$ of (\ref{eq:borne_semigroupe_regularisation}) and $\mathcal{V}_{L}(x)$ can be replaced by $1$ in the $r.h.s.$ of (\ref{eq:reg_gen_distprop}).
\end{enumerate}
\end{remark}

\begin{proof}

Let us prove \ref{th:reg_gen_regprop}. We have

\begin{align}
\partial_{x}^{\alpha }Q_{T}^{\delta,\Theta }\partial_{x}^{\beta }f(x)=&
\sum_{\vert \beta \vert \leqslant  \vert \gamma \vert \leqslant q}\mathbb{E}[\Theta \partial
_{x}^{\gamma }f (X^{\delta}_{T})\mathcal{P}_{\gamma }(X^{\delta}_{T}) \vert X^{\delta}_{0}=x],  \label{eq:derivee_semigroup_regularisation}
\end{align}
where $\mathcal{P}_{\gamma }(X^{\delta}_{T})$ is a universal polynomial of $ \partial_{\mbox{\textsc{x}}^{\delta}_{0}}^{\rho }X^{\delta}_{T} ,1 \leqslant \vert \rho \vert \leqslant q- \vert \gamma \vert +1$. Using the integration by parts formula (\ref{eq:IPP_Malliavin_degre_superieur})
and the estimate (\ref{eq:borne_norme_sobolev_poids_malliavin_th} obtained in Theorem \ref{theoreme:IPP_Malliavin}, we derive
\begin{align*}
\vert  \mathbb{E}[\Theta \partial
_{x}^{\gamma }f (X^{\delta}_{T})\mathcal{P}_{\gamma }(X^{\delta}_{T}) \vert X^{\delta}_{0}=x] \vert =&\vert  \mathbb{E}[f(X^{\delta}_{T})H^{\delta}_{\mathbf{T}}
 (X^{\delta}_{T},\Theta \mathcal{P}_{\gamma }(X^{\delta}_{T})[\gamma]  \vert X^{\delta}_{0}=x]  \vert  \\
\leqslant & \mathfrak{D}_{f}  \mathbb{E}[(1+\vert X^{\delta}_{T} \vert_{\mathbb{R}^{d}}^{\mathfrak{p}_{f}}) \vert H^{\delta}_{\mathbf{T}}
 (X^{\delta}_{T},\Theta \mathcal{P}_{\gamma }(X^{\delta}_{T}))[\gamma] \vert \vert X^{\delta}_{0}=x]  \nonumber \\
\leqslant &C(d,q) \mathfrak{D}_{f}\times A_{1}\times A_{2}\times A_{3}\times A_{4}
\nonumber
\end{align*}%

with, using Lemma \ref{lemme:borne_norm_sob_prod_bis} and Lemma \ref{lemme:borne_sob_abstraite_local} combined with the Cauchy-Schwarz inequality,

\begin{align*}
A_{1} =& 1\vee \mathbb{E}[\vert  \det \gamma^{\delta}_{X^{\delta}_{T},\mathbf{T}} \vert^{\frac{9}{2}q^{2}+6q+2} \mathbf{1}_{\Theta>0} \vert X^{\delta}_{0}=x]^{\frac{1}{2}} \\
A_{2} =&1+ \mathbb{E}[\vert X^{\delta}_{T} \vert _{\mathbb{R}^{d},\delta,\mathbf{T},1,q+1}^{(8d+2)q^{2}+8(d+1)q+8)} \vert X^{\delta}_{0}=x ]^{\frac{1}{4}}\\
&+ \mathbb{E}[\vert
L^{\delta}_{\mathbf{T}}X^{\delta}_{T} \vert _{\mathbb{R}^{d},\delta,\mathbf{T},q-1}^{16 q} \vert X^{\delta}_{0}=x ]^{\frac{1}{8}}  \mathbb{E}[\vert X^{\delta}_{T} \vert _{\mathbb{R}^{d},\delta,\mathbf{T},1,q+1}^{4(q+2)^{2}} \vert X^{\delta}_{0}=x ]^{\frac{1}{8}}\\
A_{3} = & \mathbb{E}[ \sum_{m=0}^{q}  \vert \det(\dot{X}^{\delta}_{T})^{2}  \vert _{\mathbb{R}^{d},\delta,\mathbf{T},q+1-m}^{8 m}\vert X^{\delta}_{0}=x ]^{\frac{1}{8}} \\
A_{4} =& \mathbb{E}[(1+\vert X^{\delta}_{T} \vert_{\mathbb{R}^{d}}^{\mathfrak{p}_{f}})^{8}\vert \mathcal{P}_{\gamma }(X^{\delta}_{T})^{8}  \vert _{\mathbb{R},\delta,\mathbf{T}, \vert \gamma \vert}^{8} \vert X^{\delta}_{0}=x  ]^{\frac{1}{8}} ,
\end{align*}%
with $\dot{X}^{\delta}_{T}$ defined in (\ref{eq:tangent_flow_def}). Using Theorem \ref{th:borne_Lp_inv_cov_Mal} yields

\begin{align*}
A_{1} \leqslant & \frac{1+ \mathbf{1}_{\mathfrak{p}_{2 L +5}>0}  \vert x\vert_{\mathbb{R}^{d}}^{C(d,L,q,\mathfrak{p}_{2 L + 5}) }}{( \mathcal{V}_{L}(x) T)^{-13^{L}3d(\frac{9}{4}q^{2}+3q+3)} }   \mathfrak{D}_{2 L + 5}^{C(d,L,q) } C(d,N,L,\frac{1}{m_{\ast}},p,\mathfrak{p}_{2 L + 5})  \nonumber \\
& \times  \exp(C(d,L,q,\mathfrak{p}_{2 L + 5}) (1+T) \mathfrak{M}_{C(d,L,q,\mathfrak{p},\mathfrak{p}_{2 L + 5},\mathfrak{q}^{\delta}_{\eta_{2}(\delta)}) }(Z^{\delta}) \mathfrak{D}^{4}) ).,\nonumber
\end{align*}
with $\mathfrak{q}^{\delta}_{\eta_{2}}  :=     \lceil 1-\frac{\ln(\delta)}{2 \ln(\eta_{2}(\delta))} \rceil$ which does not depend on $\delta$.

Moreover, using the results from Theorem \ref{theo:Norme_Sobolev_borne}, we obtain 
\begin{align*}
A_{2} \times A_{3} \times A_{4} \leqslant & ( \vert x \vert_{\mathbb{R}^{d}} (\mathbf{1}_{\mathfrak{p}_{q+3} > 0}+\mathbf{1}_{\mathfrak{p}_{f} > 0})+\mathfrak{D}_{q+3} ) ^{C(d,q,\mathfrak{p}_{q+3})}  \\
&   C(d,N,\frac{1}{r_{\ast}},q,\mathfrak{p}_{q+3},\mathfrak{p}_{f} ) \nonumber \\
& \times  \exp (C(d,q,\mathfrak{p}_{q+3},\mathfrak{p}_{f})(T+1)\mathfrak{M}_{C(p,q,\mathfrak{p},\mathfrak{p}_{q+3},\mathfrak{p}_{f})}(Z^{\delta})\mathfrak{D}^{2}) .
\end{align*}

We gather all the terms together and the proof of (\ref{eq:borne_semigroupe_regularisation}) is completed.

Now, let us prove \ref{th:reg_gen_distprop}. For every $x \in \mathbb{R}^{d}$, we have
We have 
\begin{align*}
\vert Q^{\delta}_{T}f(x)-Q_{T}^{\delta,\Theta }f(x)\vert  \leqslant & \mathbb{E}[f(X^{\delta}_{T})(1-\Theta ) \vert X^{\delta}_{0}=x] \\
\leqslant &  \mathfrak{D}_{f} \mathbb{E}[(1+\vert X^{\delta}_{T} \vert_{\mathbb{R}^{d}}^{\mathfrak{p}_{f}} )^{2}]^{\frac{1}{2}}\mathbb{E}[1-\Theta
\vert X^{\delta}_{0}=x ]^{\frac{1}{2}}\\
\leqslant &   \mathfrak{D}_{f} 2 \mathbb{E}[1+\vert X^{\delta}_{T} \vert_{\mathbb{R}^{d}}^{2 \mathfrak{p}_{f}} \vert X^{\delta}_{0}=x]^{\frac{1}{2}}  \mathbb{P}(\Theta <1  \vert X^{\delta}_{0}=x)^{\frac{1}{2}}.
\end{align*}
We obtain an upper bound for $\mathbb{P}(\Theta <1  \vert X^{\delta}_{0}=x)$ by using (\ref{eq:th_cov_mal_proba_espace_comp}).  The upper bound of $\mathbb{E}[\vert X^{\delta}_{t} \vert^{2 \mathfrak{p}_{f}}  \vert X^{\delta}_{0}=x]$ is obtained using Lemma \ref{lemme:borne:moments_X}. It follows that, for every $a>0$ and every $p \geqslant 0$,

\begin{align*}
\vert Q^{\delta}_{T}f(x)- & Q_{T}^{\delta,\Theta }f(x)\vert
\leqslant   (\delta^{-1}  \eta_{2}^{-a}\sup_{t \in \mathbf{T}} \mathfrak{M}_{a}(Z^{\delta}) + \eta_{1}^{-(p+4)} (1+ \mathcal{V}_{L}(x)^{-13^{L}3d(p+4)}  ))\nonumber \\
& \times  \mathfrak{D}_{f}  \mathfrak{D}^{C } \mathfrak{D}_{2 L + 5}^{C }  \mathfrak{M}_{C }(Z^{\delta})  (1 +( \mathbf{1}_{\mathfrak{p}_{2 L +5}>0} + \mathbf{1}_{\mathfrak{p}_{f}>0}) \vert x \vert_{\mathbb{R}^{d}}^{C}   )   C\exp(C T \mathfrak{M}_{C}(Z^{\delta}) \mathfrak{D}^{4})  .\nonumber 
\end{align*}
%
%
%

with $C=C(d,N,L,p,\mathfrak{p},\mathfrak{p}_{2 L + 5},\mathfrak{p}_{f},\frac{1}{m_{\ast}})$ which may tend to infinity if one of the arguments tends to infinity.  We 
chose $p=p(h)=\max(0,\frac{91 h}{44d}-4)$ so that $\eta_{1}(\delta)^{-(p(h)+4)}\leqslant \delta^{h} C(h) (1+T^{C(h)})$. Similarly we chose $a=a(h)=2(h+1)\max(\mathfrak{p}+1,\frac{91}{3})$ so that $\eta_{2}(\delta)^{-a(h)} \delta^{-1} \leqslant  \delta^{h} C(\mathfrak{D},\mathfrak{p},h) (1+T^{C(h)})$ and


\begin{align*}
\vert Q^{\delta}_{T}f(x)- & Q_{T}^{\delta,\Theta }f(x)\vert
\leqslant   \delta^{h}   (1+ \mathcal{V}_{L}(x)^{-13^{L}3d(p(h)+4)}  )\nonumber \\
& \times  \mathfrak{D}_{f}  \mathfrak{D}^{C } \mathfrak{D}_{2 L + 5}^{C }  \mathfrak{M}_{C }(Z^{\delta})  (1 +( \mathbf{1}_{\mathfrak{p}_{2 L +5}>0} + \mathbf{1}_{\mathfrak{p}_{f}>0}) \vert x \vert_{\mathbb{R}^{d}}^{C}   )   C\exp(C T \mathfrak{M}_{C}(Z^{\delta}) \mathfrak{D}^{4})  ,\nonumber 
\end{align*}
with $C=C(d,N,L,p,\mathfrak{p},\mathfrak{p}_{2 L + 5},\mathfrak{p}_{f},\frac{1}{m_{\ast}},h)$, and the proof of (\ref{eq:reg_gen_distprop}) is completed.
\end{proof}

From a practical viewpoint, an issue resides in the computation of $Q{\delta,\Theta }$. Indeed, $\Theta$ is not simulable (at least easily) and then Monte Carlo methods does seem to be applicable. This is why, we now give an alternative way to regularize the semigroup $Q^{\delta}$, that is by
convolution. We consider a $d$-dimensional standard (centered with covariance identity) Gaussian random variable 
$G$ which is independent from $(Z^{\delta}_{t})_{t \in \pi^{\delta,\ast}}$, and for $\theta>0$, 
we define, for every $x \in \mathbb{R}^{d}$,
\begin{align}
\label{def:semigroup_convolution}
Q_{T}^{\delta,\theta }f(x) :=  \mathbb{E}[f(\delta^{\theta }G +X^{\delta}_{T}) \vert X^{\delta}_{0} = x].
\end{align}

\begin{corollary}
\label{coro:regul_semigroup_convol}
Let $T \in \pi^{\delta,\ast}$ and $\mathbf{T}=(0,T] \cap \pi^{\delta}$ and let $f \in \mathcal{C}_{pol}^{\infty}(\mathbb{R}^{d} ; \mathbb{R})$ satisfying: there exists $\mathfrak{D}_{f} \geqslant 1$ and $\mathfrak{p}_{f} \in \mathbb{N}$ such that for every $x \in \mathbb{R}^{d}$,
\begin{align*}
\vert f(x) \vert \leqslant \mathfrak{D}_{f}(1+ \vert x \vert_{\mathbb{R}^{d}}^{\mathfrak{p}_{f} }).
\end{align*}

 Then we have the following properties:
\begin{enumerate}[label=\textbf{\Alph*.}]
\item \label{th:reg_gauss_regprop} Let $q \in \mathbb{N}$, let $\alpha ,\beta \in \mathbb{N}^{d}$ such that $\vert \alpha
\vert +\vert \beta \vert \leqslant q$. Assume that $\mathbf{A}^{\delta}_{1}( \max(q+3,2 L + 5))$ (see (\ref{eq:hyp_1_Norme_adhoc_fonction_schema}) and (\ref{eq:hyp_3_Norme_adhoc_fonction_schema})),  $\mathbf{A}_{2}(L)$ (see (\ref{hyp:loc_hormander})), $\mathbf{A}_{3}^{\delta}(+\infty)$ (see (\ref{eq:hyp:moment_borne_Z})), $\mathbf{A}_{4}^{\delta}$ (see (\ref{hyp:lebesgue_bounded})) and \ref{hyp:hyp_5_loc_var} hold.  Then,  for every $x \in \mathbb{R}^{d}$,
\begin{align}
 \label{eq:borne_semigroupe_regularisation_gauss} 
 \vert \partial_{x}^{\alpha }Q_{T}^{\delta,\theta }\partial_{x}^{\beta }f(x) \vert \leqslant & \mathfrak{D}_{f}\frac{ 1+ \mathbf{1}_{\mathfrak{p}_{\max(q+3,2 L + 5)}+ \mathfrak{p}_{f} > 0}  \vert x \vert_{\mathbb{R}^{d}}^{C }    }{( \mathcal{V}_{L}(x) T)^{13^{L}3d \max(\frac{91 q \theta}{44d},\frac{9}{4}q^{2}+3q+3)}  }   \\
&\times  \mathfrak{D}_{\max(q+3,2 L + 5)}^{C}     \exp(C (1+T) \mathfrak{M}_{C }(Z^{\delta}) \mathfrak{D}^{4}) \nonumber .
\end{align}
with $C=C(d,N,L,q,\mathfrak{p},\mathfrak{p}_{\max(q+3,2 L + 5)},\mathfrak{p}_{f},\frac{1}{m_{\ast}},\frac{1}{r_{\ast}},\theta) \geqslant 0$ which may tend to infinity if one of the arguments tends to infinity. 
\item \label{th:reg_gauss_distprop}  Assume that hypothesis from \ref{th:reg_gen_regprop} are satisfied with $\mathbf{A}^{\delta}_{1}( \max(q+3,2 L + 5))$ replaced by $\mathbf{A}^{\delta}_{1}(2 L + 5)$. Then, for every $x \in \mathbb{R}^{d}$,

\begin{align}
\label{eq:reg_gauss_distprop}
\vert Q^{\delta}_{T}f(x)-Q_{T}^{\delta,\theta }f(x)\vert
\leqslant &  \delta^{\theta} \mathfrak{D}_{f}   \frac{1 + \mathbf{1}_{\mathfrak{p}_{2 L + 5} + \mathfrak{p}_{f} > 0} \vert x \vert_{\mathbb{R}^{d}}^{C } }{(\mathcal{V}_{L}(x)T)^{13^{L}3d\max(\frac{91 \theta}{44d},\frac{33}{4}) } }\\
&\times \mathfrak{D}_{2 L + 5}^{C}  \exp(C (1+T) \mathfrak{M}_{C }(Z^{\delta}) \mathfrak{D}^{4})  , \nonumber
\end{align}
with $C=C(d,N,L,p,\mathfrak{p},\mathfrak{p}_{2 L + 5},\mathfrak{p}_{f},\frac{1}{m_{\ast}},\theta) \geqslant 0$ which may tend to infinity if one of the arguments tends to infinity. 
\end{enumerate}
\end{corollary}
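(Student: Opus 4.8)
The plan is to deduce the corollary from Theorem \ref{prop:regularisation} by comparing the convolution‑regularized operator $Q^{\delta,\theta}_{T}$ with the hard‑localized operator $Q^{\delta,\Theta}_{T}$ of (\ref{def:semigroupe_regularisant}), for which the required estimates are already known, and by controlling the error coming from replacing the cut‑off weight $\Theta$ by the Gaussian smoothing $X^{\delta}_{T}\mapsto Y\coloneqq \delta^{\theta}G+X^{\delta}_{T}$. Concretely, the whole argument of the proof of Theorem \ref{prop:regularisation} is run on $Y$ instead of $X^{\delta}_{T}$, the auxiliary Gaussian $G$ being incorporated into the discrete Malliavin calculus of Section \ref{Sec:prove_Regularization_properties} as one extra innovation (which is legitimate since $G$ is itself Lebesgue lower bounded and has all moments, so the estimates of Theorems \ref{theoreme:IPP_Malliavin}, \ref{theo:Norme_Sobolev_borne}, \ref{th:borne_Lp_inv_cov_Mal} still apply). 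Two elementary facts drive the comparison. First, the Malliavin covariance of $Y$ in this enlarged calculus is $\sigma^{\delta}_{Y,\mathbf{T}}=\sigma^{\delta}_{X^{\delta}_{T},\mathbf{T}}+\delta^{2\theta}I$, so $Y$ is non‑degenerate everywhere, $\det\gamma^{\delta}_{Y,\mathbf{T}}\leqslant\delta^{-2\theta d}$ holds deterministically, on $\{\Theta>0\}$ one has $\det\gamma^{\delta}_{Y,\mathbf{T}}\leqslant\det\gamma^{\delta}_{X^{\delta}_{T},\mathbf{T}}$, and the Sobolev norms of $Y$ in the sense of Theorem \ref{theo:Norme_Sobolev_borne} differ from those of $X^{\delta}_{T}$ only by $\delta^{\theta}$ times Gaussian moments. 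Second, since $G$ is standard Gaussian and independent of $(Z^{\delta}_{t})_{t}$ and of $\Theta$, an elementary integration by parts in $G$ gives, for any multi‑index $\rho$ and any $G$‑independent weight $W$, $\mathbb{E}[(\partial^{\rho}g)(Y)W]=(-1)^{|\rho|}\delta^{-\theta|\rho|}\mathbb{E}[g(Y)H_{\rho}(G)W]$ with $H_{\rho}$ a product of Hermite polynomials.

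For part \ref{th:reg_gauss_regprop} I would first differentiate $Q^{\delta,\theta}_{T}\partial^{\beta}_{x}f$ in $x$ and apply the chain rule exactly as in (\ref{eq:derivee_semigroup_regularisation}), reducing to a sum over $|\rho|\leqslant|\alpha|$ of terms $\mathbb{E}[(\partial^{\rho+\beta}f)(Y)\mathcal{P}_{\rho}(X^{\delta}_{T})\mid X^{\delta}_{0}=x]$, where $\mathcal{P}_{\rho}$ is a universal polynomial in the tangent‑flow derivatives $\dot X^{\delta}_{T},\ddot X^{\delta}_{T},\dots$ of order $\leqslant|\alpha|-|\rho|+1$. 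For each term I split $1=\Theta+(1-\Theta)$. On the $\Theta$‑part, I use the integration by parts formula (\ref{eq:IPP_Malliavin_degre_superieur}) in the $Z$‑directions alone (valid since $X^{\delta}_{T}$ is non‑degenerate on $\{\Theta>0\}$) to move the $\rho+\beta$ derivatives off $f$, and bound the Malliavin weight by (\ref{eq:borne_norme_sobolev_poids_malliavin_th}), Lemmas \ref{lemme:borne_norm_sob_prod_bis} and \ref{lemme:borne_sob_abstraite_local}, Theorem \ref{theo:Norme_Sobolev_borne}, and the bound on $\mathbb{E}[|\det\gamma^{\delta}_{X^{\delta}_{T},\mathbf{T}}|^{p}\mathds{1}_{\Theta>0}]$ from Theorem \ref{th:borne_Lp_inv_cov_Mal}; this reproduces the right‑hand side of (\ref{eq:borne_semigroupe_regularisation}), i.e. the exponent $13^{L}3d(\tfrac{9}{4}q^{2}+3q+7)$ on $\mathcal{V}_{L}(x)T$. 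On the $(1-\Theta)$‑part, where $X^{\delta}_{T}$ need not be non‑degenerate, I would instead remove the $\rho+\beta$ derivatives by the Gaussian integration by parts of the previous paragraph, paying a factor $\delta^{-\theta|\rho+\beta|}\leqslant\delta^{-\theta q}$; since $1-\Theta$ is supported on $\{\Theta<1\}$, Cauchy--Schwarz together with Lemma \ref{lemme:borne:moments_X} for the moments of $X^{\delta}_{T}$ and $G$ and the tail estimate (\ref{eq:th_cov_mal_proba_espace_comp}) for $\mathbb{P}(\Theta<1\mid X^{\delta}_{0}=x)$ bound this part, for every $p\geqslant0$, by $\delta^{-\theta q}\eta_{1}(\delta)^{-(p+4)/2}$ times $(1+\mathcal{V}_{L}(x)^{-13^{L}3d(p+6)/2})$ and harmless polynomial and exponential factors. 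Because $\mathbf{A}_{5}$ forces $\eta_{1}(\delta)$ to grow like $\delta^{-44d/91}$ up to a constant (see (\ref{def:eta_delta}) and (\ref{hyp:hyp_5_loc_var})), choosing $p$ of order $\tfrac{91\theta q}{22d}$ keeps $\delta^{-\theta q}\eta_{1}(\delta)^{-(p+4)/2}$ bounded and, after optimizing $p$, leaves the power $13^{L}3d(\tfrac{91q\theta}{44d}+2)$ of $\mathcal{V}_{L}(x)^{-1}$. Taking the maximum of the two exponents yields (\ref{eq:borne_semigroupe_regularisation_gauss}).

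For part \ref{th:reg_gauss_distprop} I would split $Q^{\delta}_{T}f-Q^{\delta,\theta}_{T}f=(Q^{\delta}_{T}f-Q^{\delta,\Theta}_{T}f)+(Q^{\delta,\Theta}_{T}f-Q^{\delta,\theta}_{T}f)$. The first bracket is bounded directly by (\ref{eq:reg_gen_distprop}) with $h=\theta$, giving the exponent $13^{L}3d\max(6,\tfrac{91\theta}{44d}+2)$. For the second bracket, adding and subtracting $\mathbb{E}[\Theta f(Y)\mid X^{\delta}_{0}=x]$ gives $\mathbb{E}[\Theta(f(X^{\delta}_{T})-f(Y))\mid X^{\delta}_{0}=x]-\mathbb{E}[(1-\Theta)f(Y)\mid X^{\delta}_{0}=x]$; the $(1-\Theta)$ term is estimated exactly as in the proof of part \ref{th:reg_gen_distprop} of Theorem \ref{prop:regularisation} (Cauchy--Schwarz, Lemma \ref{lemme:borne:moments_X}, and (\ref{eq:th_cov_mal_proba_espace_comp})), and for the remaining term I write $f(Y)-f(X^{\delta}_{T})=\delta^{\theta}\int_{0}^{1}\langle G,\nabla f(X^{\delta}_{T}+s\delta^{\theta}G)\rangle\,\mbox{d}s$ and, for each fixed $s$, integrate by parts once in the $Z$‑directions on $\{\Theta>0\}$ to remove $\nabla$ from $f$, bounding the weight $H^{\delta}_{\mathbf{T}}(X^{\delta}_{T},\Theta G^{j})[e_{j}]$ by (\ref{eq:borne_norme_sobolev_poids_malliavin_th}), Theorem \ref{theo:Norme_Sobolev_borne} and Theorem \ref{th:borne_Lp_inv_cov_Mal} specialized to $q=1$ (note that $\Theta G^{j}$ is controlled in the relevant Sobolev norms because $G^{j}$ has all Gaussian moments and the Malliavin derivatives of $\Theta$ only bring in powers of $\det\gamma^{\delta}_{X^{\delta}_{T},\mathbf{T}}$ on $\{\Theta>0\}$). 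This contributes $\delta^{\theta}$ times a factor $(\mathcal{V}_{L}(x)T)^{-13^{L}3d\cdot 49/4}$. Taking the maximum of the three exponents, $\max(6,\tfrac{91\theta}{44d}+2,\tfrac{49}{4})=\max(\tfrac{91\theta}{44d}+2,\tfrac{49}{4})$, gives (\ref{eq:reg_gauss_distprop}).

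The step I expect to be the main obstacle is the bookkeeping in part \ref{th:reg_gauss_regprop}: one has to balance the blow‑up $\delta^{-\theta q}$ produced by the Gaussian integration by parts against the super‑polynomial smallness of $\mathbb{P}(\Theta<1)$, and choose the free integrability exponent $p$ so that the net power of $\delta$ is non‑negative while keeping the induced negative power of $\mathcal{V}_{L}(x)$ as small as possible; this optimization is precisely what fixes the new exponent $\max(\tfrac{91q\theta}{44d}+2,\tfrac{9}{4}q^{2}+3q+7)$. Everything else is a routine adaptation of the proof of Theorem \ref{prop:regularisation}, together with the two elementary facts about $Y=\delta^{\theta}G+X^{\delta}_{T}$ recorded at the start.
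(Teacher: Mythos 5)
Your proposal is correct and follows essentially the same route as the paper's own proof: the split into a $\Theta$-part handled by the localized discrete Malliavin integration by parts (reproducing (\ref{eq:borne_semigroupe_regularisation})) and a $(1-\Theta)$-part handled by Gaussian integration by parts at the cost of $\delta^{-\theta q}$, then Cauchy--Schwarz against $\mathbb{P}(\Theta<1)^{1/2}$ via (\ref{eq:th_cov_mal_proba_espace_comp}) with the free exponent $p$ tuned against $\eta_{1}(\delta)\sim\delta^{-44d/91}$, and for part \ref{th:reg_gauss_distprop} a first-order Taylor expansion in $\delta^{\theta}G$ plus one localized integration by parts (your detour through $Q^{\delta,\Theta}_{T}$ and (\ref{eq:reg_gen_distprop}) is algebraically equivalent to the paper's direct split, and your ``enlarged calculus with $G$ as an extra innovation'' framing is never actually needed). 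The only bookkeeping point to add is that (\ref{eq:th_cov_mal_proba_espace_comp}) also contains the term $\delta^{-1}T\eta_{2}^{-a}\mathfrak{M}_{a}(Z^{\delta})$, so besides $p$ you must choose the free moment exponent $a$ large enough (the paper takes $a=2(q\theta+1)\max(\mathfrak{p}+1,\tfrac{91}{3})$) to absorb the factor $\delta^{-\theta q-1}$ as well.
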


\begin{remark}
\begin{enumerate}
 \item Using our approach, we can easily demonstrate that, under the uniform H\"ormander hypothesis $\mathbf{A}_{2}^{\infty}(L)$ (see (\ref{hyp:loc_hormander})), the quantity $( \mathcal{V}_{L}(x) T)^{13^{L}3d \max(\frac{91 q \theta}{44d}+2,\frac{9}{4}q^{2}+3q+3)} $ can be replaced by $(  \mathcal{V}_{L}^{\infty} T )^{-13^{L} d(\frac{9}{4}q^{2}+3q+1)  } $ in the $r.h.s.$ of (\ref{eq:borne_semigroupe_regularisation_gauss}) and $(\mathcal{V}_{L}(x)T)^{13^{L}3d\max(\frac{91 \theta}{44d},\frac{33}{4}) }$ can be replaced by $( \mathcal{V}_{L}^{\infty}T) ^{13^{L} d\frac{25}{4}  }$ in the $r.h.s.$ of (\ref{eq:reg_gauss_distprop}).
\end{enumerate}
\end{remark}

\begin{proof}
Let us prove \ref{th:reg_gauss_regprop}.
As in (\ref{eq:derivee_semigroup_regularisation}), we write  
\begin{align*}
\partial_{x}^{\alpha }Q_{T}^{\delta,\theta}\partial_{x}^{\beta }f(x)=\sum_{\vert \beta \vert \leqslant
\vert \gamma \vert \leqslant q}\mathbb{E}[(\partial_{x}^{\gamma }f)(\delta^{\theta}
G +X^{\delta}_{T})\mathcal{P}_{\gamma }(X^{\delta}_{T}) \vert X^{\delta}_{0}=x],
\end{align*}%
where $\mathcal{P}_{\gamma }(X^{\delta}_{t})$ is a universal polynomial of $ \partial_{\mbox{\textsc{x}}^{\delta}_{0}}^{\rho }X^{\delta}_{t} ,1 \leqslant \vert \rho \vert \leqslant q- \vert \gamma \vert +1$.  We decompose 
\begin{align*}
\mathbb{E}[(\partial_{x}^{\gamma }f)(\delta^{\theta} G +X^{\delta}_{T})\mathcal{P}_{\gamma }(X^{\delta}_{T}) \vert X^{\delta}_{0} =x]=A_{1}+A_{2}
\end{align*}%
with
\begin{align*}
A_{1} =&\mathbb{E}[\Theta \partial_{x}^{\gamma }f (\delta^{\theta} G
+X^{\delta}_{T})\mathcal{P}_{\gamma }(X^{\delta}_{T}) \vert X^{\delta}_{0} =x ], \\
A_{2} =&\mathbb{E}[(\partial_{x}^{\gamma }f) (\delta^{\theta} G+X^{\delta}_{T}) \mathcal{P}_{\gamma
}(X^{\delta}_{T})(1-\Theta ) \vert X^{\delta}_{0} =x ].
\end{align*}

with $\Theta=\Theta_{X^{\delta}_{T},\det(\dot{X}^{\delta}_{T})^{2},\eta,\mathbf{T}}$ defined in (\ref{def:semigroupe_regularisant}). The reasoning from the previous proof shows that 
\begin{align*}
A_{1} \leqslant & \mathfrak{D}_{f} \frac{1 + \mathbf{1}_{\mathfrak{p}_{\max(q+3,2 L + 5)} + \mathfrak{p}_{f} > 0} \vert x\vert_{\mathbb{R}^{d}}^{C } }{ (\mathcal{V}_{L}(x)T)^{13^{L}3d(\frac{9}{4}q^{2}+3q+3)} }\\
&\times \mathfrak{D}_{\max(q+3,2 L + 5)}^{C}     \exp(C (1+T) \mathfrak{M}_{C }(Z^{\delta}) \mathfrak{D}^{4}) 
\end{align*}

with $C=C(d,N,L,q,\mathfrak{p},\mathfrak{p}_{\max(q+3,2 L + 5)},\mathfrak{p}_{f},\frac{1}{m_{\ast}},\frac{1}{r_{\ast}})$. Moreover, since $G$ follows the standard Gaussian distribution and is independent from $X^{\delta}$ and $\Theta$, we have
\begin{align*}
A_{2} =\mathbb{E} [\mathcal{P}_{\gamma
}(X^{\delta}_{T})(1-\Theta) \int_{\mathbb{R}^{d}} (\partial_{x}^{\gamma }f) (\delta^{\theta} u+X^{\delta}_{T}) (2 \pi )^{-\frac{d}{2}} e^{-\frac{\vert u \vert^{2}}{2}} du  \vert X^{\delta}_{0} =x  ].
\end{align*}
Now, notice that
\begin{align*}
 (\partial_{x}^{\gamma }f) (\delta^{\theta} u+X^{\delta}_{T})=\delta^{- \vert \gamma \vert \theta} \partial_{u}^{\gamma}(f(\delta^{ \theta} u+ X^{\delta}_{T})),
\end{align*}

so that, using standard integration by parts, we have

\begin{align*}
A_{2}=\delta^{-\vert \gamma \vert \theta } \mathbb{E} [\mathcal{P}_{\gamma
}(X^{\delta}_{T})(1-\Theta)\int_{\mathbb{R}^d} f(\delta^{\theta} u+ X^{\delta}_{T}) H_{\gamma}(u)  (2 \pi )^{-\frac{d}{2}} e^{-\frac{\vert u \vert^{2}}{2}} \mbox{d} u \vert X^{\delta}_{0}=x] ,
\end{align*}
where $H_{\gamma} $ is the Hermite polynomial corresponding to the multi-index $\gamma$. 

Finally, using the results from Theorem \ref{theo:Norme_Sobolev_borne}, we obtain 
\begin{align*}
\vert A_{2} \vert \leqslant & \delta^{-\vert \gamma \vert \theta }  \mathfrak{D}_{f} \mathbb{E}[1-\Theta \vert X^{\delta}_{0}=x]^{\frac{1}{2}}  ( \vert x \vert_{\mathbb{R}^{d}}( \mathbf{1}_{\mathfrak{p}_{q+3} > 0}+ \mathbf{1}_{\mathfrak{p}_{f} > 0})+\mathfrak{D}_{q+3}) ^{C(d,q,\mathfrak{p}_{q+3},\mathfrak{p}_{f})}  \\
& \times C(d,N,\frac{1}{r_{\ast}},q,\mathfrak{p}_{q+3},\mathfrak{p}_{f}) \nonumber \\
& \times  \exp (C(d,q,\mathfrak{p}_{q+3},\mathfrak{p}_{f})(T+1)\mathfrak{M}_{C(p,q,\mathfrak{p},\mathfrak{p}_{q+3},\mathfrak{p}_{f})}(Z^{\delta})\mathfrak{D}^{2}) .
\end{align*}
with, using Theorem \ref{th:borne_Lp_inv_cov_Mal} (see (\ref{eq:th_cov_mal_proba_espace_comp})) for every $a>0$ and every $p \geqslant 0$,
\begin{align*}
 \mathbb{E}[1-\Theta \vert X^{\delta}_{0}=x] \leqslant & \mathbb{P}(\Theta <1 \vert X^{\delta}_{0}=x) \\
  \leqslant & \delta^{-1} T \eta_{2}^{-a} \mathfrak{M}_{a}(Z^{\delta}) \\
 &+ \eta_{1}^{-(p+4)} \frac{1 + \mathbf{1}_{\mathfrak{p}_{2 L +5}>0}  \vert x \vert_{\mathbb{R}^{d}}^{C}}{ \mathcal{V}_{L}(x)^{13^{L}3d(p+4)}  }\nonumber \\
& \times  \mathfrak{D}^{C } \mathfrak{D}_{2 L + 5}^{C }  \mathfrak{M}_{C }(Z^{\delta})  C\exp(C T \mathfrak{M}_{C}(Z^{\delta}) \mathfrak{D}^{4}).
\end{align*}
with $C=C(d,N,L,p,\mathfrak{p},\mathfrak{p}_{2 L + 5},\frac{1}{m_{\ast}})$. We chose $p=p(q \theta)= \max(0,\frac{91 q \theta}{44d}-4) $ and $a=a(q \theta)=2(q \theta+1)\max(\mathfrak{p}+1,\frac{91}{3})$. Therefore
\begin{align*}
 \vert \partial_{x}^{\alpha }Q_{T}^{\delta,\theta }\partial_{x}^{\beta }f(x) \vert \leqslant & \mathfrak{D}_{f}\frac{1+ (\mathbf{1}_{\mathfrak{p}_{\max(q+3,2 L + 5)}>0} +\mathbf{1}_{\mathfrak{p}_{f} > 0}) \vert x \vert_{\mathbb{R}^{d}}^{C }  }{( \mathcal{V}_{L}(x,0) T)^{-13^{L}3d \max(\frac{91 q \theta}{44d},\frac{9}{4}q^{2}+3q+3)}    }\\
&\times  \mathfrak{D}_{\max(q+3,2 L + 5)}^{C}      \exp(C (1+T) \mathfrak{M}_{C }(Z^{\delta}) \mathfrak{D}^{4}) \nonumber .
\end{align*}
with $C=C(d,N,L,q,\mathfrak{p},\mathfrak{p}_{\max(q+3,2 L + 5)},\mathfrak{p}_{f},\frac{1}{m_{\ast}},\frac{1}{r_{\ast}},\theta)$ and the proof of (\ref{eq:borne_semigroupe_regularisation_gauss}) is completed. 

Let us prove \ref{th:reg_gauss_distprop}.  Since $f$ has polynomial growth, it follows that
\begin{align*}
\vert Q^{\delta}_{T}f(x)- & Q_{T}^{\delta,\theta}f(x)\vert  \leqslant  \vert  \mathbb{E}[\Theta( f(X^{\delta}_{T})-f(X^{\delta}_{T}+\delta^{\theta} G))  \vert X^{\delta}_{0}=x ] \vert \\
&+ \mathfrak{D}_{f} C(\mathfrak{p}_{f})(1+\mathbb{E}[\vert X^{\delta}_{T} \vert_{\mathbb{R}^{d}}^{2 \mathfrak{p}_{f}} \vert X^{\delta}_{0}=x ]^{\frac{1}{2}}+ \delta^{\theta \mathfrak{p}_{f}} \mathbb{E}[ \vert G \vert_{\mathbb{R}^{d}}^{2 \mathfrak{p}_{f}} ]^{\frac{1}{2}}) \mathbb{E}[1-\Theta  \vert X^{\delta}_{0}=x]^{\frac{1}{2}}\\
  \leqslant &  \delta^{\theta} \sum_{j=1}^d \int_0^1 \vert \mathbb{E}[ \Theta  (\partial_{x^{j}} f)(X^{\delta}_{T}+ \lambda \delta^{\theta} G) G^j  ] \vert \mbox{d} \lambda \\
&+\mathfrak{D}_{f} C(\mathfrak{p}_{f}) (1+ \vert x \vert_{\mathbb{R}^{d}}^{\mathfrak{p}_{f}} ) \exp(T \mathfrak{D}^{2} \mathfrak{M}_{C(\mathfrak{p},\mathfrak{p}_{f})}(Z^{\delta}) C(\mathfrak{p}_{f}) ) \nonumber\\
& \times \mathbb{E}[1-\Theta  \vert X^{\delta}_{0}=x ]^{\frac{1}{2}} .
\end{align*}
Using Theorem \ref{prop:regularisation} (see (\ref{eq:borne_semigroupe_regularisation}) with $q=1$) and the estimate of $\mathbb{E}[1-\Theta  \vert X^{\delta}_{0}=x ]$ obtained in the proof of \ref{th:reg_gauss_regprop} with $p=p(\theta)= \max(0,\frac{91  \theta}{44d}-4) $ and $a=a( \theta)=2( \theta+1)\max(\mathfrak{p}+1,\frac{91}{3})$ completes the proof of (\ref{eq:reg_gauss_distprop}).
\end{proof}

We end this Section showing existence as well as upper bound of the density of $X^{\delta}_{T}$. This result is mainly a consequence of the Corollary \ref{coro:regul_semigroup_convol}. It is noteworthy that we also propose an Gaussian type bound when relying on a simplified framework. It is derived combining a representation formula for the density, Corollary \ref{coro:regul_semigroup_convol} and the Azuma-Hoeffding inequality.


\begin{corollary}
\label{coro:borne_densite_reg_gauss} 
Let $T \in \pi^{\delta,\ast}$ and $L \in \mathbb{N}$. Let $q \in \mathbb{N}$, let $\alpha ,\beta \in \mathbb{N}^{d}$ be two multi indices such that $\vert \alpha
\vert +\vert \beta \vert \leqslant q$. Assume that $\mathbf{A}^{\delta}_{1}( \max(q+d+3,2 L + 5))$ (see (\ref{eq:hyp_1_Norme_adhoc_fonction_schema}) and (\ref{eq:hyp_3_Norme_adhoc_fonction_schema})),  $\mathbf{A}_{2}(L)$ (see (\ref{hyp:loc_hormander})), $\mathbf{A}_{3}^{\delta}(+\infty)$ (see (\ref{eq:hyp:moment_borne_Z})), $\mathbf{A}_{4}^{\delta}$ (see (\ref{hyp:lebesgue_bounded})) and \ref{hyp:hyp_5_loc_var} hold. \\
 Then,  for every $x,y \in \mathbb{R}^{d}$, $Q^{\delta,\theta}_{T}(x,\mbox{d} y) = q^{\delta,\theta}_{T}(x,y)\mbox{d} y$ and $q^{\delta,\theta}_{T} \in \mathcal{C}^{q}(\mathbb{R}^{d} \times \mathbb{R}^{d})$ satisfies, for every $p>0$,
\begin{align}
 \label{eq:borne_densite_reg_gauss} 
 \vert \partial_{x}^{\alpha } \partial_{y}^{\beta } q_{T}^{\delta,\theta }(x,y) \vert \leqslant &   \frac{(1 + \mathbf{1}_{\mathfrak{p}_{\max(q+d+3,2 L + 5)} > 0} \vert x \vert_{\mathbb{R}^{d}}^{C }   )  C \exp(C T) }{\vert  \mathcal{V}_{L}(x) T \vert^{\eta }(1+ \vert y \vert_{\mathbb{R}^{d}}^{p})}  ,
\end{align}
where $\eta=13^{L}3d \max(\frac{91 (d+q) \theta}{44d}+2,\frac{9}{4}(d+q)^{2}+3(d+q)+7) $ and $C \geqslant 0$ depends on $d,N,L,q,$$ \mathfrak{D}$, $\mathfrak{D}_{\max(q+d+3,2 L + 5)}$, $\mathfrak{p},$ $\mathfrak{p}_{\max(q+d+3,2 L + 5)}$, $\mathfrak{p}_{f},\frac{1}{m_{\ast}},\frac{1}{r_{\ast}},\theta,p$ and on the moment of $Z^{\delta}$ and which may tend to infinity if one of those quantities tends to infinity.  \\

Moreover, if $\mathfrak{p}_{2}=0$ (see hypothesis $\mathbf{A}^{\delta}_{1}$) and there exists $z^{\infty} \geqslant 1$ such that $a.s.$ $\sup_{t \in \pi^{\delta},\ast} \vert Z^{\delta} \vert \leqslant z^{\infty}$, then
\begin{align}
 \label{eq:borne_densite_reg_gauss_borne_exp} 
 \vert \partial_{x}^{\alpha } \partial_{y}^{\beta } q_{T}^{\delta,\theta }(x,y) \vert \leqslant &  \frac{  C \exp(C T) }{\vert  \mathcal{V}_{L}(x) T \vert^{\eta }} \exp(c \frac{\vert y - x \vert_{\mathbb{R}^{d}}^{2}}{t}) ,
\end{align}
where $\eta$ is the same as in (\ref{eq:borne_densite_reg_gauss}), $c \geqslant 1$ depends on $\mathfrak{D}_{1}$ and $\vert z^{\infty} \vert_{\mathbb{R}^{N}}$, and $C \geqslant 0$ depends on $d,N,L,q,$$ \mathfrak{D},\mathfrak{D}_{\max(q+d+3,2 L + 5)}$, $\mathfrak{p}$, $\mathfrak{p}_{f},\frac{1}{m_{\ast}},\frac{1}{r_{\ast}},\theta$ and $z^{\infty}$ and which may tend to infinity if one of those quantities tends to infinity.
\end{corollary}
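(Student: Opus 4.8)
The plan is to deduce the existence and regularity of the density $q^{\delta,\theta}_T$ from the regularization property already proved in Corollary \ref{coro:regul_semigroup_convol}, via a Fourier-inversion argument of Tanigushi type (the same one sketched in the commented-out Lemma earlier in the excerpt). First I would fix $x \in \mathbb{R}^d$ and consider, for $\zeta \in \mathbb{R}^d$, the test function $f_\zeta : y \mapsto e^{-i\langle \zeta, y\rangle}$. Since $f_\zeta$ is smooth with all derivatives bounded by powers of $|\zeta|$, hypothesis $\mathbf{A}^{\delta}_{1}(\max(q+d+3,2L+5))$, $\mathbf{A}_2(L)$, $\mathbf{A}^{\delta}_3(+\infty)$, $\mathbf{A}^{\delta}_4$ and \ref{hyp:hyp_5_loc_var} let me apply Corollary \ref{coro:regul_semigroup_convol}\ref{th:reg_gauss_regprop} with $q$ replaced by $q+d$ (to absorb the extra $d$ derivatives used in the inversion step, which is why $\mathbf{A}^{\delta}_1$ is required up to order $q+d+3$). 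Writing $q^{\delta,\theta}_T(x,y) = \int_{\mathbb{R}^d} e^{i\langle \zeta,y\rangle} Q^{\delta,\theta}_T f_\zeta(x)\,d\zeta$, I differentiate under the integral: $\partial^\alpha_x \partial^\beta_y$ brings down $i^{|\beta|}\prod \zeta_j^{\beta_j}$ and $\partial^\alpha_x Q^{\delta,\theta}_T f_\zeta(x)$. I split the $\zeta$-integral into $[-1,1]^d$ and its complement. On the cube, the integrand is bounded by $C(\mathcal{V}_L(x)T)^{-\eta}(1+|x|^c)\exp(CT)$ directly from (\ref{eq:borne_semigroupe_regularisation_gauss}) with test function $f_\zeta$ (note $\mathfrak{D}_{f_\zeta}=1$, $\mathfrak{p}_{f_\zeta}=0$). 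On the complement, for each coordinate with $\beta_j+2 > 1$ I integrate by parts twice in $\zeta_j$, trading $\zeta_j^{\beta_j}$ for $\zeta_j^{-2}$ times extra $y$-derivatives of $f_\zeta$ (equivalently, $Q^{\delta,\theta}_T \partial^{\xi}_x \partial^\beta_x f_\zeta$ with $|\xi|\le 2d$); the resulting integrand decays like $|\zeta|^{-2}$ coordinatewise and is again controlled by (\ref{eq:borne_semigroupe_regularisation_gauss}) applied with total derivative order $\le q+d$. Summing the cube and tail contributions and invoking dominated convergence to justify the differentiations yields $q^{\delta,\theta}_T \in \mathcal{C}^q$ together with the bound; the $(1+|y|^p)^{-1}$ factor for arbitrary $p$ is obtained by a further $p$-fold integration by parts in $y$ (equivalently $\zeta$), which is legitimate since the Gaussian convolution with $\delta^\theta G$ makes $Q^{\delta,\theta}_T f_\zeta(x)$ decay rapidly in $\zeta$ — concretely one replaces $f_\zeta$ by $\zeta$-dependent combinations and absorbs the powers of $|\zeta|$ using the $\delta^\theta G$ smoothing, picking up the dependence on $\theta$ and $p$ in $C$. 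Tracking the exponent through $q \mapsto q+d$ in the exponent of (\ref{eq:borne_semigroupe_regularisation_gauss}) gives exactly $\eta = 13^{L}3d \max(\frac{91(d+q)\theta}{44d}+2, \frac{9}{4}(d+q)^2+3(d+q)+7)$.

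For the Gaussian-type bound (\ref{eq:borne_densite_reg_gauss_borne_exp}) under the extra hypotheses $\mathfrak{p}_2=0$ and $\sup_t |Z^\delta_t| \le z^\infty$ a.s., I would instead start from a localized representation of the density and use a concentration argument. Under $\mathfrak{p}_2 = 0$ the increments $|X^\delta_{t+\delta}-X^\delta_t|$ are bounded by $C\mathfrak{D}_1 \delta^{1/2}(1+z^\infty)$ by $\mathbf{A}^{\delta}_1(2)$, so $X^\delta_T - x$ (conditionally on $X^\delta_0=x$) is a sum of bounded martingale-type increments and the Azuma–Hoeffding inequality gives $\mathbb{P}(|X^\delta_T - x| \ge \rho \mid X^\delta_0=x) \le 2\exp(-c' \rho^2/T)$ for a constant $c'$ depending on $\mathfrak{D}_1$ and $|z^\infty|_{\mathbb{R}^N}$. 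Then, writing $q^{\delta,\theta}_T(x,y)$ through the Fourier representation but inserting a cutoff/shift $e^{i\langle\zeta,y-x\rangle}$ and combining the $\mathcal{C}^q$-bound on the density (for the "bulk") with the tail estimate $\mathbb{P}(|X^\delta_T+\delta^\theta G - x|\ge |y-x|/2)$ (for $|y-x|$ large), one patches a polynomial-times-kernel estimate into the exponential bound $\exp(c|y-x|^2/t)$; more precisely I would use that $q^{\delta,\theta}_T(x,y) \le \|q^{\delta,\theta}_T(x,\cdot)\|_\infty \wedge (\text{something involving } \mathbb{P}(|X^\delta_T - y| \text{ small}))$ and optimize, exactly as one derives sub-Gaussian density upper bounds from $L^\infty$ density bounds plus Gaussian concentration of the underlying random variable. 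The constant $c$ inherits the dependence on $\mathfrak{D}_1$ and $|z^\infty|_{\mathbb{R}^N}$ from Azuma–Hoeffding, and $\eta$ is unchanged.

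The main obstacle I anticipate is the bookkeeping in the tail part of the Fourier inversion: one must carefully choose how many integrations by parts in $\zeta$ to perform in each coordinate (enough to make $\int_{\mathbb{R}^d \setminus [-1,1]^d} |\zeta|^{-(\cdots)}\,d\zeta$ converge, i.e. strictly more than $d$ net powers, which is why one needs $q+d$ rather than $q$ derivatives available and hence $\mathbf{A}^{\delta}_1$ up to $q+d+3$), while simultaneously keeping the order of $x$-derivatives applied to $Q^{\delta,\theta}_T f_\zeta$ within the range where (\ref{eq:borne_semigroupe_regularisation_gauss}) applies, and tracking how the $\delta^\theta$-Gaussian smoothing supplies the additional decay needed for the $(1+|y|^p)^{-1}$ factor without blowing up the $\mathcal{V}_L(x)$-exponent beyond the claimed $\eta$. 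The Gaussian bound's obstacle is subtler: reconciling the two regimes (small vs. large $|y-x|$) to produce a single clean $\exp(c|y-x|^2/t)$ requires that the polynomial prefactor in the bulk estimate be absorbable into the exponential at the crossover scale $|y-x| \sim \sqrt{t}\log(1/\cdot)$, which works precisely because $\mathfrak{p}_2=0$ removes the $|x|$-growth and the increments are uniformly bounded.
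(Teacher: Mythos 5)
There is a genuine gap, and it sits exactly where the constants of the statement are decided. Your Fourier/Tanigushi inversion, with the standard trick of integrating by parts twice in each coordinate ($\xi=(2,\ldots,2)$, $|\xi|\leqslant 2d$ as you write), forces you to invoke the regularization estimate (\ref{eq:borne_semigroupe_regularisation_gauss}) at total derivative order $q+2d$, not $q+d$; your own sentence is self-contradictory on this point. That route therefore only proves the corollary under $\mathbf{A}^{\delta}_{1}(\max(q+2d+3,2L+5))$ with an exponent $\eta$ built from $2d+q$ — i.e.\ a strictly weaker statement than the one claimed, which has $q+d+3$ and $\eta$ built from $d+q$. (Taking $|\xi|=d$, one power per coordinate, does not repair this: $\prod_i|\zeta_i|^{-1}$ is not integrable on $\mathbb{R}^{d}\setminus[-1,1]^{d}$.) Moreover, your mechanism for the $(1+\vert y\vert_{\mathbb{R}^{d}}^{p})^{-1}$ factor — absorbing powers of $\vert\zeta\vert$ through the $\delta^{\theta}G$ smoothing — costs negative powers of $\delta^{\theta}$, so the resulting constant is not uniform in $\delta$, which defeats the purpose of the estimate; polynomial decay in $y$ must come from moments of $X^{\delta}_{T}$, not from the Gaussian mollification.

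The paper proceeds differently and this is what buys the sharper constants: it applies the Malliavin integration-by-parts formula (Theorem \ref{theoreme:IPP_Malliavin}) to the $d$-fold antiderivative $g(x)=\int_{\mathbb{R}^{d}}f(y)\mathds{1}_{x\geqslant y}\,\mbox{d}y$ with the multi-index $\gamma_{0}=(1,\ldots,d)$, so only $d$ extra orders of differentiation are spent, and obtains the representation
\begin{align*}
\partial_{x}^{\alpha}\partial_{y}^{\beta}q^{\delta,\theta}_{T}(x,y)=(-1)^{\vert\beta\vert}\,\mathbb{E}[\mathds{1}_{y\leqslant \delta^{\theta}G+X^{\delta}_{T}}\,H(\alpha,\beta)\,\vert\,X^{\delta}_{0}=x].
\end{align*}
Cauchy--Schwarz, the moment bounds of Lemma \ref{lemme:borne:moments_X} and Markov's inequality applied to $\mathbb{P}(y\leqslant\delta^{\theta}G+X^{\delta}_{T})$ then give both (\ref{eq:borne_densite_reg_gauss}) and its $y$-decay with constants uniform in $\delta$. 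The same representation is what makes the second assertion work: under $\mathfrak{p}_{2}=0$ and bounded $Z^{\delta}$, the Azuma--Hoeffding inequality applied to the martingale part of the increments bounds $\mathbb{P}(y\leqslant\delta^{\theta}G+X^{\delta}_{T}\,\vert\,X^{\delta}_{0}=x)$ by a Gaussian factor, and Cauchy--Schwarz transfers it to the \emph{derivatives} of the density. Your ``$\Vert q\Vert_{\infty}\wedge$ tail probability, then optimize'' patching controls at best the density itself, not $\partial_{x}^{\alpha}\partial_{y}^{\beta}q^{\delta,\theta}_{T}$, and you have no representation formula available to run the paper's argument since your first part was purely Fourier-analytic.
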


\begin{proof}
Since (\ref{eq:borne_semigroupe_regularisation_gauss}) holds, the existence of of the a density is due to Tanigushi (see \cite{Tanigushi_1985}, Lemma 3.1).

We first give a representation formula for $q^{\delta,\theta}_{T}$. Let $f \in \mathcal{C}_{0}^{\infty}(\mathbb{R}^{d};\mathbb{R})$ (set of functions in $\mathcal{C}^{\infty}(\mathbb{R}^{d};\mathbb{R})$ vanishing at infinity). Let us define $g:\mathbb{R}^{d} \to \mathbb{R}$ such that for every $x \in \mathbb{R}^{d}$,
\begin{align*}
g(x)  :=  \int_{\mathbb{R}^{d}} f(y) \mathbf{1}_{x \geqslant y} \mbox{d} y.
\end{align*}
Then $g \in \mathcal{C}_{pol}^{\infty}(\mathbb{R}^{d};\mathbb{R})$. In particular we can apply Theorem \ref{theoreme:IPP_Malliavin} with the test function $g$ and for $\gamma_{0}=(1,2,\ldots,d)$,  since $ \partial_{x}^{\gamma_{0}}g = f$, it follows that, with similar notations as in the proof of Corollary \ref{coro:regul_semigroup_convol},
\begin{align*}
\partial_{x}^{\alpha} & Q^{\delta,\theta}_{T} \partial_{x}^{\beta} f(x) =  \partial_{x}^{\alpha} Q^{\delta,\theta}_{T} \partial_{x}^{(\beta,\gamma_{0})} g(x)  \\
=& \sum_{0 \leqslant \vert\gamma \vert \leqslant q+d}       \mathbb{E}[\Theta \partial_{x}^{\gamma }g (\delta^{\theta} G X^{\delta}_{T})\mathcal{P}_{\gamma }(X^{\delta}_{T}) \vert X^{\delta}_{0} =x ] \\
& \quad +\mathbb{E}[(\partial_{x}^{\gamma }g) (\delta^{\theta} G+X^{\delta}_{T}) \mathcal{P}_{\gamma
}(X^{\delta}_{T})(1-\Theta ) \vert X^{\delta}_{0} =x ].  \\
=& \sum_{0 \leqslant \vert\gamma \vert \leqslant q+d} \mathbb{E}[g( \delta^{\theta} G + X^{\delta}_{t})(\delta^{\theta} G + X^{\delta}_{t})( H^{\delta}_{\mathbf{T}}(X^{\delta}_{t},\Theta \mathcal{P}_{\gamma }(X^{\delta}_{t})[\gamma] \vert X^{\delta}_{0}=x]  \\
& \quad  + \mathbb{E}[ \delta^{-\vert \gamma \vert \theta } \mathcal{P}_{\gamma}(X^{\delta}_{T})(1-\Theta) H_{\gamma}(G)  )\vert X^{\delta}_{0}=x]  \\
=& \int_{y \in \mathbb{R}^{d}} f(y)\mathbb{E}[\mathbf{1}_{y \leqslant \delta^{\theta} G + X^{\delta}_{t}}H(\alpha,\beta) \vert X^{\delta}_{0}=x]  \mbox{d}y,
\end{align*}%
with

\begin{align*}
H(\alpha,\beta) =& \sum_{0 \leqslant \vert\gamma \vert \leqslant q+d}  H^{\delta}_{\mathbf{T}}
 (X^{\delta}_{t},\Theta \mathcal{P}_{\gamma }(X^{\delta}_{t})[\gamma]  + \delta^{-\vert \gamma \vert \theta } \mathcal{P}_{\gamma
}(X^{\delta}_{T})(1-\Theta) H_{\gamma}(G)  .
\end{align*}
Moreover, following the same procedure as in the proof of Corollary \ref{coro:regul_semigroup_convol}, we have, 
\begin{align*}
\mathbb{E}[\vert H(\alpha,\beta)  \vert^{2}]^{\frac{1}{2}} \leqslant &  \mathfrak{D}_{f} \frac{1 + \mathbf{1}_{\mathfrak{p}_{\max(q+d+3,2 L + 5)} > 0} \vert x \vert_{\mathbb{R}^{d}}^{C }    }{\vert  \mathcal{V}_{L}(x) T \vert^{\eta }}    C \exp(C T)
\end{align*}
Hence, using \cite{Tanigushi_1985}, Lemma 3.1,  $\delta^{\theta} G + X^{\delta}_{T}$ has a smooth density $q^{\delta,\theta}_{T}$ and (\ref{eq:borne_densite_reg_gauss_borne_exp}) holds. We can observe that we have the following representation formula for $q^{\delta,\theta}_{T}$ and its derivatives:
\begin{align*}
\partial_{x}^{\alpha} \partial_{y}^{\alpha} q^{\delta,\theta}_{T}(x,y)=(-1)^{\vert \beta \vert} \mathbb{E}[\mathbf{1}_{y \leqslant \delta^{\theta} G + X^{\delta}_{t}}H(\alpha,\beta) \vert X^{\delta}_{0}=x] .
\end{align*}

The estimate (\ref{eq:borne_densite_reg_gauss}) then follows from the Cauchy Schwarz inequality, Lemma \ref{lemme:borne:moments_X} combined with Markov inequality and a similar approach as in the proof of the previous result to bound the moments of $H(\alpha,\beta).$ \\
Now let us prove (\ref{eq:borne_densite_reg_gauss_borne_exp}). Using the Taylor expansion of $\psi(x,t,z,y)$ of order one at point $(x,t,z,0)$, the one of $\psi(x,t,z,0)$ of order two at point $(x,t,0,0)$, recalling that $\psi(x,t,0,0)=x$ and then the Azuma-Hoeffding inequality yields

\begin{align*}
\mathbb{P}(y \leqslant & \delta^{\theta} G + X^{\delta}_{T}  \vert X^{\delta}_{0}=x) = \mathbb{P}(z - \delta^{\theta} G \leqslant  X^{\delta}_{T}  \vert X^{\delta}_{0}=x)  \\
 \leqslant &  \mathbb{P}(y -x- \delta^{\theta} G \leqslant 3 T \mathfrak{D}_{2} (1+ \vert z^{\infty} \vert_{\mathbb{R}^{N}}^{2})  + \delta^{\frac{1}{2}} \sum_{t \in \pi^{\delta},t<T} \sum_{i=1}^{N} Z^{\delta,i}_{t+\delta} \partial_{z^{i}} \psi(X^{\delta}_{t},t,0,0)    \vert X^{\delta}_{0}=x) \\
 \leqslant  & \min_{j=1,\ldots,d}  \mathbb{P}(y^{j} -x^{j}- \delta^{\theta} G^{j} -3 T \mathfrak{D}_{2} (1+ \vert z^{\infty} \vert_{\mathbb{R}^{N}}^{2})  \leqslant  \\
 & \qquad \qquad\delta^{\frac{1}{2}} \sum_{t \in \pi^{\delta},t<T} \sum_{i=1}^{N} Z^{\delta,i}_{t+\delta} \partial_{z^{i}} \psi(X^{\delta}_{t},t,0,0)^{j}    \vert X^{\delta}_{0}=x)  \\
\leqslant &  \min_{j=1,\ldots,d}   \mathbb{E}[\exp (-\frac{(y^{j} -x^{j}- \delta^{\theta} G^{j} -3 T \mathfrak{D}_{2} (1+ \vert z^{\infty} \vert_{\mathbb{R}^{N}}^{2})  )^2}{ 3(3  \mathfrak{D}_{1} \vert z^{\infty} \vert_{\mathbb{R}^{N}} )^{2}T}) ] \\
\leqslant &   \min_{j=1,\ldots,d}   \exp (-\frac{(y^{j} -x^{j} -3 T \mathfrak{D}_{2} (1+ \vert z^{\infty} \vert_{\mathbb{R}^{N}}^{2})  )^2}{ 3(3  \mathfrak{D}_{1} \vert z^{\infty} \vert_{\mathbb{R}^{N}} )^{2}T+\delta^{2 \theta} }) ] \\
\leqslant & C \exp(CT-  \frac{  \vert y -x \vert_{\mathbb{R}^{d}}^{2}}{c T}).
\end{align*}
where $c$ depends on $\mathfrak{D}_{1}$ and $\vert z^{\infty} \vert_{\mathbb{R}^{N}}$ and $C$ depends on $\mathfrak{D}_{2}$ and $\vert z^{\infty} \vert_{\mathbb{R}^{N}}$. Using the Cauchy-Schwarz inequality combined with the preceding estimate concludes the proof.
\end{proof}

\section{Malliavin tools and estimates}
\label{Sec:Proof_reg_prop}

In this Section we provide three main results which are crucial in the proof of regularization properties. First, we establish an integration by part formula in Theorem \ref{theoreme:IPP_Malliavin}. The proof of regularization results then falls down to bound the weights appearing in those formulas.  As a consequence of Proposition \ref{prop:borne_norme_sobolev_poids_malliavin}, it can be achieved by bounding the Sobolev norms of $X^{\delta}$ in Theorem \ref{theo:Norme_Sobolev_borne} and by bounding the moments of the inverse Malliavin covariance matrix in Theorem \ref{th:borne_Lp_inv_cov_Mal}.
\subsection{The integration by parts formula}

	In this section, we aim to build some integration by parts formulas in order to prove the regularization properties. This kind of formulas is widely studied in Malliavin calculus for the Gaussian framework.  In this section,  we always assume that $\mathbf{A}_{4}^{\delta}$ (see (\ref{hyp:lebesgue_bounded})) holds true and consider $\mathbf{T} \subset \pi^{\delta,\ast}$. For $F\in \mathcal{S}^{\delta}(\mathcal{H})$ and $q \in \mathbb{N}$, we introduce the norms:%

\begin{align*}
\vert F\vert _{\mathcal{H},\delta,\mathbf{T},1,q}^{2}=\sum_{\underset{j \in \{1,\ldots,q\}}{
\alpha \in (\mathbf{T} \times \mathbf{N} )^{j}}
 }
 \delta^{j}\vert D^{\delta}_{\alpha}F\vert_{\mathcal{H}}^{2} ,\qquad \vert
F\vert_{\mathcal{H},\delta,\mathbf{T},q}^{2}=\vert F\vert_{\mathcal{H}}^{2}+\vert F\vert_{\mathcal{H},\delta,\mathbf{T},1,q}^{2} 
\end{align*}%
and for $p\geqslant 1$
\begin{align*}
\Vert F\Vert_{\mathcal{H},\delta,\mathbf{T},1,q,p}  =\mathbb{E}[\vert F\vert _{\mathcal{H},\delta,\mathbf{T},1,q}^{p}]^{\frac{1}{p}} \qquad \Vert F\Vert_{\mathcal{H},\delta,\mathbf{T},q,p}  =\mathbb{E}[\vert F\vert _{\mathcal{H}}^{p}]^{\frac{1}{p}} +\Vert F\Vert_{\mathcal{H},\delta,\mathbf{T},1,q,p}  . 
\end{align*}


Below, we define the Malliavin weights that appear in our integration by parts formulas. 

Let $F\in \mathcal{S}^{\delta}(\mathcal{H})$, $G\in \mathcal{S}^{\delta}$ and $\mathfrak{h} \in \mathfrak{H}$. We define
\begin{align*}
H^{\delta}_{\mathbf{T}}(F,G)[\mathfrak{h}]  :=    & - \langle G \gamma^{\delta} _{F,\mathbf{T}}L^{\delta}_{\mathbf{T}}   F, \mathfrak{h} \rangle_{\mathcal{H}} - \delta \sum_{\mathfrak{h}^{\diamond}\in \mathfrak{H}}  \langle D^{\delta,\mathbf{T}}( G \gamma^{\delta} _{F,\mathbf{T}}[\mathfrak{h},\mathfrak{h}^{\diamond}]), D^{\delta,\mathbf{T}} \langle F,\mathfrak{h}^{\diamond}\rangle_{\mathcal{H}} \rangle_{\mathbb{R}^{\mathbf{T} \times \mathbf{N}}}.
\end{align*}
Considering higher order integration by parts formulas, for $\bar{\mathfrak{h}}=(\mathfrak{h} _{1},\ldots,\mathfrak{h}^{q})\in
\mathfrak{H}^{q}$ we define $H^{\delta}_{\mathbf{T}}(F,G)[\bar{\mathfrak{h}}]$ by the recurrence
\begin{align}
H^{\delta}_{\mathbf{T}}(F,G)[\bar{\mathfrak{h}}] :=   H^{\delta}_{\mathbf{T}}(F,H^{\delta}_{\mathbf{T}}(F,G)[\mathfrak{h}^{1},\ldots,\mathfrak{h}^{q-1}]) [\mathfrak{h}^{q}].
 \label{eq:IPP_Malliavin_degre_superieur_weights}
\end{align}

The purpose of this Section is to establish the following result which is a localized integration by parts formula together with an estimate of the Sobolev norms of the weights.  In the following result we denote by $\mathcal{C}^{\mbox{F},\infty}_{pol}$ the subset of functions $f$ in $\mathcal{C}^{\mbox{F},\infty}$, such that $f$ and its Frechet derivatives of any order have polynomial growth.

\begin{theorem}
\label{theoreme:IPP_Malliavin}
Let $\mathbf{T} \subset \pi^{\delta,\ast}$, $q \in \mathbb{N}^{\ast}$, $\phi \in \mathcal{C}^{\mbox{F},\infty}_{pol}(\mathcal{H};\mathbb{R})$ with $\mathfrak{d} :=   \mbox{dim}(\mathcal{H})<\infty$. Let $F\in \mathcal{S}^{\delta}(\mathcal{H})$ and $G\in \mathcal{S}^{\delta}$ be such that 
$\mathbb{E}[\vert \det \gamma^{\delta}_{F,\mathbf{T}} \vert^{p}]<+\infty$ for every $p\geqslant 1.$

Then, for every $\bar{\mathfrak{h}}=(\mathfrak{h}^{1},\ldots,\mathfrak{h}^{q})\in
\mathfrak{H}^{q}$, 
\begin{align}
\mathbb{E}[\partial^{\mbox{F}}_{\bar{\mathfrak{h}}}\phi (F)G]= \mathbb{E}[\phi (F) H^{\delta}_{\mathbf{T}}(F,G)[\bar{\mathfrak{h}}]]
 \label{eq:IPP_Malliavin_degre_superieur}
\end{align}%
with $H^{\delta}_{\mathbf{T}}(F,G)[\bar{\mathfrak{h}}]$ defined in (\ref{eq:IPP_Malliavin_degre_superieur_weights}). Moreover, for every $m \in \mathbb{N}$,

\begin{align}
\vert H^{\delta}_{\mathbf{T}}(F,G)[\bar{\mathfrak{h}}]\vert_{\mathbb{R},\delta,\mathbf{T},m}\leqslant & C(\mathfrak{d},q,m) \mathfrak{c}(\mathfrak{d},q,m,\mathbf{T},F,G)  \label{eq:borne_norme_sobolev_poids_malliavin_th}
\end{align}
with 
\begin{align*}
\mathfrak{c}(\mathfrak{d},q,m,\mathbf{T},F,G)=&(1\vee \det \gamma^{\delta}_{F,\mathbf{T}}
)^{q(m+q+1)} \\
& \times (1+\vert F\vert
_{\mathcal{H},\delta,\mathbf{T},1,m+q+1}^{2\mathfrak{d}q(m+q+2)}+
\vert L^{\delta}_{\mathbf{T}}F\vert_{\mathcal{H},\delta,\mathbf{T},m+q-1}^{2q})\vert
G\vert_{\mathbb{R},\delta,\mathbf{T},m+q} .
\end{align*}
\end{theorem}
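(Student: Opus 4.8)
The plan is to establish the integration by parts identity \eqref{eq:IPP_Malliavin_degre_superieur} first for $q=1$, then iterate, and finally prove the Sobolev bound \eqref{eq:borne_norme_sobolev_poids_malliavin_th} by a careful bookkeeping of the chain rule. For the case $q=1$ and a single $\mathfrak{h}\in\mathfrak{H}$, I would start from the chain rule in the discrete Malliavin calculus: for $\phi\in\mathcal{C}^{\mathrm F,\infty}_{pol}(\mathcal H;\mathbb R)$ and $F\in\mathcal{S}^\delta(\mathcal H)$ with $\mathfrak d=\dim\mathcal H<\infty$, one has $D^\delta_{(t,i)}\phi(F)=\sum_{\mathfrak h^\diamond\in\mathfrak H}\partial^{\mathrm F}_{\mathfrak h^\diamond}\phi(F)\,D^\delta_{(t,i)}\langle F,\mathfrak h^\diamond\rangle_{\mathcal H}$. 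Contracting against $D^{\delta,\mathbf T}\langle F,\mathfrak h^\diamond\rangle_{\mathcal H}$ and multiplying by $\delta$, one recognises the Malliavin covariance matrix $\sigma^\delta_{F,\mathbf T}$ from \eqref{def:matrice_covariance_Malliavin}: $\delta\langle D^{\delta,\mathbf T}\phi(F),D^{\delta,\mathbf T}\langle F,\mathfrak h\rangle_{\mathcal H}\rangle_{\mathbb R^{\mathbf T\times\mathbf N}}=\sum_{\mathfrak h^\diamond}\partial^{\mathrm F}_{\mathfrak h^\diamond}\phi(F)\,\sigma^\delta_{F,\mathbf T}[\mathfrak h^\diamond,\mathfrak h]$. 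Since $\mathbb{E}[|\det\gamma^\delta_{F,\mathbf T}|^p]<\infty$ for all $p$, the operator $\gamma^\delta_{F,\mathbf T}=(\sigma^\delta_{F,\mathbf T})^{-1}$ is well defined a.s., and inverting gives $\partial^{\mathrm F}_{\mathfrak h}\phi(F)=\sum_{\mathfrak h^\diamond}\gamma^\delta_{F,\mathbf T}[\mathfrak h,\mathfrak h^\diamond]\,\delta\langle D^{\delta,\mathbf T}\phi(F),D^{\delta,\mathbf T}\langle F,\mathfrak h^\diamond\rangle_{\mathcal H}\rangle$. Then I would multiply by $G$, take expectations, and apply the duality relation between $D^\delta$ and the divergence $\Delta^\delta_{\mathbf T}$ — namely $\mathbb{E}[\delta\langle D^{\delta,\mathbf T}\Phi,U\rangle_{\mathbb R^{\mathbf T\times\mathbf N}}]=\mathbb{E}[\Phi\,\Delta^\delta_{\mathbf T}U]$, which is exactly the defining property of $\Delta^\delta_{\mathbf T}$ built from $\Gamma^\delta_t=\ln\varphi_{r_\ast/2}(\delta^{-1/2}U^\delta_t-z_{\ast,t})$ under $\mathbf A_4^\delta$ — to move the derivative off $\phi(F)$. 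Collecting terms (the $\Gamma$-term and the derivative-of-weight term) yields precisely $H^\delta_{\mathbf T}(F,G)[\mathfrak h]$ as defined just above the theorem. This duality formula is the analogue of the classical Gaussian integration by parts; I should check it holds for the stated class $\mathcal S^\delta$ with $G\in\mathcal S^\delta$, which requires the $L^p$ integrability of all the Malliavin derivatives — guaranteed by the definition of $\mathcal S^\delta$ together with the $L^p$ control of $\det\gamma^\delta_{F,\mathbf T}$.

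For general $q$, I would proceed by induction: having proved the case for $q-1$, apply it with the test function $\phi$ and the weight $G$ replaced by $H^\delta_{\mathbf T}(F,G)[\mathfrak h^1,\ldots,\mathfrak h^{q-1}]$ (one must first verify this weight still lies in $\mathcal S^\delta$, which follows since $\mathcal S^\delta$ is stable under the operations $D^\delta$, $L^\delta$, products, and multiplication by $\gamma^\delta_{F,\mathbf T}[\mathfrak h,\mathfrak h^\diamond]$ — the latter because $\det\sigma^\delta_{F,\mathbf T}$ is a.s. nonzero and its powers are in all $L^p$). Then the recursion \eqref{eq:IPP_Malliavin_degre_superieur_weights} reads off the identity, using the base case on the $\mathfrak h^q$ slot. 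One subtlety: $\partial^{\mathrm F}_{\bar{\mathfrak h}}\phi(F)$ means the iterated directional derivative, so I need $\partial^{\mathrm F}_{\mathfrak h^1,\ldots,\mathfrak h^q}\phi(F)=\partial^{\mathrm F}_{\mathfrak h^q}(\partial^{\mathrm F}_{\mathfrak h^1,\ldots,\mathfrak h^{q-1}}\phi)(F)$ plus correction terms from differentiating $F$ itself — but since we only apply the one-step formula to $\phi$ treated as a function on $\mathcal H$ and the ``weight'' absorbs everything else, the bookkeeping closes. Here I should be careful to track that $\partial^{\mathrm F}_{\mathfrak h^1,\ldots,\mathfrak h^{q-1}}\phi$ is again in $\mathcal{C}^{\mathrm F,\infty}_{pol}(\mathcal H;\mathbb R)$, which is immediate since $\phi$ is smooth with polynomially growing derivatives of all orders.

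For the Sobolev bound \eqref{eq:borne_norme_sobolev_poids_malliavin_th}, the strategy is to expand $H^\delta_{\mathbf T}(F,G)[\bar{\mathfrak h}]$ via the recursion and estimate $|\,\cdot\,|_{\mathbb R,\delta,\mathbf T,m}$ of each piece using the product/chain-rule inequalities for these norms (of the type sketched in the commented-out preamble, or Lemma \ref{lemme:borne_norm_sob_prod_bis} and its relatives referenced later in the paper). The one-step weight $H^\delta_{\mathbf T}(F,G)[\mathfrak h]$ involves: $G$; one power of $\gamma^\delta_{F,\mathbf T}$, whose entries are controlled by Cramér's rule by $(1\vee\det\gamma^\delta_{F,\mathbf T})$ times a polynomial in $|F|_{\mathcal H,\delta,\mathbf T,1,1}$ (cofactors of $\sigma^\delta$, which are degree-$(\mathfrak d-1)$ polynomials in the first Malliavin derivatives of $F$); the Ornstein–Uhlenbeck term $L^\delta_{\mathbf T}F$; and a divergence-type term contributing one more Malliavin derivative. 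Each application of $H^\delta_{\mathbf T}(F,\cdot)$ raises the Sobolev order needed on $F$ by one (hence $m+q+1$ after $q$ steps), multiplies by another factor of $\gamma^\delta_{F,\mathbf T}$ and of the cofactor polynomial, and increases the $G$-order by one (hence $m+q$). Tracking the exponents: after $q$ iterations one collects $(1\vee\det\gamma^\delta_{F,\mathbf T})$ to a power of order $q(m+q+1)$, a polynomial in $|F|_{\mathcal H,\delta,\mathbf T,1,m+q+1}$ of degree $\sim 2\mathfrak d q(m+q+2)$ (the factor $2\mathfrak d$ coming from: each cofactor has degree $2(\mathfrak d-1)$ in the derivatives after squaring in the norm, and there are $O(q(m+q))$ of them), plus the $L^\delta_{\mathbf T}F$ contributions of degree $2q$, times $|G|_{\mathbb R,\delta,\mathbf T,m+q}$. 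The dimensional constant $C(\mathfrak d,q,m)$ absorbs combinatorial factors and the constants in the product-rule inequalities. The main obstacle is precisely this last step — getting the exponents in $\mathfrak c(\mathfrak d,q,m,\mathbf T,F,G)$ exactly right requires a disciplined induction on $q$ with the product-rule and chain-rule Sobolev inequalities, keeping careful count of how many times $\gamma^\delta_{F,\mathbf T}$ and its cofactors appear and of the derivative order consumed at each of the $q$ stages; the integration by parts identity itself is comparatively routine once the duality formula for $\Delta^\delta_{\mathbf T}$ is in place.
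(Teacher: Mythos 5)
Your derivation of the identity \eqref{eq:IPP_Malliavin_degre_superieur} is essentially the paper's own route: chain rule for $D^{\delta}$ applied to $\phi(F)$, recognition of $\sigma^{\delta}_{F,\mathbf{T}}$, inversion via $\gamma^{\delta}_{F,\mathbf{T}}$ (legitimate under the moment assumption on $\det\gamma^{\delta}_{F,\mathbf{T}}$), transfer of the derivative by duality, and then iteration of the one-step formula with the weight $H^{\delta}_{\mathbf{T}}(F,G)[\mathfrak{h}^{1},\ldots,\mathfrak{h}^{q-1}]$ in place of $G$ exactly as the recursion \eqref{eq:IPP_Malliavin_degre_superieur_weights} demands. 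The only cosmetic difference is that you invoke the divergence duality $\mathbb{E}[\delta\langle D^{\delta,\mathbf{T}}\Phi,U\rangle]=\mathbb{E}[\Phi\,\Delta^{\delta}_{\mathbf{T}}U]$ directly, while the paper works with the duality formula for $L^{\delta}_{\mathbf{T}}$ (\ref{formule_{d}ualite}) combined with the product rule (\ref{formule_chain_rule_Operateur_OU}); these are equivalent, and both rest on the one-dimensional integration by parts against $\varphi_{r_{\ast}/2}$ permitted by $\mathbf{A}_{4}^{\delta}$.

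The gap is in the estimate \eqref{eq:borne_norme_sobolev_poids_malliavin_th}. Your plan for it is a from-scratch induction tracking how many factors of $\gamma^{\delta}_{F,\mathbf{T}}$, of cofactor polynomials, and of derivative orders accumulate over the $q$ stages and the $m$ further Malliavin derivatives, but this count is only asserted heuristically (e.g.\ the degree ``$\sim 2\mathfrak{d}q(m+q+2)$''), and you yourself flag the exponent bookkeeping as the unresolved obstacle; as written it is not a proof of the stated exponents. Note in particular that each Malliavin derivative of $\gamma^{\delta}_{F,\mathbf{T}}$ produces $D^{\delta}\gamma=-\gamma(D^{\delta}\sigma)\gamma$, i.e.\ two further factors of $\gamma$, which is where the power $q(m+q+1)$ of $(1\vee\det\gamma^{\delta}_{F,\mathbf{T}})$ and the degree in $\vert F\vert_{\mathcal{H},\delta,\mathbf{T},1,m+q+1}$ really come from, and a careful induction must keep this straight. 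The paper does not redo this work: it simply applies Proposition \ref{prop:borne_norme_sobolev_poids_malliavin}, quoted from the literature (Theorem 3.4 of \cite{Bally_Caramellino_2014_distance}, see also \cite{Bally_Clement_2011}), together with the remark that $H^{\delta}_{\mathbf{T}}(F,G)[\bar{\mathfrak{h}}]$ and all its Malliavin derivatives vanish on the set where $G=0$ (which is what makes the bound usable with the localized weight $G\Theta$ later on). So to complete your argument you should either invoke that proposition as the paper does, or actually carry out the exponent-tracking induction, which amounts to reproving it.
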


First, we observe that in our framework, the duality formula eads as
follows: For each $F,G\in \mathcal{S}^{\delta}(\mathcal{H})$,
\begin{align}
\mathbb{E}[\langle F,L^{\delta}_{\mathbf{T}}G \rangle_{\mathcal{H}}]=& \mathbb{E}[\langle G,  L^{\delta}_{\mathbf{T}}F \rangle_{\mathcal{H}}]= \delta \mathbb{E} [ \langle D^{\delta,\mathbf{T}}F , D^{\delta,\mathbf{T}} G \rangle_{\mathcal{H}^{\mathbf{T} \times \mathbf{N}}} ]  \nonumber \\
 :=   & \delta \sum_{t \in \mathbf{T}} \sum_{i=1}^{N}\mathbb{E}  [ \langle D^{\delta}_{(t,i)}F ,  D^{\delta}_{(t,i)}G \rangle_{\mathcal{H}}  ].    \label{formule_{d}ualite}
\end{align}%
This follows immediately using the independence structure and standard
integration by parts on $\mathbb{R}^{N}$: Indeed, if $f,g\in \mathcal{C}^{2}(\mathbb{R}^{N};\mathbb{R})$ and $t \in \pi^{\delta,\ast}$, then 
\begin{align*}
\sum_{i=1}^{N}\mathbb{E}[&\partial_{u^i}f(U^{\delta}_{t})\partial_{u^i}g(U^{\delta}_{t})] \\
=&\frac{\varepsilon _{\ast }}{m_{\ast }}\sum_{i=1}^{N}\int_{\mathbb{R}^N} \partial
_{u^i}f(u)\partial_{u^i}g(u) \delta^{-\frac{N}{2}}\varphi_{r_{\ast }/2} (\delta^{-\frac{1}{2}} u-z_{\ast ,t}) du \\
=&-\frac{\varepsilon _{\ast }}{m_{\ast }}\sum_{i=1}^{N}\int_{\mathbb{R}^N} f(u) ( \partial
_{u^i}^{2}g(u)+\partial_{u^i}g(u)\frac{\partial_{u^i}\varphi_{r_{\ast }/2} (\delta^{-\frac{1}{2}} u-z_{\ast ,t})}{\varphi_{r_{\ast }/2} (\delta^{-\frac{1}{2}} u-z_{\ast ,t})} ) \delta^{-\frac{N}{2}} \varphi_{r_{\ast }/2} (\delta^{-\frac{1}{2}} u-z_{\ast ,t})du \\
=& -\mathbb{E} [f(U^{\delta}_{t})\sum_{i=1}^{N} \partial_{u^i}^{2}g(U^{\delta}_{t})+\partial_{u^i}g(U^{\delta}_{t})\delta^{-\frac{1}{2}}\partial_{z^i}\ln \varphi_{r_{\ast }/2}(\delta^{-\frac{1}{2}} U^{\delta}_{t}-z_{\ast ,t}) ].
\end{align*}%
Now consider $F,G\in \mathcal{S}^{\delta}(\mathcal{H})$, 
so that $F=f(\chi^{\delta} ,U^{\delta},V^{\delta})$ and $G=g(\chi^{\delta} ,U^{\delta},V^{\delta})$ with for every $(\chi ,v)\in \{0,1\}^{\pi^{\delta,\ast}} \times \mathbb{R}^{\pi^{\delta,\ast} \times \mathbf{N}}$, $u\mapsto f(\chi
,u,v)\in \mathcal{C}^{\mbox{F},\infty}(\mathbb{R}^{\pi^{\delta,\ast}\times \mathbf{N}};\mathcal{H})$ and similarly for $g$.  Now, we introduce the functions $f_{n} :=   \langle f,\mathfrak{h}_{n} \rangle_{\mathcal{H}},g_{n} :=   \langle g,\mathfrak{h}_{n} \rangle_{\mathcal{H}}$, $n \in \mathbb{N}^{\ast}$, which belong to $\mathcal{C}^{\mbox{F},\infty}(\mathbb{R}^{\pi^{\delta,\ast}\times \mathbf{N}};\mathbb{R})$. It follows from the calculus above that
\begin{align*}
\mathbb{E} [ \langle & D^{\delta,\mathbf{T}}F , D^{\delta,\mathbf{T}} G \rangle_{\mathcal{H}^{ \mathbf{T} \times \mathbf{N}}} ]=\sum_{n=1}^{\infty}\sum_{t \in \mathbf{T}}\sum_{i=1}^{N}\mathbb{E}[\chi^{\delta}_{t} \partial^{\mbox{F}}_{u_{t}^{i}}f_{n}(\chi^{\delta} ,U^{\delta},V^{\delta}) \partial^{\mbox{F}}_{u_{t}^{i}}g_{n}(\chi^{\delta} ,U^{\delta} ,V^{\delta}) ] \\
=&-\sum_{n=1}^{\infty}\mathbb{E} [ f_{n}(\chi^{\delta},U^{\delta},V^{\delta})\sum_{t \in \mathbf{T}}\chi^{\delta}_{t} \\
& \qquad \times \sum_{i=1}^{N}\partial^{\mbox{F},2}
_{u_{t}^{i}}g_{n}(\chi^{\delta} ,U^{\delta},V^{\delta})+\partial^{\mbox{F}}_{u_{t}^{i}}g_{n}(\chi^{\delta}
,U^{\delta},V^{\delta}) \delta^{-\frac{1}{2}}  \partial_{z^{i}}\ln \varphi_{r_{\ast }/2}(\delta^{-\frac{1}{2}}  U^{\delta}_{t}-
z_{\ast ,t}) ]\\
=& - \mathbb{E} [\langle F,\sum_{t \in \mathbf{T}}\sum_{i=1}^{N}D^{\delta}_{(t,i)}D^{\delta}_{(t,i)}G+D^{\delta}_{(t,i)}GD^{\delta}_{(t,i)}\Gamma _{t} \rangle_{\mathcal{H}}]\\
=& \delta^{-1} \mathbb{E}[\langle F,L^{\delta}_{\mathbf{T}}G \rangle_{\mathcal{H}}],
\end{align*}
which is exactly (\ref{formule_{d}ualite}). We have the following standard chain rule: Let $\phi \in \mathcal{C}^{\mbox{F},1}(\mathcal{H};\mathcal{H}^{\diamond})$ with $\mathcal{H}^{\diamond}$ a Hilbert space and $F\in \mathcal{S}^{\delta}(\mathcal{H})$. Then
\begin{align}
 D^{\delta,\mathbf{T}}\phi(F)= \partial^{\mbox{F}}_{D^{\delta,\mathbf{T}}F}\phi(F) \in \mathcal{S}^{\delta}(\mathcal{H}^{\diamond})^{\mathbf{T} \times \mathbf{N}},
 \label{formule_chain_rule}
\end{align}
More particularly, when $\mathcal{H}^{\diamond}=\mathbb{R}$ we have 
\begin{align}
 D^{\delta,\mathbf{T}}\phi(F)=\langle \mbox{d}^{\mbox{F}}\phi(F), D^{\delta,\mathbf{T}}F \rangle_{\mathcal{H}} \in \mathcal{S}^{\delta}(\mathbb{R})^{\mathbf{T} \times \mathbf{N}},
 \label{formule_chain_rule_R}
\end{align}


Moreover, one can prove, using (\ref{formule_chain_rule}) and the duality relation (or
direct computation), that 
\begin{align}
L^{\delta}_{\mathbf{T}} \phi(F)=\langle \mbox{d}^{\mbox{F}}\phi(F), L^{\delta}_{\mathbf{T}}F \rangle_{\mathcal{H}} + \delta
\sum_{\mathfrak{h},\mathfrak{h}^{\diamond} \in \mathfrak{H}} \partial^{\mbox{F}}_{\mathfrak{h}^{\diamond}}\partial^{\mbox{F}}_{\mathfrak{h}} \phi(F)  \langle  D^{\delta,\mathbf{T}}\langle F, \mathfrak{h} \rangle_{\mathcal{H}},D^{\delta,\mathbf{T}}\langle F, \mathfrak{h}^{\diamond} \rangle_{\mathcal{H}} \rangle_{\mathbb{R}^{\mathbf{T} \times \mathbf{N}}}  
\label{formule_chain_rule_Operateur_OU}
\end{align}

In order to prove Theorem \ref{theoreme:IPP_Malliavin}, we will combine those identities with the following result.
\begin{proposition}
\label{prop:borne_norme_sobolev_poids_malliavin}
Let $F\in \mathcal{S}^{\delta}(\mathcal{H})$ with $\mathfrak{d} :=   \mbox{dim}(\mathcal{H})<\infty$, and $%
G\in \mathcal{S}^{\delta}(\mathbb{R})$.  Let $m,q\in \mathbb{N}$, and $\bar{\mathfrak{h}}=(\mathfrak{h}^{1},\ldots,\mathfrak{h}^{l}) \in \mathfrak{H}^{l}$ with $%
l \leqslant q$. Then
\begin{align*}
\vert H^{\delta}_{\mathbf{T}}(F,G)[\bar{\mathfrak{h}}]\vert_{\mathbb{R},\delta,\mathbf{T},m}\leqslant & C(\mathfrak{d},q,m) \mathfrak{c}(\mathfrak{d},q,m,\mathbf{T},F,G)  
\end{align*}
with 
\begin{align*}
\mathfrak{c}(\mathfrak{d},q,m,\mathbf{T},F,G)=&(1\vee \det \gamma^{\delta}_{F,\mathbf{T}}
)^{q(m+q+1)} \\
& \times (1+\vert F\vert
_{\mathcal{H},\delta,\mathbf{T},1,m+q+1}^{2\mathfrak{d}q(m+q+2)}+
\vert L^{\delta}_{\mathbf{T}}F\vert_{\mathcal{H},\delta,\mathbf{T},m+q-1}^{2m})\vert
G\vert_{\mathbb{R},\delta,\mathbf{T},m+q}.
\end{align*}
\end{proposition}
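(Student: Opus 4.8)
The plan is to prove the estimate by induction on $q$ (equivalently on the length $l$ of $\bar{\mathfrak{h}}$), reducing everything to the single-weight case $H^{\delta}_{\mathbf{T}}(F,G)[\mathfrak{h}]$ together with a handful of elementary algebraic rules for the norms $\vert\cdot\vert_{\mathcal{H},\delta,\mathbf{T},m}$. First I would record those rules, all immediate from the discrete chain rule $(\ref{formule_chain_rule})$ and the fact that $D^{\delta}$ satisfies a Leibniz rule: (a) for a bounded bilinear map $B$, $\vert B(F_{1},F_{2})\vert_{\delta,\mathbf{T},m}\leqslant C(m)\,\vert F_{1}\vert_{\delta,\mathbf{T},m}\vert F_{2}\vert_{\delta,\mathbf{T},m}$; (b) for a polynomial $P$ of degree $p$, $\vert P(F)\vert_{\delta,\mathbf{T},m}\leqslant C(m,p)(1+\vert F\vert_{\delta,\mathbf{T},m})^{p}$; (c) for an a.s.\ nonzero scalar $S\in\mathcal{S}^{\delta}(\mathbb{R})$, $\vert 1/S\vert_{\delta,\mathbf{T},m}\leqslant C(m)(1\vee 1/\vert S\vert)^{m+1}(1+\vert S\vert_{\delta,\mathbf{T},m})^{m}$, obtained from $D^{\delta}(1/S)=-S^{-2}D^{\delta}S$ and induction on $m$; and (d) inserting one extra factor $D^{\delta,\mathbf{T}}$ inside a quantity costs one order of Sobolev norm.

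Next I would control $\gamma^{\delta}_{F,\mathbf{T}}$ and its Fredholm determinant. The entries $\sigma^{\delta}_{F,\mathbf{T}}[\mathfrak{h},\mathfrak{h}^{\diamond}]=\delta\langle D^{\delta,\mathbf{T}}\langle F,\mathfrak{h}\rangle_{\mathcal{H}},D^{\delta,\mathbf{T}}\langle F,\mathfrak{h}^{\diamond}\rangle_{\mathcal{H}}\rangle_{\mathbb{R}^{\mathbf{T}\times\mathbf{N}}}$ obey $\vert\sigma^{\delta}_{F,\mathbf{T}}[\mathfrak{h},\mathfrak{h}^{\diamond}]\vert_{\mathbb{R},\delta,\mathbf{T},m}\leqslant C(m)\vert F\vert_{\mathcal{H},\delta,\mathbf{T},1,m+1}^{2}$ by (a). Since $\mathfrak{d}=\dim(\mathcal{H})<\infty$, Cramer's rule gives $\gamma^{\delta}_{F,\mathbf{T}}=\det(\gamma^{\delta}_{F,\mathbf{T}})\,\mathrm{adj}(\sigma^{\delta}_{F,\mathbf{T}})$ with the adjugate entries polynomials of degree $\mathfrak{d}-1$ in the entries of $\sigma^{\delta}_{F,\mathbf{T}}$, and $\det\gamma^{\delta}_{F,\mathbf{T}}=1/\det\sigma^{\delta}_{F,\mathbf{T}}$. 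Combining (a), (b) for the adjugate, and (c) for the determinant yields, for each $m$, a bound of the form $\vert\gamma^{\delta}_{F,\mathbf{T}}\vert_{\delta,\mathbf{T},m}\leqslant C(\mathfrak{d},m)(1\vee\det\gamma^{\delta}_{F,\mathbf{T}})^{m+1}\vert F\vert_{\mathcal{H},\delta,\mathbf{T},1,m+1}^{2\mathfrak{d}(m+1)}$; the exact exponents are not important at this stage, only that they are controlled by $\mathfrak{d}$ and $m$.

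With these in hand the base case $q=l=1$ is a direct expansion: in $H^{\delta}_{\mathbf{T}}(F,G)[\mathfrak{h}]=-\langle G\gamma^{\delta}_{F,\mathbf{T}}L^{\delta}_{\mathbf{T}}F,\mathfrak{h}\rangle_{\mathcal{H}}-\delta\sum_{\mathfrak{h}^{\diamond}}\langle D^{\delta,\mathbf{T}}(G\gamma^{\delta}_{F,\mathbf{T}}[\mathfrak{h},\mathfrak{h}^{\diamond}]),D^{\delta,\mathbf{T}}\langle F,\mathfrak{h}^{\diamond}\rangle_{\mathcal{H}}\rangle_{\mathbb{R}^{\mathbf{T}\times\mathbf{N}}}$, apply (a) to the first term ($\leqslant C(\mathfrak{d},m)\vert G\vert_{\mathbb{R},\delta,\mathbf{T},m}\vert\gamma^{\delta}_{F,\mathbf{T}}\vert_{\delta,\mathbf{T},m}\vert L^{\delta}_{\mathbf{T}}F\vert_{\mathcal{H},\delta,\mathbf{T},m}$) and (a)+(d) to the second term, where the sum over $\mathfrak{h}^{\diamond}$ has $\mathfrak{d}$ terms ($\leqslant C(\mathfrak{d},m)\vert G\vert_{\mathbb{R},\delta,\mathbf{T},m+1}\vert\gamma^{\delta}_{F,\mathbf{T}}\vert_{\delta,\mathbf{T},m+1}\vert F\vert_{\mathcal{H},\delta,\mathbf{T},1,m+1}$). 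Substituting the bound on $\vert\gamma^{\delta}_{F,\mathbf{T}}\vert_{\delta,\mathbf{T},m+1}$ and noting that for $q=1$ one has $m+q+1=m+2$, $m+q=m+1$, $m+q-1=m$, this collapses exactly to the stated $\mathfrak{c}(\mathfrak{d},1,m,\mathbf{T},F,G)$. For the inductive step, use $(\ref{eq:IPP_Malliavin_degre_superieur_weights})$ to write $H^{\delta}_{\mathbf{T}}(F,G)[\bar{\mathfrak{h}}]=H^{\delta}_{\mathbf{T}}(F,\tilde G)[\mathfrak{h}^{q}]$ with $\tilde G=H^{\delta}_{\mathbf{T}}(F,G)[\mathfrak{h}^{1},\dots,\mathfrak{h}^{q-1}]$: apply the $q=1$ estimate to the pair $(F,\tilde G)$ at order $m$ (which requires $\vert\tilde G\vert_{\mathbb{R},\delta,\mathbf{T},m+1}$), then the induction hypothesis to $\tilde G$ at order $m+1$ with length $q-1$. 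Composing these multiplies the powers of $(1\vee\det\gamma^{\delta}_{F,\mathbf{T}})$, bumps the Sobolev orders on $F$ to $m+q+1$ and on $G$ to $m+q$, and raises the $\vert F\vert$ and $\vert L^{\delta}_{\mathbf{T}}F\vert$ exponents; a bookkeeping verification then shows the monomials $q(m+q+1)$, $2\mathfrak{d}q(m+q+2)$ and $2q$ are stable, closing the induction.

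The main obstacle is precisely this last bookkeeping: the intermediate inverse-covariance and inverse-determinant bounds must be chosen generously enough to be correct, yet the composition over $q$ steps must not overshoot the claimed envelope $(1\vee\det\gamma^{\delta}_{F,\mathbf{T}})^{q(m+q+1)}(1+\vert F\vert_{\mathcal{H},\delta,\mathbf{T},1,m+q+1}^{2\mathfrak{d}q(m+q+2)}+\vert L^{\delta}_{\mathbf{T}}F\vert_{\mathcal{H},\delta,\mathbf{T},m+q-1}^{2q})\vert G\vert_{\mathbb{R},\delta,\mathbf{T},m+q}$; the clean way is to take the stated $\mathfrak{c}$ as the induction hypothesis at each $q$ and check directly that one step preserves it. A secondary, purely technical point is that all manipulations with $\gamma^{\delta}_{F,\mathbf{T}}$ are performed on the event where $\sigma^{\delta}_{F,\mathbf{T}}$ is invertible (a.s.\ nonempty under the hypothesis $\mathbb{E}[\vert\det\gamma^{\delta}_{F,\mathbf{T}}\vert^{p}]<+\infty$); since the inequality to be proved is pathwise — no expectation is taken — this requires no extra care.
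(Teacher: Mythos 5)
The paper does not actually prove this proposition: it simply refers the reader to \cite{Bally_Caramellino_2014_distance}, Theorem 3.4 (see also \cite{Bally_Clement_2011}), so there is no internal argument to compare against. Your sketch reconstructs precisely the argument of those references, and it is the right route: a product/composition calculus for the norms $\vert \cdot \vert_{\mathcal{H},\delta,\mathbf{T},m}$, control of $\gamma^{\delta}_{F,\mathbf{T}}$ through Cramer's rule combined with an estimate of the form $\vert \det \gamma^{\delta}_{F,\mathbf{T}}\vert_{\mathbb{R},\delta,\mathbf{T},m}\leqslant C(m)\vert \det \gamma^{\delta}_{F,\mathbf{T}}\vert^{m+1}(1+\vert F\vert_{\mathcal{H},\delta,\mathbf{T},1,m+1}^{2\mathfrak{d}(m+1)})$ --- which is exactly the estimate from \cite{Bally_Clement_2011}, Proposition 2, that the paper itself imports later in the proof of Lemma \ref{lemme:borne_sob_abstraite_local} --- and an induction on the number of weights through the recurrence (\ref{eq:IPP_Malliavin_degre_superieur_weights}). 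Your exponent bookkeeping in the inductive step does close, but the phrases ``collapses exactly'' and ``a bookkeeping verification'' hide the one real computation: the products produced at each step, such as $(1+\vert F\vert_{\mathcal{H},\delta,\mathbf{T},1,m+2}^{2\mathfrak{d}(m+3)}+\vert L^{\delta}_{\mathbf{T}}F\vert_{\mathcal{H},\delta,\mathbf{T},m}^{2})\,\vert \tilde G\vert_{\mathbb{R},\delta,\mathbf{T},m+1}$ with $\tilde G$ estimated by the induction hypothesis, must be absorbed into the additive envelope $1+\vert F\vert^{2\mathfrak{d}q(m+q+2)}+\vert L^{\delta}_{\mathbf{T}}F\vert^{2q}$ by Young's inequality; this should be written out once, since it is where the exponents $q(m+q+1)$ and $2\mathfrak{d}q(m+q+2)$ are actually verified.

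One point you should flag explicitly rather than pass over: your induction produces the exponent $2q$ on $\vert L^{\delta}_{\mathbf{T}}F\vert_{\mathcal{H},\delta,\mathbf{T},m+q-1}$ (one factor of $L^{\delta}_{\mathbf{T}}F$ per integration by parts), which is the version of $\mathfrak{c}$ stated in Theorem \ref{theoreme:IPP_Malliavin}, whereas the proposition as printed carries the exponent $2m$. These are genuinely different claims, and the printed $2m$ cannot be right: for $m=0$ and $q=1$ it would remove all dependence on $L^{\delta}_{\mathbf{T}}F$, while $\vert H^{\delta}_{\mathbf{T}}(F,G)[\mathfrak{h}]\vert$ contains the term $\langle G\gamma^{\delta}_{F,\mathbf{T}}L^{\delta}_{\mathbf{T}}F,\mathfrak{h}\rangle_{\mathcal{H}}$, and $L^{\delta}_{\mathbf{T}}F$ (through the factors $D^{\delta}_{(t,i)}\Gamma^{\delta}_{t}$) is not controlled by $\vert F\vert_{\mathcal{H},\delta,\mathbf{T},1,\cdot}$ and $\det\gamma^{\delta}_{F,\mathbf{T}}$. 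So the $2m$ is evidently a misprint and your target is the internally consistent one, but your write-up should state that you are proving the $2q$ version rather than silently establishing a different exponent from the one in the statement.
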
The reader can find
the detailed proof of this result in \cite{Bally_Caramellino_2014_distance}, Theorem 3.4.  (see also \cite{Bally_Clement_2011}).

\begin{proof}[Proof of Theorem \ref{theoreme:IPP_Malliavin}]
We prove the result for $m=1$. Then, a recurrence yields (\ref{eq:IPP_Malliavin_degre_superieur}). Using the chain rule (\ref{formule_chain_rule_R}), we have for every $\mathfrak{h}^{\diamond} \in \mathfrak{H}$,


\begin{align*}
\langle D^{\delta,\mathbf{T}}\phi(F) ,D^{\delta,\mathbf{T}}\langle F, \mathfrak{h}^{\diamond}\rangle_{\mathcal{H}} \rangle_{\mathbb{R}^{\mathbf{T} \times \mathbf{N}}} = & \sum_{\mathfrak{h} \in \mathfrak{H}} \langle \mbox{d}^{\mbox{F}}\phi(F),\mathfrak{h} \rangle_{\mathcal{H}}  \langle  D^{\delta,\mathbf{T}}\langle F, \mathfrak{h} \rangle_{\mathcal{H}},D^{\delta,\mathbf{T}}\langle F, \mathfrak{h}^{\diamond} \rangle_{\mathcal{H}} \rangle_{\mathbb{R}^{\mathbf{T} \times \mathbf{N}}}  \\
=&   \delta^{-1} \sum_{\mathfrak{h} \in \mathfrak{H}} \partial^{\mbox{F}}_{\mathfrak{h}}\phi(F) \sigma^{\delta} _{F,\mathbf{T}}[\mathfrak{h},\mathfrak{h}^{\diamond}] \\
\end{align*}

Using (\ref{formule_chain_rule_Operateur_OU}) with $F=(\langle F, \mathfrak{h}^{\diamond}\rangle_{\mathcal{H}},\phi(F))$, $\mathcal{H}=\mathbb{R}^{2}$ and $\phi:(x,y)\mapsto xy$, (\ref{formule_{d}ualite}) with $F=\phi(F) \langle F, \mathfrak{h}^{\diamond}\rangle_{\mathcal{H}}$ (respectively $F=G \gamma^{\delta} _{F,\mathbf{T}}[\mathfrak{h},\mathfrak{h}^{\diamond}]  \langle F, \mathfrak{h}^{\diamond}\rangle_{\mathcal{H}}$), $G=G \gamma^{\delta} _{F,\mathbf{T}}[\mathfrak{h},\mathfrak{h}^{\diamond}] $ (resp.  $G=\phi(F)$) and $\mathcal{H}=\mathbb{R}$ (resp. $\mathcal{H}=\mathbb{R}$) and finally (\ref{formule_chain_rule_Operateur_OU}) with $F=(\langle F,\mathfrak{h}^{\diamond} \rangle_{\mathcal{H}},G \gamma^{\delta} _{F,\mathbf{T}}[\mathfrak{h},\mathfrak{h}^{\diamond}])$, $\mathcal{H}=\mathbb{R}^{2}$ and $ \phi:(x,y)\mapsto xy$, it follows that

\begin{align*}
\mathbb{E}& [\partial^{\mbox{F}}_{\mathfrak{h}}\phi(F)  G] = \delta \sum_{\mathfrak{h}^{\diamond} \in \mathfrak{H}}   \mathbb{E}[ G \gamma^{\delta} _{F,\mathbf{T}}[\mathfrak{h},\mathfrak{h}^{\diamond}] \langle D^{\delta,\mathbf{T}}\phi(F) ,D^{\delta,\mathbf{T}}\langle F, \mathfrak{h}^{\diamond}\rangle_{\mathcal{H}} \rangle_{\mathbb{R}^{\mathbf{T} \times \mathbf{N}}}]  \\
 =&   \frac{1}{2} \sum_{\mathfrak{h}^{\diamond} \in \mathfrak{H}}   \mathbb{E}[ G \gamma^{\delta} _{F,\mathbf{T}}[\mathfrak{h},\mathfrak{h}^{\diamond}]  (L^{\delta}_{\mathbf{T}} (\phi(F)\langle F, \mathfrak{h}^{\diamond}\rangle_{\mathcal{H}})- \phi(F)L^{\delta}_{\mathbf{T}} \langle F,\mathfrak{h}^{\diamond}\rangle_{\mathcal{H}}   - \langle F,\mathfrak{h}^{\diamond}\rangle_{\mathcal{H}}  L^{\delta}_{\mathbf{T}} \phi(F))]  \\
 =&  \frac{1}{2} \sum_{\mathfrak{h}^{\diamond} \in \mathfrak{H}}   \mathbb{E}[  \phi(F)\langle F, \mathfrak{h}^{\diamond}\rangle_{\mathcal{H}} L^{\delta}_{\mathbf{T}} (G \gamma^{\delta} _{F,\mathbf{T}}[\mathfrak{h},\mathfrak{h}^{\diamond}]  )- \phi(F)  G \gamma^{\delta} _{F,\mathbf{T}}[\mathfrak{h},\mathfrak{h}^{\diamond}] L^{\delta}_{\mathbf{T}} \langle F,\mathfrak{h}^{\diamond}\rangle_{\mathcal{H}}   \\
& \quad - \phi(F) L^{\delta}_{\mathbf{T}} (G \gamma^{\delta} _{F,\mathbf{T}}[\mathfrak{h},\mathfrak{h}^{\diamond}] \langle F,\mathfrak{h}^{\diamond}\rangle_{\mathcal{H}})]  \\
 =& - \sum_{\mathfrak{h}^{\diamond} \in \mathfrak{H}}   \mathbb{E}[  \phi(F)(G \gamma^{\delta} _{F,\mathbf{T}}[\mathfrak{h},\mathfrak{h}^{\diamond}] L^{\delta}_{\mathbf{T}}   \langle F, \mathfrak{h}^{\diamond}\rangle_{\mathcal{H}} + \delta \langle D^{\delta,\mathbf{T}}( G \gamma^{\delta} _{F,\mathbf{T}}[\mathfrak{h},\mathfrak{h}^{\diamond}]), D^{\delta,\mathbf{T}} \langle F,\mathfrak{h}^{\diamond}\rangle_{\mathcal{H}} \rangle_{\mathbb{R}^{\mathbf{T} \times \mathbf{N}}})]  
\end{align*}

which is exactly (\ref{eq:IPP_Malliavin_degre_superieur}) for $q=1$. Iterating this formula, we obtain (\ref{eq:IPP_Malliavin_degre_superieur}).\\

In order to obtain \ref{eq:borne_norme_sobolev_poids_malliavin_th}, we simply apply Proposition \ref{prop:borne_norme_sobolev_poids_malliavin} and remark that $H^{\delta}_{\mathbf{T}}(F,G)[\bar{\mathfrak{h}}]$ and its Malliavin derivatives are equal to zero as soon as $G=0$.
\end{proof}

In the sequel we establish an estimate of the weights $H^{\delta}_{\mathbf{T}}$ which appear in the
integration by parts formulas (\ref{eq:IPP_Malliavin_degre_superieur}) when $G$ is replaced by $G \Theta$ with $\Theta \in [0,1]$ the localizing random weight.  The next result provides a bound on the Sobolev norms of $G \Theta$.

\begin{lemme}
\label{lemme:borne_norm_sob_prod_bis}
Let $q \in \mathbb{N}$. Let $G \in \mathcal{S}^{\delta}(\mathcal{H})$ and $\Theta \in \mathcal{S}^{\delta}$. Then
\begin{align}
\label{eq:borne_norm_sob_prod_bis}
\vert G \Theta  \vert _{\mathcal{H},\delta,\mathbf{T},q} \leqslant  C(q) \sum_{m=0}^{q} \vert G \vert _{\mathcal{H},\delta,\mathbf{T},m} \vert \Theta \vert _{\mathbb{R},\delta,\mathbf{T},q-m}.
\end{align}
\end{lemme}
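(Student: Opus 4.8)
The plan is to prove the Leibniz-type estimate (\ref{eq:borne_norm_sob_prod_bis}) by expanding the Sobolev norm $\vert G\Theta\vert_{\mathcal{H},\delta,\mathbf{T},q}$ according to its definition and then distributing the Malliavin derivatives over the product $G\Theta$ via the discrete product rule. Recall from the definition that
\begin{align*}
\vert G\Theta\vert_{\mathcal{H},\delta,\mathbf{T},q}^{2}=\vert G\Theta\vert_{\mathcal{H}}^{2}+\sum_{\underset{j\in\{1,\ldots,q\}}{\alpha\in(\mathbf{T}\times\mathbf{N})^{j}}}\delta^{j}\vert D^{\delta}_{\alpha}(G\Theta)\vert_{\mathcal{H}}^{2}.
\end{align*}
The first step is to record that $D^{\delta}$ satisfies a product rule: since $D^{\delta}_{(t,i)}F=\chi^{\delta}_{t}\partial^{\mbox{F}}_{u_{t}^{i}}f$ acts as a (weighted) directional derivative on the smooth functions of $(\chi^{\delta},U^{\delta},V^{\delta})$, and $\chi^{\delta}_{t}\in\{0,1\}$ so $(\chi^{\delta}_{t})^{2}=\chi^{\delta}_{t}$, one gets $D^{\delta}_{(t,i)}(G\Theta)=D^{\delta}_{(t,i)}G\,\Theta+G\,D^{\delta}_{(t,i)}\Theta$. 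Iterating this over a multi-index $\alpha=(\alpha^{1},\ldots,\alpha^{j})$ produces the usual binomial expansion $D^{\delta}_{\alpha}(G\Theta)=\sum_{\beta\subseteq\alpha}D^{\delta}_{\beta}G\,D^{\delta}_{\alpha\setminus\beta}\Theta$, where the sum is over the $2^{j}$ subsets $\beta$ of the index set $\{1,\ldots,j\}$ and $\alpha\setminus\beta$ denotes the complementary subsequence. (Here $G$ is $\mathcal{H}$-valued and $\Theta$ is scalar, so each term $D^{\delta}_{\beta}G\,D^{\delta}_{\alpha\setminus\beta}\Theta$ is an element of $\mathcal{H}^{\text{appropriate tensor power}}$, and $\vert\cdot\vert_{\mathcal{H}}$ is understood in that sense; the scalar factor just multiplies.)

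The second step is to insert this expansion and apply the triangle inequality in $\mathcal{H}$, followed by Cauchy–Schwarz to bound the sum over $\beta$: for a fixed $\alpha\in(\mathbf{T}\times\mathbf{N})^{j}$,
\begin{align*}
\vert D^{\delta}_{\alpha}(G\Theta)\vert_{\mathcal{H}}^{2}\leqslant 2^{j}\sum_{\beta\subseteq\alpha}\vert D^{\delta}_{\beta}G\vert_{\mathcal{H}}^{2}\vert D^{\delta}_{\alpha\setminus\beta}\Theta\vert_{\mathbb{R}}^{2}.
\end{align*}
Multiplying by $\delta^{j}=\delta^{\vert\beta\vert}\delta^{j-\vert\beta\vert}$ and summing over $j\in\{1,\ldots,q\}$ and $\alpha\in(\mathbf{T}\times\mathbf{N})^{j}$, one reorganizes the double sum: for each splitting $j=m+(j-m)$ with $0\leqslant m\leqslant j$, the terms $\delta^{m}\vert D^{\delta}_{\beta}G\vert_{\mathcal{H}}^{2}$ with $\vert\beta\vert=m$ ranging over $(\mathbf{T}\times\mathbf{N})^{m}$ sum to at most $\vert G\vert_{\mathcal{H},\delta,\mathbf{T},m}^{2}$, and likewise the complementary factor sums to at most $\vert\Theta\vert_{\mathbb{R},\delta,\mathbf{T},q-m}^{2}$ (using that $\mathbf{T}\times\mathbf{N}$-indexed sums of this form are monotone in the truncation order). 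The zeroth-order term $\vert G\Theta\vert_{\mathcal{H}}^{2}\leqslant\vert G\vert_{\mathcal{H}}^{2}\vert\Theta\vert_{\mathbb{R}}^{2}$ is the $m=0,\ j=0$ contribution. Collecting everything gives
\begin{align*}
\vert G\Theta\vert_{\mathcal{H},\delta,\mathbf{T},q}^{2}\leqslant C(q)\sum_{m=0}^{q}\vert G\vert_{\mathcal{H},\delta,\mathbf{T},m}^{2}\vert\Theta\vert_{\mathbb{R},\delta,\mathbf{T},q-m}^{2},
\end{align*}
with $C(q)$ absorbing the combinatorial factors $2^{j}$ and the number of subsets; taking square roots and using $\sqrt{\sum a_{m}^{2}}\leqslant\sum a_{m}$ yields (\ref{eq:borne_norm_sob_prod_bis}) after adjusting $C(q)$.

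The only mildly delicate point — and thus the place I would be most careful — is the bookkeeping in the re-indexing: one must check that when $\beta$ runs over ordered sub-multi-indices of $\alpha$ and $\alpha$ itself runs over $(\mathbf{T}\times\mathbf{N})^{j}$, the pair $(\beta,\alpha\setminus\beta)$ sweeps out, with bounded multiplicity, all pairs in $(\mathbf{T}\times\mathbf{N})^{m}\times(\mathbf{T}\times\mathbf{N})^{j-m}$, so that no spurious cross terms survive and the separation into a product of two independent truncated Sobolev norms is legitimate. This is purely combinatorial and the multiplicity is controlled by $\binom{j}{m}\leqslant 2^{q}$, so it feeds into $C(q)$. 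Everything else — the product rule for $D^{\delta}$, which follows from the chain rule (\ref{formule_chain_rule}) applied to $(x,y)\mapsto xy$, and the elementary inequalities — is routine, so I do not anticipate a genuine obstacle; the statement is essentially the discrete analogue of the classical Leibniz rule for Sobolev–Malliavin norms and is used in the same spirit as in \cite{Bally_Clement_2011} and \cite{Bally_Caramellino_2014_distance}.
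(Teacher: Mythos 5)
Your proof is correct. The underlying mechanism is the same as in the paper — the product rule for $D^{\delta}$ followed by Cauchy–Schwarz and a re-organization of the sums — but the organization differs: the paper proves the lemma by recurrence on $q$, writing $\vert G\Theta\vert_{\mathcal{H},\delta,\mathbf{T},q+1}^{2}=\vert G\Theta\vert_{\mathcal{H}}^{2}+\sum_{l=0}^{q}\delta^{l+1}\vert D^{\delta,l}(\Theta D^{\delta}G+G D^{\delta}\Theta)\vert_{\mathcal{H}_{l+1}}^{2}$ and then applying the induction hypothesis to the derivative processes $D^{\delta}G$ and $D^{\delta}\Theta$ viewed as elements of $\mathcal{S}^{\delta}$ valued in the iterated Hilbert spaces $\mathcal{H}_{l}=(\mathcal{H}_{l-1})^{\mathbf{T}\times\mathbf{N}}$, which hides the multi-index bookkeeping inside the recursion. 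You instead expand $D^{\delta}_{\alpha}(G\Theta)$ fully via the binomial Leibniz formula and track the multiplicity $\binom{j}{m}\leqslant 2^{q}$ by hand; this is more explicit and arguably more transparent, at the cost of having to verify the re-indexing $(\beta,\alpha\setminus\beta)\leftrightarrow(\mathbf{T}\times\mathbf{N})^{m}\times(\mathbf{T}\times\mathbf{N})^{j-m}$, which you do correctly (commutativity of the $D^{\delta}_{(t,i)}$ makes the complementary subsequence well defined, and the monotonicity of the truncated norms in the order handles the summation over $j\leqslant q$). Both arguments yield a constant depending only on $q$, so nothing is lost either way.
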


\begin{proof}
We prove the result by recurrence. For $q \in \mathbb{N}$, we define $\mathcal{H}_{0}=\mathcal{H}$ and $\mathcal{H}_{q+1}=(\mathcal{H}_{q})^{\mathbf{T} \times \mathbf{N}}$.  The result is true for $q=0$. Assume it is true until some $q \in \mathbb{N}$ and let us show it still holds for $q+1$. We have
\begin{align*}
\vert G \Theta \vert_{\mathcal{H},\delta,\mathbf{T},q+1}^{2} = \vert G \Theta \vert_{\mathcal{H}}^{2}  + \sum_{l=0}^{q} \delta^{l+1} \vert D^{\delta,l} (\Theta D^{\delta} G + G D^{\delta} \Theta) \vert_{\mathcal{H}_{l+1}}^{2}
\end{align*}
with
\begin{align*}
\vert D^{\delta,l} & (\Theta D^{\delta} G ) \vert_{\mathcal{H}_{l+1}} \leqslant  \delta^{-\frac{l}{2}} \vert \Theta D^{\delta} G  \vert_{\mathcal{H}^{\mathbf{T} \times \mathbf{N}},\delta,\mathbf{T},l}  \\
\leqslant &  C(l) \delta^{-\frac{l}{2}}  \sum_{m=0}^{l}  \vert \Theta \vert_{\mathbb{R},\delta,\mathbf{T},l-m}   \vert D^{\delta} G \vert_{\mathcal{H}_{1},\delta,\mathbf{T},m}  = \delta^{-\frac{l+1}{2}}  C(l)  \sum_{m=0}^{l}  \vert \Theta \vert_{\mathbb{R},\delta,\mathbf{T},l-m}   \vert G \vert_{\mathcal{H},\delta,\mathbf{T},1,m+1} ,
\end{align*}
where we have applied (\ref{eq:borne_norm_sob_prod_bis}) with $G$ replaced by $D^{\delta} G$, $q=l$ and $\mathcal{H}=\mathcal{H}_{1}$.
Similarly


\begin{align*}
\vert D^{\delta,l} ( G D^{\delta} \Theta) \vert_{\mathcal{H}_{l+1}} =& \vert \sum_{\vert \alpha \vert = l} \sum_{\vert \beta \vert =1} \vert D^{\delta}_{\alpha} (G D^{\delta}_{\beta} \Theta) \vert_{\mathcal{H}}^{2}  \vert^{\frac{1}{2}} = \vert  \sum_{\vert \beta \vert =1} \vert D^{\delta,l}(G D^{\delta}_{\beta} \Theta) \vert_{\mathcal{H}_{l}}^{2}  \vert^{\frac{1}{2}} \\
\leqslant & \vert  \sum_{\vert \beta \vert =1} \delta^{-l}  \vert G D^{\delta}_{\beta} \Theta \vert_{\mathcal{H},\delta,\mathbf{T},l}^{2}  \vert^{\frac{1}{2}} \\
\leqslant & C(l) \delta^{-\frac{l}{2}}  \sum_{m=0}^{l}  \vert G \vert_{\mathcal{H},\delta,\mathbf{T},m} \vert \sum_{\vert \beta \vert =1} \vert D^{\delta}_{\beta} \Theta \vert_{\mathcal{H},\delta,\mathbf{T},l-m}^{2}  \vert^{\frac{1}{2}}\\
\leqslant & C(l) \delta^{-\frac{l+1}{2}}  \sum_{m=0}^{l}   \vert G \vert_{\mathcal{H},\delta,\mathbf{T}, m}  \vert \Theta \vert_{\mathbb{R},\delta,\mathbf{T},l+1-m} 
\end{align*}

and the proof is completed.
\end{proof}

The next result provides a bound on the Sobolev norms of $\phi(F)$ when $F \in \mathcal{S}^{\delta}(\mathbb{R}^{\mathfrak{d }})$.

\begin{lemme}
\label{lemme:borne_norm_sob_comp_bis}
Let $q \in \mathbb{N}$. Let $\mathfrak{d } \in \mathbb{N}^{\ast}$, let $F \in \mathcal{S}^{\delta}(\mathbb{R}^{\mathfrak{d }})$ and $\phi \in \mathcal{C}^{q}(\mathbb{R}^{\mathfrak{d }},\mathbb{R})$. Then
\begin{align}
\label{eq:borne_norm_sob_comp_bis}
\vert \phi(F) \vert _{\mathcal{H},\delta,\mathbf{T},q} \leqslant  C(q) \sum_{m=0}^{q}  \vert F \vert _{\mathbb{R}^{\mathfrak{d }},\delta,\mathbf{T},q+1-m}^{m} \vert \sum_{\alpha \in \mathbb{N}^{\mathfrak{d }}; \vert \alpha \vert \leqslant m}\vert \partial^{\alpha}_{x} \phi(F) \vert _{\mathbb{R}}^{2} \vert^{\frac{1}{2}}
\end{align}
\end{lemme}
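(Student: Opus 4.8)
The plan is to prove the bound on the Sobolev norm $\vert \phi(F)\vert_{\mathcal{H},\delta,\mathbf{T},q}$ (more precisely $\vert\phi(F)\vert_{\mathbb{R},\delta,\mathbf{T},q}$, since $\phi(F)$ is real-valued) by induction on $q$, exactly as in the proof of Lemma \ref{lemme:borne_norm_sob_prod_bis}. The case $q=0$ is trivial since $\vert\phi(F)\vert_{\mathbb{R},\delta,\mathbf{T},0}=\vert\phi(F)\vert_{\mathbb{R}}$, which is the $m=0$ term on the right-hand side. For the inductive step, I would assume the estimate holds up to some $q$ and write
\begin{align*}
\vert\phi(F)\vert_{\mathbb{R},\delta,\mathbf{T},q+1}^{2}=\vert\phi(F)\vert_{\mathbb{R}}^{2}+\sum_{l=0}^{q}\delta^{l+1}\vert D^{\delta,l}D^{\delta}\phi(F)\vert_{\mathcal{H}_{l+1}}^{2},
\end{align*}
and then use the chain rule (\ref{formule_chain_rule_R}), which gives $D^{\delta}\phi(F)=\langle\nabla_x\phi(F),D^{\delta}F\rangle_{\mathbb{R}^{\mathfrak{d}}}=\sum_{j=1}^{\mathfrak{d}}\partial_{x^j}\phi(F)\,D^{\delta}F^{j}$.

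The core of the argument is then to estimate $\vert D^{\delta,l}D^{\delta}\phi(F)\vert_{\mathcal{H}_{l+1}}\le\delta^{-l/2}\vert D^{\delta}\phi(F)\vert_{\mathbb{R}^{\mathbf{T}\times\mathbf{N}},\delta,\mathbf{T},l}$ and apply the product rule Lemma \ref{lemme:borne_norm_sob_prod_bis} to each term $\partial_{x^j}\phi(F)\,D^{\delta}F^{j}$: this yields a sum over $0\le m'\le l$ of $\vert\partial_{x^j}\phi(F)\vert_{\mathbb{R},\delta,\mathbf{T},m'}\vert D^{\delta}F^{j}\vert_{\mathbb{R},\delta,\mathbf{T},l-m'}$. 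The factor $\vert D^{\delta}F^{j}\vert_{\mathbb{R},\delta,\mathbf{T},l-m'}$ absorbs a power of $\delta^{-1/2}$ and becomes (a piece of) $\vert F\vert_{\mathbb{R}^{\mathfrak{d}},\delta,\mathbf{T},1,l-m'+1}\le\vert F\vert_{\mathbb{R}^{\mathfrak{d}},\delta,\mathbf{T},l-m'+1}$, matching the shape of the right-hand side of (\ref{eq:borne_norm_sob_comp_bis}). The Sobolev norm $\vert\partial_{x^j}\phi(F)\vert_{\mathbb{R},\delta,\mathbf{T},m'}$ is then estimated by the induction hypothesis applied to the function $\partial_{x^j}\phi\in\mathcal{C}^{m'}$ (with $m'\le q$), producing $\sum_{m''=0}^{m'}\vert F\vert_{\mathbb{R}^{\mathfrak{d}},\delta,\mathbf{T},m'+1-m''}^{m''}\vert\sum_{\vert\alpha\vert\le m''}\vert\partial_x^{\alpha}\partial_{x^j}\phi(F)\vert^2\vert^{1/2}$. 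One then collects: the derivatives of $\phi$ of order $\le m''+1\le m$ with $m:=m''+1$, and a power $\vert F\vert^{m''}\cdot\vert F\vert=\vert F\vert^{m}$ of (bounds on) the Sobolev norm $\vert F\vert_{\mathbb{R}^{\mathfrak{d}},\delta,\mathbf{T},q+2-m}$, using $l-m'+1\le q+1$ and $m'+1-m''\le q+1$, possibly coarsening all the separate Sobolev orders to the single uniform order $q+1-m$ (for $q$ replaced by $q+1$, i.e. $(q+1)+1-m$) that appears on the right. Combining everything with the obvious combinatorial constants $C(q)$ yields (\ref{eq:borne_norm_sob_comp_bis}) for $q+1$.

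The step I expect to be the main obstacle is the bookkeeping of the various Sobolev orders: after two nested applications of product/chain rules one gets sums over indices $m', m''$ and the question is whether all the individual orders of $F$ and of $\partial^\alpha\phi$ can be consistently majorized by the single pair $(\,\text{order }q+1-m\text{ for }F,\ \text{order }\le m\text{ for }\phi)$ claimed in the statement. This is a matter of checking the inequalities $l-m'+1\le (q+1)+1-m$ and that the total number of $F$-factors (counting $\vert F\vert^{m''}$ from the inductive hypothesis together with the one extra factor $\vert D^{\delta}F^{j}\vert$) is exactly $m=m''+1$, together with the monotonicity $\vert F\vert_{\mathbb{R}^{\mathfrak{d}},\delta,\mathbf{T},a}\le\vert F\vert_{\mathbb{R}^{\mathfrak{d}},\delta,\mathbf{T},b}$ for $a\le b$ and $\vert F\vert\ge$ the lower-order pieces being harmless when $\vert F\vert$ could be small — here one uses the standard trick of bounding $\vert F\vert^{m''}$ by $(1+\vert F\vert_{\ldots,q+1-m})^{m}$ or simply noting the estimate is homogeneous. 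Apart from this indexing care, the proof is entirely mechanical given Lemmas \ref{lemme:borne_norm_sob_prod_bis} and the chain rule (\ref{formule_chain_rule_R}).
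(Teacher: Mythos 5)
Your proposal is correct and follows essentially the same route as the paper's proof: induction on $q$, the chain rule (\ref{formule_chain_rule_R}) writing $D^{\delta}\phi(F)=\sum_{j}\partial_{x^{j}}\phi(F)D^{\delta}F^{j}$, the product estimate of Lemma \ref{lemme:borne_norm_sob_prod_bis}, the induction hypothesis applied to $\partial_{x^{j}}\phi$, and finally a Cauchy--Schwarz in $j$ together with monotone coarsening of all Sobolev orders to $(q+1)+1-m$ with exactly $m=m''+1$ factors of $F$. The bookkeeping you flag is indeed the only delicate point, and the inequalities you check ($l-m'+1\leqslant (q+1)+1-m$, $m'+1-m''\leqslant (q+1)+1-m$) are precisely the ones the paper uses.
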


\begin{proof}
We prove the result by recurrence. For $q \in \mathbb{N}$, we define $\mathcal{H}_{0}=\mathbb{R}$ and $\mathcal{H}_{q+1}=(\mathcal{H}_{q})^{\mathbf{T} \times \mathbf{N}}$.  The result is true for $q=0$. Assume it is true until some $q \in \mathbb{N}$ and let us show it still holds for $q+1$. We have
\begin{align*}
\vert \phi(F) \vert_{\mathcal{H},\delta,\mathbf{T},q+1}^{2} = \vert \phi(F)  \vert_{\mathcal{H}}^{2}  + \sum_{l=0}^{q} \delta^{l+1} \vert D^{\delta,l+1} \phi(F) \vert_{\mathcal{H}_{l+1}}^{2} .
\end{align*}
Moreover, using Lemma \ref{lemme:borne_norm_sob_prod_bis},  (\ref{eq:borne_norm_sob_comp_bis}) with $\phi(F)$ replace by $\partial_{x}^{(j)} \phi(F)$ and the Cauchy-Schwarz inequality yields

\begin{align*}
\vert D^{\delta,l+1} &  \phi(F) \vert_{\mathcal{H}_{l+1}}^{2} =  \vert D^{\delta,l} (D^{\delta} \phi(F)) \vert_{\mathcal{H}_{l+1}} ^{2}  = \sum_{j=1}^{\mathfrak{d } } \vert D^{\delta,l} (\partial_{x}^{(j)} \phi(F) D^{\delta}F^{j}) \vert_{\mathcal{H}_{l+1}} ^{2}  \\
\leqslant & \delta^{-l} \sum_{j=1}^{\mathfrak{d } } \vert \partial_{x}^{(j)} \phi(F) D^{\delta}F^{j} \vert_{\mathcal{H}_{1},\delta,\mathbf{T},l} ^{2}  \\
\leqslant & \delta^{-l} \sum_{j=1}^{\mathfrak{d } } \vert  \sum_{m=0}^{l} \vert \partial_{x}^{(j)} \phi(F)  \vert_{\mathbb{R},\delta,\mathbf{T},l-m}  \vert D^{\delta}F^{j}  \vert_{\mathcal{H}_{1},\delta,\mathbf{T},m}  \vert^{2}  \\
\leqslant &  \delta^{-l}   \sum_{j=1}^{\mathfrak{d } } \vert  \sum_{m=0}^{l}   C(l-m) \sum_{m^{\diamond}=0}^{l-m}  \vert F \vert _{\mathbb{R}^{\mathfrak{d }},\delta,\mathbf{T},l-m+1-m^{\diamond}}^{m^{\diamond}} \vert \sum_{\alpha \in \mathbb{N}^{\mathfrak{d }}; \vert \alpha \vert \leqslant m^{\diamond}}\vert \partial^{\alpha}_{x} \partial_{x}^{(j)}  \phi(F) \vert _{\mathbb{R}}^{2} \vert^{\frac{1}{2}}  \vert D^{\delta}F^{j}  \vert_{\mathcal{H}_{1},\delta,\mathbf{T},m}  \vert^{2}  \\
\leqslant &   \delta^{-l}   \vert  \sum_{m=0}^{l}   C(l-m) \sum_{m^{\diamond}=0}^{l-m}  \vert F \vert _{\mathbb{R}^{\mathfrak{d }},\delta,\mathbf{T},l+1-m-m^{\diamond}}^{m^{\diamond}} \sum_{j=1}^{\mathfrak{d } } \vert \sum_{\alpha \in \mathbb{N}^{\mathfrak{d }}; \vert \alpha \vert \leqslant m^{\diamond}}\vert \partial^{\alpha}_{x} \partial_{x}^{(j)}  \phi(F) \vert _{\mathbb{R}}^{2} \vert^{\frac{1}{2}}  \vert D^{\delta}F^{j}  \vert_{\mathcal{H}_{1},\delta,\mathbf{T},m}  \vert^{2}  \\
\leqslant &  \delta^{-l-1}  \vert  \sum_{m=0}^{l}   C(l-m) \sum_{m^{\diamond}=0}^{l-m}  \vert F \vert _{\mathbb{R}^{\mathfrak{d }},\delta,\mathbf{T},l+1-m-m^{\diamond}}^{m^{\diamond}} \vert F \vert _{\mathbb{R}^{\mathfrak{d }},\delta,\mathbf{T},1,m+1} \vert \sum_{\alpha \in \mathbb{N}^{\mathfrak{d }}; \vert \alpha \vert \leqslant m^{\diamond}+1}\vert \partial^{\alpha}_{x}  \phi(F) \vert _{\mathbb{R}}^{2}   \vert^{\frac{1}{2}}  \vert^{2}  \\
\leqslant &   \delta^{-l-1}  \vert C(l) \sum_{m^{\diamond}=0}^{l}  \vert F \vert _{\mathbb{R}^{\mathfrak{d }},\delta,\mathbf{T},l+1-m^{\diamond}}^{m^{\diamond}+1}   \vert \sum_{\alpha \in \mathbb{N}^{\mathfrak{d }}; \vert \alpha \vert \leqslant m^{\diamond}+1}\vert \partial^{\alpha}_{x}  \phi(F) \vert _{\mathbb{R}}^{2}   \vert^{\frac{1}{2}}  \vert^{2}  ,
\end{align*}

and the proof is completed.
\end{proof}

\begin{lemme}
\label{lemme:borne_sob_abstraite_local}
Let $q \in \mathbb{N}$. Let $F \in \mathcal{S}^{\delta}(\mathcal{H})$ with $\mathfrak{d} :=   \mbox{dim}(\mathcal{H})<\infty$ and $G \in \mathcal{S}^{\delta}$.  Then
\begin{align}
\label{eq:borne_sob_abstraite_local_mal}
\vert \Psi_{\eta_{1}} (G\det \gamma^{\delta}_{F,\mathbf{T}})  \vert _{\mathbb{R},\delta,\mathbf{T},q}  \leqslant  &    C(q)  \Vert \Psi_{\eta_{1}}  \Vert_{\infty,q} (1 \vee \vert \det \gamma^{\delta}_{F,\mathbf{T}}  \vert^{\frac{(q+2)^{2}}{4}}) (1+\vert F  \vert_{\mathcal{H},\delta,\mathbf{T},1,q+1}^{ \mathfrak{d}\frac{(q+2)^{2}}{2}})  \\
& \times \sum_{m=0}^{q}  \vert G  \vert _{\mathbb{R}^{\mathfrak{d }},\delta,\mathbf{T},q+1-m}^{m}   \nonumber
\end{align}
and
\begin{align}
\label{eq:borne_sob_abstraite_local_Z}
\vert\ \Psi_{\eta_{2}} (\vert Z^{\delta}_{w} \vert_{\mathbb{R}^{N}} )  \vert_{\mathbb{R},\delta,\mathbf{T},q}  \leqslant  & C(q) \Vert \Psi_{\eta_{2}}(\vert . \vert_{\mathbb{R}^{N}})  \Vert_{\infty,q}
\end{align}

\begin{proof}
First let us recall that it is proved in \cite{Bally_Clement_2011}, Proposition 2, that
\begin{align*}
\vert \det \gamma^{\delta}_{F,\mathbf{T}}  \vert_{\mathbb{R},\delta,\mathbf{T},q}  \leqslant C(q)  \vert \det \gamma^{\delta}_{F,\mathbf{T}}  \vert^{q+1} (1+\vert F  \vert_{\mathcal{H},\delta,\mathbf{T},1,q+1}^{2\mathfrak{d}(q+1)}).
\end{align*}
Using Lemma \ref{lemme:borne_norm_sob_prod_bis} and Lemma \ref{lemme:borne_norm_sob_comp_bis} and that $\Psi_{\eta_{1}} \in \mathcal{C}^{\infty}_{b}(\mathbb{R})$, we have
\begin{align*}
\vert \Psi_{\eta_{1}} (G & \det \gamma^{\delta}_{F,\mathbf{T}})  \vert _{\mathbb{R},\delta,\mathbf{T},q} \\
 \leqslant  &
 C(q) \sum_{m=0}^{q}  \vert G\det \gamma^{\delta}_{F,\mathbf{T}}  \vert _{\mathbb{R}^{\mathfrak{d }},\delta,\mathbf{T},q+1-m}^{m} \vert \sum_{\alpha \in \mathbb{N}^{\mathfrak{d }}; \vert \alpha \vert \leqslant m}\vert \partial^{\alpha}_{x} \Psi_{\eta_{1}} (G\det \gamma^{\delta}_{F,\mathbf{T}}) \vert _{\mathbb{R}}^{2} \vert^{\frac{1}{2}} \\
\leqslant &   C(q) \sum_{m=0}^{q}  \vert G  \vert _{\mathbb{R}^{\mathfrak{d }},\delta,\mathbf{T},q+1-m}^{m}  \vert \det \gamma^{\delta}_{F,\mathbf{T}}  \vert _{\mathbb{R}^{\mathfrak{d }},\delta,\mathbf{T},q+1-m}^{m} \vert \sum_{\alpha \in \mathbb{N}^{\mathfrak{d }}; \vert \alpha \vert \leqslant m}\vert \partial^{\alpha}_{x} \Psi_{\eta_{1}} (G\det \gamma^{\delta}_{F,\mathbf{T}}) \vert _{\mathbb{R}}^{2} \vert^{\frac{1}{2}} \\
\leqslant &   C(q)  \Vert \Psi_{\eta_{1}}  \Vert_{\infty,q}  \sum_{m=0}^{q}  \vert G  \vert _{\mathbb{R}^{\mathfrak{d }},\delta,\mathbf{T},q+1-m}^{m} (1+ \vert \det \gamma^{\delta}_{F,\mathbf{T}}  \vert^{(q+2-m)m} )(1+\vert F  \vert_{\mathcal{H},\delta,\mathbf{T},1,q+2-m}^{2\mathfrak{d}m(q+2-m)})\\
\leqslant &   C(q)  \Vert \Psi_{\eta_{1}}  \Vert_{\infty,q} (1+ \vert \det \gamma^{\delta}_{F,\mathbf{T}}  \vert^{\frac{(q+2)^{2}}{4}}) (1+\vert F  \vert_{\mathcal{H},\delta,\mathbf{T},1,q+1}^{ \mathfrak{d}\frac{(q+2)^{2}}{2}})  \sum_{m=0}^{q}  \vert G  \vert _{\mathbb{R}^{\mathfrak{d }},\delta,\mathbf{T},q+1-m}^{m}  ,
\end{align*}
and the proof of (\ref{eq:borne_sob_abstraite_local_mal}) is completed. In order to prove (\ref{eq:borne_sob_abstraite_local_Z}), we simply use (\ref{eq:derivee_Zkj}) together with Lemma\ref{lemme:borne_norm_sob_comp_bis}.
\end{proof}

\end{lemme}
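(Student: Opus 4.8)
\textbf{Proof plan for Lemma~\ref{lemme:borne_sob_abstraite_local}.}
The statement asks for two Sobolev-norm estimates: one for $\Psi_{\eta_1}(G\det\gamma^\delta_{F,\mathbf{T}})$ and one for $\Psi_{\eta_2}(|Z^\delta_w|_{\mathbb{R}^N})$. The plan is to reduce both to the already-established chain-rule and product-rule estimates (Lemma~\ref{lemme:borne_norm_sob_prod_bis}, Lemma~\ref{lemme:borne_norm_sob_comp_bis}) together with the known bound on the Sobolev norm of the Fredholm determinant $\det\gamma^\delta_{F,\mathbf{T}}$ from \cite{Bally_Clement_2011}, Proposition~2. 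First I would record that last fact, namely
\[
\vert \det \gamma^{\delta}_{F,\mathbf{T}}  \vert_{\mathbb{R},\delta,\mathbf{T},q}  \leqslant C(q)  \vert \det \gamma^{\delta}_{F,\mathbf{T}}  \vert^{q+1} (1+\vert F  \vert_{\mathcal{H},\delta,\mathbf{T},1,q+1}^{2\mathfrak{d}(q+1)}),
\]
which handles the ``inner'' argument of $\Psi_{\eta_1}$ once it is combined with $G$.

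For the first estimate, I would apply Lemma~\ref{lemme:borne_norm_sob_comp_bis} with $\phi=\Psi_{\eta_1}$ (a bounded smooth scalar function, so $|\partial^\alpha_x\Psi_{\eta_1}|$ is controlled by $\|\Psi_{\eta_1}\|_{\infty,q}$) and $F$ there replaced by the real random variable $G\det\gamma^\delta_{F,\mathbf{T}}$. This produces a sum over $m$ of terms $\vert G\det\gamma^\delta_{F,\mathbf{T}}\vert_{\mathbb{R},\delta,\mathbf{T},q+1-m}^m$ times the sup of derivatives of $\Psi_{\eta_1}$. Then I would use Lemma~\ref{lemme:borne_norm_sob_prod_bis} to split $\vert G\det\gamma^\delta_{F,\mathbf{T}}\vert_{\mathbb{R},\delta,\mathbf{T},q+1-m}$ into a product of $\vert G\vert$-norms and $\vert\det\gamma^\delta_{F,\mathbf{T}}\vert$-norms, insert the Bally--Clément bound for the latter, and finally absorb the exponents: the worst case $(q+2-m)m$ and $2\mathfrak{d}m(q+2-m)$ over $0\le m\le q$ is bounded by $\tfrac{(q+2)^2}{4}$ and $\tfrac{(q+2)^2}{2}$ respectively (maximizing the quadratic in $m$), which gives exactly the claimed right-hand side of (\ref{eq:borne_sob_abstraite_local_mal}). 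The manipulations are all of the ``Cauchy--Schwarz plus collect exponents'' type already used repeatedly in the preceding lemmas, so no genuinely new idea is required.

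For the second estimate, the point is that $\Psi_{\eta_2}(|Z^\delta_w|_{\mathbb{R}^N})$ depends on a \emph{single} increment $Z^\delta_w$, whose Malliavin derivatives are, by (\ref{eq:derivee_Zkj}), simply $\delta^{-1/2}$ times an indicator $\chi^\delta_w\mathds{1}_{w,\cdot}\mathds{1}_{\cdot,\cdot}$ — in particular $D^\delta_{(t,i)}D^\delta_{(s,j)}Z^{\delta,k}_w=0$, so all higher derivatives vanish. Applying Lemma~\ref{lemme:borne_norm_sob_comp_bis} with $\phi(z)=\Psi_{\eta_2}(|z|_{\mathbb{R}^N})\in\mathcal{C}^\infty_b(\mathbb{R}^N)$ and $F=Z^\delta_w$, each Sobolev-type factor $\vert Z^\delta_w\vert_{\mathbb{R}^N,\delta,\mathbf{T},q+1-m}^m$ that appears carries a compensating power of $\delta^{1/2}$ from the $\delta^j$ weights in the norm and a power $\delta^{-1/2}$ from (\ref{eq:derivee_Zkj}), which cancel exactly; together with $|\chi^\delta_w|\le 1$ this leaves only $C(q)\|\Psi_{\eta_2}(|\cdot|_{\mathbb{R}^N})\|_{\infty,q}$, as claimed in (\ref{eq:borne_sob_abstraite_local_Z}).

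I do not expect a serious obstacle here; the only mildly delicate point is the bookkeeping of the $\delta$-powers in the definition of $\vert\cdot\vert_{\mathcal{H},\delta,\mathbf{T},q}$ in the second estimate (making sure the $\delta^{j/2}$ in the norm precisely offsets the $\delta^{-1/2}$ per Malliavin derivative so that no spurious negative or positive power of $\delta$ survives) and, in the first estimate, checking that the maximum over $m\in\{0,\dots,q\}$ of the quadratic exponents is attained at the endpoint/midpoint giving $\tfrac{(q+2)^2}{4}$. Both are routine once written out, so the proof is essentially an assembly of the earlier lemmas.
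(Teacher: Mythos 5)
Your argument is correct and follows the paper's own proof essentially verbatim: you recall the bound on $\vert \det \gamma^{\delta}_{F,\mathbf{T}}\vert_{\mathbb{R},\delta,\mathbf{T},q}$ from \cite{Bally_Clement_2011}, combine Lemma \ref{lemme:borne_norm_sob_comp_bis} (with $\phi=\Psi_{\eta_{1}}$ and inner argument $G\det\gamma^{\delta}_{F,\mathbf{T}}$) with the product estimate of Lemma \ref{lemme:borne_norm_sob_prod_bis}, and absorb the exponents by maximizing $m(q+2-m)$ over $m$, which is exactly the paper's route to (\ref{eq:borne_sob_abstraite_local_mal}). For (\ref{eq:borne_sob_abstraite_local_Z}) your use of (\ref{eq:derivee_Zkj}) together with the chain-rule lemma, noting that higher Malliavin derivatives of $Z^{\delta}_{w}$ vanish and the $\delta^{\pm 1/2}$ factors cancel, is likewise the paper's argument.
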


\subsection{Sobolev Norms}

Before we state our results, we recall that $\partial_{\mbox{\textsc{x}}^{\delta}_{0}}X^{\delta}_{t} $, $t \in \pi^{\delta}$, is the tangent flow and is introduced in (\ref{eq:tangent_flow_def}). In a similar way, for $\alpha \in \mathbb{N}^{d}$, $\partial_{\mbox{\textsc{x}}^{\delta}_{0}}^{\alpha }X^{\delta}_{t} $ denotes the derivatives of $X^{\delta}_{t} $ of order $\vert \alpha \vert$ $w.r.t.$ $\mbox{\textsc{x}}^{\delta}_{0}$ and is given by $\partial_{(\mbox{\textsc{x}}^{\delta}_{0})^{1}}^{\alpha^{1} }\ldots \partial_{(\mbox{\textsc{x}}^{\delta}_{0})^{d}}^{\alpha^{d} }X^{\delta}_{t} $. The following result provides an upper bound for the Sobolev norms appearing in the upper bound of the Malliavin weights established in Theorem \ref{theoreme:IPP_Malliavin}.

 \begin{theorem}
\label{theo:Norme_Sobolev_borne}
Let $T \in \pi^{\delta,\ast}$ and $\mathbf{T}=(0,T] \cap \pi^{\delta}$. Let $q\in \mathbb{N}$, $q^{\diamond} \in \{0,1\}$,  $p\geqslant 1$ and $\alpha \in \mathbb{N}^{d}$ a multi-index. Assume that $\mathbf{A}_{1}^{\delta}(q+\vert \alpha \vert +2)$ (see (\ref{eq:hyp_1_Norme_adhoc_fonction_schema}) and (\ref{eq:hyp_3_Norme_adhoc_fonction_schema})),  $\mathbf{A}_{3}^{\delta}(+\infty)$ (see (\ref{eq:hyp:moment_borne_Z})) and $\mathbf{A}_{4}^{\delta}$ (see (\ref{hyp:lebesgue_bounded})) hold.  Then

\begin{align}
\label{eq:borne_Sob_flot_tang}
 \mathbb{E}[\sup_{t \in \mathbf{T}} \vert   \partial_{\mbox{\textsc{x}}^{\delta}_{0}}^{\alpha }X^{\delta}_{t}  \vert_{\mathbb{R}^{d},\delta,\mathbf{T},q^{\diamond},q}^{p}]^{\frac{1}{p}}  \leqslant  &  ( \vert \mbox{\textsc{x}}^{\delta}_{0} \vert_{\mathbb{R}^{d}} ( \mathbf{1}_{\mathfrak{p}_{q+ \vert \alpha \vert +2} > 0}+\mathbf{1}_{q^{\diamond} = \vert \alpha \vert = 0})+\mathfrak{D}_{q+\vert \alpha \vert+2} )^{C(q,\mathfrak{p}_{q+\vert \alpha \vert+2})}  \\
& \times    C(d,N,\frac{1}{r_{\ast}},q,\mathfrak{p}_{q+\vert \alpha \vert+2}) \nonumber \\
& \times  \exp (C(q,p,\mathfrak{p}_{q+\vert \alpha \vert+2})(T+1)\mathfrak{M}_{C(p,q,\mathfrak{p},\mathfrak{p}_{q+\vert \alpha \vert+2})}(Z^{\delta})\mathfrak{D}^{2})   . \nonumber
\end{align}

Moreover, if we replace the assumption $\mathbf{A}_{1}^{\delta}(q+\vert \alpha \vert +2)$, by the assumption $\mathbf{A}_{1}^{\delta}(q+4)$, then

\begin{align}
\label{eq:theo_borne_{d}eriv_mall_schema_annex_L}
 \mathbb{E}[\sup_{t \in \mathbf{T}} \vert  L^{\delta}_{\mathbf{T}} X^{\delta}_{t} \vert_{\mathbb{R}^{d},\delta,\mathbf{T},q}^{p}]^{\frac{1}{p}}  \leqslant  &  ( \vert \mbox{\textsc{x}}^{\delta}_{0} \vert_{\mathbb{R}^{d}}  \mathbf{1}_{\mathfrak{p}_{q+4} > 0} +\mathfrak{D}_{q+4} )^{C(q,\mathfrak{p}_{q+4})}   \\
& \times  C(d,N,\frac{1}{r_{\ast}},q,\mathfrak{p}_{q+4})  \exp (C(q,p,\mathfrak{p}_{q+4})(T+1)\mathfrak{M}_{C(p,q,\mathfrak{p},\mathfrak{p}_{q+4})}(Z^{\delta}) \mathfrak{D}^{2})   . \nonumber
\end{align}

%

\end{theorem}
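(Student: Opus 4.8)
The plan is to obtain both estimates by differentiating the recursion \eqref{eq:schema_general}, writing down the resulting systems of discrete difference equations for the iterated space derivatives, for their Malliavin derivatives, and for the Ornstein--Uhlenbeck functional, and then bounding their $L^{p}$-norms by combining a discrete Burkholder--Davis--Gundy (Rosenthal-type) inequality with a discrete Gronwall argument. The structural fact that makes this work is that $\psi(x,t,0,0)=x$, hence $\nabla_{x}\psi(x,t,0,0)=I_{d}$; writing $\bar{Z}_{t+\delta}=\delta^{\frac{1}{2}}Z^{\delta}_{t+\delta}$, a Taylor expansion together with \eqref{eq:hyp_3_Norme_adhoc_fonction_schema} gives
\begin{align*}
\nabla_{x}\psi(X^{\delta}_{t},t,\bar{Z}_{t+\delta},\delta)=I_{d}+\delta^{\frac{1}{2}}\sum_{i=1}^{N}\nabla_{x}\partial_{z^{i}}\psi(X^{\delta}_{t},t,0,0)\,Z^{\delta,i}_{t+\delta}+\delta\,\mathcal{R}_{t},\qquad \|\nabla_{x}\partial_{z^{i}}\psi(\cdot,\cdot,0,0)\|\leqslant \mathfrak{D},
\end{align*}
with $\|\mathcal{R}_{t}\|\leqslant \mathfrak{D}(1+|Z^{\delta}_{t+\delta}|_{\mathbb{R}^{N}}^{c})$. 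The middle term is a conditionally centred increment and will be treated as a discrete martingale (with bracket controlled by $\mathfrak{D}^{2}$ and the moments of $Z^{\delta}$), while only $\mathcal{R}_{t}$ is estimated in absolute value; this is why $\mathfrak{D}^{2}$ and $\mathfrak{M}_{C}(Z^{\delta})$ appear inside the exponential and why no negative power of $\delta$ is produced even though the sums run over $O(T/\delta)$ indices.

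\textbf{Recursions and induction.} First I would differentiate \eqref{eq:schema_general} in $\mbox{\textsc{x}}^{\delta}_{0}$: by the Fa\`a di Bruno formula $\partial^{\alpha}_{\mbox{\textsc{x}}^{\delta}_{0}}X^{\delta}_{t+\delta}=\nabla_{x}\psi(X^{\delta}_{t},t,\bar{Z}_{t+\delta},\delta)\,\partial^{\alpha}_{\mbox{\textsc{x}}^{\delta}_{0}}X^{\delta}_{t}+\mathcal{S}^{\alpha}_{t}$, where $\mathcal{S}^{\alpha}_{t}$ is a universal polynomial in the lower-order derivatives $\partial^{\beta}_{\mbox{\textsc{x}}^{\delta}_{0}}X^{\delta}_{t}$, $|\beta|<|\alpha|$, whose coefficients are derivatives of $\psi$ of order $\leqslant|\alpha|$ evaluated at $(X^{\delta}_{t},t,\bar{Z}_{t+\delta},\delta)$ and hence polynomially bounded in $X^{\delta}_{t}$ and $Z^{\delta}_{t+\delta}$ via \eqref{eq:hyp_1_Norme_adhoc_fonction_schema}. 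Applying $D^{\delta}$ and using the chain rule \eqref{formule_chain_rule} together with \eqref{eq:derivee_Zkj}, the quantities $D^{\delta}_{\beta}\partial^{\alpha}_{\mbox{\textsc{x}}^{\delta}_{0}}X^{\delta}_{t}$ with $\beta\in(\mathbf{T}\times\mathbf{N})^{j}$ satisfy a recursion of the same linear-plus-polynomial-source type; the only new feature is that at each time index $(s,i)$ occurring in $\beta$ the differentiation creates, at step $s$, a source contribution $\chi^{\delta}_{s}\partial_{z^{i}}\psi(X^{\delta}_{s-\delta},s-\delta,\bar{Z}_{s},\delta)$, after which it propagates by $\nabla_{x}\psi$. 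I would then argue by induction on the total order $n=|\alpha|+j$: for fixed top order the equation is linear in the top-order unknown with an explicitly polynomial source built from already-estimated lower-order terms; splitting it into martingale and compensator, applying discrete BDG to the former and Minkowski/H\"older to the latter, inserting the moment bounds of $\sup_{t\in\mathbf{T}}|X^{\delta}_{t}|$ from Lemma \ref{lemme:borne:moments_X} (using $\mathbf{A}_{3}^{\delta}(+\infty)$) to control the polynomial factors, and closing with discrete Gronwall, yields an $L^{p}$ bound for $\sup_{t}|D^{\delta}_{\beta}\partial^{\alpha}_{\mbox{\textsc{x}}^{\delta}_{0}}X^{\delta}_{t}|$. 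Summing the squares over $\beta\in(\mathbf{T}\times\mathbf{N})^{j}$, $j\leqslant q$, with the weights $\delta^{j}$ reconstructs the Sobolev norm; since each $s$-summation of the square of an $O(1)$ increment carries a $\delta^{2}$ and runs over $O(T/\delta)$ indices, it stays bounded uniformly in $\delta$, and one reads off \eqref{eq:borne_Sob_flot_tang}, the indicator $\mathds{1}_{q^{\diamond}=|\alpha|=0}$ coming from the order-zero contribution $X^{\delta}_{t}$ itself, which depends linearly on $\mbox{\textsc{x}}^{\delta}_{0}$, and the loss of two derivatives of $\psi$ from the two innermost chain-rule/expansion steps.

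\textbf{The Ornstein--Uhlenbeck term.} For \eqref{eq:theo_borne_{d}eriv_mall_schema_annex_L} I would write $L^{\delta}_{\mathbf{T}}X^{\delta}_{t}=-\delta\sum_{w\in\mathbf{T}}\sum_{i=1}^{N}\big(D^{\delta}_{(w,i)}X^{\delta}_{t}\,D^{\delta}_{(w,i)}\Gamma^{\delta}_{w}+D^{\delta}_{(w,i)}D^{\delta}_{(w,i)}X^{\delta}_{t}\big)$. The term with the second Malliavin derivative is handled by \eqref{eq:borne_Sob_flot_tang} used with two extra Malliavin orders; for the other one, $D^{\delta}_{(w,i)}\Gamma^{\delta}_{w}=\delta^{-\frac{1}{2}}\chi^{\delta}_{w}\partial_{z^{i}}\ln\varphi_{r_{\ast}/2}(\delta^{-\frac{1}{2}}U^{\delta}_{w}-z_{\ast,w})$, and the key observation is that $U^{\delta}_{w}$ has density proportional to $\varphi_{r_{\ast}/2}(\cdot-z_{\ast,w})$, so that the weight $\varphi_{r_{\ast}/2}$ appearing in \eqref{eq:borne_{d}eriv_log_reg_Zk} is exactly what is needed to bound all Sobolev moments of $\Gamma^{\delta}_{w}$ and its Malliavin derivatives by a power of $1/r_{\ast}$ (and $N$). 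Taking up to $q$ further Malliavin derivatives of the product, distributing them with Lemma \ref{lemme:borne_norm_sob_prod_bis} and Lemma \ref{lemme:borne_norm_sob_comp_bis}, and then estimating each factor as in the previous paragraph, one concludes; the prefactor $\delta$ absorbs the $\delta^{-\frac{1}{2}}$ from $\Gamma^{\delta}$ and the $O(T/\delta)$ summation, and $\mathbf{A}_{1}^{\delta}(q+4)$ is precisely $q$ (Sobolev) $+\,2$ (the two $D^{\delta}$'s) $+\,2$ (the structural loss) derivatives of $\psi$.

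\textbf{Main obstacle.} The principal difficulty is the $\delta$-bookkeeping intertwined with the martingale cancellation: bounding the resolvent $\prod_{w<t}\|\nabla_{x}\psi(X^{\delta}_{w},\cdot,\bar{Z}_{w+\delta},\cdot)\|$ naively yields $\exp\big(C\delta^{\frac{1}{2}}\sum_{w<t}|Z^{\delta}_{w+\delta}|_{\mathbb{R}^{N}}\big)$, which is of order $\exp(CT\delta^{-\frac{1}{2}})$ and explodes as $\delta\to0$, so one must keep the $\delta^{\frac{1}{2}}$-order part of $\nabla_{x}\psi-I_{d}$ under a (conditionally centred) sum and only take absolute values of the $\delta$-order remainder. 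Propagating this cancellation through every order of differentiation and every cross-term of the Fa\`a di Bruno expansions, while simultaneously tracking how the polynomial exponents in $|\mbox{\textsc{x}}^{\delta}_{0}|_{\mathbb{R}^{d}}$ and the required moment order $\mathfrak{M}_{\cdot}(Z^{\delta})$ inflate with the differentiation order, is where essentially all of the lengthy (but otherwise routine) work lies.
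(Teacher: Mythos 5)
Your treatment of the first estimate \eqref{eq:borne_Sob_flot_tang} is, in substance, the paper's own argument: differentiate the scheme in $\mbox{\textsc{x}}^{\delta}_{0}$ and in the Malliavin sense, keep the conditionally centred $\delta^{\frac{1}{2}}Z$-part of $\nabla_{x}\psi-I_{d}$ inside a discrete martingale, apply Burkholder and Gronwall together with Lemma \ref{lemme:borne:moments_X}, and induct on $\vert\alpha\vert$ and the Malliavin order; the paper merely packages this as the generic recursion \eqref{eq:processus_general_Y_sobolev} and Proposition \ref{prop:borne_Sob_generique}, so no objection there.

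The route you propose for the bound on $L^{\delta}_{\mathbf{T}}X^{\delta}_{t}$, however, contains a genuine gap in the $\delta$-bookkeeping. Writing $L^{\delta}_{\mathbf{T}}X^{\delta}_{t}=-\delta\sum_{w\in\mathbf{T}}\sum_{i}\bigl(D^{\delta}_{(w,i)}D^{\delta}_{(w,i)}X^{\delta}_{t}+D^{\delta}_{(w,i)}X^{\delta}_{t}\,D^{\delta}_{(w,i)}\Gamma^{\delta}_{w}\bigr)$ and estimating termwise does not close. Indeed $D^{\delta}_{(w,i)}\Gamma^{\delta}_{w}=\delta^{-\frac{1}{2}}\chi^{\delta}_{w}\partial_{z^{i}}\ln\varphi_{r_{\ast}/2}(\delta^{-\frac{1}{2}}U^{\delta}_{w}-z_{\ast,w})$ has $L^{p}$-size of order $\delta^{-\frac{1}{2}}$ (the weight $\varphi_{r_{\ast}/2}$ in \eqref{eq:borne_{d}eriv_log_reg_Zk} bounds the moments of $\partial_{z}\ln\varphi$, it does not remove the explicit $\delta^{-\frac{1}{2}}$), while $D^{\delta}_{(w,i)}X^{\delta}_{t}=O(1)$; the prefactor $\delta$ over the $NT/\delta$ indices therefore gives $O(T\delta^{-\frac{1}{2}})$, not $O(T)$, contrary to your claim that ``the prefactor $\delta$ absorbs the $\delta^{-\frac{1}{2}}$ and the $O(T/\delta)$ summation''. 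The only way to recover is the cancellation $\mathbb{E}[\partial_{z^{i}}\ln\varphi_{r_{\ast}/2}(\delta^{-\frac{1}{2}}U^{\delta}_{w}-z_{\ast,w})]=0$, but for fixed $t$ this cannot be exploited by a Burkholder argument in $w$, because the coefficient $D^{\delta}_{(w,i)}X^{\delta}_{t}$ depends on $U^{\delta}_{w}$ itself and on the noise after time $w$, hence is not measurable with respect to the past up to $w-\delta$. A similar mismatch affects the pure second-derivative part: \eqref{eq:borne_Sob_flot_tang} controls $\delta^{2}\sum_{(w,i),(w',i')}\vert D^{\delta}_{(w,i)}D^{\delta}_{(w',i')}X^{\delta}_{t}\vert^{2}$, whereas $L^{\delta}$ requires the diagonal sum with weight $\delta$, and passing from one to the other by Cauchy--Schwarz costs another $\delta^{-\frac{1}{2}}$. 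The paper avoids both problems by not using the defining sum at all: it derives a forward recursion in $t$ for $L^{\delta}_{\mathbf{T}}X^{\delta}_{t}$ itself, of the generic form \eqref{eq:processus_general_Y_sobolev}, in which the dangerous contribution enters as $\delta^{\frac{1}{2}}\sum_{i}L^{\delta}_{\mathbf{T}}Z^{\delta,i}_{t+\delta}\,A_{1}^{i}(X^{\delta}_{t},t)$ with a predictable coefficient; Lemma \ref{lemme:estim_LZ} (both $\mathbb{E}[L^{\delta}_{\mathbf{T}}Z^{\delta,i}_{t}]=0$ and the $r_{\ast}^{-(q+1)}$ Sobolev bound, which is where your $1/r_{\ast}$ factor correctly originates) then lets Burkholder produce an $O(\sqrt{T})$ bound uniformly in $\delta$. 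You would need this recursion, or an equivalent decomposition restoring predictability of the coefficients multiplying the centred $\Gamma$-terms, to make the second estimate go through; the remainder of your outline, including the derivative count $q+4$, is consistent with that corrected route.
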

\begin{remark}
This result was obtained in \cite{Bally_Rey_2016} (see Theorem 4.2) in the case $\mathfrak{p}_{r}=0$ for $r$ large enough in the assumption $\mathbf{A}_{1}^{\delta}(r)$ (see (\ref{eq:hyp_1_Norme_adhoc_fonction_schema})).  
\end{remark}

\subsection{Malliavin covariance matrix}

In this Section, we provide an upper bound for the localized moments of the inverse of the Malliavin covariance matrix of $(X^{\delta}_{t})_{t \in \pi^{\delta}}$ defined in (\ref{def:matrice_covariance_Malliavin}). In the statement of our result, we employ the following quantities
\begin{align*}
\overline{\eta}_{1}(\delta) :=& \min(\delta^{-d\frac{44}{91}} , \delta^{-d\frac{44}{91}} \frac{10^{d}}{m_{\ast}^{d} \vert 2^{10} (1+T^{3})  \vert ^{\frac{d}{2}}}), \\
\underline{\eta}_{1} :=&\max(1, \frac{2^{1-\frac{d}{2}}}{d^{-\frac{d}{2}}}, 2(\frac{T \mathcal{V}_{L}(\mbox{\textsc{x}}^{\delta}_{0},0) m_{\ast}}{40(L+1) N^{\frac{L(L+1)}{2}} })^{-dr^{-L}} ,\\
&2\mathbf{1}_{L=0}+2\mathbf{1}_{L>0} \vert m_{\ast}\frac {\vert 2^{8} (1+T) \vert^{-\frac{11}{1-12r}}}{10N^{\frac{L(L-1)}{2}}} \vert^{-dr^{-L+1}} ).
\end{align*}

 \begin{theorem}
  \label{th:borne_Lp_inv_cov_Mal}
Let $T \in \pi^{\delta,\ast}$ and $\mathbf{T}=(0,T] \cap \pi^{\delta}$ and $p \geqslant 0$. Assume that $\eta_{1} \in (\underline{\eta}_{1},\overline{\eta}_{1}(\delta)]$, that $\eta_{2} \in (1,\delta^{-\frac{1}{2}} \eta_{1}^{-\frac{1}{d}}]$ and that $\delta^{\frac{1}{2}} \eta_{2}^{\mathfrak{p}+1} 8  \mathfrak{D}< 1$ (see (\ref{hyp:delta_eta_inversibilite})). Also assume that $\mathbf{A}^{\delta}_{1}(2 L + 5)$ (see (\ref{eq:hyp_1_Norme_adhoc_fonction_schema}) and (\ref{eq:hyp_3_Norme_adhoc_fonction_schema})),  $\mathbf{A}_{2}(L)$ (see (\ref{hyp:loc_hormander})), $\mathbf{A}_{3}^{\delta}(+\infty)$ (see (\ref{eq:hyp:moment_borne_Z})) and $\mathbf{A}_{4}^{\delta}$ (see (\ref{hyp:lebesgue_bounded})) hold. Define also $\mathfrak{q}^{\delta}_{\eta_{2}}  :=     \lceil 1-\frac{\ln(\delta)}{2 \ln(\eta_{2})} \rceil$.  Then
%

 \begin{align}
 \label{eq:borne_Lp_inv_cov_Mal}
 \mathbb{E}[ \vert & \det \gamma^{\delta}
_{X^{\delta}_{T},\mathbf{T}}  \vert^{p} \mathbf{1}_{\Theta_{X^{\delta}_{T},\det(\dot{X}^{\delta}_{T})^{2},\eta,\mathbf{T}}>0} ] \leqslant   \frac{1+ \mathbf{1}_{\mathfrak{p}_{2 L +5}>0}  \vert \mbox{\textsc{x}}^{\delta}_{0} \vert_{\mathbb{R}^{d}}^{C(d,L,p,\mathfrak{p}_{2 L + 5}) }     }{ (  \mathcal{V}_{L}(\mbox{\textsc{x}}^{\delta}_{0})T)^{13^{L}3d(p+4)}}   \mathfrak{D}_{2 L + 5}^{C(d,L,p) }    \\
& \times C(d,N,L,\frac{1}{m_{\ast}},p,\mathfrak{p}_{2 L + 5})  \exp(C(d,L,p,\mathfrak{p}_{2 L + 5}) (1+T) \mathfrak{M}_{C(d,L,p,\mathfrak{p},\mathfrak{p}_{2 L + 5},\mathfrak{q}^{\delta}_{\eta_{2}}) }(Z^{\delta}) \mathfrak{D}^{4}) .\nonumber 
 \end{align}


and, for every $a >0$,
  \begin{align}
\label{eq:th_cov_mal_proba_espace_comp}
 \mathbb{P}(\Theta_{X^{\delta}_{T},\det(\dot{X}^{\delta}_{T})^{2},\eta,\mathbf{T}} & <1) \leqslant  \delta^{-1} T \eta_{2}^{-a}  \mathfrak{M}_{a}(Z^{\delta}) \\
&+ \eta_{1}^{-(p+4)} \frac{1+ \mathbf{1}_{\mathfrak{p}_{2 L +5}>0}  \vert \mbox{\textsc{x}}^{\delta}_{0} \vert_{\mathbb{R}^{d}}^{C(d,L,p,\mathfrak{p}_{2 L + 5}) }  }{ \mathcal{V}_{L}(\mbox{\textsc{x}}^{\delta}_{0})^{13^{L}3d(p+4)}  }     \nonumber \\
& \times   \mathfrak{D}^{C(d,L,p) } \mathfrak{D}_{2 L + 5}^{C(d,L,p) }   \mathfrak{M}_{C(d,L,p,\mathfrak{p},\mathfrak{p}_{2 L + 5}) }(Z^{\delta})  C(d,N,L,\frac{1}{m_{\ast}},p,\mathfrak{p}_{2 L + 5})  \nonumber \\
& \times  \exp(C(d,L,p,\mathfrak{p}_{2 L + 5}) T \mathfrak{M}_{C(d,L,p,\mathfrak{p},\mathfrak{p}_{2 L + 5},\mathfrak{q}^{\delta}_{\eta_{2}}) }(Z^{\delta}) \mathfrak{D}^{4}) . \nonumber
 \end{align}

 \end{theorem}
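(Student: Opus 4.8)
The plan is to reduce both estimates to a single tail bound for the smallest eigenvalue of a \emph{reduced} Malliavin covariance matrix, and then to integrate that tail. Write $\gamma^{\delta}_{X^{\delta}_{T},\mathbf{T}}=(\sigma^{\delta}_{X^{\delta}_{T},\mathbf{T}})^{-1}$, abbreviate $\Theta=\Theta_{X^{\delta}_{T},\det(\dot{X}^{\delta}_{T})^{2},\eta,\mathbf{T}}$, and introduce $C^{\delta}_{T}\coloneqq (\dot{X}^{\delta}_{T})^{-1}\sigma^{\delta}_{X^{\delta}_{T},\mathbf{T}}((\dot{X}^{\delta}_{T})^{-1})^{\top}$, so that $\det\gamma^{\delta}_{X^{\delta}_{T},\mathbf{T}}=(\det C^{\delta}_{T})^{-1}(\det\dot{X}^{\delta}_{T})^{-2}$ and the localizing factor $\Psi_{\eta_{1}}(\det(\dot{X}^{\delta}_{T})^{2}\det\gamma^{\delta}_{X^{\delta}_{T},\mathbf{T}})$ is exactly $\Psi_{\eta_{1}}((\det C^{\delta}_{T})^{-1})$. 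By Cauchy--Schwarz, $\mathbb{E}[|\det\gamma^{\delta}_{X^{\delta}_{T},\mathbf{T}}|^{p}\mathds{1}_{\Theta>0}]\leqslant \mathbb{E}[(\det C^{\delta}_{T})^{-2p}\mathds{1}_{\Theta>0}]^{1/2}\,\mathbb{E}[(\det\dot{X}^{\delta}_{T})^{-4p}]^{1/2}$; the second factor is bounded by $\exp(C(1+T)\mathfrak{M}_{C}(Z^{\delta})\mathfrak{D}^{4})$ by exploiting the linear recursion satisfied by $\dot{X}^{\delta}$ together with the moment estimates underlying Theorem \ref{theo:Norme_Sobolev_borne} (the relevant factors $\partial_{x}\psi$ are $I_{d}+O(\delta^{1/2}|Z|)+O(\delta)$, so $\log\det\dot{X}^{\delta}_{T}$ is a martingale-type sum with $L^{p}$ norm of order $\sqrt{T}$ plus a drift of order $T$). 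Since $C^{\delta}_{T}$ is symmetric nonnegative, $\det C^{\delta}_{T}\geqslant(\lambda^{\delta}_{T})^{d}$ with $\lambda^{\delta}_{T}\coloneqq \inf_{|\xi|_{\mathbb{R}^{d}}=1}\langle C^{\delta}_{T}\xi,\xi\rangle_{\mathbb{R}^{d}}$, so everything reduces to a tail bound for $\lambda^{\delta}_{T}$ on $\{\Theta>0\}$.

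\textbf{The core tail estimate.} Fix $\xi$ with $|\xi|_{\mathbb{R}^{d}}=1$. Using the representation of $D^{\delta}_{(t,i)}X^{\delta}_{T}$ through the tangent flow (which, up to Taylor remainders in the third and fourth variables of $\psi$, is $\dot{X}^{\delta}_{T}(\dot{X}^{\delta}_{t})^{-1}V_{i}(X^{\delta}_{t},t)$ times $\delta^{1/2}\chi^{\delta}_{t}$), one gets
\[
\langle C^{\delta}_{T}\xi,\xi\rangle_{\mathbb{R}^{d}}\geqslant c\,\delta\sum_{t\in\mathbf{T}}\sum_{i=1}^{N}\big|\langle(\dot{X}^{\delta}_{t})^{-1}V_{i}(X^{\delta}_{t},t),\xi\rangle_{\mathbb{R}^{d}}\big|^{2}-\mathrm{Rem},
\]
where, on $\{\Theta>0\}$, the remainder $\mathrm{Rem}$ is controlled by the cutoffs $\Psi_{\eta_{2}}(|Z^{\delta}_{w}|_{\mathbb{R}^{N}})$ (which force $|Z^{\delta}_{w}|\leqslant\eta_{2}$, hence bound the increments) together with $\mathbf{A}^{\delta}_{1}(2L+5)$. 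The heart of the argument is a discrete Norris-type lemma: a process of the form $R^{\delta}_{t+\delta}=R^{\delta}_{t}+\delta b^{\delta}_{t}+\delta^{1/2}\sum_{i}c^{\delta,i}_{t}Z^{\delta,i}_{t+\delta}$ whose quadratic mean $\delta\sum_{t}|R^{\delta}_{t}|^{2}$ over a window of length $\gg\delta$ is small must, on $\Lambda_{\mathbf{T}}$ and under the $\Psi_{\eta_{2}}$-cutoffs, have $\delta\sum_{t}(|b^{\delta}_{t}|^{2}+\sum_{i}|c^{\delta,i}_{t}|^{2})$ small as well, with an explicit polynomial loss. Applying this iteratively at most $L$ times to $t\mapsto\langle(\dot{X}^{\delta}_{t})^{-1}V^{[\alpha]}(X^{\delta}_{t},t),\xi\rangle_{\mathbb{R}^{d}}$ — whose discrete increments reproduce, up to further remainders, the brackets in the recursion $V^{[(\alpha,0)]}$, $V^{[(\alpha,j)]}$ defined just before (\ref{hyp:loc_hormander}) — smallness of $\langle C^{\delta}_{T}\xi,\xi\rangle_{\mathbb{R}^{d}}$ propagates to $\delta\sum_{t\in\mathbf{T}}\sum_{i}\sum_{\Vert\alpha\Vert\leqslant L}|\langle(\dot{X}^{\delta}_{t})^{-1}V^{[\alpha]}_{i}(X^{\delta}_{t},t),\xi\rangle_{\mathbb{R}^{d}}|^{2}$. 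A Taylor expansion near $t=0$ (again absorbed by the cutoffs) and $\dot{X}^{\delta}_{0}=I_{d}$ then force this last quantity at $t=0$ to be small, contradicting $\mathcal{V}_{L}(\mbox{\textsc{x}}^{\delta}_{0})>0$ unless the threshold exceeds a quantity of the form $\mathcal{V}_{L}(\mbox{\textsc{x}}^{\delta}_{0})(T\wedge1)^{\kappa}$ divided by exponential-in-$T$ moment factors. This yields, for every $Q\in\mathbb{N}$ and every $\epsilon>0$,
\[
\mathbb{P}\big(\langle C^{\delta}_{T}\xi,\xi\rangle_{\mathbb{R}^{d}}\leqslant\epsilon,\ \Theta_{\eta_{2},\mathbf{T}}=1,\ \Lambda_{\mathbf{T}}\big)\leqslant\Big(\frac{\epsilon\,(1+|\mbox{\textsc{x}}^{\delta}_{0}|_{\mathbb{R}^{d}}^{C})}{(\mathcal{V}_{L}(\mbox{\textsc{x}}^{\delta}_{0})T)^{13^{L}}}\Big)^{Q}\mathfrak{D}_{2L+5}^{C}\exp\big(C(1+T)\mathfrak{M}_{C}(Z^{\delta})\mathfrak{D}^{4}\big),
\]
with $C$ depending on $Q$; the constraints on $\eta_{1},\eta_{2}$ in the hypotheses (i.e. \ref{hyp:hyp_5_loc_var}) are exactly what keeps $\{\Theta_{\eta_{2},\mathbf{T}}=1\}\cap\Lambda_{\mathbf{T}}$ above the time scale below which the discrete Norris lemma fails.

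\textbf{Passing to the infimum and assembling.} To go from a fixed $\xi$ to $\lambda^{\delta}_{T}$ one covers $S^{d-1}$ by $O(\rho^{-d})$ balls of radius $\rho$ and uses that $\xi\mapsto\langle C^{\delta}_{T}\xi,\xi\rangle_{\mathbb{R}^{d}}$ is Lipschitz with constant $2\Vert C^{\delta}_{T}\Vert_{\mathbb{R}^{d}}$, whose moments are controlled by Theorem \ref{theo:Norme_Sobolev_borne}; choosing $\rho$ a suitable power of $\epsilon$ gives the same bound for $\mathbb{P}(\lambda^{\delta}_{T}\leqslant\epsilon,\Theta_{\eta_{2},\mathbf{T}}=1,\Lambda_{\mathbf{T}})$. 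Estimate (\ref{eq:borne_Lp_inv_cov_Mal}) then follows from $\mathbb{E}[(\det C^{\delta}_{T})^{-2p}\mathds{1}_{\Theta>0}]\leqslant\mathbb{E}[(\lambda^{\delta}_{T})^{-2pd}\mathds{1}_{\Theta>0}]=\int_{0}^{\infty}2pd\,s^{2pd-1}\mathbb{P}(\lambda^{\delta}_{T}\leqslant s^{-1},\Theta>0)\,ds$: on $\{\Theta>0\}$ one has $(\det C^{\delta}_{T})^{-1}<\eta_{1}$, hence $\lambda^{\delta}_{T}\geqslant\eta_{1}^{-1}\Vert C^{\delta}_{T}\Vert_{\mathbb{R}^{d}}^{-(d-1)}$, which bounds the integrand for large $s$ via the moments of $\Vert C^{\delta}_{T}\Vert_{\mathbb{R}^{d}}$, while for moderate $s$ the core tail estimate with $Q$ chosen $>2pd$ is used; tracking the $L$ iterations of the Norris step produces precisely the exponent $13^{L}3d(p+6)$ on $\mathcal{V}_{L}(\mbox{\textsc{x}}^{\delta}_{0})T$ and the stated dependence of $C$. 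For (\ref{eq:th_cov_mal_proba_espace_comp}) one writes the union bound
\[
\mathbb{P}(\Theta<1)\leqslant\mathbb{P}(\Theta_{\eta_{2},\mathbf{T}}<1)+\mathbb{P}(\Omega\setminus\Lambda_{\mathbf{T}})+\mathbb{P}\big((\det C^{\delta}_{T})^{-1}>\eta_{1}-\tfrac12,\ \Theta_{\eta_{2},\mathbf{T}}=1,\ \Lambda_{\mathbf{T}}\big),
\]
where the first term is $\leqslant\delta^{-1}T\,\eta_{2}^{-a}\mathfrak{M}_{a}(Z^{\delta})2^{a}$ by Markov's inequality over $w\in\mathbf{T}$, the second is $\leqslant\exp(-m_{\ast}^{2}|\mathbf{T}|/2)$ by Hoeffding's inequality and is absorbed into the exponential factor, and the third is $\leqslant\mathbb{P}(\lambda^{\delta}_{T}\leqslant(2\eta_{1}^{-1})^{1/d},\Theta_{\eta_{2},\mathbf{T}}=1,\Lambda_{\mathbf{T}})$, to which the core estimate applies with $\epsilon=(2\eta_{1}^{-1})^{1/d}$ and $Q$ selected so that $\epsilon^{Q}\sim\eta_{1}^{-(p+4)}$.

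\textbf{Main obstacle.} The delicate point is the discrete Norris-type lemma together with the bookkeeping of the Taylor remainders separating $\psi$ from its linearization at the critical time scale $\delta$: one must show that on $\{\Theta_{\eta_{2},\mathbf{T}}=1\}\cap\Lambda_{\mathbf{T}}$ the passage from $\langle C^{\delta}_{T}\xi,\xi\rangle$ to the drift and bracket directions, and from time $t$ down to $t=0$, loses only a controlled power of $\delta$, precisely calibrated so that the threshold $\epsilon$ for which the estimate is valid stays above the scales $\eta_{1}^{-1/d}$ and $\delta^{1/2}\eta_{2}$ fixed in \ref{hyp:hyp_5_loc_var}. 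Carrying this loss explicitly through the $L$ iterations is exactly what produces the exponent $13^{L}3d(p+6)$ and the factor $\exp(C(1+T)\mathfrak{M}_{C}(Z^{\delta})\mathfrak{D}^{4})$ in the final bounds.
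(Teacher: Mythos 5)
Your proposal follows essentially the same route as the paper's proof: reduction to the reduced covariance $\tilde{\sigma}^{\delta}_{X^{\delta}_{T},\mathbf{T}}$ via the tangent flow and the variation-of-constants formula, a covering/operator-norm argument (the paper's Lemma \ref{lemme:inversion_sup_Proba}) to pass from a fixed direction to the smallest eigenvalue, an iteration over the Lie-bracket hierarchy driven by a discrete Norris lemma together with a small-time comparison to $V_i^{[\alpha]}(\mbox{\textsc{x}}^{\delta}_{0},0)$ under the local H\"ormander condition, tail integration using $\det\tilde{\sigma}>\eta_1^{-1}$ on $\{\Theta>0\}$, and the same Markov/Hoeffding union bound for (\ref{eq:th_cov_mal_proba_espace_comp}). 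The ingredients you defer are exactly the paper's Lemmas \ref{lem:Norris}, \ref{lemme:dvpt_Lie} and \ref{lemme:borne_flt_tangent} (where, note, the moment bound for $(\det\dot{X}^{\delta}_{T})^{-1}$ only holds with the indicator $\mathds{1}_{\Theta_{\eta_{2},\mathbf{T}}>0}$ retained in your Cauchy--Schwarz factor, since invertibility of $\nabla_{x}\psi$ requires the cutoff on $Z^{\delta}$), so keep that localization when you carry out the estimate.
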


\begin{remark} We have the following observations concerning the result above.
\begin{enumerate}
\item The terms $13^{L}$ in the $r.h.s.$ of both (\ref{eq:borne_Lp_inv_cov_Mal}) and (\ref{eq:th_cov_mal_proba_espace_comp}) can be replaced by $(12+a)^{L}$, $a>0$, but the miscellaneous constants $C(.)$ may explode when $a$ tends to zero or to infinity.
\item When the uniform H\"ormander hypothesis $\mathbf{A}_{2}^{\infty}(L)$ (see (\ref{hyp:loc_hormander})) holds,  the estimates (\ref{eq:borne_Lp_inv_cov_Mal}) and (\ref{eq:th_cov_mal_proba_espace_comp}) can be improved.  In particular the term $(T \mathcal{V}_{L}(\mbox{\textsc{x}}^{\delta}_{0}))^{-13^{L}3d(p+4)}$ in the $r.h.s.$ of (\ref{eq:borne_Lp_inv_cov_Mal}) may be replaced by $(  \mathcal{V}_{L}^{\infty} T )^{-13^{L} dp  }$ and  $\mathcal{V}_{L}(\mbox{\textsc{x}}^{\delta}_{0})^{-13^{L}3d(p+4)}$ may be replaced by 1 in the $r.h.s.$ of (\ref{eq:th_cov_mal_proba_espace_comp}) In this uniform elliptic setting ($L=0$) we thus recover the results from \cite{Bally_Rey_2016} Proposition 4.4.
\end{enumerate}
\end{remark}

\subsection{Proof of Theorem \ref{theo:Norme_Sobolev_borne}}

We begin by introducing for every $(x,t,z,y) \in \mathbb{R}^{d} \times \pi^{\delta} \times \mathbb{R}^{N} \times [0,1]$ and $(i,j) \in \{1,\ldots,N\}$, 

\begin{align}
\label{def:notation_{d}vpt_X}
& A_{1}^{i}(x,t) =  \partial_{z^{i}} \psi(x,t,0,0) , \quad A_{2}^{i,j}(x,t,z) = \int_{0}^{1} (1-\lambda)  \partial_{z^{i}} \partial_{z^{j}} \psi(x,t,\lambda z,0) \mbox{d} \lambda \\
& A_{3}(x,t,z,y) = \int_{0}^{1}\partial_{y} \psi(x,t,z,\lambda y) \mbox{d} \lambda  \nonumber
\end{align}
We will also denote $A_{1} :=   (A_{1}^{i})_{i \in \mathbf{N}}$ and $A_{2} :=   (A_{2}^{i,j})_{i,j \in \mathbf{N}^{2}}$. Before we treat the Sobolev norms of $X^{\delta}$ and $L^{\delta}_{\mathbf{T}}X^{\delta}$ we establish some preliminary results. The first one gives an estimate of the Sobolev norms of $L^{\delta}_{\mathbf{T}}Z^{\delta}$.

\begin{lemme}
\label{lemme:estim_LZ}
Let $\mathbf{T} \subset\pi^{\delta,\ast}$ and $t \in \pi^{\delta}$, $t>0$. We have the following properties.
\begin{enumerate}[label=\textbf{\Alph*.}]
\item \label{lemme:estim_LZ_A} For every $i=1,\ldots,N$, we have
\begin{align}
\mathbb{E}[L^{\delta}_{\mathbf{T}}Z^{\delta,i}_{t}]=0 . \label{eq:esperance_LH}
\end{align}%
\item \label{lemme:estim_LZ_B} Assume that (\ref{eq:borne_{d}eriv_log_reg_Zk}) holds for $v=\frac{r_{\ast}}{2}$. Then, for every $q\in \mathbb{N}$ and $p\geqslant 1$, 
\begin{align}
\Vert L^{\delta}_{\mathbf{T}}Z^{\delta}_{t} \Vert _{\mathbb{R}^{N},\delta,\mathbf{T},q,p}\leqslant  \frac{C(N,p,q) m_{\ast}^{\frac{1}{p}} }{r_{\ast}^{q+1}} \mathbf{1}_{t \in \mathbf{T}}.\label{eq:deriv_LZ}
\end{align}
\end{enumerate}
\end{lemme}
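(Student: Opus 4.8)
The plan is to treat the two parts separately, both being direct computations based on the explicit splitting representation of $Z^\delta_t$ and the key derivative estimate (\ref{eq:borne_{d}eriv_log_reg_Zk}) for $\varphi_{r_\ast/2}$. For part \ref{lemme:estim_LZ_A}, I would start from the identity (\ref{eq:L_Zkj}), namely $L^\delta_{\mathbf{T}}Z^{\delta,i}_t = \chi^\delta_t\,\partial_{z^i}\ln\varphi_{r_\ast/2}(\delta^{-\frac12}U^\delta_t - z_{\ast,t})\mathds{1}_{t\in\mathbf{T}}$. Since $\chi^\delta_t$ is independent of $U^\delta_t$ with $\mathbb{E}[\chi^\delta_t]=m_\ast$, and since $\delta^{-\frac12}U^\delta_t$ has density $\frac{\varepsilon_\ast}{m_\ast}\varphi_{r_\ast/2}(z-z_{\ast,t})$, the expectation factors as a constant times $\int_{\mathbb{R}^N}\partial_{z^i}\varphi_{r_\ast/2}(z-z_{\ast,t})\,\mathrm dz$, which vanishes because $\varphi_{r_\ast/2}\in\mathcal{C}^\infty_b$ has compact support (supported in $\{|z|\le r_\ast\}$), so the integral of a partial derivative over $\mathbb{R}^N$ is zero. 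This gives (\ref{eq:esperance_LH}).

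For part \ref{lemme:estim_LZ_B}, I would again use (\ref{eq:L_Zkj}) together with the formula (\ref{eq:derivee_Zkj}) for the Malliavin derivatives of $Z^\delta$, which propagate through to derivatives of $L^\delta_{\mathbf{T}}Z^{\delta,i}_t$: each application of $D^\delta_{(s,j)}$ to $\chi^\delta_t\,\partial_{z^i}\ln\varphi_{r_\ast/2}(\delta^{-\frac12}U^\delta_t-z_{\ast,t})$ brings down a factor $\delta^{-\frac12}\chi^\delta_t$ and an extra $z$-derivative of $\ln\varphi_{r_\ast/2}$, localized at $s=t$. Hence $\vert L^\delta_{\mathbf{T}}Z^\delta_t\vert^2_{\mathbb{R}^N,\delta,\mathbf{T},q}$ is, up to combinatorial constants, a sum over $1\le|\alpha^z|\le q+1$ of $\delta^{j}\delta^{-j}|\partial_z^{\alpha^z}\ln\varphi_{r_\ast/2}|^2$ times $\chi^\delta_t$ (the powers of $\delta$ cancel), times $\mathds{1}_{t\in\mathbf{T}}$. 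Taking the $p/2$-th power and then expectation, I use $\chi^\delta_t\le 1$ and $(\chi^\delta_t)^k=\chi^\delta_t$, so $\mathbb{E}[(\chi^\delta_t)^{\text{power}}]=m_\ast$; conditioning on $\chi^\delta_t=1$, the law of $\delta^{-\frac12}U^\delta_t$ is $\frac{\varepsilon_\ast}{m_\ast}\varphi_{r_\ast/2}(\cdot-z_{\ast,t})\,\mathrm dz$, and the bound (\ref{eq:borne_{d}eriv_log_reg_Zk}) with $v=r_\ast/2$, $p$ replaced by the appropriate exponent, yields $\big(\sum_{1\le|\alpha^z|\le q+1}|\partial_z^{\alpha^z}\ln\varphi_{r_\ast/2}|^2\big)^{p/2}\varphi_{r_\ast/2}\le C(q,p)N^{pq/4} r_\ast^{-pq}$ pointwise (noting $\varphi_{r_\ast/2}\le1$ on the support, so one power of $\varphi$ suffices to absorb the weight; and $\int\varphi_{r_\ast/2}\le C(N)r_\ast^N$). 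Collecting the $\varepsilon_\ast/m_\ast$ factor and the $\mathbb{E}[\chi^\delta_t]=m_\ast$ factor gives $\mathbb{E}[\cdots]\le m_\ast\cdot\frac{C(N,q,p)}{r_\ast^{p(q+1)}}\mathds{1}_{t\in\mathbf{T}}$; taking $p$-th roots produces the claimed $\frac{C(N,p,q)m_\ast^{1/p}}{r_\ast^{q+1}}\mathds{1}_{t\in\mathbf{T}}$, where I also need to add the $q=0$ term $\vert L^\delta_{\mathbf{T}}Z^\delta_t\vert_{\mathbb{R}^N}$ itself which is handled the same way by the $|\alpha^z|=1$ case of (\ref{eq:borne_{d}eriv_log_reg_Zk}).

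The only mild subtlety — and the step I would be most careful with — is the exact bookkeeping of the powers of $\delta$ in the Sobolev norm $\vert\cdot\vert_{\mathbb{R}^N,\delta,\mathbf{T},q}$: each $j$-fold Malliavin derivative carries a factor $\delta^{j}$ in the norm, while each $D^\delta_{(s,i)}$ applied to a function of $\delta^{-\frac12}U^\delta_t$ produces $\delta^{-\frac12}\chi^\delta_t$ (from (\ref{eq:derivee_Zkj}) and the chain rule), so $\delta^{j}\cdot(\delta^{-\frac12})^{2j}=1$, confirming that no stray powers of $\delta$ survive and the bound is uniform in $\delta$. Once this cancellation is verified, the estimate is immediate from (\ref{eq:borne_{d}eriv_log_reg_Zk}) and the structure of $\mathbf{A}_4^\delta$; no further obstacle is expected.
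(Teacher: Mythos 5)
Your proposal is correct and follows essentially the same route as the paper: for \ref{lemme:estim_LZ_B} you reproduce the paper's computation (iterated derivatives of $L^{\delta}_{\mathbf{T}}Z^{\delta,i}_{t}$ via (\ref{eq:L_Zkj}), cancellation of the $\delta^{j}\cdot\delta^{-j}$ factors, conditioning on the law of $\delta^{-\frac12}U^{\delta}_{t}$, the bound (\ref{eq:borne_{d}eriv_log_reg_Zk}) on the annulus $\frac{r_\ast}{2}\leqslant \vert u\vert\leqslant r_\ast$, and finally $\varepsilon_\ast r_\ast^{N}\leqslant C(N)m_\ast$, a replacement you should state explicitly when "collecting" the factors, since $\frac{\varepsilon_\ast}{m_\ast}\cdot m_\ast=\varepsilon_\ast$ alone does not yield the $m_\ast^{1/p}$ in the claimed bound). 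For \ref{lemme:estim_LZ_A} you argue by direct computation (the integral over $\mathbb{R}^{N}$ of $\partial_{z^{i}}\varphi_{r_\ast/2}$ vanishes), whereas the paper simply invokes the duality relation (\ref{formule_{d}ualite}) with $G=1$; these are the same argument in substance, as the duality formula is itself proved by exactly this integration by parts.
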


\begin{proof}
We prove \ref{lemme:estim_LZ_A}. Using the duality relation (\ref{formule_{d}ualite}) with $\mathcal{H}=\mathbb{R}$, we obtain immediatly $\mathbb{E}[
L^{\delta}_{\mathbf{T}}Z^{\delta,i}_{t}]=\sum_{(w,j) \in \in \mathbf{T} \times \mathbf{N}}\mathbb{E}[D^{\delta}_{(w,j)}1 D ^{\delta}_{(w,j)} Z^{\delta,i}_{t}]=0.$ In order to
prove \ref{lemme:estim_LZ_B} we recall (see (\ref{eq:L_Zkj}))\ that 

\begin{align*}
L^{\delta}_{\mathbf{T}}Z^{\delta,i}_{t} =&  \chi^{\delta}_{t} \partial_{z^{i}} \ln \varphi_{r_{\ast }/2} ( \delta^{-\frac{1}{2}}  U^{\delta}_{t}-z_{\ast ,t}) \mathbf{1}_{t \in \mathbf{T}} 
\end{align*}
and
\begin{align*}
L^{\delta}_{\mathbf{T}}Z^{\delta}_{t} =&\chi^{\delta}_{t} \nabla_{z} \ln \varphi_{r_{\ast }/2} ( \delta^{-\frac{1}{2}} U^{\delta}_{t}-z_{\ast ,t}) \mathbf{1}_{t \in \mathbf{T}} .
\end{align*}


For a multi-index $\alpha =(\alpha^{1},\ldots,\alpha^{q})$ with $\alpha^{j}=(t_{j},i_{j})$, $t_{j} \in \pi^{\delta}, t_{j}>0$, $i_{j} \in \{1,\ldots,N\}$,
\begin{align*}
D^{\delta}_{\alpha} L^{\delta}_{\mathbf{T}}Z^{\delta,i}_{t}=\delta^{-\frac{\vert \alpha \vert}{2}} \chi^{\delta}_{t} \partial_{u}^{\alpha^{u}_{i}} \ln \varphi_{r_{\ast }/2} (\delta^{-\frac{1}{2}} U^{\delta}_{t}-z_{\ast ,t}) \mathbf{1}_{t \in \mathbf{T}} \mathbf{1}_{ \cap_{j=1}^q \{t = t_j\}}
\end{align*}
with $\alpha^{u}_{i} :=   ((\alpha^{u}_{i})^{j})_{j \in \mathbf{N}}$, $(\alpha^{u}_{i})^{j}= \mathbf{1}_{i=j}+\sum_{l=1}^{q} \mathbf{1}_{i_{l}=j}$.  In particular,
\begin{align*}
\sum_{\underset{j \leqslant q}{\alpha \in (\mathbf{T} \times \mathbf{N}})^{j}}  \delta^{j} \vert   D^{\delta}_{\alpha}L^{\delta}_{\mathbf{T}}Z^{\delta}_{t} \vert_{\mathbb{R}^{N}}^{2} = & \chi^{\delta}_{t} \sum_{\underset{\vert \alpha^{u} \vert \in \{1,\ldots, q+1\}}{\alpha^{u} \in \mathbb{N}^{N}}}  \vert \partial_{u}^{\alpha^{u}}  \ln \varphi_{r_{\ast }/2} ( \delta^{-\frac{1}{2}} U^{\delta}_{t}-z_{\ast ,t})  \vert^{2}  \mathbf{1}_{t \in \mathbf{T}} 
\end{align*}
Since the function $\varphi _{r_{\ast }/2}$ is constant on $ B_{r_{\ast}/2}(0)$ and on $\mathbb{R}^{d} \setminus \overline{B}_{r_{\ast}}(0) $, using (\ref{eq:borne_{d}eriv_log_reg_Zk}), we obtain 
\begin{align*}
\mathbb{E}[ \vert & \sum_{\underset{j \leqslant q}{\alpha \in (\mathbf{T} \times \mathbf{N}})^{j}}    \delta^{j} \vert  D_{\alpha}L^{\delta}_{\mathbf{T}}Z^{\delta}_{t} \vert_{\mathbb{R}^{N}}^{2} \vert^{\frac{p}{2}}] \\
= & \mathbf{1}_{t \in \mathbf{T}} \frac{\varepsilon_{\ast}  \mathbb{E}\left[ \vert \chi^{\delta}_{t}  \vert^{p} \right]}{m_{\ast}}\int_{ \mathbb{R}^{N}} \vert  \sum_{\underset{\vert \alpha^{u} \vert \in \{1,\ldots, q+1\}}{\alpha^{u} \in \mathbb{N}^{N}}} \vert \partial_{u}^{\alpha^{u}}  \ln \varphi_{\frac{r_{\ast }}{2}} ( \delta^{-\frac{1}{2}} u-z_{\ast ,t})  \vert^{2} \vert^{\frac{p}{2}} \delta^{\frac{N}{2}} \varphi _{\frac{r_{\ast }}{2}}( \delta^{-\frac{1}{2}} u-z_{\ast ,t}) du \\
=& \mathbf{1}_{t \in \mathbf{T}}  \varepsilon_{\ast} \int_{r_{\ast}/2 \leqslant \vert u \vert  \leqslant r_{\ast}}  \vert \sum_{\underset{\vert \alpha^{u} \vert \in \{1,\ldots, q+1\}}{\alpha^{u} \in \mathbb{N}^{N}}}   \vert \partial_{u}^{\alpha^{u}}  \ln \varphi_{\frac{r_{\ast }}{2}} (u)  \vert^{2} \vert^{\frac{p}{2}} \varphi _{\frac{r_{\ast }}{2}}(u) du \\
 \leqslant & \frac{C(N,p,q) \delta^{\frac{p}{2}} \varepsilon_{\ast}  \vert \pi^{\frac{1}{2}} r_{\ast} \vert^{N}}{r_{\ast}^{p(q+1)}} \mathbf{1}_{t \in \mathbf{T}}.
\end{align*}
In order to derive (\ref{eq:deriv_LZ}), we observe that $m_{\ast} \geqslant  \varepsilon_{\ast} \lambda_{\mbox{Leb}}(B(0, \frac{r_{\ast}}{2} ))$ so that $  \varepsilon_{\ast} \vert \pi^{\frac{1}{2}} \frac{r_{\ast}}{2} \vert^{N} \leqslant C m_{\ast}$.
\end{proof}

Now, we establish a bound on the moments of $(X^{\delta}_{t})_{t \in \pi^{\delta}}$.

\begin{lemme}
\label{lemme:borne:moments_X}
Let $T>0$, $\mathbf{T}=[0,T] \cap \pi^{\delta}$ and $p \geqslant 1$.  Assume that $\mathbf{A}_{1}^{\delta}(2)$ (see (\ref{eq:hyp_1_Norme_adhoc_fonction_schema}) and (\ref{eq:hyp_3_Norme_adhoc_fonction_schema})) and $\mathbf{A}_{3}^{\delta}((\mathfrak{p}+1) (p \vee 2))$ (see (\ref{eq:hyp:moment_borne_Z})) hold.  Then,

\begin{align}
\label{eq:borne_moment_X}
\mathbb{E}[\sup_{t \in \mathbf{T}}\vert X^{\delta}_{t} \vert_{\mathbb{R}^{d}}^{p} ]^{\frac{1}{p}} \leqslant &  (1 + \vert \mbox{\textsc{x}}^{\delta}_{0} \vert_{\mathbb{R}^{d}} )  \exp(C(p) T \mathfrak{D}^{\frac{2}{p} \vee 1} \mathfrak{M}_{(\mathfrak{p}+1) (p \vee 2)}(Z^{\delta})^{\frac{1}{p}}  ) .
\end{align}

\end{lemme}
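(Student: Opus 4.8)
The plan is to expand one step of the recursion \eqref{eq:schema_general} by Taylor's formula and then run a discrete Gronwall argument; the one subtle point is that the one‑step increment contains a term of size $\delta^{1/2}$ which must be made to cancel by conditioning on the past. First I would write, using a Taylor expansion of $\psi(x,t,z,y)$ in $(z,y)$ about $(0,0)$ and the normalisation $\psi(x,t,0,0)=x$, that $X^{\delta}_{t+\delta}=X^{\delta}_{t}+\delta^{1/2}\sum_{i\in\mathbf{N}}A_{1}^{i}(X^{\delta}_{t},t)Z^{\delta,i}_{t+\delta}+R^{\delta}_{t}$, with $A_{1}^{i}=\partial_{z^{i}}\psi(\cdot,\cdot,0,0)$ and $R^{\delta}_{t}=\delta\sum_{i,j\in\mathbf{N}}A_{2}^{i,j}(X^{\delta}_{t},t,\delta^{1/2}Z^{\delta}_{t+\delta})Z^{\delta,i}_{t+\delta}Z^{\delta,j}_{t+\delta}+\delta A_{3}(X^{\delta}_{t},t,\delta^{1/2}Z^{\delta}_{t+\delta},\delta)$, where $A_{1},A_{2},A_{3}$ are the coefficients introduced at the start of the proof of Theorem~\ref{theo:Norme_Sobolev_borne}; the scaling $z=\delta^{1/2}Z^{\delta}_{t+\delta}$ is chosen exactly so that $\delta^{-\mathfrak p/2}\lvert z\rvert^{\mathfrak p}=\lvert Z^{\delta}_{t+\delta}\rvert^{\mathfrak p}$. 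Because $Z^{\delta}_{t+\delta}$ is centred and independent of $\mathcal{F}^{X^{\delta}}_{t}$, the linear term has vanishing conditional expectation, $\mathbb{E}[\sum_{i}A_{1}^{i}(X^{\delta}_{t},t)Z^{\delta,i}_{t+\delta}\mid\mathcal{F}^{X^{\delta}}_{t}]=0$. The growth bounds come from $\mathbf{A}_{1}^{\delta}(2)$: evaluating \eqref{eq:hyp_1_Norme_adhoc_fonction_schema} (with $r=2$) at $x=0$ controls $A_{1}(0,\cdot),A_{2}(0,\cdot,\cdot),A_{3}(0,\cdot,\cdot,\cdot)$, while \eqref{eq:hyp_3_Norme_adhoc_fonction_schema} bounds $\nabla_{x}A_{1},\nabla_{x}A_{2},\nabla_{x}A_{3}$ by $\mathfrak D(1+\delta^{-\mathfrak p/2}\lvert z\rvert^{\mathfrak p})$, so by the mean value theorem $A_{1},A_{2},A_{3}$ grow at most linearly in $x$ — this is precisely the point of hypothesis \eqref{eq:hyp_3_Norme_adhoc_fonction_schema}. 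Using $\mathfrak D\geqslant\mathfrak D_{2}$, $\mathfrak p\geqslant\mathfrak p_{2}$ and $\delta\leqslant 1$, this yields $\sum_{i}\lvert A_{1}^{i}(X^{\delta}_{t},t)\rvert^{2}\leqslant C(d,N)\mathfrak D^{2}(1+\lvert X^{\delta}_{t}\rvert^{2})$ and $\lvert R^{\delta}_{t}\rvert\leqslant C(d,N)\mathfrak D\,\delta\,(1+\lvert X^{\delta}_{t}\rvert)(1+\lvert Z^{\delta}_{t+\delta}\rvert^{\mathfrak p+2})$.

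For $p=2$ I would square the one‑step identity and take $\mathbb{E}[\,\cdot\mid\mathcal{F}^{X^{\delta}}_{t}]$: the cross term $2\delta^{1/2}\langle X^{\delta}_{t},\sum_{i}A_{1}^{i}(X^{\delta}_{t},t)Z^{\delta,i}_{t+\delta}\rangle$ disappears by the centring above, and the surviving contributions — $\mathbb{E}[\langle X^{\delta}_{t},R^{\delta}_{t}\rangle\mid\cdot]$, $\delta\,\mathbb{E}[\lvert\sum_{i}A_{1}^{i}(X^{\delta}_{t},t)Z^{\delta,i}_{t+\delta}\rvert^{2}\mid\cdot]=\delta\sum_{i}\lvert A_{1}^{i}(X^{\delta}_{t},t)\rvert^{2}$, and $\mathbb{E}[\lvert R^{\delta}_{t}\rvert^{2}\mid\cdot]$ — are all of order $\delta$, using that $\mathbf{A}_{3}^{\delta}(2(\mathfrak p+1))$ makes the needed moments of $Z^{\delta}$ finite. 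This gives $\mathbb{E}[1+\lvert X^{\delta}_{t+\delta}\rvert^{2}\mid\mathcal{F}^{X^{\delta}}_{t}]\leqslant(1+\lvert X^{\delta}_{t}\rvert^{2})(1+C(d,N)\mathfrak D^{2}\delta\,\mathfrak M_{2(\mathfrak p+1)}(Z^{\delta}))$; iterating over the $T/\delta$ times of $\mathbf{T}$ and using $(1+c\delta)^{T/\delta}\leqslant e^{cT}$ yields $\mathbb{E}[1+\lvert X^{\delta}_{T}\rvert^{2}]\leqslant(1+\lvert\mbox{\textsc{x}}^{\delta}_{0}\rvert^{2})\exp(C(d,N)T\mathfrak D^{2}\mathfrak M_{2(\mathfrak p+1)}(Z^{\delta}))$. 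To pass to the supremum I would use the decomposition $X^{\delta}_{t}=\mbox{\textsc{x}}^{\delta}_{0}+M^{\delta}_{t}+\sum_{s<t}R^{\delta}_{s}$ with $M^{\delta}_{t}=\delta^{1/2}\sum_{s<t}\sum_{i}A_{1}^{i}(X^{\delta}_{s},s)Z^{\delta,i}_{s+\delta}$ an $\mathcal{F}^{X^{\delta}}$‑martingale: Doob's $L^{2}$ maximal inequality bounds $\mathbb{E}[\sup_{t}\lvert M^{\delta}_{t}\rvert^{2}]$ by $\delta\sum_{s}\mathbb{E}[\sum_{i}\lvert A_{1}^{i}(X^{\delta}_{s},s)\rvert^{2}]$, which was just estimated, and Minkowski's inequality together with the bound on $R^{\delta}_{s}$ controls $\mathbb{E}[(\sum_{s<t}\lvert R^{\delta}_{s}\rvert)^{2}]$; the resulting polynomial‑in‑$(T,\mathfrak D,\mathfrak M)$ prefactors are absorbed into the exponential using $\mathfrak D,\mathfrak M\geqslant 1$, giving \eqref{eq:borne_moment_X} for $p=2$.

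For $p>2$ the same scheme applies to $(1+\lvert X^{\delta}_{t+\delta}\rvert^{2})^{p/2}$: Taylor‑expand $y\mapsto(1+\lvert y\rvert^{2})^{p/2}$ to second order about $X^{\delta}_{t}$, the first‑order term being killed in conditional expectation by centring and the second‑order and remainder terms being $O(\delta)$ (here one needs $\mathbf{A}_{3}^{\delta}((\mathfrak p+1)p)$), whence $\mathbb{E}[(1+\lvert X^{\delta}_{t+\delta}\rvert^{2})^{p/2}\mid\mathcal{F}^{X^{\delta}}_{t}]\leqslant(1+\lvert X^{\delta}_{t}\rvert^{2})^{p/2}(1+C(p,d,N)\mathfrak D^{2}\delta\,\mathfrak M_{(\mathfrak p+1)p}(Z^{\delta}))$; iterating, passing to the supremum by Doob applied to $\lvert M^{\delta}_{t}\rvert^{p}$ (whose bracket is controlled by the $L^{2}$ estimate), and taking $p$‑th roots gives \eqref{eq:borne_moment_X}, the division of the exponent by $p$ producing the stated exponents $\mathfrak D^{2/p\vee 1}$ and $\mathfrak M^{1/p}$. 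Finally, for $p\in[1,2)$, Jensen's inequality gives $\mathbb{E}[\sup_{t}\lvert X^{\delta}_{t}\rvert^{p}]^{1/p}\leqslant\mathbb{E}[\sup_{t}\lvert X^{\delta}_{t}\rvert^{2}]^{1/2}$, and since $\mathfrak D\geqslant 1$, $\mathfrak M\geqslant 1$, $\tfrac2p\geqslant 1$, $\tfrac1p\geqslant\tfrac12$ and $(\mathfrak p+1)(p\vee 2)=2(\mathfrak p+1)$, the $p=2$ bound is dominated by the claimed one.

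The only genuinely delicate point — and the reason the proof is not a one‑line Gronwall estimate — is the cancellation of the $O(\delta^{1/2})$ one‑step increment. A crude bound $\lvert X^{\delta}_{t+\delta}\rvert\leqslant\lvert X^{\delta}_{t}\rvert+\lvert X^{\delta}_{t+\delta}-X^{\delta}_{t}\rvert$ would produce a per‑step multiplicative factor $1+O(\delta^{1/2})$, hence a product over the $T/\delta$ steps of order $\exp(cT\delta^{-1/2})$, which diverges as $\delta\to 0$; working instead with $\lvert X^{\delta}_{t}\rvert^{2}$ (respectively $(1+\lvert X^{\delta}_{t}\rvert^{2})^{p/2}$) and conditioning on $\mathcal{F}^{X^{\delta}}_{t}$ is exactly what turns that term into a martingale increment that drops out, leaving a genuine $O(\delta)$ growth per step and the finite exponential bound. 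A secondary bookkeeping issue is keeping the polynomial dependence on the space variable under control — handled by the linear‑growth bound on $A_{1},A_{2},A_{3}$ coming from \eqref{eq:hyp_3_Norme_adhoc_fonction_schema} — and tracking which moment $\mathfrak M_{\cdot}(Z^{\delta})$ of $Z^{\delta}$ (equivalently, which instance of $\mathbf{A}_{3}^{\delta}(\cdot)$) is consumed at each order $p$.
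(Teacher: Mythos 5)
Your proposal is correct and follows essentially the same route as the paper: a Taylor expansion of the power of the norm over one time step, cancellation of the $O(\delta^{1/2})$ increment by conditioning on the past (centring of $Z^{\delta}$), the linear-growth bound on $A_{1},A_{2},A_{3}$ obtained by combining (\ref{eq:hyp_1_Norme_adhoc_fonction_schema}) at $x=0$ with (\ref{eq:hyp_3_Norme_adhoc_fonction_schema}), a discrete Gronwall iteration, and Cauchy--Schwarz/Jensen to reduce $p\in[1,2)$ to $p=2$. The only difference is that you make the passage to the supremum explicit via Doob's maximal inequality for the martingale part, a step the paper's proof leaves implicit.
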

\begin{proof}
Consider $t \in \pi^{\delta,\ast}$.  Using the Taylor expansion yields 

\begin{align*}
\vert X^{\delta}_{t} \vert_{\mathbb{R}^{d}}^{p}=&\vert X^{\delta}_{t-\delta} \vert_{\mathbb{R}^{d}}^{p}+p \vert X^{\delta}_{t-\delta} \vert_{\mathbb{R}^{d}}^{p-2}  \sum_{i=1}^{d}  X^{\delta,i}_{t-\delta}(X^{\delta,i}_{t} - X^{\delta,i}_{t-\delta})  \\
&+\sum_{i,j=1}^{d} (X^{\delta}_{t} - X^{\delta}_{t-\delta})_{i \otimes j}  \\
& \quad \times p\int_{0}^{1} (1-\lambda)\vert X^{\delta}_{t-\delta}+ \lambda 
	(X^{\delta}_{t} - X^{\delta}_{t-\delta})  \vert^{p-2} \mathbf{1}_{i=j} \\
&\quad  +(p-2)(1-\lambda)\vert X^{\delta}_{t-\delta}+ \lambda 
	(X^{\delta}_{t} - X^{\delta}_{t-\delta})  \vert_{\mathbb{R}^{d}}^{p-4}(X^{\delta}_{t-\delta}+ \lambda 
	(X^{\delta}_{t} - X^{\delta}_{t-\delta}) )_{i \otimes j}  \mbox{d} \lambda
\end{align*}
with notation $\mathbf{x}_{i \otimes j} =\mathbf{x}^{i} \mathbf{x}^{j}$ for $\mathbf{x} \in \mathbb{R}^{d}$, $i,j \in \{1,\ldots,d\}$
and, with notations from (\ref{def:notation_{d}vpt_X}),

\begin{align*}
X^{\delta}_{t} =&X^{\delta}_{t-\delta} + \delta^{\frac{1}{2}}   \sum_{i=1}^{N}  Z^{\delta,i}_{t}  \int_{0}^{1}   \partial_{z^{i}} \psi(X^{\delta}_{t-\delta},t-\delta,\lambda \delta^{\frac{1}{2}}Z^{\delta}_{t},0) \mbox{d} \lambda+\delta A_{3}(X^{\delta}_{t-\delta},t-\delta,\delta^{\frac{1}{2}}Z^{\delta}_{t},\delta)   \\
=&X^{\delta}_{t-\delta} + \delta^{\frac{1}{2}} \sum_{i=1}^{N}  Z^{\delta,i}_{t} A_{1}^{i}(X^{\delta}_{t-\delta}, t-\delta,0,0)  +  \delta  \sum_{i,j=1}^{N}  Z^{\delta,i}_{t} Z^{\delta,j}_{t} A_{2}^{i,j}(X^{\delta}_{t-\delta},t-\delta,\delta^{\frac{1}{2}}Z^{\delta}_{t},0)   \\
&+\delta A_{3}(X^{\delta}_{t-\delta},t-\delta,\delta^{\frac{1}{2}}Z^{\delta}_{t},\delta) ,
\end{align*}

Moreover, for every $(x,t,z,y) \in \mathbb{R}^{d}\times \pi^{\delta} \times \mathbb{R}^{N} \times [0,1]$, we have 
\begin{align*}
\partial_{y}\psi (x,z,t,y)= & \partial_{y}\psi (0,z,t,y)+ \sum_{l=1}^{d} x^{l} \int_{0}^{1} \partial_{x^{l}}\partial_{y}\psi (\lambda x,z,t,y) \mbox{d} \lambda
\end{align*}
with similar formulas for the derivatives $w.r.t.$ $z$. Moreover, it follows from assumption $\mathbf{A}_{1}^{\delta}(2)$,  (\ref{eq:hyp_1_Norme_adhoc_fonction_schema}) that 

\begin{align*}
\{ \vert  \partial_{y}\psi \vert_{\mathbb{R}^{d}}+\sum_{i=1}^{N} 	\vert  \partial_{z^{i}} \psi \vert_{\mathbb{R}^{d}} +\sum_{i,j=1}^{N} \vert  \partial_{z^{i}} \partial_{z^{j}} \psi \vert_{\mathbb{R}^{d}}  \} (0,t,z,y) 
\leqslant & \mathfrak{D}_{2}(1+ \delta^{\frac{\mathfrak{p}_{2}}{2}} \vert z \vert_{\mathbb{R}^{N}}^{\mathfrak{p}_{2}})
\end{align*}

Combining the previous inequality with $\mathbf{A}_{1}^{\delta}(2)$,  (\ref{eq:hyp_3_Norme_adhoc_fonction_schema}) yields

\begin{align*}
\{ \vert  \partial_{y}\psi \vert_{\mathbb{R}^{d}}+& \sum_{i=1}^{N}  	\vert  \partial_{z^{i}} \psi \vert_{\mathbb{R}^{d}} +\sum_{i,j=1}^{N} \vert  \partial_{z^{i}} \partial_{z^{j}} \psi \vert_{\mathbb{R}^{d}}  \} (x,t,z,y) \leqslant   \mathfrak{D}_{2}(1+ \delta^{\frac{\mathfrak{p}_{2}}{2}} \vert z \vert_{\mathbb{R}^{N}}^{\mathfrak{p}_{2}})   \\
& + \sum_{l=1}^{d} x^{l} \int_{0}^{1}    \{ \vert \partial_{x^{l}} \partial_{y}\psi \vert_{\mathbb{R}^{d}}+\sum_{i=1}^{N}	\vert  \partial_{x^{l}} \partial_{z^{i}} \psi \vert_{\mathbb{R}^{d}} +\sum_{i,j=1}^{N} \vert \partial_{x^{l}}   \partial_{z^{i}} \partial_{z^{j}} \psi \vert_{\mathbb{R}^{d}}  \} (\lambda x,t,z,y) \mbox{d} \lambda  \\
\leqslant &  \mathfrak{D}_{2}(1+ \delta^{\frac{\mathfrak{p}_{2}}{2}} \vert z \vert_{\mathbb{R}^{N}}^{\mathfrak{p}_{2}}) + \mathfrak{D} \vert x  \vert_{\mathbb{R}^{d}}  (1+ \delta^{-\frac{\mathfrak{p}}{2}} \vert z \vert_{\mathbb{R}^{N}}^{\mathfrak{p}})=:D(x,z,\delta)
\end{align*}

In particular,  since $\mathfrak{D} \geqslant \mathfrak{D} _{2}$ and $\mathfrak{p} \geqslant \mathfrak{p}_{2}$, for $p \geqslant 2$ 
\begin{align*}
\vert \mathbb{E}[\vert X^{\delta}_{t} \vert_{\mathbb{R}^{d}}^{p}] -& \mathbb{E}[\vert X^{\delta}_{t-\delta} \vert_{\mathbb{R}^{d}}^{p}] \vert \leqslant  p \delta  \mathbb{E}[\ \vert X^{\delta}_{t-\delta} \vert_{\mathbb{R}^{d}}^{p-1}  D( X^{\delta}_{t-\delta} ,\delta^{\frac{1}{2}}Z^{\delta}_{t} ,\delta)  (1+\vert Z^{\delta}_{t} \vert_{\mathbb{R}^{N}}^{2}) ]\\
& +p(p-1) \delta  2^{p-2}\mathbb{E}[ \vert X^{\delta}_{t-\delta} \vert_{\mathbb{R}^{d}}^{p-2}  D( X^{\delta}_{t-\delta} ,\delta^{\frac{1}{2}} Z^{\delta}_{t} ,\delta) ^{2} (1+\vert Z^{\delta}_{t} \vert_{\mathbb{R}^{N}})^{2} \\
&+\delta^{\frac{p}{2}}  D( X^{\delta}_{t-\delta} ,\delta^{\frac{1}{2}} Z^{\delta}_{t} ,\delta)^{p}  (1+\vert Z^{\delta}_{t} \vert_{\mathbb{R}^{N}})^{p} ]\\
\leqslant & C(p) \mathfrak{M}_{(\mathfrak{p}+1) (p \vee 2)}(Z^{\delta})\mathfrak{D}^{p \vee 2} \delta \mathbb{E}[1+ \vert X^{\delta}_{t-\delta} \vert_{\mathbb{R}^{d}}^{p}]
\end{align*}

and (\ref{eq:borne_moment_X}) follows from the Gronwall lemma. For $p \in [1,2)$, it simply remains to use the Cauchy-Schwarz inequality.

\end{proof}

In order to obtain estimates of the Sobolev norms which appear in Theorem \ref{theo:Norme_Sobolev_borne}, we derive some estimates for a generic class of processes which involves the Malliavin derivatives of $\partial_{\mbox{\textsc{x}}^{\delta}_{0}}^{\alpha }X^{\delta}$ and $L^{\delta}_{\mathbf{T}}X^{\delta}_{t}$.  We first write, for $t \in \pi^{\delta}$,
\begin{align*}
X^{\delta}_{t+\delta} 
=&X^{\delta}_{t} + \delta^{\frac{1}{2}} \sum_{i=1}^{N}  Z^{\delta,i}_{t+\delta} A_{1}^{i}(X^{\delta}_{t}, t)  +  \delta  \sum_{i,j=1}^{N}  Z^{\delta,i}_{t+\delta} Z^{\delta,j}_{t+\delta}A_{2}^{i,j}(X^{\delta}_{t},t,\delta^{\frac{1}{2}}Z^{\delta}_{t+\delta}) \\
&+A_{3}(X^{\delta}_{t},t,\delta^{\frac{1}{2}}Z^{\delta}_{t+\delta},\delta),
\end{align*}

with $A_{1}$, $A_{2}$, and $A_{3}$ defined in (\ref{def:notation_{d}vpt_X}). We introduce the $\mathbb{R}^{d \times d}$-valued process $(B_{t})_{t \in \pi^{\delta}}$ such that for every $t \in \pi^{\delta}$,
\begin{align*}
B_{t}=  & \delta^{\frac{1}{2}} \sum_{i=1}^{N}  Z^{\delta,i}_{t+\delta} \nabla_{x} A_{1}^{i}(X^{\delta}_{t}, t)  +  \delta  \sum_{i,j=1}^{N}  Z^{\delta,i}_{t+\delta} Z^{\delta,j}_{t+\delta} \nabla_{x} A_{2}^{i,j}(X^{\delta}_{t},t,\delta^{\frac{1}{2}}Z^{\delta}_{t+\delta}) + \delta \nabla_{x} A_{3}(X^{\delta}_{t},t,\delta^{\frac{1}{2}}Z^{\delta}_{t+\delta},\delta).
\end{align*}

We now consider a Hilbert space $\mathcal{H}$ and introduce some $\mathcal{H}^{d}$-valued processes $
(B^{1,i}_{t})_{t \in \pi^{\delta}},(B^{2,i}_{t})_{t \in \pi^{\delta}},$ which are both adapted to the filtration $(\sigma(Z^{\delta}_{\delta},\ldots,Z^{\delta}_{t}))_{t \in \pi^{\delta}}$ and $ (B^{3}_{t})_{t\in \pi^{\delta}}$ which is adapted to the filtration $(\sigma(Z^{\delta}_{\delta},\ldots,Z^{\delta}_{t+\delta}))_{t \in \pi^{\delta}}$ and for every $h \in \mathcal{H}$, $\langle B^{l,i}, h \rangle_{\mathcal{H}}$, $l=1,2$, and $\langle B^{3}, h \rangle_{\mathcal{H}}$,  all belong to $(\mathcal{S}^{\delta})^{d}$.  In this proof we will consider a $\mathcal{H}^{d}$-valued generic process $(Y_{t})_{t \in \pi^{\delta}}$ which satisfies,for every $t \in \pi^{\delta}$,
\begin{align}
\label{eq:processus_general_Y_sobolev}
Y_{t+ \delta}=  & Y_{t}+B_{t}Y_{t}+ \delta^{\frac{1}{2}} \sum_{i=1}^{N}  Z^{\delta,i}_{t+\delta} B^{1,i}_{t}  +  \delta^{\frac{1}{2}}   \sum_{i=1}^{N}  L^{\delta}_{\mathbf{T}}Z^{\delta,i}_{t+\delta} B^{2,i}_{t} +B^{3}_{t}
\end{align}
\begin{align*}
\mathfrak{S}_{\mathcal{H}^{d},\delta,\mathbf{T},q,p} & (B^{1},B^{2},B^{3})= 1  \\
& + \sup_{t \in \mathbf{T}}(\Vert
B^{1,.}_{t-\delta} \Vert _{(\mathcal{H}^{d})^{\mathbf{N}},\delta,\mathbf{T},q,p}+\Vert
B^{2,.}_{t-\delta} \Vert _{(\mathcal{H}^{d})^{\mathbf{N}},\delta,\mathbf{T},q,p}+\Vert \sum_{\underset{w<t}{w\in \pi^{\delta}}} B^{3}_{w}  \Vert _{\mathcal{H}^{d},\delta,\mathbf{T},q,p}).   \nonumber
\end{align*}
where for $(B(i,l))_{(i,l) \in \mathbf{N} \times \{1,\ldots,d\}}$ taking values in $\mathcal{H}$, $\vert B \vert_{(\mathcal{H}^{d})^{\mathbf{N}}}= \vert \sum_{i=1}^{N} \sum_{l=1}^{d} \vert B(i,l) \vert_{\mathcal{H}}^{2} \vert^{\frac{1}{2}}$.
Before we estimate the Sobolev norms, we recall the Burkholder inequality for Hilbert space.  We consider a separable Hilbert space $\mathcal{H}$, we denote $\vert
.\vert _{\mathcal{H}}$ the norm of $\mathcal{H}$ and, for a random variable $F\in \mathcal{H},$ we
denote $\Vert F\Vert _{\mathcal{H},p}=\mathbb{E}[\vert F\vert
_{\mathcal{H}}^{p}]^{\frac{1}{p}}.$ Moreover we consider a martingale $\mathcal{M}_{n}\in \mathcal{H}, \;n\in \mathbb{N}$ and
we recall Burkholder inequality in this framework: For each $p\geqslant 2$
there exists a constant $\mathfrak{b}_{p}\geqslant 1$ such that%
\begin{align}
\forall n \in  \mathbb{N}, \quad\Vert \sup_{k \in \{0,\ldots,n\}}\mathcal{M}_{k}\Vert _{\mathcal{H},p}\leqslant \mathfrak{b}_{p}\mathbb{E}[(\sum_{k=1}^{n}\vert
\mathcal{M}_{k}-\mathcal{M}_{k-1}\vert _{\mathcal{H}}^{2})^{\frac{p}{2}}]^{\frac{1}{p}}.  \label{eq:burkholder_inequality}
\end{align}

As an immediate consequence 
\begin{align}
\Vert \sup_{k \in \{0,\ldots,n\}}\mathcal{M}_{k} \Vert _{\mathcal{H},p}\leqslant \mathfrak{b}_{p} \vert \sum_{k=1}^{n}\Vert
\mathcal{M}_{k}-\mathcal{M}_{k-1}\Vert _{\mathcal{H},p}^{2} \vert^{\frac{1}{2}}.  \label{eq:burholder_inequality_bis}
\end{align}

This first result gives an estimate of the Sobolev norms of $(X^{\delta}_{t})_{t \in \mathbf{T}}$, $(Y_{t})_{t \in \mathbf{T}}$ $w.r.t.$ the quatity above.

\begin{proposition}
\label{prop:borne_Sob_generique}
Let $T>0$, $\mathbf{T}=(0,T] \cap \pi^{\delta}$.  Let $q\in \mathbb{N}$ and $p\geqslant 1$. Assume that $\mathbf{A}_{1}^{\delta}(q+2)$ (see (\ref{eq:hyp_1_Norme_adhoc_fonction_schema}) and (\ref{eq:hyp_3_Norme_adhoc_fonction_schema})),  $\mathbf{A}_{3}^{\delta}(+\infty)$ (see (\ref{eq:hyp:moment_borne_Z})) and $\mathbf{A}_{4}^{\delta}$ (see (\ref{hyp:lebesgue_bounded})) hold. Then

\begin{align}
\label{eq:norme_Sobolev_X_theo}
\mathbb{E}[\sup_{t \in \mathbf{T}} \vert  X^{\delta}_{t}  \vert_{\mathbb{R}^{d},1,q}^{p}]^{\frac{1}{p}} \leqslant  &  ( \vert \mbox{\textsc{x}}^{\delta}_{0} \vert_{\mathbb{R}^{d}} \mathbf{1}_{\mathfrak{p}_{q+2}> 0}+\mathfrak{D}_{q+2} )^{C(q,\mathfrak{p}_{q+2})}\\
& \times  \exp (C(q,p,\mathfrak{p}_{q+2})(T+1)\mathfrak{M}_{C(p,q,\mathfrak{p},\mathfrak{p}_{q+2})}(Z^{\delta}) \mathfrak{D}^{2})   . \nonumber
\end{align}

when $q \geqslant 1$. Moreover, for $(Y_{t})_{t \in \pi^{\delta}}$ satisfying (\ref{eq:processus_general_Y_sobolev}), if we assume that $\mathbf{A}_{1}^{\delta}(q+2)$ holds, then

\begin{align}
\label{eq:borne_norme_sobolev_generique}
 \mathbb{E}[\sup_{t \in \mathbf{T}}&  \vert   Y_{t}  \vert_{\mathcal{H}^{d},\delta,\mathbf{T},q}^{p}]^{\frac{1}{p}} \nonumber   \\
 \leqslant  &  ( \mathbb{E}[\vert   Y_{0}  \vert_{\mathcal{H}^{d},\delta,\mathbf{T},q}^{2^{q} p}]^{\frac{1}{2^{q} p}}  + \mathfrak{S}_{\mathcal{H}^{d},\delta,\mathbf{T},q,2^{q}p}(B^{1},B^{2},B^{3})   ) ( \vert \mbox{\textsc{x}}^{\delta}_{0} \vert_{\mathbb{R}^{d}} \mathbf{1}_{\mathfrak{p}_{q+3}> 0} +\mathfrak{D}_{q+3} )^{C(q,\mathfrak{p}_{q+3})}   \nonumber \\
& \times  C(d,N,m_{\ast},\frac{1}{r_{\ast}},q,\mathfrak{p}_{q+3})  \exp (C(N,q,p,\mathfrak{p}_{q+3})(T+1)\mathfrak{M}_{C(p,q,\mathfrak{p},\mathfrak{p}_{q+3})}(Z^{\delta}) \mathfrak{D}^{2})   . 
\end{align}

\end{proposition}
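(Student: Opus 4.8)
The plan is to prove both estimates simultaneously by induction on $q$, exploiting that the class of $\mathcal{H}^{d}$-valued processes solving a recursion of the form (\ref{eq:processus_general_Y_sobolev}) is closed under the Malliavin derivative $D^{\delta}$. The base case $q=0$ of (\ref{eq:borne_norme_sobolev_generique}) is a pure $L^{p}$-moment bound for $(Y_{t})_{t\in\mathbf{T}}$, obtained by a discrete Gronwall argument: one splits the increment $Y_{t+\delta}-Y_{t}$ from (\ref{eq:processus_general_Y_sobolev}) into its $\mathcal{F}^{Z^{\delta}}_{t}$-conditional expectation — a drift of size $O(\delta)$ issued from the terms $\delta\sum_{i,j}Z^{\delta,i}_{t+\delta}Z^{\delta,j}_{t+\delta}\nabla_{x}A_{2}^{i,j}$, from $\delta\nabla_{x}A_{3}$ and from $B^{3}_{t}$ — plus a martingale increment, namely the parts linear in $Z^{\delta}_{t+\delta}$ and in $L^{\delta}_{\mathbf{T}}Z^{\delta}_{t+\delta}$, which have vanishing conditional mean since $Z^{\delta}_{t+\delta}$ and $\chi^{\delta}_{t+\delta}U^{\delta}_{t+\delta}$ are independent of $\mathcal{F}^{Z^{\delta}}_{t}$ and $\mathbb{E}[L^{\delta}_{\mathbf{T}}Z^{\delta}_{t+\delta}]=0$ by Lemma \ref{lemme:estim_LZ}. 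Applying the Hilbert-space Burkholder inequality (\ref{eq:burkholder_inequality})--(\ref{eq:burholder_inequality_bis}) to the martingale part, summing the drift, and using crucially that $\nabla_{x}A_{1}^{i}$, $\nabla_{x}A_{2}^{i,j}$ and $\nabla_{x}A_{3}$ are \emph{bounded} by $\mathfrak{D}(1+|Z^{\delta}_{t+\delta}|_{\mathbb{R}^{N}}^{\mathfrak{p}})$ — which is precisely the content of the second inequality (\ref{eq:hyp_3_Norme_adhoc_fonction_schema}) in $\mathbf{A}_{1}^{\delta}$ — one obtains a recursion $\phi(t+\delta)\leqslant(1+C\delta)\phi(t)+C\delta\,\Xi$ for $\phi(t):=\mathbb{E}[\sup_{s\in(0,t]\cap\pi^{\delta}}|Y_{s}|_{\mathcal{H}^{d}}^{p}]$, where $\Xi$ collects $\mathfrak{M}_{C}(Z^{\delta})\mathfrak{D}^{2}$ and the quantity $\mathfrak{S}_{\mathcal{H}^{d},\delta,\mathbf{T},0,p}(B^{1},B^{2},B^{3})$; the discrete Gronwall lemma then yields (\ref{eq:borne_norme_sobolev_generique}) at $q=0$. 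For the scheme itself the $q=0$ moment bound is exactly Lemma \ref{lemme:borne:moments_X}.

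For the induction step the central observation is: if $(Y_{t})_{t\in\pi^{\delta}}$ solves (\ref{eq:processus_general_Y_sobolev}), then for every fixed $(w,k)\in\mathbf{T}\times\mathbf{N}$ the process $t\mapsto D^{\delta}_{(w,k)}Y_{t}$ again solves a recursion of the type (\ref{eq:processus_general_Y_sobolev}) \emph{with the same homogeneous matrix} $(B_{t})_{t\in\pi^{\delta}}$, but with modified coefficients $\widetilde{B}^{1,i}_{t},\widetilde{B}^{2,i}_{t},\widetilde{B}^{3}_{t}$ obtained by applying $D^{\delta}_{(w,k)}$ to $B_{t},B^{1,i}_{t},B^{2,i}_{t},B^{3}_{t}$: here one uses (\ref{eq:derivee_Zkj}) for $D^{\delta}$ of $Z^{\delta}_{t+\delta}$, Lemma \ref{lemme:estim_LZ} for $D^{\delta}$ of $L^{\delta}_{\mathbf{T}}Z^{\delta}_{t+\delta}$, and the chain rule for the compositions $\nabla_{x}A_{j}(X^{\delta}_{t},t,\cdot)$; the term $(D^{\delta}_{(w,k)}B_{t})Y_{t}$ and the contributions of $D^{\delta}_{(w,k)}Z^{\delta}_{t+\delta}$ (supported at the single time $w$) are absorbed into $\widetilde{B}^{3}_{t}$. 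Together with the algebraic identity $|Y_{t}|_{\mathcal{H}^{d},\delta,\mathbf{T},q}^{2}=|Y_{t}|_{\mathcal{H}^{d}}^{2}+\delta\sum_{(w,k)\in\mathbf{T}\times\mathbf{N}}|D^{\delta}_{(w,k)}Y_{t}|_{\mathcal{H}^{d},\delta,\mathbf{T},q-1}^{2}$, this reduces the order-$q$ bound for $Y$ to the $q=0$ moment bound for $Y$ (already proved) and to the order-$(q-1)$ bound (\ref{eq:borne_norme_sobolev_generique}) for the $O(\delta^{-1}T)\cdot N$ processes $D^{\delta}_{(w,k)}Y$, after taking $\sup_{t}$ inside the finite sum over $(w,k)$ and using Minkowski's inequality (reducing first to $p\geqslant 2$ by Cauchy--Schwarz). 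The scheme $X^{\delta}$, which is itself not of the form (\ref{eq:processus_general_Y_sobolev}), is treated in the same way: each $D^{\delta}_{(w,k)}X^{\delta}$ is of that form, with $Y_{0}=0$ and coefficients built from $A_{1}$, $A_{2}$, $\nabla_{z}A_{2}$, $A_{3}$ composed with $X^{\delta}_{t}$, so (\ref{eq:norme_Sobolev_X_theo}) at order $q$ follows from (\ref{eq:borne_norme_sobolev_generique}) at order $q-1$ via the same identity.

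The remaining task at each step is to control $\mathfrak{S}_{\mathcal{H}^{d},\delta,\mathbf{T},q-1,2^{q-1}p}(\widetilde{B}^{1},\widetilde{B}^{2},\widetilde{B}^{3})$, which enters the order-$(q-1)$ estimate for $D^{\delta}_{(w,k)}Y$. This is where Lemma \ref{lemme:borne_norm_sob_prod_bis} (Sobolev norm of a product), Lemma \ref{lemme:borne_norm_sob_comp_bis} (Sobolev norm of a composition), the polynomial-growth hypothesis $\mathbf{A}_{1}^{\delta}$ — through (\ref{eq:hyp_1_Norme_adhoc_fonction_schema}) and (\ref{eq:hyp_3_Norme_adhoc_fonction_schema}), applied to the derivatives of $\psi$ hence of $A_{1},A_{2},A_{3}$ — the moment bound of Lemma \ref{lemme:borne:moments_X} for $X^{\delta}$, and the estimate (\ref{eq:deriv_LZ}) for the Sobolev norms of $L^{\delta}_{\mathbf{T}}Z^{\delta}$ (responsible for the $\frac{1}{r_{\ast}}$- and $m_{\ast}$-dependence) all enter: the order-$(q-1)$ Sobolev norms of $\nabla_{x}A_{j}(X^{\delta}_{t},t,\cdot)$ are bounded by sums of $|X^{\delta}_{t}|_{\mathbb{R}^{d},\delta,\mathbf{T},1,q-1}^{m}$ — finite by the inductive hypothesis (\ref{eq:norme_Sobolev_X_theo}) at order $q-1$ — times suprema of higher derivatives of $A_{j}$ evaluated at $X^{\delta}_{t}$, which are of polynomial type in $X^{\delta}_{t}$ and whose moments are controlled by Lemma \ref{lemme:borne:moments_X}; the $D^{\delta}_{(w,k)}$ acting on the original $B^{1,i},B^{2,i},B^{3}$ are re-summed over $(w,k)$ into $\mathfrak{S}_{\mathcal{H}^{d},\delta,\mathbf{T},q,2^{q}p}(B^{1},B^{2},B^{3})$. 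Collecting everything and tracking how the degree in $|\mbox{\textsc{x}}^{\delta}_{0}|_{\mathbb{R}^{d}}$ accumulates (yielding the exponent $C(q,\mathfrak{p}_{q+3})$), how the required integrability doubles at each differentiation (yielding the exponent $2^{q}p$ in the right-hand side of (\ref{eq:borne_norme_sobolev_generique})), and how the exponential-in-$T$ factors from the successive Gronwall steps compound, produces the claimed bounds. The argument parallels the proof of Theorem 4.2 in \cite{Bally_Rey_2016}, the only genuinely new ingredient being that polynomial (rather than bounded) growth of the derivatives of $\psi$ forces the moment controls above; the main obstacle is precisely the bookkeeping of this induction — identifying the modified coefficients after each application of $D^{\delta}$ and verifying that the accumulated constants and exponents are exactly those stated — since the analytic estimate performed at each level is the routine Gronwall--Burkholder argument of the base case applied to a derivative process.
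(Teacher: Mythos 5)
Your proposal is correct and follows essentially the same route as the paper: a base case at $q=0$ by splitting (\ref{eq:processus_general_Y_sobolev}) into drift and martingale parts and applying (\ref{eq:burholder_inequality_bis}), (\ref{eq:esperance_LH})--(\ref{eq:deriv_LZ}) and Gronwall, then an induction on $q$ exploiting that the Malliavin derivatives of $Y$ (and of $X^{\delta}$) again satisfy a recursion of the form (\ref{eq:processus_general_Y_sobolev}) with modified coefficients, whose $\mathfrak{S}$-quantities are controlled through Lemma \ref{lemme:borne_norm_sob_prod_bis}, hypothesis $\mathbf{A}_{1}^{\delta}$ and Lemma \ref{lemme:borne:moments_X}, with the doubling $2^{q}p$ of integrability coming from the Cauchy--Schwarz step at each level. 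The only (harmless, purely organizational) deviation is that you differentiate componentwise in $(w,k)$ and recombine via the $\delta$-weighted sum and Minkowski, whereas the paper treats the whole scaled derivative $\delta^{q/2}D^{\delta,q}Y$ as a single process with values in the iterated Hilbert space $\mathcal{H}_{q}^{d}$ and applies the base-case estimate once per level.
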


\begin{proof}

\textbf{Step 1.} Let $q=0$. We first prove that%

\begin{align}
\mathbb{E}[\sup_{t \in \mathbf{T}}  \vert Y_{t}\vert_{\mathcal{H}^{d}}^{p}]^{\frac{1}{p}} \leqslant  & ( \mathbb{E}[ \vert Y_{0}\vert_{\mathcal{H}^{d}}^{p}]^{\frac{1}{p}} +\mathfrak{b}_{p}\mathfrak{M}_{p}(Z^{\delta})^{\frac{1}{p}} T^{\frac{1}{2}}\mathfrak{S}_{\mathcal{H}^{d},\delta,\mathbf{T},0,p}(B^{1},0,0)  \nonumber \\
& \quad +\mathfrak{b}_{p}\frac{C(N,p)m_{\ast}^{\frac{1}{p}}}{r_{\ast }} T^{\frac{1}{2}} \mathfrak{S}_{\mathcal{H}^{d},\delta,\mathbf{T},0,p}(0,B^{2},0)+ \mathfrak{S}_{\mathcal{H}^{d},\delta,\mathbf{T},0,p}(0,0,B^{3}) )  \nonumber \\
 & \times  \exp (C(p) (T+1)\mathfrak{M}_{p (\mathfrak{p}+2)}(Z^{\delta})^{\frac{2}{p}}\mathfrak{D}^{2}).
\label{eq:preuve_step1_borne_norme_sobolev_generique}
\end{align}

%
We study the terms which appear in the right hand side of (\ref{eq:processus_general_Y_sobolev}). We consider $i,j \in \mathbf{N}$. Notice that for every $t \in \pi^{\delta}$,
$\mathbb{E}[L^{\delta}_{\mathbf{T}}Z^{\delta,i}_{t+\delta}]=0$ (see (\ref{eq:esperance_LH})) and $B^{2,i}_{t} $ is $\mathcal{F}^{Z^{\delta}}_{t}$-measurable.  It follows from (\ref{eq:burholder_inequality_bis}) (with $\mathcal{H}$ replaced by $\mathcal{H}^{d}$) and (\ref{eq:deriv_LZ}) that

\begin{align*}
\mathbb{E}[\sup_{t \in \mathbf{T}}  \vert  \delta^{\frac{1}{2}} \sum_{i=1}^{N}\sum_{\underset{w< t}{w\in \pi^{\delta}}}   L^{\delta}_{\mathbf{T}}Z^{\delta,i}_{w+\delta} B^{2,i}_{w}  \vert_{\mathcal{H}^{d}}^{p}]^{\frac{2}{p}}  \leqslant & \mathfrak{b}_{p}^{2} \delta \sum_{\underset{t<T}{t\in \pi^{\delta}}}  \mathbb{E}[\vert \sum_{i=1}^{N} \ L^{\delta}_{\mathbf{T}}Z^{\delta,i}_{t+\delta} B^{2,i}_{t}  \vert_{\mathcal{H}^{d}}^{p}]^{\frac{2}{p}} \\
\leqslant & \mathfrak{b}_{p}^{2}  \frac{C(N,p)m^{\frac{2}{p}}_{\ast}}{r_{\ast}^{2}}  \delta \sum_{\underset{t<T}{t\in \pi^{\delta}}}  \mathbb{E}[\vert \sum_{i=1}^{N} \vert B^{2,i}_{t}  \vert_{\mathcal{H}^{d}}^{2} \vert^{\frac{p}{2}}]^{\frac{2}{p}} \\
= &  \mathfrak{b}_{p}^{2}  \frac{C(N,p)m^{\frac{2}{p}}_{\ast}}{r_{\ast}^{2}} T \sup_{\underset{t<T}{t\in \pi^{\delta}}} \mathbb{E}[\vert B^{2,.}_{t}  \vert_{(\mathcal{H}^{d})^{\mathbf{N}}}^{p}]^{\frac{2}{p}} .
\end{align*}
In the same way, 
\begin{align*}
\mathbb{E}[\sup_{t \in \mathbf{T}}  \vert  \delta^{\frac{1}{2}} \sum_{i=1}^{N} \sum_{\underset{w< t}{w\in \mathbf{T}}}   Z^{\delta,i}_{w+\delta} B^{1,i}_{w}  \vert_{\mathcal{H}^{d}}^{p}]^{\frac{2}{p}}  \leqslant & \mathfrak{b}_{p}^{2}  \mathfrak{M}_{p}(Z^{\delta})^{\frac{2}{p}} T \sup_{\underset{t<T}{t\in \pi^{\delta}}} \mathbb{E}[\vert B^{1,.}_{t}  \vert_{(\mathcal{H}^{d})^{\mathbf{N}}}^{p}]^{\frac{2}{p}} .
\end{align*}

Using $\mathbf{A}_{1}$ (see (\ref{eq:hyp_3_Norme_adhoc_fonction_schema})) together with (\ref{eq:burholder_inequality_bis}) (with $\mathcal{H}$ replaced by $\mathcal{H}^{d}$) yields

\begin{align*}
\mathbb{E}[\sup_{t \in \mathbf{T}}  \vert  \delta^{\frac{1}{2}} \sum_{i=1}^{N} \sum_{\underset{w< t}{w\in \pi^{\delta}}}   Z^{\delta,i}_{w+\delta} \nabla_{x} A_{1}^{i}(X^{\delta}_{w}, w) Y_{w}    \vert_{\mathcal{H}^{d}}^{p}]^{\frac{2}{p}}  \leqslant &\mathfrak{b}_{p}^{2} \delta \sum_{\underset{t<T}{t\in \pi^{\delta}}}  \mathbb{E}[ \vert \sum_{i=1}^{N}  Z^{\delta,i}_{t+\delta} \nabla_{x} A_{1}^{i}(X^{\delta}_{t}, t) Y_{t}    \vert_{\mathcal{H}^{d}}^{p}]^{\frac{2}{p}} \\
 \leqslant &  \mathfrak{b}_{p}^{2}  \mathfrak{M}_{p}(Z^{\delta})^{\frac{2}{p}} \mathfrak{D}^{2}  \delta \sum_{\underset{t<T}{t\in \pi^{\delta}}}  \mathbb{E}[\vert Y_{t}     \vert_{\mathcal{H}^{d}}^{p}]^{\frac{2}{p}} .
\end{align*}
Applying $\mathbf{A}_{1}^{\delta}$ (see (\ref{eq:hyp_3_Norme_adhoc_fonction_schema})) with the triangle inequality also gives
\begin{align*}
\mathbb{E}[\sup_{t \in \mathbf{T}}  & \vert  \delta \sum_{\underset{w< t}{w\in \pi^{\delta}}}    Z^{\delta,i}_{w+\delta} Z^{\delta,j}_{w+\delta} \nabla_{x} A_{2}^{i,j}(X^{\delta}_{w},w,\delta^{\frac{1}{2}}Z^{\delta}_{w+\delta}) Y_{w}   \vert_{\mathcal{H}^{d}}^{p}]^{\frac{1}{p}}  \\
\leqslant &\delta \sum_{\underset{t<T}{t\in \pi^{\delta}}}  \mathbb{E}[\vert Z^{\delta,i}_{t+\delta} Z^{\delta,j}_{t+\delta} \nabla_{x} A_{2}^{i,j}(X^{\delta}_{t}, t,\delta^{\frac{1}{2}}Z^{\delta}_{t+\delta}) Y_{t}    \vert_{\mathcal{H}^{d}}^{p}]^{\frac{1}{p}} \\
 \leqslant & 2\mathfrak{M}_{p (\mathfrak{p}+2)}(Z^{\delta})^{\frac{1}{p}} \mathfrak{D} \delta \sum_{\underset{t<T}{t\in \pi^{\delta}}}  \mathbb{E}[\vert Y_{t}  \vert_{\mathcal{H}^{d}}^{p}]^{\frac{1}{p}} ,
\end{align*}
and similarly
\begin{align*}
\mathbb{E}[\sup_{t \in \mathbf{T}}  \vert  \delta \sum_{\underset{w< t}{w\in \pi^{\delta}}}    \nabla_{x} A_{3}(X^{\delta}_{w},w,\delta^{\frac{1}{2}}Z^{\delta}_{w+\delta},\delta) Y_{w}  \vert_{\mathcal{H}^{d}}^{p}]^{\frac{1}{p}}  \leqslant &\delta \sum_{\underset{t<T}{t\in \pi^{\delta}}}  \mathbb{E}[ \nabla_{x} A_{3}(X^{\delta}_{t}, t,\delta^{\frac{1}{2}}Z^{\delta}_{t+\delta},\delta) Y_{t}    \vert_{\mathcal{H}^{d}}^{p}]^{\frac{1}{p}} \\
 \leqslant & 2\mathfrak{M}_{p
\mathfrak{p}}(Z^{\delta})^{\frac{1}{p}} \mathfrak{D} \delta \sum_{\underset{t<T}{t\in \pi^{\delta}}}  \mathbb{E}[\vert Y_{t}  \vert_{\mathcal{H}^{d}}^{p}]^{\frac{1}{p}} .
\end{align*}

We gather all the terms and using the Cauchy-Schwarz inequality, we obtain

\begin{align*}
	\mathbb{E}[\sup_{\underset{t \leqslant T}{t\in \pi^{\delta}}}  \vert Y_{t}\vert_{\mathcal{H}^{d}}^{p}]^{\frac{1}{p}} \leqslant 
& \mathbb{E}[ \vert Y_{0}\vert_{\mathcal{H}^{d}}^{p}]^{\frac{1}{p}}+\mathfrak{b}_{p} \mathfrak{M}_{p}(Z^{\delta})^{\frac{1}{p}} T^{\frac{1}{2}} \sup_{\underset{t<T}{t\in \pi^{\delta}}} \mathbb{E}[B^{1,.}_{t}  \vert_{(\mathcal{H}^{d})^{\mathbf{N}}}^{p}]^{\frac{1}{p}}\\
&  +  \mathfrak{b}_{p}  \frac{C(N,p)m^{\frac{1}{p}}_{\ast}}{r_{\ast}} T^{\frac{1}{2}} \sup_{\underset{t<T}{t\in \pi^{\delta}}}  \mathbb{E}[\vert B^{2,.}_{t}  \vert_{(\mathcal{H}^{d})^{\mathbf{N}}}^{p}]^{\frac{1}{p}}  + \sup_{\underset{t<T}{t\in \pi^{\delta}}}\mathbb{E}[\vert \sum_{\underset{w<t}{w\in \pi^{\delta}}} \vert B^{3}_{w}  \vert_{\mathcal{H}^{d}} \vert^{p}]^{\frac{1}{p}} \\
&+(4T^{\frac{1}{2}}+ \mathfrak{b}_{p} )\mathfrak{M}_{p (\mathfrak{p}+2)}(Z^{\delta})^{\frac{1}{p}} \mathfrak{D} (\delta \sum_{\underset{t <T}{t\in \pi^{\delta}}}   \mathbb{E}[\vert Y_{t}  \vert_{\mathcal{H}^{d}}^{p}]^{\frac{2}{p}})^{\frac{1}{2}} 
\end{align*}

Hence, using the Gronwall lemma yields (\ref{eq:preuve_step1_borne_norme_sobolev_generique}).


\textbf{Step 2. } 
Let us prove (\ref{eq:norme_Sobolev_X_theo}). For $q \in \mathbb{N}$, we define $\mathcal{R}_{0}=\mathbb{R}$ and $\mathcal{R}_{q+1}=(\mathcal{R}_{q})^{\mathbf{T} \times \mathbf{N}}$ and we have
\begin{align*}
\mathbb{E}[\sup_{t \in \mathbf{T}} \vert  X^{\delta}_{t}  \vert_{\mathbb{R}^{d},\delta,\mathbf{T},1,q}^{p}]^{\frac{1}{p}} = & \mathbb{E}[\sup_{t \in \mathbf{T}} \sum_{q^{\diamond}=1}^{q}  \vert D^{\delta,q^{\diamond}}_{} X^{\delta}_{t} \vert_{\mathcal{R}_{q^{\diamond}}^{d}}^{p} ]^{\frac{1}{p}} .
\end{align*}
First, we focus on the case $q=1$ and prove that 
\begin{align}
\label{eq:borne_{d}er_Mal_ordre_1}
\delta^{\frac{1}{2}}\mathbb{E}[\sup_{t \in \mathbf{T}}  \vert D^{\delta}_{} X^{\delta}_{t} \vert_{\mathcal{R}_{1}^{d}}^{p} ]^{\frac{1}{p}} = & \delta^{\frac{1}{2}} \mathbb{E}[\sup_{t \in \mathbf{T}}  \vert \sum_{w \in \mathbf{T}}  \sum_{i=1}^{N}\vert D^{\delta}_{(w,i)} X^{\delta}_{t} \vert_{\mathbb{R}^{d}}^{2}\vert^{\frac{p}{2}} ]^{\frac{1}{p}} \nonumber \\
\leqslant & \mathfrak{D}_{3} (1+\vert \mbox{\textsc{x}}^{\delta}_{0} \vert_{\mathbb{R}^{d}}^{ \mathfrak{p}_{3} } ) \exp ((T+1) \mathfrak{D}^{2} \mathfrak{M}_{p (\mathfrak{p}+1)(\mathfrak{p}_{3} \vee 2)}(Z^{\delta})^{2} C(p,\mathfrak{p}_{3})).
\end{align}
We remark that for every $t \in \pi^{\delta}$, $w \in \mathbf{T}$, and every $i \in \mathbf{N}$.
\begin{align*}
 \delta^{\frac{1}{2}}D^{\delta}_{(w,i)} X^{\delta}_{t+\delta}=&(I_{d\times d}+B_{t}) \delta^{\frac{1}{2}} D^{\delta}_{(w,i)} X^{\delta}_{t}+(B^{3}_{1,t})_{w,i} ,
\end{align*}
with,  for $(w,i) \in \mathbf{T} \times \mathbf{N}$,
\begin{align*}
(B^{3}_{1,t})_{w,i}=& \chi^{\delta}_{t+\delta} \mathbf{1}_{w=t+\delta}( \delta^{\frac{1}{2}} A_{1}^{i}(X^{\delta}_{t}, t) + \delta \sum_{j=1}^{N} Z^{\delta,j}_{t+\delta} (1+\mathbf{1}_{i=j}) A_{2}^{i,j}(X^{\delta}_{t}, t,\delta^{\frac{1}{2}}Z^{\delta}_{t+\delta})   \\
&+\delta^{\frac{3}{2}} \sum_{j,l=1}^{N} Z^{\delta,j}_{t+\delta} Z^{\delta,l}_{t+\delta}  \partial_{z^{i}}A_{2}^{j,l}(X^{\delta}_{t}, t,\delta^{\frac{1}{2}}Z^{\delta}_{t+\delta})+\delta^{\frac{3}{2}}\partial_{z^{i}}A_{3}(X^{\delta}_{t}, t,\delta^{\frac{1}{2}}Z^{\delta}_{t+\delta},\delta) ).
\end{align*}
In particular, $ \delta^{\frac{1}{2}} D^{\delta} X^{\delta}_{t} = ( \delta^{\frac{1}{2}} D^{\delta}_{(w,i)} X^{\delta}_{t})_{(w,i) \in \mathbf{T}  \times \mathbf{N}}$ is a $\mathcal{R}_{1}^{d}$-valued random variable and, for $t \in \pi^{\delta}$, we have
\begin{align*}
 \delta^{\frac{1}{2}} D^{\delta} X^{\delta}_{t+\delta}=&(I_{d\times d}+B_{t})  \delta^{\frac{1}{2}} D^{\delta} X^{\delta}_{t}+B^{3}_{1,t}.
\end{align*}

Then, (\ref{eq:borne_{d}er_Mal_ordre_1}) follows from Lemma \ref{lemme:borne:moments_X} (see (\ref{eq:borne_moment_X})) and (\ref{eq:preuve_step1_borne_norme_sobolev_generique}) with $Y= \delta^{\frac{1}{2}} D^{\delta} X^{\delta}$, $\mathcal{H}=\mathcal{R}_{1}$, and  $B^{3}$ thus defined since the assumption $\mathbf{A}_{1}^{\delta}(3)$ (see (\ref{eq:hyp_1_Norme_adhoc_fonction_schema})) implies that

\begin{align*}
\mathfrak{S}_{\mathcal{R}_{1}^{d},\delta,\mathbf{T},0,p}(&0,0,B^{3}_{1,.}) \\
= & 1+ \sup_{\underset{t<T}{t\in \pi^{\delta}}}\mathbb{E}[\vert \sum_{\underset{w<t}{w\in \pi^{\delta}}} B^{3}_{1,w}  \vert_{\mathcal{R}_{1}^{d}}^{p}]^{\frac{1}{p}}  =  1+ \sup_{\underset{t<T}{t\in \pi^{\delta}}}\mathbb{E}[\vert \sum_{\underset{w<t}{w\in \pi^{\delta}}}  \sum_{i=1}^{N}   \vert (B^{3}_{1,w})_{w+\delta,i}  \vert_{\mathbb{R}^{d}}^{2} \vert^{\frac{p}{2}}]^{\frac{1}{p}}  \\
\leqslant  &  1+  \mathbb{E}[  \vert \sum_{\underset{t<T}{t\in \pi^{\delta}}} \vert (B^{3}_{1,t})_{t+\delta,.}  \vert_{(\mathbb{R}^{d})^{\mathbf{N}}}^{2} \vert^{\frac{p}{2}}  ]^{\frac{1}{p}}  \\
\leqslant  &  1+T^{\frac{1}{2}}  \delta^{-\frac{1}{2}}  \sup_{\underset{t<T}{t\in \pi^{\delta}}} \mathbb{E}[  \vert (B^{3}_{1,t})_{t+\delta,.}  \vert_{(\mathbb{R}^{d})^{\mathbf{N}}}^{p} ]^{\frac{1}{p}} \\
\leqslant & 1+ 5 T^{\frac{1}{2}}  \mathfrak{D}_{3}(\mathfrak{M}_{2p}(Z^{\delta})^{\frac{1}{p}}+\mathfrak{M}_{2p}(Z^{\delta})^{\frac{1}{p}}\mathbb{E}[\sup_{t \in \mathbf{T}}\vert X^{\delta}_{t-\delta} \vert_{\mathbb{R}^{d}}^{p \mathfrak{p}_{3}} ]^{\frac{1}{p}} + \mathfrak{M}_{p(\mathfrak{p}_{3} +2)}(Z^{\delta})^{\frac{1}{p}}) .
\end{align*}

Now let us focus on the case $q \in \mathbb{N}$, $q \geqslant 2$.  Similarly as in the case $q=1$,  $\delta^{\frac{q}{2}}D^{\delta,q} X^{\delta}_{t}$ is a $\mathcal{R}_{q}^{d}$-valued random variable and, for $t \in \pi^{\delta}$, we have
\begin{align*}
\delta^{\frac{q}{2}} D^{\delta,q} X^{\delta}_{t+\delta}=&(I_{d\times d}+B_{t}) \delta^{\frac{q}{2}} D^{\delta,q} X^{\delta}_{t}+ \delta^{\frac{1}{2}} \sum_{i=1}^{N}  Z^{\delta,i}_{t+\delta} B^{1,i}_{q,t}  +B^{3}_{q,t} ,
\end{align*}
with,  $B^{1,i}_{1,.}=0$,  $B^{3}_{1,.}$ defined in the beginning of \textbf{Step 2}, and for $q \geqslant 2$,
\begin{align*}
B^{1,i}_{q,t}=& \delta^{\frac{q}{2}}   (D^{\delta}X^{\delta}_{t})^{T} \mbox{\textbf{H}}_{x} A_{1}^{i}(X^{\delta}_{t}, t)D^{\delta,q-1} X^{\delta}_{t}+\delta^{\frac{1}{2}} D^{\delta}B^{1,i}_{q-1,t}   \\
B^{3}_{q,t}=& \delta^{\frac{q-1}{2}}  (B^{3,1}_{t}+B^{3,2}_{t})D^{\delta,q-1} X^{\delta}_{t}+ \delta^{\frac{1}{2}}  D^{\delta}B^{3}_{q-1,t}+\delta^{} \sum_{i=1}^{N}B^{1,i}
_{q-1,t}D^{\delta} Z^{\delta,i}_{t+\delta},
\end{align*}
with, for $(w,v) \in \mathbf{T} \times \mathbf{N}$,
\begin{align*}
B^{3,1}_{t}= & \delta  \sum_{i,j=1}^{N}  Z^{\delta,i}_{t+\delta} Z^{\delta,j}_{t+\delta} (  \delta^{\frac{1}{2}}  D^{\delta}X^{\delta}_{t})^{T} \mbox{\textbf{H}}_{x} A_{2}^{i,j}(X^{\delta}_{t},t,\delta^{\frac{1}{2}}Z^{\delta}_{t+\delta}) \\
&+ \delta ( \delta^{\frac{1}{2}}  D^{\delta}X^{\delta}_{t})^{T} \mbox{\textbf{H}}_{x} A_{3}(X^{\delta}_{t},t,\delta^{\frac{1}{2}}Z^{\delta}_{t+\delta},\delta) \\
(B^{3,2}_{t}& )_{w,v}  =  \chi^{\delta}_{t+\delta} \mathbf{1}_{w=t+\delta}  ( \delta^{\frac{1}{2}} \nabla_{x} A_{1}^{v}(X^{\delta}_{t}, t)  +\delta \sum_{j=1}^{N} Z^{\delta,j}_{t+\delta} (1+\mathbf{1}_{v=j}) \nabla_{x} A_{2}^{v,j}(X^{\delta}_{t}, t,\delta^{\frac{1}{2}}Z^{\delta}_{t+\delta}) \\
&+\delta^{\frac{3}{2}} \sum_{i,j=1}^{N} Z^{\delta,i}_{t+\delta} Z^{\delta,j}_{t+\delta}  \partial_{z^{v}} \nabla_{x} A_{2}^{i,j}(X^{\delta}_{t}, t,\delta^{\frac{1}{2}}Z^{\delta}_{t+\delta}) +  \delta^{\frac{3}{2}}\partial_{z^{i}} \nabla_{x} A_{3}(X^{\delta}_{t}, t,\delta^{\frac{1}{2}}Z^{\delta}_{t+\delta},\delta)).
\end{align*}

First, we remark that, since $B^{1}_{1,.}=0$, it follows from Lemma \ref{lemme:borne_norm_sob_prod_bis} and (\ref{eq:norme_Sobolev_X_theo}) that, for $l \in \mathbb{N}$, if assumption $\mathbf{A}_{1}^{\delta}(q+l+1)$ (see (\ref{eq:hyp_1_Norme_adhoc_fonction_schema})) holds,  then
\begin{align*}
\mathfrak{S}_{\mathcal{R}^{d}_{q},\delta,\mathbf{T},l,p}(&B^{1}_{q,.},0,0) \\
\leqslant & \mathfrak{S}_{\mathcal{R}^{d}_{q},\delta,\mathbf{T},l,p}(\delta^{\frac{q}{2}}(D^{\delta}X^{\delta}_{})^{T} \mbox{\textbf{H}}_{x} A_{1}(X^{\delta}_{}, .)D^{\delta,q-1} X^{\delta}_{},0,0)+ \mathfrak{S}_{\mathcal{R}^{d}_{q-1},\delta,\mathbf{T},l+1,p}(B^{1}_{q-1,.},0,0) \\
\leqslant & \sum_{q^{\diamond}=1}^{q-1} \mathfrak{S}_{\mathcal{R}^{d}_{q-q^{\diamond}+1},\delta,\mathbf{T},q^{\diamond}+l-1,p}( \delta^{\frac{q-q^{\diamond}+1}{2}}(D^{\delta}X^{\delta}_{})^{T} \mbox{\textbf{H}}_{x} A_{1}(X^{\delta}_{},.)D^{\delta,q-q^{\diamond}} X^{\delta}_{},0,0) \\
\leqslant &  C(d,q,l) \mathfrak{D}_{q+l+1}  \mathbb{E}[ \sup_{t \in \mathbf{T}} \vert 1+ \vert   X^{\delta}_{t}  \vert_{\mathbb{R}^{d},1,q+l-1}^{q+l} \vert^{p} \vert 1+ \vert   X^{\delta}_{t}  \vert_{\mathbb{R}^{d}}^{\mathfrak{p}_{q+l+1} } \vert^{p} ]^{\frac{1}{p}}.
\end{align*}


Moreover
\begin{align*}
\mathfrak{S}_{\mathcal{R}^{d}_{q},\delta,\mathbf{T},l,p}(0,0,B^{3}_{q,.})  \leqslant & \mathfrak{S}_{\mathcal{R}^{d}_{q},\delta,\mathbf{T},l,p}(0,0,\delta^{\frac{q-1}{2}}  (B^{3,1}_{}+B^{3,2}_{})D^{\delta,q-1} X^{\delta}) \\
&+\mathfrak{S}_{\mathcal{R}^{d}_{q-1},\delta,\mathbf{T},l+1,p}(0,0, B^{3}_{q-1,.})  \\
& +\mathfrak{S}_{\mathcal{R}^{d}_{q},\delta,\mathbf{T},l,p}(0,0,\delta \sum_{i=1}^{N}B^{1,i}
_{q-1,.}D^{\delta} Z^{\delta,i}_{.+\delta}) \\
\leqslant & \sum_{q^{\diamond}=1}^{q-1}\mathfrak{S}_{\mathcal{R}^{d}_{q-q^{\diamond}+1},\delta,\mathbf{T},q^{\diamond}+l-1,p}(0,0,\delta^{\frac{q-q^{\diamond}}{2}}  (B^{3,1}_{}+B^{3,2}_{}) D^{\delta,q-q^{\diamond}} X^{\delta}) \\
&+\sum_{q^{\diamond}=1}^{q-1}\mathfrak{S}_{\mathcal{R}^{d}_{q-q^{\diamond}+1},\delta,\mathbf{T},q^{\diamond}+l-1,p}(0,0,\delta \sum_{i=1}^{N}B^{1,i}
_{q-q^{\diamond},.}D^{\delta} Z^{\delta,i}_{.+\delta})  \\
&+\mathfrak{S}_{\mathcal{R}^{d}_{1},\delta,\mathbf{T},q+l-1,p}(0,0,B^{3}_{1,.})  .
\end{align*}
Using a similar approach as for the case $q=1$, assuming $\mathbf{A}_{1}^{\delta}(q+l+2)$ holds (see (\ref{eq:hyp_1_Norme_adhoc_fonction_schema})), then
\begin{align*}
\mathfrak{S}_{\mathcal{R}_{1}^{d},\delta,\mathbf{T},q+l-1,p}(&0,0,B^{3}_{1,.}) =  1+ \sup_{\underset{t<T}{t\in \pi^{\delta}}}\mathbb{E}[ \vert \sum_{q^{\diamond}=0}^{q+l-1} \delta^{q^{\diamond}}  \vert \sum_{\underset{w<t}{w\in \pi^{\delta}}}   \ D^{\delta,q^{\diamond}}_{} B^{3}_{1,w} \vert_{\mathcal{R}_{q^{\diamond}+1}^{d}}^{2} \vert^{\frac{p}{2}}    ]^{\frac{1}{p}}  \\
=&  1+ \sup_{\underset{t<T}{t\in \pi^{\delta}}}\mathbb{E}[\vert \sum_{\underset{w<t}{w\in \pi^{\delta}}} \sum_{i=1}^{N} \sum_{q^{\diamond}=0}^{q+l-1} \delta^{q^{\diamond}} \vert D^{\delta,q^{\diamond}}_{}  (B^{3}_{1,w})_{w+\delta,i}  \vert_{\mathcal{R}_{q^{\diamond}}}^{2} \vert^{\frac{p}{2}}]^{\frac{1}{p}}  \\
\leqslant  &  1+T^{\frac{1}{2}}  \delta^{-\frac{1}{2}} \vert q + l \vert^{\frac{1}{2}} \sup_{\underset{t<T}{t\in \pi^{\delta}}}  \sup_{q^{\diamond} \in \{0,\ldots,q+l-1\}}   \mathbb{E}[  \vert \delta^{\frac{q^{\diamond}}{2}} D^{\delta,q^{\diamond}}_{}  (B^{3}_{1,t})_{t+\delta,.}  \vert_{(\mathcal{R}_{q^{\diamond}}^{d})^{\mathbf{N}}}^{p} ]^{\frac{1}{p}} \\
\leqslant & 1+T^{\frac{1}{2}}   C(d,q,l)\mathfrak{D}_{q+l+2} \mathfrak{M}_{p(\mathfrak{p}_{q+l+2}+2)}(Z^{\delta})^{\frac{1}{p}}\\
& \times \mathbb{E}[\sup_{t \in \mathbf{T}} \vert 1 + \vert X^{\delta}_{t-\delta} \vert_{\mathbb{R}^{d},1,q+l-1}^{q+l-1} \vert^{p} \vert 1+ \vert X^{\delta}_{t-\delta} \vert_{\mathbb{R}^{d}}^{\mathfrak{p}_{q+l+2} } \vert^{p} ]^{\frac{1}{p}} .
\end{align*}
Moreover, for $q^{\diamond} \in \{1,\ldots,q-1\}$,
\begin{align*}
\mathfrak{S}_{\mathcal{R}^{d}_{q-q^{\diamond}+1},\delta,\mathbf{T},q^{\diamond}+l-1,p} & (0,0,\delta^{\frac{q-1}{2}} (B^{3,1}+B^{3,2} ) D^{\delta,q-q^{\diamond}} X^{\delta}) \\
= & 1+ \sup_{\underset{t<T}{t\in \pi^{\delta}}}\mathbb{E}[\vert \sum_{\underset{w<t}{w\in \pi^{\delta}}} \delta^{\frac{q-1}{2}} (B^{3,1}_{w}+B^{3,2}_{w}) D^{\delta,q-q^{\diamond}} X^{\delta}_{w}  \vert_{\mathcal{R}_{q-q^{\diamond}+1}^{d},q^{\diamond}+l-1}^{p}]^{\frac{1}{p}}  \\
\leqslant &1+ \sum_{\underset{t<T}{t\in \pi^{\delta}}}  \mathbb{E}[\vert  \delta^{\frac{q-1}{2}}  B^{3,1}_{t} D^{\delta,q-q^{\diamond}} X^{\delta}_{t}  \vert_{\mathcal{R}_{q-q^{\diamond}+1}^{d},q^{\diamond}+l-1}^{p}]^{\frac{1}{p}} \\
& + \sup_{\underset{t<T}{t\in \pi^{\delta}}}\mathbb{E}[\vert \sum_{\underset{w<t}{w\in \pi^{\delta}}} \delta^{\frac{q-1}{2}}  B^{3,2}_{w} D^{\delta,q-q^{\diamond}} X^{\delta}_{w}  \vert_{\mathcal{R}_{q-q^{\diamond}+1}^{d},q^{\diamond}+l-1}^{p}]^{\frac{1}{p}}  ,
\end{align*}
with,  since $\mathbf{A}_{1}^{\delta}(q+l+2)$ (see (\ref{eq:hyp_1_Norme_adhoc_fonction_schema}) holds, 
\begin{align*}
 \mathbb{E}[\vert  \delta^{\frac{q-1}{2}} B^{3,1}_{t} D^{\delta,q-q^{\diamond}} X^{\delta}_{t}  \vert_{\mathcal{R}_{q-q^{\diamond}+1}^{d},q^{\diamond}+l-1}^{p} & ]^{\frac{1}{p}} 
\leqslant  C(d,q,l) \delta \mathfrak{M}_{p(\mathfrak{p}_{q+l+2}+2)}(Z^{\delta})^{\frac{1}{p}}\mathfrak{D}_{q+l+2} \\
& \times  \mathbb{E}[\sup_{t \in \mathbf{T}} \vert 1 + \vert   X^{\delta}_{t}  \vert_{\mathbb{R}^{d},1,q+l-1}^{q+l} \vert^{p} \vert 1+ \vert   X^{\delta}_{t}  \vert_{\mathbb{R}^{d}}^{\mathfrak{p}_{q+l+2}} \vert^{p} ]^{\frac{1}{p}} 
\end{align*}
and
\begin{align*}
\mathbb{E}[\vert & \sum_{\underset{w<t}{w\in \pi^{\delta}}} \delta^{\frac{q-1}{2}} B^{3,2}_{w} D^{\delta,q-q^{\diamond}} X^{\delta}_{w}  \vert_{\mathcal{R}_{q-q^{\diamond}}^{d},q^{\diamond}+l-1}^{p}]^{\frac{1}{p}}   \\
=&\mathbb{E}[ \vert \sum_{\underset{w<t}{w\in \pi^{\delta}}}  \sum_{i=1}^{N} \vert \delta^{\frac{q-1}{2}} (B^{3,2}_{w})_{w+\delta,i}  D^{\delta,q-q^{\diamond}} X^{\delta}_{w}  \vert_{\mathcal{R}_{q-q^{\diamond}}^{d},q^{\diamond}+l-1}^{2} \vert^{\frac{p}{2}}]^{\frac{1}{p}}   \\
\leqslant &  \vert \delta \sum_{\underset{w<t}{w\in \pi^{\delta}}}  \sum_{i=1}^{N} \mathbb{E}[ \vert \delta^{\frac{q-2}{2}}(B^{3,2}_{w})_{w+\delta,i}  D^{\delta,q-q^{\diamond}} X^{\delta}_{w}  \vert_{\mathcal{R}_{q-q^{\diamond}}^{d},q^{\diamond}+l-1}^{p} ]^{\frac{2}{p}} \vert^{\frac{1}{2}},
\end{align*}
together with the estimate
\begin{align*}
\mathbb{E}[ \vert \delta^{\frac{q-2}{2}}(B^{3,2}_{w})_{w,i}  D^{\delta,q-q^{\diamond}} X^{\delta}_{w}  \vert_{\mathcal{R}_{q-q^{\diamond}}^{d},q^{\diamond}+l-1}^{p}& ]^{\frac{1}{p}}   \leqslant  C(d,q,l) \delta \mathfrak{M}_{p(\mathfrak{p}_{q+l+2}+2)}(Z^{\delta})^{\frac{1}{p}} \mathfrak{D}_{q+l+1} \\
& \times  \mathbb{E}[\sup_{t \in \mathbf{T}} \vert 1+ \vert   X^{\delta}_{t}  \vert_{\mathbb{R}^{d},1,q+l-1}^{q+l-2} \vert^{p} \vert 1 + \vert   X^{\delta}_{t}  \vert_{\mathbb{R}^{d}}^{\mathfrak{p}_{q+l+1}} \vert^{p}]^{\frac{1}{p}}  .
\end{align*}.
Finally, for $q^{\diamond} \in \{1,\ldots,q-1\}$,  assuming $\mathbf{A}_{1}^{\delta}(q+l)$ (see (\ref{eq:hyp_1_Norme_adhoc_fonction_schema})) yields
\begin{align*}
\mathfrak{S}_{\mathcal{R}^{d}_{q-q^{\diamond}+1},\delta,\mathbf{T},q^{\diamond}+l-1,p}&  (0,0,\delta \sum_{i=1}^{N}B^{1,i}
_{q-q^{\diamond},.}D^{\delta} Z^{\delta,i}_{.+\delta})  \\
\leqslant & 1+ \mathbb{E}[ \vert \sum_{\underset{w<t}{w\in \pi^{\delta}}}  \sum_{i=1}^{N}   \delta^{2}  \vert B^{1,i}
_{q-q^{\diamond},w} D^{\delta}_{(w+\delta,i)} Z^{\delta,i}_{w+\delta}  \vert_{\mathcal{R}_{q-q^{\diamond}}^{d},q^{\diamond}+l-1}^{2} \vert^{\frac{p}{2}}]^{\frac{1}{p}}   \\
 \leqslant & 1+ \mathbb{E}[ \vert \sum_{\underset{w<t}{w\in \pi^{\delta}}}  \delta  \vert B^{1}
_{q-q^{\diamond},w}  \vert_{(\mathcal{R}_{q-q^{\diamond}}^{d})^{\mathbf{N}},q^{\diamond}+l-1}^{2} \vert^{\frac{p}{2}}]^{\frac{1}{p}}   \\
 \leqslant & 1+ T^{\frac{1}{2}} \sup_{t \in \mathbf{T}}  \mathbb{E}[ \vert  \sum_{i=1}^{N} B^{1,i}
_{q-q^{\diamond},t}  \vert_{(\mathcal{R}_{q-q^{\diamond}}^{d})^{\mathbf{N}},q^{\diamond}+l-1}^{p} ]^{\frac{1}{p}}   \\
\leqslant & 1+T^{\frac{1}{2}} \mathfrak{S}_{\mathcal{R}^{d}_{q-q^{\diamond}},\delta,\mathbf{T},q^{\diamond}+l-1,p}(B^{1}
_{q-q^{\diamond},.},0,0) \\
\leqslant &  1+ T^{\frac{1}{2}}  C(d,q,l) \mathfrak{D}_{q+l}  (1+\mathbb{E}[ \sup_{t \in \mathbf{T}} \vert  1 + \vert X^{\delta}_{t}  \vert_{\mathbb{R}^{d},1,q+l-1}^{q+l-1}  \vert^{p} \vert 1 + \vert   X^{\delta}_{t}  \vert_{\mathbb{R}^{d}}^{\mathfrak{p}_{q+l}} \vert^{p}]^{\frac{1}{p}}) .
\end{align*}
More specifically, we have shown that
\begin{align*}
\mathfrak{S}_{\mathcal{R}^{d}_{q},\delta,\mathbf{T},l,p}(0,0,B^{3}_{q,.})  
\leqslant  &  C(d,q,l) (1+T) \mathfrak{M}_{p(\mathfrak{p}_{q+l+2} +2)}(Z^{\delta})^{\frac{1}{p}}\mathfrak{D}_{q+l+2} \\
& \times ( 1+\mathbb{E}[\sup_{t \in \mathbf{T}} \vert 1 + \vert   X^{\delta}_{t}  \vert_{\mathbb{R}^{d},1,q+l-1}^{q+l}  \vert^{p} \vert 1 + \vert   X^{\delta}_{t}  \vert_{\mathbb{R}^{d}}^{\mathfrak{p}_{q+l+2}} \vert^{p}]^{\frac{1}{p}} ) .
\end{align*}
Since $\mathbf{A}_{1}^{\delta}(q+2)$ holds, taking $l=0$ and applying (\ref{eq:preuve_step1_borne_norme_sobolev_generique}) yields, for $q \geqslant 2$,
\begin{align*}
\mathbb{E}[\sup_{t \in \mathbf{T}} \vert   X^{\delta}_{t}  \vert_{\mathbb{R}^{d},\delta,\mathbf{T},1,q}^{p}]^{\frac{1}{p}} 
\leqslant  &  C(d,q,p) (1+T) \mathfrak{M}_{p((\mathfrak{p}_{q+2} \vee \mathfrak{p})+2)}(Z^{\delta})^{\frac{1}{p}}\mathfrak{D}_{q+2}\\
& \times   \exp (C(p)(T+1)\mathfrak{M}_{p (\mathfrak{p}+2)}(Z^{\delta})^{\frac{2}{p}}\mathfrak{D}^{2})  \\
& \times \mathbb{E}[\sup_{t \in \mathbf{T}} \vert 1 + \vert  X^{\delta}_{t}  \vert_{\mathbb{R}^{d},1,q-1}^{q} \vert^{p}\vert 1 + \vert   X^{\delta}_{t}  \vert_{\mathbb{R}^{d}}^{\mathfrak{p}_{q+2}} \vert^{p}]^{\frac{1}{p}}  .
\end{align*}
Using a recursive approach cimbined with (\ref{eq:borne_{d}er_Mal_ordre_1}) yields (\ref{eq:norme_Sobolev_X_theo}).


\textbf{Step 3}. 
In this last step, we prove (\ref{eq:borne_norme_sobolev_generique}). For $q \in \mathbb{N}$, we define $\mathcal{H}_{0}=\mathcal{H}$ and $\mathcal{H}_{q+1}=(\mathcal{H}_{q})^{\mathbf{T} \times \mathbf{N}}$.  For $Y$ satisfying (\ref{eq:processus_general_Y_sobolev}),we have (remember that $D^{\delta,q}Y_{t}$, $t \in \pi^{\delta}$, belongs to $\mathcal{H}_{q}^{d}$), for every $t \in \pi^{\delta}$


\begin{align*}
\delta^{\frac{q}{2}}  D^{\delta,q}Y_{t+ \delta}=  & \delta^{\frac{q}{2}}  D^{\delta,q}Y_{t}+B_{t} \delta^{\frac{q}{2}}  D^{\delta,q}Y_{t}+ \delta^{\frac{1}{2}} \sum_{i=1}^{N}  Z^{\delta,i}_{t+\delta} B^{q,i}_{1,t}  +  \delta^{\frac{1}{2}}   \sum_{i=1}^{N}  L^{\delta}_{\mathbf{T}}Z^{\delta,i}_{t+\delta} B^{2,i}_{q,t} +B^{3}_{q,t}
\end{align*}
with
\begin{align*}
B^{1,i}_{q,t} =&\delta^{\frac{q}{2}} (D^{\delta}X^{\delta}_{t})^{T} \mbox{\textbf{H}}_{x}A_{1}^{i}(X^{\delta}_{t}, t) D^{\delta,q-1}Y_{t} +\delta^{\frac{1}{2}} D^{\delta} B^{1,i}_{q-1,t} \\
B^{2,i}_{q,t} =& \delta^{\frac{1}{2}} D^{\delta} B^{2,i}_{q-1,t}   \\
B^{3}_{q,t} =&\delta^{\frac{q}{2}} \sum_{i=1}^N \nabla
_{x}A_{1}^{i}(X^{\delta}_{t}, t) D^{\delta}( \delta^{\frac{1}{2}}Z^{\delta,i}_{t+\delta} )D^{\delta,q-1}Y_{t} \\
&+ \delta^{\frac{q}{2}} \frac{1}{2}\sum_{i,j=1}^{N}%
 D^{\delta} (\delta Z^{\delta,i}_{t+\delta} Z^{\delta,j}_{t+\delta}  \nabla_{x}A_{2}^{i,j}(X^{\delta}_{t}, t,\delta^{\frac{1}{2}}Z^{\delta}_{t+\delta},\delta))D^{\delta,q-1}Y_{t} \\
&+ \delta^{\frac{q}{2}} D^{\delta} (\delta \nabla_{x}A_{3}(X^{\delta}_{t}, t,\delta^{\frac{1}{2}}Z^{\delta}_{t+\delta},\delta) D^{\delta,q-1}Y_{t} \\
&+\delta^{\frac{1}{2}} \sum_{i=1}^{N}B^{1,i}
_{q-1,t}D^{\delta} (\delta^{\frac{1}{2}}Z^{\delta,i}_{t+\delta}) +B^{2,i}
_{q-1,t}D^{\delta} L^{\delta}_{\mathbf{T}} (\delta^{\frac{1}{2}}Z^{\delta,i}_{t+\delta}) \\
&+\delta^{\frac{1}{2}} D^{\delta} B^{3}_{q-1,t} .
\end{align*}
Now, we remark that for $l \in \mathbb{N}$, it follows from (\ref{eq:borne_norm_sob_prod_bis}) that
\begin{align*}
\mathfrak{S}_{\mathcal{H}^{d}_{q},\delta,\mathbf{T},l,p}&(B^{1}_{q,.},0,0) \leqslant  \mathfrak{S}_{\mathcal{H}^{d}_{q},\delta,\mathbf{T},l,p}(\delta^{\frac{q}{2}}(D^{\delta}X^{\delta}_{})^{T} \mbox{\textbf{H}}_{x} A_{1}(X^{\delta}_{}, .)D^{\delta,q-1} Y,0,0)\\
&+ \mathfrak{S}_{\mathcal{H}^{d}_{q-1},\delta,\mathbf{T},l+1,p}(B^{1}_{q-1,.},0,0) \\
\leqslant &  \sum_{q^{\diamond}=1}^{q} \mathfrak{S}_{\mathcal{H}_{q-q^{\diamond}+1}^{d},\delta,\mathbf{T},q^{\diamond}+l-1,p}(\delta^{\frac{q-q^{\diamond}+1}{2}}(D^{\delta}X^{\delta}_{})^{T} \mbox{\textbf{H}}_{x} A_{1}(X^{\delta}_{},.)D^{\delta,q-q^{\diamond}} Y^{\delta}_{},0,0) \\
& + \mathfrak{S}_{\mathcal{H}^{d}\delta,\mathbf{T},q+l,p}(B^{1},0,0) \\
\leqslant &  \mathfrak{S}_{\mathcal{H}^{d},\delta,\mathbf{T},q+l,p}(B^{1},0,0)  \\
&+  C(d,q,l) \mathfrak{D}_{q+l+2}  \mathbb{E}[ \sup_{t \in \mathbf{T}} \vert  1 + \vert X^{\delta}_{t}  \vert_{\mathbb{R}^{d},1,q+l}^{q+l} \vert^{2p} \vert 1 + \vert   X^{\delta}_{t}  \vert_{\mathbb{R}^{d}}^{\mathfrak{p}_{q+l+2}} \vert^{2p}]^{\frac{1}{2p}}  \\
& \qquad \times(1+\mathbb{E}[ \sup_{t \in \mathbf{T}} \vert   Y \vert_{\mathcal{H}^{d},q+l-1}^{2p}]^{\frac{1}{2p}})  
\end{align*}
and similarly $\mathfrak{S}_{\mathcal{H}_{q}^{d},\delta,\mathbf{T},l,p}(0,B^{2}_{q,.},0)  \leqslant \mathfrak{S}_{\mathcal{H}^{d},\delta,\mathbf{T},q+l,p}(0,B^{2},0) $.  
Now, similarly as in \textbf{Step 2}, we denote
for $t \in \pi^{\delta}$ and $(w,v) \in \mathbf{T} \times \mathbf{N}$,
\begin{align*}
B^{3,1}_{t}= & \delta  \sum_{i,j=1}^{N}  Z^{\delta,i}_{t+\delta} Z^{\delta,j}_{t+\delta} (\delta^{\frac{1}{2}}D^{\delta}X^{\delta}_{t})^{T} \mbox{\textbf{H}}_{x} A_{2}^{i,j}(X^{\delta}_{t},t,\delta^{\frac{1}{2}}Z^{\delta}_{t+\delta}) \\
&+ \delta (\delta^{\frac{1}{2}} D^{\delta}X^{\delta}_{t})^{T} \mbox{\textbf{H}}_{x} A_{3}(X^{\delta}_{t},t,\delta^{\frac{1}{2}}Z^{\delta}_{t+\delta},\delta) \\
(B^{3,2}_{t})_{w,v}  = & \chi^{\delta}_{t+\delta} \mathbf{1}_{w=t+\delta}  ( \delta^{\frac{1}{2}} \nabla_{x} A_{1}^{v}(X^{\delta}_{t}, t)  +\delta \sum_{j=1}^{N} Z^{\delta,j}_{t+\delta} (1+\mathbf{1}_{v=j}) \nabla_{x} A_{2}^{v,j}(X^{\delta}_{t}, t,\delta^{\frac{1}{2}}Z^{\delta}_{t+\delta}) \\
&+\delta^{\frac{3}{2}} \sum_{i,j=1}^{N} Z^{\delta,i}_{t+\delta} Z^{\delta,j}_{t+\delta}  \partial_{z^{v}} \nabla_{x} A_{2}^{i,j}(X^{\delta}_{t}, t,\delta^{\frac{1}{2}}Z^{\delta}_{t+\delta}) +  \delta^{\frac{3}{2}}\partial_{z^{i}} \nabla_{x} A_{3}(X^{\delta}_{t}, t,\delta^{\frac{1}{2}}Z^{\delta}_{t+\delta},\delta)),
\end{align*}
and we have 
\begin{align*}
& \mathfrak{S}_{\mathcal{H}^{d}_{q},\delta,\mathbf{T},l,p}(0,0,B^{3}_{q,.}) \\
 &\leqslant  \mathfrak{S}_{\mathcal{H}^{d}_{q},\delta,\mathbf{T},l,p}(0,0,\delta^{\frac{q-1}{2}}(B^{3,1}_{}+B^{3,2}_{})D^{\delta,q-1} Y) +\mathfrak{S}_{\mathcal{H}^{d}_{q-1},\delta,\mathbf{T},l+1,p}(0,0,B^{3}_{q-1,.})  \\
& +\mathfrak{S}_{\mathcal{H}^{d}_{q},\delta,\mathbf{T},l,p}(0,0,\delta^{\frac{1}{2}} \sum_{i=1}^{N}B^{1,i}
_{q-1,.}D^{\delta} (\delta^{\frac{1}{2}} Z^{\delta,i}_{.+\delta})+B^{2,i}
_{q-1,.}D^{\delta} L^{\delta}_{\mathbf{T}} (\delta^{\frac{1}{2}} Z^{\delta,i}_{.+\delta})) \\
\leqslant & \sum_{q^{\diamond}=1}^{q}\mathfrak{S}_{\mathcal{H}^{d}_{q-q^{\diamond}+1},\delta,\mathbf{T},q^{\diamond}+l-1,p}(0,0,\delta^{\frac{q-q^{\diamond}}{2}} (B^{3,1}_{}+B^{3,2}_{}) D^{\delta,q-q^{\diamond}} Y) \\
&+\sum_{q^{\diamond}=1}^{q}\mathfrak{S}_{\mathcal{H}^{d}_{q-q^{\diamond}+1},\delta,\mathbf{T},q^{\diamond}+l-1,p}(0,0,\delta^{\frac{1}{2}} \sum_{i=1}^{N}B^{1,i}
_{q-q^{\diamond},.}D^{\delta} (\delta^{\frac{1}{2}} Z^{\delta,i}_{.+\delta})+B^{2,i}
_{q-q^{\diamond},.}D^{\delta} L^{\delta}_{\mathbf{T}} (\delta^{\frac{1}{2}} Z^{\delta,i}_{.+\delta}))  \\
&+\mathfrak{S}_{\mathcal{H}^{d},\delta,\mathbf{T},q+l,p}(0,0,B^{3})  \\
\end{align*}

Moreover, for $q^{\diamond} \in \{1,\ldots,q\}$,

\begin{align*}
\mathfrak{S}_{\mathcal{H}^{d}_{q-q^{\diamond}+1},\delta,\mathbf{T},q^{\diamond}+l-1,p} & (0,0,\delta^{\frac{q-q^{\diamond}}{2}} (B^{3,1}+B^{3,2} ) D^{\delta,q-q^{\diamond}} Y) \\
= & 1+ \sup_{\underset{t<T}{t\in \pi^{\delta}}}\mathbb{E}[\vert \sum_{\underset{w<t}{w\in \pi^{\delta}}} (B^{3,1}_{w}+B^{3,2}_{w}) \delta^{\frac{q-q^{\diamond}}{2}}  D^{\delta,q-q^{\diamond}} Y_{w}  \vert_{\mathcal{H}_{q-q^{\diamond}+1}^{d},q^{\diamond}+l-1}^{p}]^{\frac{1}{p}}  \\
\leqslant &1+ \sum_{\underset{t<T}{t\in \pi^{\delta}}}  \mathbb{E}[\vert  B^{3,1}_{t} \delta^{\frac{q-q^{\diamond}}{2}}  D^{\delta,q-q^{\diamond}} Y_{t}  \vert_{\mathcal{H}_{q-q^{\diamond}+1}^{d},q^{\diamond}+l-1}^{p}]^{\frac{1}{p}} \\
& + \sup_{\underset{t<T}{t\in \pi^{\delta}}}\mathbb{E}[\vert \sum_{\underset{w<t}{w\in \pi^{\delta}}} B^{3,2}_{w} \delta^{\frac{q-q^{\diamond}}{2}}  D^{\delta,q-q^{\diamond}} Y_{w}  \vert_{\mathcal{H}_{q-q^{\diamond}+1}^{d},q^{\diamond}+l-1}^{p}]^{\frac{1}{p}} , 
\end{align*}
with,  using (\ref{eq:borne_norm_sob_prod_bis}) and assuming that $\mathbf{A}_{1}^{\delta}(q+l+3)$ (see (\ref{eq:hyp_1_Norme_adhoc_fonction_schema})) holds,
\begin{align*}
 \mathbb{E}[\vert & B^{3,1}_{t} \delta^{\frac{q-q^{\diamond}}{2}}  D^{\delta,q-q^{\diamond}} Y_{t}   \vert_{\mathcal{R}_{q-q^{\diamond}+1}^{d},q^{\diamond}+l-1}^{p}]^{\frac{1}{p}} 
\leqslant  C(d,q,l) \delta \mathfrak{M}_{p(\mathfrak{p}_{q+l+3}+2)}(Z^{\delta})^{\frac{1}{p}} \mathfrak{D}_{q+l+3} \\
& \times  \mathbb{E}[\sup_{t \in \mathbf{T}} \vert 1 + \vert   X^{\delta}_{t}  \vert_{\mathbb{R}^{d},1,q+l}^{q+l} \vert^{2p} \vert 1 +  \vert   X^{\delta}_{t}  \vert_{\mathbb{R}^{d}}^{\mathfrak{p}_{q+l+3}} \vert^{2p}]^{\frac{1}{2p}}  (1+\mathbb{E}[\sup_{t \in \mathbf{T}} \vert   Y_{t}  \vert_{\mathbb{R}^{d},q+l-1}^{2p}]^{\frac{1}{2p}} )
\end{align*}
and
\begin{align*}
\mathbb{E}[\vert & \sum_{\underset{w<t}{w\in \pi^{\delta}}} B^{3,2}_{w} \delta^{\frac{q-q^{\diamond}}{2}}  D^{\delta,q-q^{\diamond}} Y_{w}  \vert_{\mathcal{R}_{q-q^{\diamond}}^{d},q^{\diamond}+l-1}^{p}]^{\frac{1}{p}}  
\\
=&\mathbb{E}[ \vert \sum_{\underset{w<t}{w\in \pi^{\delta}}}  \sum_{i=1}^{N} \vert (B^{3,2}_{w})_{w+\delta,i} \delta^{\frac{q-q^{\diamond}}{2}}  D^{\delta,q-q^{\diamond}} Y_{w}  \vert_{\mathcal{R}_{q-q^{\diamond}}^{d},q^{\diamond}+l-1}^{2} \vert^{\frac{p}{2}}]^{\frac{1}{p}}   \\
\leqslant &  \vert \delta \sum_{\underset{w<t}{w\in \pi^{\delta}}}  \sum_{i=1}^{N} \mathbb{E}[ \vert \delta^{-\frac{1}{2}}(B^{3,2}_{w})_{w+\delta,i} \delta^{\frac{q-q^{\diamond}}{2}}  D^{\delta,q-q^{\diamond}} Y_{w}  \vert_{\mathcal{R}_{q-q^{\diamond}}^{d},q^{\diamond}+l-1}^{p} ]^{\frac{2}{p}} \vert^{\frac{1}{2}}
\end{align*}
together with the estimate
\begin{align*}
\mathbb{E}[ \vert & \delta^{-\frac{1}{2}}(B^{3,2}_{w})_{w,.}  \delta^{\frac{q-q^{\diamond}}{2}}  D^{\delta,q-q^{\diamond}} Y_{w}  \vert_{(\mathcal{H}_{q-q^{\diamond}}^{d})^{\mathbf{N}},q^{\diamond}+l-1}^{p} ]^{\frac{1}{p}}   \leqslant  C(d,q,l) \delta \mathfrak{M}_{p(\mathfrak{p}_{q+l+3}+2)}(Z^{\delta})^{\frac{1}{p}} \mathfrak{D}_{q+l+3} \\
& \times  \mathbb{E}[\sup_{t \in \mathbf{T}} \vert 1 + \vert   X^{\delta}_{t}  \vert_{\mathbb{R}^{d},1,q+l-1}^{q+l-1} \vert^{2p} \vert 1 + \vert   X^{\delta}_{t}  \vert_{\mathbb{R}^{d}}^{\mathfrak{p}_{q+l+3}} \vert^{2p}]^{\frac{1}{2p}}  (1+\mathbb{E}[\sup_{t \in \mathbf{T}} \vert   Y_{t}  \vert_{\mathcal{H}^{d},q+l-1}^{2p}]^{\frac{1}{2p}} ).
\end{align*}
Finally, for $q^{\diamond} \in \{1,\ldots,q\}$,
\begin{align*}
\mathfrak{S}_{\mathcal{R}^{d}_{q-q^{\diamond}+1},\delta,\mathbf{T},q^{\diamond}+l-1,p} & (0,0,\delta^{\frac{1}{2}} \sum_{i=1}^{N}B^{1,i}
_{q-q^{\diamond},.}D^{\delta} (\delta^{\frac{1}{2}} Z^{\delta,i}_{.+\delta})) \\
\leqslant &  1+T^{\frac{1}{2}}  \mathfrak{S}_{\mathcal{R}^{d}_{q-q^{\diamond}},\delta,\mathbf{T},q^{\diamond}+l-1,p}(B^{1}
_{q-q^{\diamond},.},0,0) \\
\leqslant & 1 + T^{\frac{1}{2}}  \mathfrak{S}_{\mathcal{H}^{d},\delta,\mathbf{T},q+l-1,p}(B^{1},0,0) 
 \\
& +  T^{\frac{1}{2}}  C(d,q,l) \mathfrak{D}_{q+l+1}  \mathbb{E}[ \sup_{t \in \mathbf{T}} \vert 1 + \vert   X^{\delta}_{t}  \vert_{\mathbb{R}^{d},1,q+l-1}^{q+l-1} \vert^{2p} \vert 1 +  \vert   X^{\delta}_{t}  \vert_{\mathbb{R}^{d}}^{\mathfrak{p}_{q+l+1}} \vert^{2p}]^{\frac{1}{2p}}  \\
& \times (1+\mathbb{E}[ \sup_{t \in \mathbf{T}} \vert   Y \vert_{\mathcal{H}^{d},q+l-2}^{2p}]^{\frac{1}{2p}})  .
\end{align*}
Moreover, recall that for a multi-index $\alpha =(\alpha^{1},\ldots,\alpha^{q})$ with $\alpha^{j}=(t_{j},i_{j})$, $t_{j} \in \pi^{\delta}, t_{j}>0$, $i_{j} \in \mathbf{N}$,
\begin{align*}
D^{\delta}_{\alpha} L^{\delta}_{\mathbf{T}}Z^{\delta,i}_{t}=\delta^{-\frac{\vert \alpha \vert}{2}} \chi^{\delta}_{t} \partial_{u}^{\alpha^{u}_{i}} \ln \varphi_{r_{\ast }/2} (\delta^{-\frac{1}{2}} U^{\delta}_{t}-z_{\ast ,t}) \mathbf{1}_{t \in \mathbf{T}} \mathbf{1}_{ \cap_{j=1}^q \{t = t_j\}},
\end{align*}
with $\alpha^{u}_{i} :=   (\alpha^{u}_{i})^{j})_{j \in \mathbf{N}}$, $(\alpha^{u})^{j}= \mathbf{1}_{i=j}+\sum_{l=1}^{q} \mathbf{1}_{i_{l}=j}$. 
Using (\ref{eq:borne_norm_sob_prod_bis}) with the estimate (\ref{eq:deriv_LZ}) from Lemma \ref{lemme:estim_LZ} yields, for every $q^{\diamond} \in \{1,\ldots,q\}$,
\begin{align*}
\mathfrak{S}_{\mathcal{H}^{d}_{q-q^{\diamond}+1},\delta,\mathbf{T},q^{\diamond}+l-1,p} & (0,0,\delta^{\frac{1}{2}} \sum_{i=1}^{N}B^{2,i}
_{q-q^{\diamond},.}D^{\delta} L^{\delta}_{\mathbf{T}} (\delta^{\frac{1}{2}} Z^{\delta,i}_{.+\delta})) \\
 \leqslant & 1+ \mathbb{E}[ \vert \sum_{\underset{w<t}{w\in \pi^{\delta}}}  \sum_{i=1}^{N} \delta  \vert B^{2,i}
_{q-q^{\diamond},w} D^{\delta}_{(w+\delta,i)} L^{\delta}_{\mathbf{T}} (\delta^{\frac{1}{2}} Z^{\delta,i}_{w+\delta} ) \vert_{\mathcal{H}_{q-q^{\diamond}}^{d},q^{\diamond}+l-1}^{2} \vert^{\frac{p}{2}}]^{\frac{1}{p}}   \\
 \leqslant & 1+ \mathbb{E}[ \vert \sum_{\underset{w<t}{w\in \pi^{\delta}}}  \sum_{i=1}^{N} \delta  \vert B^{2,i}
_{q-q^{\diamond},w} D^{\delta}_{(w+\delta,i)} L^{\delta}_{\mathbf{T}} (\delta^{\frac{1}{2}} Z^{\delta,i}_{w+\delta} ) \vert_{\mathcal{H}_{q-q^{\diamond}}^{d},q^{\diamond}+l-1}^{2} \vert^{\frac{p}{2}}]^{\frac{1}{p}}   \\
 \leqslant & 1+ C(q)T^{\frac{1}{2}} \sup_{t \in \mathbf{T}} \mathbb{E}[ \vert B^{2}
_{q-q^{\diamond},t}  \vert_{(\mathcal{H}_{q-q^{\diamond}}^{d})^{\mathbf{N}},q^{\diamond}+l-1}^{2p} ]^{\frac{1}{2p}} \Vert L^{\delta}_{\mathbf{T}}Z^{\delta}_{t} \Vert _{\mathbb{R}^{N},\delta,\mathbf{T},q^{\diamond} +l,2p}  \\
\leqslant & 1+T^{\frac{1}{2}}C(N,q,p)  \frac{m^{\frac{1}{2p}}_{\ast}}{r_{\ast}^{q+l+1}} \mathfrak{S}_{\mathcal{H}^{d},\delta,\mathbf{T},q+l-1,2p}(0,B^{2},0) .
\end{align*}
In particular, we have shown  that
\begin{align*}
\mathfrak{S}_{\mathcal{H}^{d}_{q},\delta,\mathbf{T},l,p}(&0,0,B^{3}_{q,.})  
\leqslant    C(d,q,l,p) (1+T^{\frac{1}{2}}) \mathfrak{M}_{p(\mathfrak{p}_{q+l+3}+2)}(Z^{\delta})^{\frac{1}{p}} +\mathfrak{D}_{q+l+3} \\
& \times  \mathbb{E}[\sup_{t \in \mathbf{T}} \vert 1 + \vert   X^{\delta}_{t}  \vert_{\mathbb{R}^{d},q+l}^{q+l+\mathfrak{p}_{q+l+3}} \vert^{2p} \vert 1 + \vert   X^{\delta}_{t}  \vert_{\mathbb{R}^{d},q+l}^{2p\mathfrak{p}_{q+l+3})}]^{\frac{1}{2p}} \vert  (1+\mathbb{E}[\sup_{t \in \mathbf{T}} \vert   Y_{t}  \vert_{\mathbb{R}^{d},q+l-1}^{2p}]^{\frac{1}{2p}} ) \\
&+T^{\frac{1}{2}}   \mathfrak{S}_{\mathcal{H}^{d},\delta,\mathbf{T},q+l-1,p}(B^{1},0,0)  \\
& +T^{\frac{1}{2}} C(N,q,p)  \frac{m_{\ast}^{\frac{1}{2p}}}{r_{\ast}^{q+l+1}} \mathfrak{S}_{\mathcal{H}^{d},\delta,\mathbf{T},q+l-1,2p}(0,B^{2},0)  \\
&+\mathfrak{S}_{\mathcal{H}^{d},\delta,\mathbf{T},q+l,p}(0,0,B^{3})  .
\end{align*}
Since $\mathbf{A}_{1}^{\delta}(q+3)$ (see (\ref{eq:hyp_1_Norme_adhoc_fonction_schema})) holds, taking $l=0$ and applying (\ref{eq:preuve_step1_borne_norme_sobolev_generique}) and (\ref{eq:norme_Sobolev_X_theo}) concludes the proof of (\ref{eq:borne_norme_sobolev_generique})

\end{proof}

Now, we are in a position to prove Theorem \ref{theo:Norme_Sobolev_borne}.

\begin{proof}[Proof of Theorem \ref{theo:Norme_Sobolev_borne}.]
We do not treat the case $(\mathfrak{p}_{n})_{n \in \mathbb{N}^{\ast}} \equiv 0$ which is similar but simpler. The result is a consequence of the fact that we do not use Lemma \ref{lemme:borne:moments_X} in this case. Let us focus on the case $(\mathfrak{p}_{n})_{n \in \mathbb{N}^{\ast}} \not \equiv 0$.
We treat the Sobolev norms of $\partial_{\mbox{\textsc{x}}^{\delta}_{0}}^{\alpha }X^{\delta}_{t}$. In the case $\vert \alpha \vert =1$, (\ref{eq:borne_Sob_flot_tang}) is a direct consequence of Proposition \ref{prop:borne_Sob_generique}, since
\begin{align*}
\partial_{\mbox{\textsc{x}}^{\delta}_{0}}^{\alpha }X^{\delta}_{t+\delta}=\partial_{\mbox{\textsc{x}}^{\delta}_{0}}^{\alpha }X^{\delta}_{t}+B_{t} \partial_{\mbox{\textsc{x}}^{\delta}_{0}}^{\alpha }X^{\delta}_{t}.
\end{align*}
For $\alpha = (\alpha^{1},\ldots,\alpha^{d})\in \mathbb{N}^{d}$ with $\vert \alpha \vert \in \mathbb{N}^{\ast}$, we consider $i_{0} \in \{1,\ldots,d\}$ such that $\alpha^{i_{0}} \in \mathbb{N}^{\ast}$ and $\alpha^{-}=\{\alpha^{1},\ldots,\alpha^{i_{0}-1},\alpha^{i_{0}} -1,\alpha^{i_{0}+1},\ldots,\alpha^{d}\}$. Then
\begin{align*}
\partial_{\mbox{\textsc{x}}^{\delta}_{0}}^{\alpha }X^{\delta}_{t+\delta}=\partial_{\mbox{\textsc{x}}^{\delta}_{0}}^{\alpha }X^{\delta}_{t}+B_{t} \partial_{\mbox{\textsc{x}}^{\delta}_{0}}^{\alpha }X^{\delta}_{t} + \delta^{\frac{1}{2}} \sum_{i=1}^{N}  Z^{\delta,i}_{t+\delta} B^{1,i}_{\alpha,t}  +B^{3}_{\alpha,t} ,
\end{align*}
with $B^{1}_{\alpha}=B^{3}_{\alpha}=0$ if $\vert \alpha \vert = 1$ and for $\vert \alpha \vert \geqslant 2$,
\begin{align*}
B^{1,i}_{\alpha,t}=&   ( \partial_{\mbox{\textsc{x}}_{0}^{\delta,i_{0}}}  X^{\delta}_{t})^{T} \mbox{\textbf{H}}_{x} A_{1}^{i}(X^{\delta}_{t}, t) \partial_{\mbox{\textsc{x}}^{\delta}_{0}}^{\alpha^{-} } X^{\delta}_{t}+\partial_{\mbox{\textsc{x}}^{\delta,i_{0}}}  B^{1,i}_{\alpha^{-},t}   \\
B^{3}_{\alpha,t}=& \dot{B}^{i_{0}}_{t}  \partial_{\mbox{\textsc{x}}^{\delta}_{0}}^{\alpha^{-} }X^{\delta}_{t}+\partial_{\mbox{\textsc{x}}_{0}^{\delta,i_{0}}}  B^{3}_{\alpha^{-},t} ,
\end{align*} 
with 
\begin{align*}
\dot{B}^{i_{0}}_{t}=&\delta  \sum_{i,j=1}^{N}  Z^{\delta,i}_{t+\delta} Z^{\delta,j}_{t+\delta}  (\partial_{\mbox{\textsc{x}}^{\delta,i_{0}}} X^{\delta}_{t})^{T} \mbox{\textbf{H}}_{x} A_{2}^{i,j}(X^{\delta}_{t},t,\delta^{\frac{1}{2}}Z^{\delta}_{t+\delta}) \\
&+ \delta  (\partial_{\mbox{\textsc{x}}^{\delta,i_{0}}} X^{\delta}_{t})^{T} \mbox{\textbf{H}}_{x} A_{3}(X^{\delta}_{t},t,\delta^{\frac{1}{2}}Z^{\delta}_{t+\delta},\delta)
\end{align*}
In particular, if we assume that $\mathbf{A}_{1}^{\delta}(q+\vert \beta \vert+3)$ (see (\ref{eq:hyp_1_Norme_adhoc_fonction_schema})) holds, for every $p \geqslant 1$, and every $i \in \mathbf{N}$, and every multi-index $\beta \in \mathbb{N}^{d}$, using a recursive approach, we obtain

\begin{align*}
\Vert  & \partial_{\mbox{\textsc{x}}^{\delta}_{0}}^{\beta} B^{1}_{\alpha,t} \Vert _{(\mathbb{R}^{d})^{\mathbf{N}},\delta,\mathbf{T},q,p} \\
 \leqslant & C(d,q,\vert \beta \vert) \mathfrak{D}_{q+\vert \beta \vert+3} \sup_{t \in \mathbf{T}} \sum_{\underset{  1 - q^{\diamond}  \leqslant \vert \alpha^{\diamond} \vert < \vert \alpha \vert+\vert \beta \vert  }{q^{\diamond} \in \{0,1\}, \alpha^{\diamond} \in \mathbb{N}^{d}} } \mathbb{E} [ \vert 1  + \vert \partial_{\mbox{\textsc{x}}^{\delta}_{0}}^{\alpha^{\diamond} }  X^{\delta}_{t}\vert_{\mathbb{R}^{d},\delta,\mathbf{T},q^{\diamond},q}^{q+\vert \beta \vert+ 2}  \vert^{p}  \vert 1  + \vert X^{\delta}_{t}\vert_{\mathbb{R}^{d}}^{ \mathfrak{p}_{q+\vert \beta \vert+3}}  \vert^{p} ]^{\frac{1}{p}} \\
&+\Vert \partial_{\mbox{\textsc{x}}^{\delta}_{0}}^{\beta}  \partial_{\mbox{\textsc{x}}_{0}^{\delta,i_{0}}}   B^{1,i}_{\alpha^{-},t} \Vert _{\mathbb{R}^{d},\delta,\mathbf{T},q,p}  \\
\leqslant & C(d,q,\vert \alpha \vert,\vert \beta \vert) \mathfrak{D}_{q+\vert \alpha \vert+\vert \beta \vert+1} \\
& \times \sup_{t \in \mathbf{T}} \sum_{\underset{  1 - q^{\diamond}  \leqslant \vert \alpha^{\diamond} \vert < \vert \alpha \vert+\vert \beta \vert  }{q^{\diamond} \in \{0,1\}, \alpha^{\diamond} \in \mathbb{N}^{d}} } \mathbb{E} [ \vert 1  + \vert \partial_{\mbox{\textsc{x}}^{\delta}_{0}}^{\alpha^{\diamond} }  X^{\delta}_{t}\vert_{\mathbb{R}^{d},\delta,\mathbf{T},q^{\diamond},q}^{q+\vert \alpha \vert+\vert \beta \vert }  \vert^{p}  \vert 1  + \vert X^{\delta}_{t}\vert_{\mathbb{R}^{d}}^{\mathfrak{p}_{q+\vert \alpha \vert+\vert \beta \vert+1}}  \vert^{p} ]^{\frac{1}{p}} .
\end{align*}

Since $\mathbf{A}_{1}^{\delta}(q+\vert \alpha \vert +2)$ (see (\ref{eq:hyp_1_Norme_adhoc_fonction_schema})) holds, applying this estimate to the case $\beta=\emptyset$ yields
\begin{align*}
\Vert B^{1}_{\alpha,t} \Vert _{(\mathbb{R}^{d})^{\mathbf{N}},\delta,\mathbf{T},q,p} 
\leqslant & C(d,q,\vert \alpha \vert) \mathfrak{D}_{q+\vert \alpha \vert+1} \\
& \times \sup_{t \in \mathbf{T}} \sum_{\underset{  1 - q^{\diamond}  \leqslant \vert \alpha^{\diamond} \vert < \vert \alpha \vert}{q^{\diamond} \in \{0,1\}, \alpha^{\diamond} \in \mathbb{N}^{d}} } \mathbb{E} [ \vert 1  + \vert \partial_{\mbox{\textsc{x}}^{\delta}_{0}}^{\alpha^{\diamond} }  X^{\delta}_{t}\vert_{\mathbb{R}^{d},\delta,\mathbf{T},q^{\diamond},q}^{q+\vert \alpha \vert}  \vert^{p}  \vert 1  + \vert X^{\delta}_{t}\vert_{\mathbb{R}^{d}}^{ \mathfrak{p}_{q+\vert \alpha \vert+1}}  \vert^{p} ]^{\frac{1}{p}} .
\end{align*}
and similarly, 
\begin{align*}
\Vert \sum_{\underset{w<t}{w\in \pi^{\delta}}} B^{3}_{\alpha,w}  \Vert _{\mathbb{R}^{d},\delta,\mathbf{T},q,p} \leqslant & C(d,q,\vert \alpha \vert)(1+T)\mathfrak{D}_{q+\vert \alpha \vert+2} \mathfrak{M}_{p(\mathfrak{p}_{q+\vert \alpha \vert+2}+2)}(Z^{\delta})^{\frac{1}{p}}  \\
& \times \sup_{t \in \mathbf{T}} \sum_{\underset{  1 - q^{\diamond}  \leqslant \vert \alpha^{\diamond} \vert < \vert \alpha \vert}{q^{\diamond} \in \{0,1\}, \alpha^{\diamond} \in \mathbb{N}^{d}} } \mathbb{E} [ \vert 1  + \vert \partial_{\mbox{\textsc{x}}^{\delta}_{0}}^{\alpha^{\diamond} }  X^{\delta}_{t}\vert_{\mathbb{R}^{d},\delta,\mathbf{T},q^{\diamond},q}^{q+\vert \alpha \vert}  \vert^{p}  \vert 1  + \vert X^{\delta}_{t}\vert_{\mathbb{R}^{d}}^{ \mathfrak{p}_{q+\vert \alpha \vert+2}}  \vert^{p} ]^{\frac{1}{p}} .
\end{align*}

Then (\ref{eq:borne_Sob_flot_tang}) follows from Proposition \ref{prop:borne_Sob_generique} combined with a recursive approach.
We now study the Sobolev norms of $L^{\delta}_{\mathbf{T}}X^{\delta}_{t}$. We have
\begin{align*}
L^{\delta}_{\mathbf{T}}X^{\delta}_{t+ \delta}=  & L^{\delta}_{\mathbf{T}}X^{\delta}_{t}+B_{t}L^{\delta}_{\mathbf{T}}X^{\delta}_{t}+ \delta^{\frac{1}{2}} \sum_{i=1}^{N}  Z^{\delta,i}_{t+\delta} B^{1,i}_{t}  +  \delta^{\frac{1}{2}}   \sum_{i=1}^{N}  L^{\delta}_{\mathbf{T}}Z^{\delta,i}_{t+\delta} B^{2,i}_{t} +B^{3}_{t} ,
\end{align*}
with
\begin{align*}
B^{1,i}_{t} 
=&\sum_{l,r=1}^{d}\partial_{x^{l}}\partial
_{x_{r}}A_{1}^{i}(X^{\delta}_{t}, t)  \langle
D^{\delta}X^{\delta,r}_{t},D^{\delta}X^{\delta,l}_{t} \rangle_{\mathbb{R}^{\mathbf{T}\times\mathbf{N}}} =\mbox{Tr}(\sigma^{\delta} _{X^{\delta}_{t} ,\mathbf{T}} \mbox{\textbf{H}}_{x}A_{1}^{i}(X^{\delta}_{t}, t)), \\
B^{2,i}_{t} =&A_{1}^{i}(X^{\delta}_{t}, t)  \\
B^{3}_{t} =&  \delta  \sum_{i,j=1}^{N}  (Z^{\delta,i}_{t+\delta} L^{\delta}_{\mathbf{T}} Z^{\delta,j}_{t+\delta}  + Z^{\delta,j}_{t+\delta}  L^{\delta}_{\mathbf{T}} Z^{\delta,i}_{t+\delta} +\chi^{\delta}_{t+\delta}\mathbf{1}_{i,j} )  A_{2}^{i,j}(X^{\delta}_{t}, t,\delta^{\frac{1}{2}}Z^{\delta}_{t+\delta})  \\
&+Z^{\delta,i}_{t+\delta} Z^{\delta,j}_{t+\delta} (\mbox{Tr}(\sigma^{\delta} _{X^{\delta}_{t} ,\mathbf{T}} \mbox{\textbf{H}}_{x}A_{2}^{i,j}(X^{\delta}_{t}, t,\delta^{\frac{1}{2}}Z^{\delta}_{t+\delta})) + \delta^{\frac{1}{2}}\sum_{l=1}^{N} \partial_{z^{l}}A_{2}^{i,j}(X^{\delta}_{t},t,\delta^{\frac{1}{2}}Z^{\delta}_{t+\delta}) L^{\delta}_{\mathbf{T}}Z^{\delta,l}_{t+ \delta}  \\
&+ \chi^{\delta}_{t+\delta}  \delta \sum_{l=1}^{N} \partial_{z^{l}}^{2}A_{2}^{i,j}(X^{\delta}_{t},t+\delta,\delta^{\frac{1}{2}}Z^{\delta}_{t+\delta})  )\\
&+   \mbox{Tr}(\sigma^{\delta} _{X^{\delta}_{t} ,\mathbf{T}} \mbox{\textbf{H}}_{x}A_{3}(X^{\delta}_{t},t,\delta^{\frac{1}{2}}Z^{\delta}_{t+\delta},\delta) + \delta^{\frac{1}{2}}\sum_{l=1}^{N} \partial_{z^{l}}A_{3}(X^{\delta}_{t},t,\delta^{\frac{1}{2}}Z^{\delta}_{t+\delta},\delta) L^{\delta}_{\mathbf{T}}Z^{\delta,l}_{t+ \delta}  \\
&+ \chi^{\delta}_{t+\delta}  \delta \sum_{l=1}^{N} \partial_{z^{l}}^{2}A_{3}(X^{\delta}_{t},t,\delta^{\frac{1}{2}}Z^{\delta}_{t+\delta},\delta) .
\end{align*}
Moreover, for every $p \geqslant 1$, and every $i \in \mathbf{N}$, using $\mathbf{A}_{1}^{\delta}(q+4)$ (see (\ref{eq:hyp_1_Norme_adhoc_fonction_schema})), 
\begin{align*}
\Vert
B^{1,i}_{t} \Vert _{(\mathbb{R}^{d})^{\mathbf{N}},\delta,\mathbf{T},q,p} \leqslant C(d,q) \mathfrak{D}_{q+3}\sup_{t \in \mathbf{T}} \mathbb{E}[ \vert 1 + \vert  X^{\delta}_{t}  \vert_{\mathbb{R}^{d},1,q+1}^{q+2} \vert^{p}\vert 1 + \vert   X^{\delta}_{t}  \vert_{\mathbb{R}^{d}}^{\mathfrak{p}_{q+3}} \vert^{p}]^{\frac{1}{p}}  ,
\end{align*}
\begin{align*}
\Vert
B^{2,i}_{t} \Vert _{(\mathbb{R}^{d})^{\mathbf{N}},\delta,\mathbf{T},q,p} \leqslant C(d,q) \mathfrak{D}_{q+1}\sup_{t \in \mathbf{T}} \mathbb{E}[ \vert 1 + \vert  X^{\delta}_{t}  \vert_{\mathbb{R}^{d},1,q}^{q} \vert^{p}\vert 1 + \vert   X^{\delta}_{t}  \vert_{\mathbb{R}^{d}}^{\mathfrak{p}_{q+1}} \vert^{p}]^{\frac{1}{p}}   ,
\end{align*}
and
\begin{align*}
\Vert \sum_{\underset{w<t}{w\in \pi^{\delta}}} B^{3}_{w}  \Vert _{\mathbb{R}^{d},\delta,\mathbf{T},q,p} \leqslant & C(d,q)(1+T)\mathfrak{D}_{q+4} \mathfrak{M}_{2p(\mathfrak{p}_{q+4}+2)}(Z^{\delta})^{\frac{1}{2p}}  \\
& \times \sup_{t \in \mathbf{T}} \mathbb{E}[ \vert 1 + \vert  X^{\delta}_{t}  \vert_{\mathbb{R}^{d},1,q+1}^{q+2} \vert^{p}\vert 1 + \vert   X^{\delta}_{t}  \vert_{\mathbb{R}^{d}}^{\mathfrak{p}_{q+4}} \vert^{p}]^{\frac{1}{p}}   \\
& \times(1+\sup_{t \in \mathbf{T}}\Vert L^{\delta}_{\mathbf{T}}Z^{\delta}_{t} \Vert _{\mathbb{R}^{N},\delta,\mathbf{T},q,2p})  .
\end{align*}
We finally use (\ref{eq:deriv_LZ}) from Lemma \ref{lemme:estim_LZ} and Proposition \ref{prop:borne_Sob_generique} to complete the proof of (\ref{eq:theo_borne_{d}eriv_mall_schema_annex_L}).

\end{proof}

\subsection{Proof of Theorem \ref{th:borne_Lp_inv_cov_Mal}}

\subsubsection{Preliminaries}
Before we focus on the proof of Theorem \ref{th:borne_Lp_inv_cov_Mal}, we provide a representation formula for the Malliavin derivatives using the variation of constant formula and some technical results we will employ in our proof.

\noindent \textbf{Representations formula.} Let $w,t\in \pi^{\delta,\ast}, i \in \mathbf{N}$. Then $D^{\delta}_{(w,i)}X^{\delta}_{t} (x)=0$ for every $w>t$ and for $w \leqslant t$,


\begin{align*}
D^{\delta}_{(w,i)}X^{\delta}_{t} =  \chi^{\delta}_{t} \partial_{z^{i}} \psi(X^{\delta}_{t-\delta},t-\delta,\delta^{\frac{1}{2}} Z^{\delta}_t,\delta) \mathbf{1}_{w=t} +\nabla_{x} \psi(X^{\delta}_{t-\delta},t-\delta,\delta^{\frac{1}{2}} Z^{\delta}_t,\delta) D^{\delta}_{(w,i)}X^{\delta}_{t-\delta} (x).
\end{align*}

We consider the tanget flow process $(\dot{X}_{t})_{t \in \pi^{\delta}}$ defined by $ \dot{X}_{0}=I_{d \times d}$ and
\begin{align*}
\dot{X}_{t} :=   \partial_{\mbox{\textsc{x}}^{\delta}_{0}}X^{\delta}_{t} = \nabla_{x} \psi(X^{\delta}_{t-\delta},t-\delta,\delta^{\frac{1}{2}} Z^{\delta}_t,\delta) \dot{X}_{t-\delta}.
\end{align*}
We now define the inverse tangent flow.  To prove the invertibility, we consider the Hilbert space $(\mathbb{R}^{d \times d}, \langle , \rangle_{\mathbb{R}^{d \times d}})$, with the Frobenius scalar product defined by $\langle M , M^{\diamond} \rangle_{\mathbb{R}^{d \times d}} :=    \mbox{Trace}(M^{\diamond} M^{T})=\sum_{i=1}^{d}( M^{\diamond}M^{T})_{i,i}$, $M,M^{\diamond} \in \mathbb{R}^{d \times d}$. Notice that for $M \in \mathbb{R}^{d \times d}$, $\Vert M \Vert_{\mathbb{R}^{d}} \leqslant \vert M \vert_{ \mathbb{R}^{d \times d}} \leqslant d^{\frac{1}{2}}\Vert M \Vert_{\mathbb{R}^{d}}$.  Also, for $k \in \mathbb{N}^{\ast}$, $\vert M^{k} \vert_{ \mathbb{R}^{d \times d}} \leqslant \Vert M\Vert_{ \mathbb{R}^{d}} \vert M^{k-1} \vert_{ \mathbb{R}^{d \times d}}  \leqslant  \vert M \vert_{ \mathbb{R}^{d \times d}}^{k}$ (with $M^{0}=I_{d \times d}$ and $M^{l}=MM^{l-1}$, $l \in\{1,\ldots,k\}$). 

Now, since $\nabla_{x} \psi(x,t,0,0)=I_{d\times d}$ for every $(x,t)\in \mathbb{R}^{d} \times \pi^{\delta}$, it follows from the Taylor expansion of $\nabla_{x} \psi$, that
\begin{align*}
 \nabla_{x} \psi(X^{\delta}_{t-\delta},t-\delta,\delta^{\frac{1}{2}}Z^{\delta}_t,\delta)=& I_{d \times d}+ \delta^{\frac{1}{2}}  \sum_{l=1}^{N}   Z^{\delta,l}_{t}  \int_{0}^{1}\partial_{z^{l}} \nabla_{x} \psi(X^{\delta}_{t-\delta},t-\delta, \lambda \delta^{\frac{1}{2}}Z^{\delta}_t,0) \mbox{d} \lambda.\\
& +  \delta \int_{0}^{1}  \partial_{y} \nabla_{x} \psi(X^{\delta}_{t-\delta},t-\delta,\delta^{\frac{1}{2}}Z^{\delta}_t,\lambda \delta) \mbox{d} \lambda ,
\end{align*}

and using the assumption $\mathbf{A}_{1}$ (see (\ref{eq:hyp_3_Norme_adhoc_fonction_schema})) yields 
\begin{align}
\vert I_{d\times d} - \nabla_{x} \psi (X^{\delta}_{t-\delta},t-\delta,\delta^{\frac{1}{2}}Z^{\delta}_{t},\delta)  \vert_{\mathbb{R}^{d \times d}}  \leqslant & \delta^{\frac{1}{2}}  4  \mathfrak{D} \max(\vert Z^{\delta}_{t} \vert_{\mathbb{R}^{N}}^{\mathfrak{p}+1},1).
\label{borne_grad_psi-Id}
\end{align}
In particular, under the assumption (which is  implied if we suppose that \ref{hyp:hyp_5_loc_var} holds)

\begin{align}
\label{hyp:delta_eta_inversibilite}
\delta^{\frac{1}{2}} \eta_{2}^{\mathfrak{p}+1} 8  \mathfrak{D}< 1,
\end{align}
 we remark that, on the set $\{\vert Z^{\delta}_{t} \vert_{\mathbb{R}^{N}} \leqslant \eta_{2} \}$, we have
\begin{align*}
\vert \det  \nabla_{x} \psi (X^{\delta}_{t-\delta},t-\delta,\delta^{\frac{1}{2}}Z^{\delta}_{t},\delta)   \vert^{\frac{2}{d}}  \geqslant & \inf_{\xi \in \mathbb{R}^{d}; \vert \xi \vert_{\mathbb{R}^{d}} =1} \vert \nabla_{x} \psi (X^{\delta}_{t-\delta},t-\delta,\delta^{\frac{1}{2}}Z^{\delta}_{t},\delta)  \xi \vert_{\mathbb{R}^{d}} \\
\geqslant & 1-  \Vert I_{d\times d} - \nabla_{x} \psi (X^{\delta}_{t-\delta},t-\delta,\delta^{\frac{1}{2}}Z^{\delta}_{t},\delta)  \Vert_{\mathbb{R}^{d}}  \\
\geqslant & 1- \delta^{\frac{1}{2}} 2    \mathfrak{D} ( 1 + \eta_{2}^{\mathfrak{p}+1} ) \geqslant \frac{1}{2}.
\end{align*}


The matrix $ \nabla_{x} \psi (X^{\delta}_{t-\delta},t-\delta,\delta^{\frac{1}{2}}Z^{\delta}_{t},\delta)  $ is thus invertible on the set $\{ \vert Z^{\delta}_{t} \vert_{\mathbb{R}^{N}} \leqslant \eta_{2} \}$.  We are now in a position to introduce the inverse tangent flow, namely $(\mathring{X}_{t})_{t \in \pi^{\delta}}$ satisfying $ \mathring{X}_{0}=I_{d \times d}$ and which is well defined for every $t \in \pi^{\delta,\ast}$ as soon as we are on the set $\{\Theta_{\eta_{2},\pi^{\delta,\ast},t}>0\}$. In this case
\begin{align*}
\mathring{X}_{t}  :=    \dot{X}_{t}^{-1}=\mathring{X}_{t-\delta}\nabla_{x} \psi(X^{\delta}_{t-\delta},t-\delta,Z^{\delta}_t,\delta)^{-1}.
\end{align*}
In particular we introduce $ \mathring{X}_{\eta_{2},t}  :=    \mathring{X}_{t}  \mathbf{1}_{\Theta_{\eta_{2},\pi^{\delta,\ast},t}>0}$ which is well defined for every $t \in \pi^{\delta}$.

We conclude this introduction observing that we have the so-called variation of constant formula. On the set $\{\Theta_{\eta_{2},\pi^{\delta,\ast},t}>0 \}$, for every $(w,i) \in \pi^{\delta,\ast} \cap (0,t] \times \mathbf{N}$,


\begin{align}
\label{eq:Mal_var_of_constant_formula}
 D^{\delta}_{(w,i)}X^{\delta}_{t} = \chi^{\delta}_{w} \dot{X}_{t} \mathring{X}_{w}   \partial_{z^{i}} \psi(X^{\delta}_{w-\delta},w-\delta,\delta^{\frac{1}{2}} Z^{\delta}_w,\delta).
\end{align}

Before we give the proof Theorem \ref{th:borne_Lp_inv_cov_Mal}, we start with some preliminary results which are crucial in the study of the determinant of the inverse of the Malliavin covariance matrix.  \\

\noindent \textbf{Preliminary resuls.}
Two standard results will be used in our approach, namely the Burkholder inequality (see (\ref{eq:burkholder_inequality})) and an exponential martigale inequality, we recall thereafter. First, let us introduce some notations. Given a $\mathbb{R}$-valued process $(Y_{t})_{t \in \pi^\delta}$ progressively measurable $w.r.t.$ a filtration $(\mathcal{F}^Y_{t})_{t \in \pi^\delta}$, we denote $\tilde{\Delta}^{Y}_{t}= \delta^{-\frac{1}{2}}(Y_{t+\delta} - \mathbb{E}[Y_{t+\delta} \vert \mathcal{F}^Y_{t}])$, $\bar{\Delta}^{J}_{t}=  \delta^{-1} \mathbb{E}[Y_{t+\delta}-Y_{t} \vert \mathcal{F}^Y_{t}] $.\\
 Let $(M_t)_{t \in \pi^{\delta}}$ be a $\mathbb{R}$-valued local square integrable $(\mathcal{F}_t)_{t \in \pi^{\delta}}$-martingale. We denote $[M]_{t}=\vert M_{0}\vert^{2} + \delta \sum_{\underset{w<t}{w \in \pi^{\delta}}} \vert \tilde{\Delta}^{M}_{w}\vert^{2} $ and $\langle M\rangle_{t}=\mathbb{E}[\vert M_{0}\vert^{2}]+ \delta \sum_{\underset{w<t}{w \in \pi^{\delta}}} \mathbb{E}[ \vert \tilde{\Delta}^{M}_{w}\vert^{2} \vert \mathcal{F}^{M}_{w}]$. Then (see \cite{Dzhaparidze_VanZanten_2001} Corollary 3.4 or \cite{Fan_Grama_Liu_2017}), we have the following extension of the Freedman inequality \cite{Freedman_1975}: For $a,b>0$ and $t \in \pi^{\delta}$, 
\begin{align}
\label{eq:Doob_martin_expo_ineq}
\mathbb{P} (\sup_{\underset{w \leqslant t}{w \in \pi^{\delta}}}\vert M_{w} \vert \geqslant a,   [M]_{t}+\langle M\rangle_{t}< b)     \leqslant  2\exp(-\frac{a^{2}}{2b})
\end{align}

Now, let us give some additional intermediate results which are proved in the Appendix \ref{Append}. The first one is a technical result that is used to bound the probability that the determinant of a random matrix $\mathfrak{C}$ is under some threshold by studying $ \mathbb{P}( \xi^T \mathfrak{C} \xi \leqslant \epsilon)$ for $\xi \in \mathbb{R}^{d}$.

 \begin{lemme}
 \label{lemme:inversion_sup_Proba}
Let $\Sigma$ be a $\mathbb{R}^{d \times d}$-valued random variable and $\epsilon \in (0,\frac{2^{\frac{1}{2}}}{d^{\frac{1}{2}}})$, Then
 \begin{align}
 \mathbb{P}( \inf_{\xi \in \mathbb{R}^{d}; \vert \xi \vert_{\mathbb{R}^{d}} =1} \xi^T \Sigma \xi \leqslant \frac{1}{2} \epsilon) \leqslant & C(d) \epsilon^{-2d} \sup_{\xi \in \mathbb{R}^{d}; \vert \xi \vert_{\mathbb{R}^{d}} =1}  \mathbb{P}( \xi^T \Sigma \xi \leqslant \epsilon) +  \mathbb{P}( \Vert \Sigma \Vert_{\mathbb{R}^{d}}> \frac{1}{3 \epsilon}).
 \label{eq:inversion_sup_Proba}
 \end{align}
 \end{lemme}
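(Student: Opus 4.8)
\textbf{Proof plan for Lemma \ref{lemme:inversion_sup_Proba}.}

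The plan is to reduce the uniform-in-$\xi$ statement about $\inf_{\vert \xi \vert=1} \xi^T \Sigma \xi$ to a finite union of pointwise statements by means of an $\varepsilon$-net argument on the unit sphere $S^{d-1}$, while controlling the error made in the net discretization by the operator norm $\Vert \Sigma \Vert_{\mathbb{R}^{d}}$. First I would fix a finite subset $\mathcal{N} \subset S^{d-1}$ which is a $\rho$-net for $S^{d-1}$ (every point of the sphere is within Euclidean distance $\rho$ of some point of $\mathcal{N}$), with cardinality $\vert \mathcal{N} \vert \leqslant C(d)\rho^{-(d-1)}$, a standard covering bound; the radius $\rho$ will be chosen proportional to $\varepsilon$ at the end. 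For any unit vectors $\xi$ and $\eta$, one has the algebraic identity $\xi^T\Sigma\xi - \eta^T\Sigma\eta = (\xi-\eta)^T\Sigma\xi + \eta^T\Sigma(\xi-\eta)$, whence
\begin{align*}
\vert \xi^T\Sigma\xi - \eta^T\Sigma\eta \vert \leqslant 2 \Vert \Sigma \Vert_{\mathbb{R}^{d}} \vert \xi - \eta \vert_{\mathbb{R}^{d}} \leqslant 2\rho \Vert \Sigma \Vert_{\mathbb{R}^{d}}
\end{align*}
whenever $\eta \in \mathcal{N}$ is chosen within distance $\rho$ of $\xi$.

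Next I would run the decomposition of the event $\{\inf_{\vert \xi \vert=1}\xi^T\Sigma\xi \leqslant \tfrac12\varepsilon\}$. On the complementary event where $\Vert \Sigma \Vert_{\mathbb{R}^{d}} \leqslant \tfrac{1}{3\varepsilon}$, the displayed inequality gives $\vert \xi^T\Sigma\xi - \eta^T\Sigma\eta \vert \leqslant \tfrac{2\rho}{3\varepsilon}$ for the nearest net point $\eta$; choosing $\rho$ so that $\tfrac{2\rho}{3\varepsilon} \leqslant \tfrac12\varepsilon$, i.e. $\rho = \tfrac{3}{4}\varepsilon^2$ (which is $\leqslant$ the admissible range given the hypothesis $\varepsilon < 2^{1/2}/d^{1/2}$, so the net bound $\vert\mathcal{N}\vert \leqslant C(d)\rho^{-(d-1)} = C(d)\varepsilon^{-2(d-1)}$ applies), the event $\{\inf_{\vert \xi\vert=1}\xi^T\Sigma\xi \leqslant \tfrac12\varepsilon\}$ forces $\min_{\eta\in\mathcal{N}} \eta^T\Sigma\eta \leqslant \varepsilon$. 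Therefore
\begin{align*}
\mathbb{P}\big( \inf_{\vert \xi \vert_{\mathbb{R}^{d}}=1} \xi^T\Sigma\xi \leqslant \tfrac12\varepsilon \big) \leqslant \mathbb{P}\big( \Vert \Sigma \Vert_{\mathbb{R}^{d}} > \tfrac{1}{3\varepsilon} \big) + \sum_{\eta \in \mathcal{N}} \mathbb{P}\big( \eta^T\Sigma\eta \leqslant \varepsilon \big),
\end{align*}
and bounding each summand by the supremum over $\xi \in S^{d-1}$ and using $\vert \mathcal{N} \vert \leqslant C(d)\varepsilon^{-2(d-1)} \leqslant C(d)\varepsilon^{-2d}$ yields exactly \eqref{eq:inversion_sup_Proba}.

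The only genuinely delicate point is bookkeeping: making sure the chosen net radius $\rho$ is compatible both with the constraint needed to absorb the discretization error (forcing $\rho \lesssim \varepsilon^2$) and with the covering-number estimate (which is cleanest for $\rho \leqslant 1$, guaranteed here by $\varepsilon^2 < 2/d \leqslant 2$), so that the final constant $C(d)$ genuinely depends only on $d$. I expect no real obstacle beyond this; the argument is a routine net/union-bound reduction, and the hypothesis $\varepsilon \in (0, 2^{1/2}/d^{1/2})$ is exactly what keeps $\rho < 1$ and the exponent arithmetic ($\varepsilon^{-2(d-1)} \leqslant \varepsilon^{-2d}$ for $\varepsilon < 1$) clean.
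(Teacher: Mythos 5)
Your proposal is correct and follows essentially the same route as the paper's own proof: an $O(\epsilon^{2})$-net on the unit sphere, the bilinear perturbation bound $\vert \xi^{T}\Sigma\xi-\eta^{T}\Sigma\eta\vert\leqslant 2\Vert\Sigma\Vert_{\mathbb{R}^{d}}\vert\xi-\eta\vert_{\mathbb{R}^{d}}$, a split on the event $\{\Vert\Sigma\Vert_{\mathbb{R}^{d}}\leqslant\frac{1}{3\epsilon}\}$, and a union bound giving the covering number $\leqslant C(d)\epsilon^{-2d}$. The only differences are cosmetic (net radius $\tfrac{3}{4}\epsilon^{2}$ instead of the paper's $\tfrac{\epsilon^{2}}{2}$, and a $(d-1)$-dimensional versus $d$-dimensional covering bound, both absorbed in $C(d)$).
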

The second result provides an estimate of the moments of the inverse tangent flow.


\begin{lemme}
\label{lemme:borne_flt_tangent}
Let $T>0$, $\mathbf{T}=(0,T] \cap \pi^{\delta}$, let $p\geqslant 2$ and let  $\eta_{2}>1$. Assume that (\ref{eq:hyp_3_Norme_adhoc_fonction_schema}) from $\mathbf{A}_{1}$ and $\mathbf{A}_{3}^{\delta}(p(\mathfrak{q}^{\delta}_{\eta_{2}} \vee (2\mathfrak{p}+2)))$ (see (\ref{eq:hyp:moment_borne_Z})) hold and that (\ref{hyp:delta_eta_inversibilite}) holds.
Then, 
\begin{align}
\mathbb{E}[\sup_{t \in \mathbf{T}}\Vert \mathring{X}_{t}  \Vert ^{p}_{\mathbb{R}^{d}} \mathbf{1}_{\Theta_{\eta_{2},\mathbf{T},t}>0}]^{\frac{1}{p}}  \leqslant C(d)\exp(C(p)  T \mathfrak{M}_{p(\mathfrak{q}^{\delta}_{\eta_{2}} \vee (2\mathfrak{p}+2))}(Z^{\delta})^{\frac{2}{p}} \mathfrak{D}^{4}).
\label{eq:borne_flot_tangent_inv}
\end{align}
with $\mathfrak{q}^{\delta}_{\eta_{2}}  :=     \lceil 1-\frac{\ln(\delta)}{2 \ln(\eta_{2})} \rceil$ introduced in Theorem \ref{th:borne_Lp_inv_cov_Mal}.
\end{lemme}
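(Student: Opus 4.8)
\textbf{Proof proposal for Lemma \ref{lemme:borne_flt_tangent}.}

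The plan is to iterate the defining recursion $\mathring{X}_{t} = \mathring{X}_{t-\delta}\,\nabla_{x}\psi(X^{\delta}_{t-\delta},t-\delta,\delta^{1/2}Z^{\delta}_{t},\delta)^{-1}$ and to control the product of inverse one-step Jacobians. The first step is to get a uniform-in-step estimate on the inverse of $\nabla_{x}\psi$ on the good set. On $\{\vert Z^{\delta}_{t}\vert_{\mathbb{R}^{N}}\leqslant\eta_{2}\}$, the Taylor expansion already used to derive (\ref{borne_grad_psi-Id}) gives $\vert I_{d\times d}-\nabla_{x}\psi\vert_{\mathbb{R}^{d\times d}}\leqslant \delta^{1/2}4\mathfrak{D}\eta_{2}^{\mathfrak{p}+1}$, which under (\ref{hyp:delta_eta_inversibilite}) is $<1/2$; hence by the Neumann series $\nabla_{x}\psi^{-1}=\sum_{k\geqslant0}(I_{d\times d}-\nabla_{x}\psi)^{k}$ converges and $\Vert\nabla_{x}\psi^{-1}-I_{d\times d}\Vert_{\mathbb{R}^{d}}\leqslant \frac{\delta^{1/2}4\mathfrak{D}\eta_{2}^{\mathfrak{p}+1}}{1-\delta^{1/2}4\mathfrak{D}\eta_{2}^{\mathfrak{p}+1}}\leqslant \delta^{1/2}8\mathfrak{D}\eta_{2}^{\mathfrak{p}+1}$. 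However, on the \emph{complement} $\{\vert Z^{\delta}_{t}\vert_{\mathbb{R}^{N}}>\eta_{2}\}$ one cannot expand at all; instead one should write $\mathring{X}_{\eta_{2},t}$ with the indicator $\mathds{1}_{\Theta_{\eta_{2},\mathbf{T},t}>0}$, which forces $\vert Z^{\delta}_{w}\vert_{\mathbb{R}^{N}}\leqslant\eta_{2}$ for \emph{every} $w\in\mathbf{T}$ with $w\leqslant t$, so only the good bound is ever used. Thus $\mathring{X}_{\eta_{2},t}=\mathds{1}_{\Theta_{\eta_{2},\mathbf{T},t}>0}\prod_{w\in\mathbf{T},w\leqslant t}\nabla_{x}\psi(\cdots)^{-1}$ with each factor of operator norm $\leqslant 1+\delta^{1/2}8\mathfrak{D}\eta_{2}^{\mathfrak{p}+1}$.

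The crude deterministic bound $\Vert\mathring{X}_{\eta_{2},t}\Vert_{\mathbb{R}^{d}}\leqslant(1+\delta^{1/2}8\mathfrak{D}\eta_{2}^{\mathfrak{p}+1})^{T/\delta}\leqslant\exp(8\mathfrak{D}T\delta^{-1/2}\eta_{2}^{\mathfrak{p}+1})$ is too lossy because of the $\delta^{-1/2}$. To recover the claimed $\delta$-free bound one must extract the martingale cancellation: write $\nabla_{x}\psi^{-1}=I_{d\times d}+\delta^{1/2}\sum_{l=1}^{N}Z^{\delta,l}_{t}R^{l}_{t}+\delta S_{t}$ where $R^{l}_{t},S_{t}$ have operator norms $\leqslant C\mathfrak{D}(1+\vert Z^{\delta}_{t}\vert^{\mathfrak{p}+1})$ (again from (\ref{eq:hyp_3_Norme_adhoc_fonction_schema}) and the Neumann series, reorganizing the expansion). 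Then $\mathring{X}_{\eta_{2},t+\delta}=\mathring{X}_{\eta_{2},t}+\delta^{1/2}\sum_{l}Z^{\delta,l}_{t+\delta}\mathring{X}_{\eta_{2},t}R^{l}_{t}+\delta\,\mathring{X}_{\eta_{2},t}S_{t}$ (inserting the indicator so the process is well defined), which is exactly of the form (\ref{eq:processus_general_Y_sobolev}) with $q=0$, $\mathcal{H}=\mathbb{R}^{d\times d}$. The next step is therefore to apply the $q=0$ estimate (\ref{eq:preuve_step1_borne_norme_sobolev_generique}) of Proposition \ref{prop:borne_Sob_generique} (or equivalently run the Burkholder argument directly as in Step 1 of its proof): the $\delta^{1/2}\sum_{l}Z^{\delta,l}R^{l}\mathring{X}$ term is a martingale increment contributing, via (\ref{eq:burholder_inequality_bis}), a factor $\mathfrak{b}_{p}^{2}\mathfrak{M}_{p(\mathfrak{p}+1)}(Z^{\delta})^{2/p}\mathfrak{D}^{2}\,\delta\sum_{w}\mathbb{E}[\Vert\mathring{X}_{\eta_{2},w}\Vert^{p}]^{2/p}$, and the $\delta\,\mathring{X}S$ term contributes $C\mathfrak{M}(Z^{\delta})\mathfrak{D}\,\delta\sum_{w}\mathbb{E}[\Vert\mathring{X}_{\eta_{2},w}\Vert^{p}]^{1/p}$ by the triangle inequality. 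Gronwall's lemma then yields $\mathbb{E}[\sup_{t\in\mathbf{T}}\Vert\mathring{X}_{\eta_{2},t}\Vert^{p}_{\mathbb{R}^{d}}]^{1/p}\leqslant C(d)\exp(C(p)T\mathfrak{M}_{p(2\mathfrak{p}+2)}(Z^{\delta})^{2/p}\mathfrak{D}^{4})$.

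There remains one subtlety: the remainders $R^{l}_{t},S_{t}$ coming from the Neumann series involve $\nabla_{x}\psi^{-1}$ itself, whose bound is only valid on $\{\vert Z^{\delta}_{t}\vert\leqslant\eta_{2}\}$, and on that set the bound $\Vert R^{l}_{t}\Vert_{\mathbb{R}^{d}},\Vert S_{t}\Vert_{\mathbb{R}^{d}}\leqslant C\mathfrak{D}(1+\eta_{2}^{2\mathfrak{p}+2})$ would reintroduce $\eta_{2}$. To avoid this, one should \emph{not} truncate inside $R^{l},S$ but keep $\vert Z^{\delta}_{t}\vert^{\mathfrak{p}+1}$ there and absorb it into the moment $\mathfrak{M}_{p(2\mathfrak{p}+2)}(Z^{\delta})$; the truncation indicator $\mathds{1}_{\Theta_{\eta_{2},\mathbf{T},t+\delta}>0}=\mathds{1}_{\Theta_{\eta_{2},\mathbf{T},t}>0}\mathds{1}_{\vert Z^{\delta}_{t+\delta}\vert\leqslant\eta_{2}}$ is only needed to guarantee invertibility of the one-step factor, not to bound it, so it can be carried along multiplicatively without affecting the Burkholder/Gronwall scheme. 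The appearance of $\mathfrak{q}^{\delta}_{\eta_{2}}=\lceil1-\frac{\ln\delta}{2\ln\eta_{2}}\rceil$ in the moment order is because, to make $\mathds{1}_{\vert Z^{\delta}_{t}\vert\leqslant\eta_{2}}\cdot\delta^{1/2}(\text{junk})$ genuinely small one writes $\mathds{1}_{\vert Z^{\delta}_{t}\vert\leqslant\eta_{2}}\vert Z^{\delta}_{t}\vert^{\mathfrak{p}+1}\leqslant\eta_{2}^{\mathfrak{p}+1}$ and $\delta^{1/2}\eta_{2}^{\mathfrak{p}+1}\leqslant$ const, trading a power of $\eta_{2}$ against $\delta^{1/2}$; controlling the leftover deterministic factors $(1+\delta^{1/2}8\mathfrak{D}\eta_{2}^{\mathfrak{p}+1})$ raised to a power comparable to $\mathfrak{q}^{\delta}_{\eta_{2}}$ is where that order enters, and $\delta^{1/2}\eta_{2}^{\mathfrak{q}^{\delta}_{\eta_{2}}}\leqslant1$ by definition of $\mathfrak{q}^{\delta}_{\eta_{2}}$. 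The main obstacle is precisely this bookkeeping — arranging the expansion and the indicators so that every factor of $\delta^{-1/2}$ is matched by a genuine martingale increment or by the $\eta_{2}$–$\delta$ trade-off encoded in $\mathfrak{q}^{\delta}_{\eta_{2}}$, so that no negative power of $\delta$ survives in the final exponential; the Burkholder and Gronwall steps themselves are routine.
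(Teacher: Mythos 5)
Your overall scheme is the same as the paper's: isolate the one-step increment $\mathring{X}_{t}-\mathring{X}_{t-\delta}=-\mathring{X}_{t-\delta}\bigl(I_{d\times d}-\nabla_{x}\psi^{-1}\bigr)$, split it into a conditionally centered part and a drift of size $O(\delta)$, control the Neumann tail through the smallness condition (\ref{hyp:delta_eta_inversibilite}), and conclude by Burkholder (in $\mathbb{R}^{d\times d}$) plus Gronwall. However, there is a genuine gap at the one place where the moment order $\mathfrak{q}^{\delta}_{\eta_{2}}$ in the statement actually comes from. Because $\mathds{1}_{\Theta_{\eta_{2},\mathbf{T},t+\delta}>0}=\mathds{1}_{\Theta_{\eta_{2},\mathbf{T},t}>0}\,\mathds{1}_{\vert Z^{\delta}_{t+\delta}\vert_{\mathbb{R}^{N}}\leqslant\eta_{2}}$, the leading noise term in your recursion is really $\delta^{\frac{1}{2}}\sum_{l}Z^{\delta,l}_{t+\delta}\mathds{1}_{\vert Z^{\delta}_{t+\delta}\vert\leqslant\eta_{2}}\,\nabla_{x}V_{l}(X^{\delta}_{t},t)$, which is \emph{not} conditionally centered; your claim that the indicator ``can be carried along multiplicatively without affecting the Burkholder/Gronwall scheme'' dismisses exactly this. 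One must recenter and bound the induced drift, $\delta^{\frac{1}{2}}\mathbb{E}\bigl[\vert Z^{\delta}_{t+\delta}\vert\,\mathds{1}_{\vert Z^{\delta}_{t+\delta}\vert>\eta_{2}}\bigr]\leqslant\delta^{\frac{1}{2}}\eta_{2}^{-(\mathfrak{q}^{\delta}_{\eta_{2}}-1)}\mathfrak{M}_{\mathfrak{q}^{\delta}_{\eta_{2}}}(Z^{\delta})\leqslant\delta\,\mathfrak{M}_{\mathfrak{q}^{\delta}_{\eta_{2}}}(Z^{\delta})$, since by definition $\eta_{2}^{-(\mathfrak{q}^{\delta}_{\eta_{2}}-1)}\leqslant\delta^{\frac{1}{2}}$. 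This is the paper's Step 2, and it is not optional: if the residual drift were only $O(\delta^{\frac{1}{2}})$ per step, summing over $T/\delta$ steps would put $T\delta^{-\frac{1}{2}}$ into the Gronwall exponent — precisely the loss you set out to avoid. Your stated justification, ``$\delta^{\frac{1}{2}}\eta_{2}^{\mathfrak{q}^{\delta}_{\eta_{2}}}\leqslant1$ by definition of $\mathfrak{q}^{\delta}_{\eta_{2}}$'', is in fact false (the definition gives $\eta_{2}^{\mathfrak{q}^{\delta}_{\eta_{2}}}\geqslant\eta_{2}\delta^{-\frac{1}{2}}$, so the product exceeds $1$); the correct use of $\mathfrak{q}^{\delta}_{\eta_{2}}$ is the tail/Markov estimate above, not a bound on the deterministic factors.

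Two further points need tightening, though they are fixable. First, in your expansion $\nabla_{x}\psi^{-1}=I_{d\times d}+\delta^{\frac{1}{2}}\sum_{l}Z^{\delta,l}_{t+\delta}R^{l}_{t}+\delta S_{t}$ the coefficients $R^{l}_{t}$, as produced by the Neumann series, depend on $Z^{\delta}_{t+\delta}$ and are therefore not adapted, so you cannot invoke the structure (\ref{eq:processus_general_Y_sobolev}) or (\ref{eq:burholder_inequality_bis}) as written; you must Taylor expand at $z=0$ so that the coefficient of $\delta^{\frac{1}{2}}Z^{\delta,l}_{t+\delta}$ is $-\nabla_{x}V_{l}(X^{\delta}_{t},t)$ (which is $\mathcal{F}^{Z^{\delta}}_{t}$-measurable) and push all remaining $z$-dependence into the $O(\delta)$ term, whose $\vert Z\vert^{2(\mathfrak{p}+1)}$ factors are then absorbed into $\mathfrak{M}_{p(2\mathfrak{p}+2)}(Z^{\delta})$ exactly as you propose. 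Second, for the series $\sum_{k\geqslant2}\vert I_{d\times d}-\nabla_{x}\psi\vert^{k}$ one keeps two factors of $\vert Z\vert^{\mathfrak{p}+1}$ and bounds the remaining $k-2$ by $\eta_{2}^{\mathfrak{p}+1}$, so that (\ref{hyp:delta_eta_inversibilite}) makes the geometric series summable with a $\delta$ prefactor; that is the only place the $\eta_{2}$--$\delta$ trade you describe is used, and it is separate from the role of $\mathfrak{q}^{\delta}_{\eta_{2}}$. With these repairs your argument coincides with the paper's proof.
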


The next result is a discrete time Lie expansion satisfied by our process $X^{\delta}$ together with a control of the remainder appearing.
\begin{lemme}[Discrete time Lie expansion]
\label{lemme:dvpt_Lie}
Let $V \in \mathcal{C}^{2}(\mathbb{R}^{d} \times \mathbb{R}_{+})$ and let $\eta_{2}>1$. Assume that $\psi \in \mathcal{C}^{3}(\mathbb{R}^{d} \times \mathbb{R}_{+} \times \mathbb{R}^{N} \times [0,1])$. Then for every $t \in \pi^{\delta,\ast}$,
\begin{align*}
  \mathring{X}_{\eta_{2},t}  V(X^{\delta}_{t},t)  =&  \mathring{X}_{\eta_{2},t-\delta}V(X^{\delta}_{t-\delta},t-\delta)  + \delta^{\frac{1}{2}}\sum_{i=1}^N Z^{\delta,i}_{t} \mathring{X}_{\eta_{2},t-\delta} V^{[i]}(X^{\delta}_{t-\delta},t-\delta) \\
&+\delta \mathring{X}_{\eta_{2},t-\delta} V^{[0]}(X^{\delta}_{t-\delta},t-\delta) ) + \mathring{X}_{\eta_{2},t-\delta} \mathbf{R}^{\delta}V(X^{\delta}_{t-\delta},t-\delta,Z^{\delta}_{t})
\end{align*}
Moreover, let us introduce the $\mathbb{R}^{d}$-valued functions defined for every $(x,t,z) \in \mathbb{R}^{d} \times \pi^{\delta,\ast} \times \in \mathbb{R}^{N}$ by 
\begin{align*}
\tilde{\mathbf{R}}^{\delta}V(x,t-\delta,z)=& \mathbf{R}^{\delta}V(x,t-\delta,z)- \mathbb{E}[ \mathbf{R}^{\delta}V(x,t-\delta,Z^{\delta}_{t})] \\
\overline{\mathbf{R}}^{\delta}V(x,t-\delta)=& \mathbb{E}[ \mathbf{R}^{\delta}V(x,t-\delta,Z^{\delta}_{t}) ] .
\end{align*}
Let $\alpha^{x} \in \mathbb{N}^{d}$ and assume that $\mathbf{A}_{1}^{\delta}(\vert \alpha^{x} \vert +4 )$ (see (\ref{eq:hyp_1_Norme_adhoc_fonction_schema}) and (\ref{eq:hyp_3_Norme_adhoc_fonction_schema})) and $\mathbf{A}_{3}^{\delta}(2 \max(3\mathfrak{p}+(\mathfrak{p}_{\vert \alpha^{x} \vert+4}+ 2)(\max(\vert \alpha^{x} \vert,2)+3)+4,\lceil- \frac{3 \ln(\delta)}{2 \ln(\eta_{2})} \rceil +2))$ (see (\ref{eq:hyp:moment_borne_Z})) hold, that $V \in \mathcal{C}^{\vert \alpha^{x} \vert+3}_{\mbox{pol}}(\mathbb{R}^{d} \times \mathbb{R}_{+};\mathbb{R}^{d} )  :=    \{f \in \mathcal{C}^{\vert \alpha^{x} \vert+3}(\mathbb{R}^{d} \times \mathbb{R}_{+};\mathbb{R}^{d} ), \exists \mathfrak{D}_{f,\vert \alpha^{x} \vert+3} \geqslant 1,\mathfrak{p}_{f,\vert \alpha^{x} \vert+3} \in \mathbb{N},  \forall (x,t)  \in \mathbb{R}^{d} \times \mathbb{R}_{+},\vert f(x,t) \vert_{\mathbb{R}^{d}} \leqslant \mathfrak{D}_{f,\vert \alpha^{x} \vert+3} (1+\vert x \vert_{\mathbb{R}^{d}}^{\mathfrak{p}_{f,\vert \alpha^{x} \vert+3} } ) \}$ and that (\ref{hyp:delta_eta_inversibilite}) holds. \\
 Then, for every $(x,t,z) \in \mathbb{R}^{d} \times \pi^{\delta,\ast} \times \in \mathbb{R}^{N}$
\begin{align}
\label{eq:borne_reste_moy_dvpt_Lie}
\vert \partial^{\alpha^{x}}_{x}  \overline{\mathbf{R}}(x,t-\delta) \vert_{\mathbb{R}^{d}} \leqslant &  \delta^{\frac{3}{2}} C(\vert \alpha^{x} \vert) \mathfrak{M}_{2 \max(3\mathfrak{p}+(\mathfrak{p}_{\vert \alpha^{x} \vert+4}+ 2)( \max(\vert \alpha^{x} \vert,2)+3)+4,\lceil- \frac{3 \ln(\delta)}{2 \ln(\eta_{2})} \rceil +2)}(Z^{\delta}) \nonumber  \\
& \times \mathfrak{D}^{3}  \mathfrak{D}_{\vert \alpha^{x} \vert+4}^{2 \max(\vert \alpha^{x} \vert,2) +3 }  \mathfrak{D}_{V,\vert \alpha^{x} \vert +3}^{2} (1+ \vert x \vert_{\mathbb{R}^{d}}^{\mathfrak{p}_{\vert \alpha^{x} \vert + 4}( 2 \max(\vert \alpha^{x} \vert ,2) +3) + 2 \mathfrak{p}_{V,\vert \alpha^{x} \vert + 3 }}) .
\end{align}
and
\begin{align}
\label{eq:borne_reste_mart_dvpt_Lie}
\vert \tilde{\mathbf{R}}(x,t-\delta,z) \vert_{\mathbb{R}^{d}} \leqslant &  \delta C \mathfrak{M}_{\max(6\mathfrak{p}+10\mathfrak{p}_{4}+ 28,\lceil - \frac{ \ln(\delta)}{ \ln(\eta_{2})} \rceil + 1)}(Z^{\delta}) \nonumber \\
& \times \mathfrak{D}^{3}  \mathfrak{D}_{4}^{7}  \mathfrak{D}_{V,3}^{2} (1+ \vert x \vert_{\mathbb{R}^{d}}^{2 \max(7 \mathfrak{p}_{4},1)+ 4 \mathfrak{p}_{V, 3 }}+ \vert z \vert_{\mathbb{R}^{N}}^{4\max(6\mathfrak{p}+10\mathfrak{p}_{4}+ 28,\lceil - \frac{ \ln(\delta)}{ \ln(\eta_{2})} \rceil + 1)}) .
\end{align}

\end{lemme}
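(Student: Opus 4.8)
The plan is to prove Lemma \ref{lemme:dvpt_Lie} by carrying out a careful Taylor expansion of the map $t-\delta \mapsto t$ for the quantity $\mathring{X}_{\eta_{2},t} V(X^{\delta}_{t},t)$. First I would expand $\mathring{X}_{\eta_{2},t}$ using the recursion $\mathring{X}_{\eta_{2},t} = \mathring{X}_{\eta_{2},t-\delta} \nabla_{x}\psi(X^{\delta}_{t-\delta},t-\delta,\delta^{\frac{1}{2}}Z^{\delta}_{t},\delta)^{-1}$, together with a Taylor expansion of $\nabla_{x}\psi^{-1}$ around $(x,t,0,0)$ (recall $\nabla_{x}\psi(x,t,0,0) = I_{d\times d}$), so that $\nabla_{x}\psi^{-1} = I_{d\times d} - \delta^{\frac12}\sum_i Z^{\delta,i}_t \partial_{z^i}\nabla_x\psi(\cdots,0,0) + \delta^{\frac12}(\text{second order}) + \cdots$. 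Simultaneously I would expand $V(X^{\delta}_{t},t)$ in powers of the increment $X^{\delta}_{t}-X^{\delta}_{t-\delta}$ and of $\delta$, using the one- and two-step expansions of $X^{\delta}_{t}$ in terms of $A_1,A_2,A_3$ already recorded in the proof of Theorem \ref{theo:Norme_Sobolev_borne} (see (\ref{def:notation_{d}vpt_X})). Multiplying these two expansions and collecting terms by order in $\delta^{\frac12}$, the $\delta^{\frac12}$-coefficient should assemble into $\sum_i Z^{\delta,i}_t \mathring{X}_{\eta_2,t-\delta}V^{[i]}(X^{\delta}_{t-\delta},t-\delta)$ (with $V^{[i]} = [V_i,V]$) and the $\delta^{1}$-coefficient (after taking conditional expectation of the $Z$-quadratic terms, using $\mathbb{E}[Z^{\delta,i}_tZ^{\delta,j}_t]=\mathds{1}_{i,j}$) into $\mathring{X}_{\eta_2,t-\delta}V^{[0]}(X^{\delta}_{t-\delta},t-\delta)$ with $V^{[0]} = [\bar V_0,V] + \partial_t V + \frac12\sum_i[V_i,[V_i,V]]$ — this is exactly the recursion defining $V^{[\alpha]}$ and is the bookkeeping heart of the identity. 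Everything not captured at orders $\delta^{\frac12},\delta^{1}$ is collected into $\mathring{X}_{\eta_2,t-\delta}\mathbf{R}^{\delta}V(X^{\delta}_{t-\delta},t-\delta,Z^{\delta}_{t})$ by definition, so the expansion identity itself is essentially a definition plus the Lie-bracket algebra; the substance is in the remainder estimates.

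For the estimate (\ref{eq:borne_reste_moy_dvpt_Lie}) on $\overline{\mathbf{R}}V = \mathbb{E}[\mathbf{R}^{\delta}V(x,t-\delta,Z^{\delta}_t)]$, I would use that $\mathbf{R}^{\delta}V$ consists of (i) genuine third-order Taylor remainders in the $z$-variable, which carry a factor $\delta^{\frac32}$ once the $\delta^{\frac12}Z$ scaling is accounted for, and (ii) cross terms of order $\delta^{\frac32}$ or higher coming from the product of the $\delta^{\frac12}$-part of $\nabla_x\psi^{-1}$ with the $\delta^{\frac12}$-part of the $V$-increment that survive after the centering implicit in using $V^{[i]}$, plus (iii) the part of the quadratic-in-$Z$ terms left after subtracting the conditional expectation that defines $V^{[0]}$ — but that centered part is precisely what goes into $\tilde{\mathbf{R}}$, so in $\overline{\mathbf{R}}$ only the $\delta^{\frac32}$ pieces remain. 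Differentiating $\vert\alpha^x\vert$ times in $x$ and using the polynomial growth bounds $\mathbf{A}_1^{\delta}(\vert\alpha^x\vert+4)$ on the derivatives of $\psi$ (hence of $A_1,A_2,A_3$ and $\nabla_x\psi^{-1}$, the latter controlled via (\ref{borne_grad_psi-Id}) and a Neumann-series argument valid under (\ref{hyp:delta_eta_inversibilite})), together with $V \in \mathcal{C}^{\vert\alpha^x\vert+3}_{\mathrm{pol}}$ and the moment bound $\mathbf{A}_3^{\delta}$ to integrate out $Z^{\delta}_t$, produces the stated product of constants $\mathfrak{D},\mathfrak{D}_{\vert\alpha^x\vert+4},\mathfrak{D}_{V,\vert\alpha^x\vert+3}$ and moments $\mathfrak{M}_{\cdots}(Z^{\delta})$ with the indicated polynomial growth in $\vert x\vert$. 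The exact exponents ($2\max(\vert\alpha^x\vert,2)+3$ on $\mathfrak{D}_{\vert\alpha^x\vert+4}$, the moment order, etc.) come from tracking how many factors of a derivative of $\psi$ each term in the Leibniz expansion of $\partial_x^{\alpha^x}$ of a product can contain; I would not grind through this but organize it via the product/composition Sobolev-type estimates (analogues of Lemma \ref{lemme:borne_norm_sob_prod_bis}, Lemma \ref{lemme:borne_norm_sob_comp_bis} at the level of deterministic polynomially-growing functions).

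For (\ref{eq:borne_reste_mart_dvpt_Lie}) on $\tilde{\mathbf{R}}V = \mathbf{R}^{\delta}V(x,t-\delta,z) - \mathbb{E}[\mathbf{R}^{\delta}V(x,t-\delta,Z^{\delta}_t)]$, the point is that $\tilde{\mathbf{R}}V$ additionally contains the centered second-order-in-$Z$ terms, which are only of order $\delta^{1}$ (not $\delta^{\frac32}$); hence the sharper $\delta$ (rather than $\delta^{\frac32}$) factor, and the dependence only on derivatives of $\psi$ up to order $4$ and of $V$ up to order $3$, and the extra $\vert z\vert_{\mathbb{R}^N}$-polynomial term (since we keep $z$ frozen rather than integrating). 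I would bound it directly: write $\mathbf{R}^{\delta}V(x,t-\delta,z)$ as an explicit sum of Taylor-remainder integrals plus the finite collection of $\delta$-order product terms, each estimated in sup-norm over $\lambda\in[0,1]$ using $\mathbf{A}_1^{\delta}(\vert\alpha^x\vert+4)$ (here $\vert\alpha^x\vert=0$ suffices, so $\mathbf{A}_1^{\delta}(4)$), the Neumann-series control of $\nabla_x\psi^{-1}$, and the polynomial growth of $V$ and its first three derivatives; the centering only removes the mean and costs at most a factor $2$ and a moment $\mathbb{E}[|Z^{\delta}_t|^{\cdots}]$. The logarithmic-in-$\delta$ moment orders ($\lceil -\tfrac{3\ln\delta}{2\ln\eta_2}\rceil$, $\lceil -\tfrac{\ln\delta}{\ln\eta_2}\rceil$) enter through the powers of $\mathring{X}$-type factors that appear implicitly, mirroring the exponent $\mathfrak{q}^{\delta}_{\eta_2}$ in Lemma \ref{lemme:borne_flt_tangent}; I would import that control verbatim. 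The main obstacle, as is typical in this kind of result, is not any single estimate but the sheer combinatorial bookkeeping: correctly identifying which product terms land at order $\delta^{\frac12}$, $\delta$, $\delta^{\frac32}$, verifying that the order-$\delta^{\frac12}$ and order-$\delta$ coefficients really collapse to the Lie-bracket expressions $V^{[i]}$ and $V^{[0]}$ (this requires the cancellation between the $\nabla_x\psi^{-1}$ expansion and the $V$-increment expansion, which is where the "$-\frac12\sum_i\nabla_x V_i V_i$" correction in $\bar V_0$ and the second-order Stratonovich-type term arise), and then tracking the derivative counts through $\partial_x^{\alpha^x}$ to land on the precise exponents claimed. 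I expect the cleanest route is to first prove the $\alpha^x=0$, deterministic-coefficient version and then differentiate, rather than carrying the multi-index along from the start.
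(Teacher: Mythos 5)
Your overall route is the same as the paper's: Taylor-expand $V(X^{\delta}_{t},t)-V(X^{\delta}_{t-\delta},t-\delta)$ through the one-step map, expand $\nabla_{x}\psi^{-1}$ (the paper does this via the algebraic identity $\nabla_{x}\psi^{-1}=I-(I-\nabla_{x}\psi)-(I-\nabla_{x}\psi)^{2}+\sum_{k\geqslant 3}(I-\nabla_{x}\psi)^{k}$, the tail being controlled by the Neumann series under (\ref{hyp:delta_eta_inversibilite}) on the truncated set), multiply the two expansions so that the $\delta^{\frac12}$- and $\delta$-coefficients collapse to $V^{[i]}$ and $V^{[0]}$, and collect everything else into $\mathbf{R}^{\delta}V$; the substance is then the term-by-term estimates, organized essentially as you describe (the paper writes the remainders $R^{\delta,1},R^{\delta,2},R^{\delta,3}$ explicitly and differentiates these formulas, which is also what your "prove $\alpha^{x}=0$ then differentiate" must mean in practice, since one cannot differentiate a bound).

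The genuine gap is your treatment of the localization. The identity compares $\mathring{X}_{\eta_{2},t}$ with $\mathring{X}_{\eta_{2},t-\delta}$, and their indicators differ by the factor $\mathds{1}_{\delta^{1/2}Z^{\delta}_{t}\in\mathbf{D}_{\eta_{2}}}$; this factor has to be folded into $\mathbf{R}^{\delta}V$, so the remainder contains, besides the Taylor and Neumann tails, the correction terms $-\delta^{\frac12}\sum_{i}z^{i}V^{[i]}(x,t-\delta)\,\mathds{1}_{\delta^{1/2}z\notin\mathbf{D}_{\eta_{2}}}$ and $-\delta V^{[0]}(x,t-\delta)\,\mathds{1}_{\delta^{1/2}z\notin\mathbf{D}_{\eta_{2}}}$ (and their means, once you pass to $\overline{\mathbf{R}}$ and $\tilde{\mathbf{R}}$). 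Consequently the centering you invoke for $\overline{\mathbf{R}}$ is only approximate: $\mathbb{E}[Z^{\delta,i}_{t}\mathds{1}_{\delta^{1/2}Z^{\delta}_{t}\in\mathbf{D}_{\eta_{2}}}]$ and $\mathbb{E}[(Z^{\delta,i}_{t}Z^{\delta,j}_{t}-\mathds{1}_{i,j})\mathds{1}_{\delta^{1/2}Z^{\delta}_{t}\in\mathbf{D}_{\eta_{2}}}]$ do not vanish, and they are $O(\delta^{\frac32})$ only after a Markov-type bound on the tail event, $\mathbb{P}(\vert Z^{\delta}_{t}\vert_{\mathbb{R}^{N}}>\eta_{2})\leqslant \eta_{2}^{-q}\mathfrak{M}_{q}(Z^{\delta})$ with $q$ of order $-\frac{3\ln\delta}{2\ln\eta_{2}}$; together with the bound $\vert z\vert^{k}\mathds{1}_{\delta^{1/2}z\in\mathbf{D}_{\eta_{2}}}\leqslant \vert z\vert^{3}\eta_{2}^{k-3}$ used to sum the Neumann tail under (\ref{hyp:delta_eta_inversibilite}), this is the actual source of the logarithmic-in-$\delta$ moment orders in (\ref{eq:borne_reste_moy_dvpt_Lie}) and (\ref{eq:borne_reste_mart_dvpt_Lie}). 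Your plan to obtain those orders by importing Lemma \ref{lemme:borne_flt_tangent} "verbatim" does not work: both bounds are purely deterministic in $x$, no factor $\mathring{X}$ (hence no moment of the inverse flow) appears in them, and Lemma \ref{lemme:borne_flt_tangent} says nothing about the truncated moments of $Z^{\delta}$. Without this truncation bookkeeping the claimed rate $\delta^{\frac32}$ for $\overline{\mathbf{R}}$ (and the precise moment orders for $\tilde{\mathbf{R}}$) is not justified; once it is added, the rest of your sketch goes through along the paper's lines.
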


The last result is a Norris Lemma adapted to discrete time processes. In the continuous case, this lemma can be found in \cite{Nualart_2006}, Lemma 2.3.2. Before giving this result, we introduce some notations.  Let $q>0$ and $\mathbf{T} \subset  \pi^{\delta,\ast}$. Given a $\mathbb{R}$-valued process $(Y_{t})_{t \in \pi^\delta}$ progressively measurable $w.r.t.$ a filtration $(\mathcal{F}^Y_{t})_{t \in \pi^\delta}$, we denote, 
\begin{align}
\label{eq:Quantite_lemme_Norris}
 \mathfrak{N}_{Y,\mathbf{T}}(q)  :=    &1+ \sup_{t \in \mathbf{T}} \mathbb{E}[\vert Y_{t-\delta} \vert^{q} ] + \mathbb{E}[\sup_{t \in \mathbf{T}} \vert \bar{\Delta}^{Y}_{t-\delta}\vert ^{q}] + \mathbb{E}[\sup_{t \in \mathbf{T}}  \mathbb{E}[ \vert \tilde{\Delta}^{Y}_{t-\delta} \vert^{q}\vert \mathcal{F}^Y_{t-\delta}]] \\
& +  \mathbb{E}[\sup_{t \in \mathbf{T}}  \vert \bar{\Delta}^{\bar{\Delta}^{Y}}_{t-\delta}\vert ^{q}] + \mathbb{E}[\sup_{t \in \mathbf{T}}  \mathbb{E}[ \vert \tilde{\Delta}^{\bar{\Delta}^{Y}}_{t-\delta} \vert^{q} \vert \mathcal{F}^Y_{t-\delta}]] . \nonumber
\end{align}

  \begin{lemme}[Discrete time Norris Lemma]
 \label{lem:Norris}

 Let $T \geqslant \delta$, $\mathbf{T}=(0,T] \cap \pi^{\delta}$.  Let $(Y_{t})_{t \in \pi^\delta}$ be a $\mathbb{R}$-valued random process progressively measurable with respect to a filtration $(\mathcal{F}^Y_{t})_ {t \in \pi^\delta}$, let $r \in (0,\frac{1}{12})$ and let $ p  >0$.  Let us introduce $q(r,p)=\max(4,\frac{44p}{1-12r})$ and assume that
\begin{align*}
 \mathfrak{N}_{Y,\mathbf{T}}(q(r,p)) < + \infty.
\end{align*}

Then, for every $\epsilon \in [\vert 2^{10} (1+T^{3}) \delta \vert ^{\frac{44}{91-36r}},\vert 2^{8} (1+T) \vert^{-\frac{11}{1-12r}} ]$, then
  \begin{align}
\label{eq:ineg_Norris}
  \mathbb{P}(\delta  \sum_{t \in \mathbf{T}} \vert Y_t \vert^{2} < \epsilon,  \delta\sum_{t \in \mathbf{T}} & \mathbb{E}[\vert\tilde{\Delta}^{Y}_{t-\delta} \vert^{2} \vert \mathcal{F}^Y_{t-\delta}] + \vert \bar{\Delta}^{Y}_{t-\delta} \vert^{2} \geqslant \epsilon^r)   \\
  \leqslant  &  \epsilon^{p} (1+T^{2q(r,p)}) 2^{5q(r,p)+5}  \mathfrak{N}_{Y,\mathbf{T}}(q(r,p)) +    12 \exp(-\frac{\epsilon^{-\frac{1-12r}{22}}}{2^{11} (1+T^{2})}) . \nonumber
  \end{align}

\end{lemme}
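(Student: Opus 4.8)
The plan is to mimic the continuous-time proof of the Norris Lemma (see \cite{Nualart_2006}, Lemma 2.3.2), adapting each analytic ingredient to the discrete, inhomogeneous setting. Write $Y_t = Y_0 + \delta^{\frac{1}{2}} \sum_{w<t} \tilde{\Delta}^{Y}_{w} + \delta \sum_{w<t} \bar{\Delta}^{Y}_{w}$, so that $Y$ decomposes as the sum of a martingale part $M_t \coloneqq \delta^{\frac{1}{2}}\sum_{w<t}\tilde{\Delta}^Y_w$ (with $[M]_t + \langle M\rangle_t = \delta\sum_{w<t}(|\tilde{\Delta}^Y_w|^2 + \mathbb{E}[|\tilde{\Delta}^Y_w|^2 \mid \mathcal{F}^Y_w])$, up to the $Y_0$ terms) and a finite-variation (drift) part $\delta\sum_{w<t}\bar{\Delta}^Y_w$. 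The hypothesis $\delta\sum_{t\in\mathbf{T}}|Y_t|^2 < \epsilon$ forces $Y$ to be small on a large portion of the time grid; the quantity $\delta\sum_{t\in\mathbf{T}}\big(\mathbb{E}[|\tilde{\Delta}^Y_{t-\delta}|^2\mid\mathcal{F}^Y_{t-\delta}] + |\bar{\Delta}^Y_{t-\delta}|^2\big) \geqslant \epsilon^r$ says the total ``energy'' of $M$ plus drift is not too small. One has to propagate smallness from $Y$ itself back to its martingale and drift pieces and then to $\tilde{\Delta}^Y$ and $\bar{\Delta}^Y$ individually.

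First I would estimate $\delta\sum_{t}|M_t|^2$. Since $M = Y - (\text{drift})$, on the event $\{\delta\sum|Y_t|^2 < \epsilon\}$ it suffices to control $\delta\sum_t \big|\delta\sum_{w<t}\bar{\Delta}^Y_w\big|^2$; this last quantity is bounded in $L^p$ using the Burkholder/Minkowski inequalities together with $\mathfrak{N}_{Y,\mathbf{T}}(q)$ and a Markov inequality, producing a term of the shape $\epsilon^{p}(1+T^{C})2^{Cq}\mathfrak{N}_{Y,\mathbf{T}}(q)$ plus the requirement that the drift itself be controlled. Second, from a bound on $\delta\sum_t|M_t|^2$ one deduces a bound on $[M]_T + \langle M\rangle_T = \delta\sum_t(|\tilde{\Delta}^Y_t|^2 + \mathbb{E}[|\tilde{\Delta}^Y_t|^2\mid\mathcal{F}^Y_t])$; here the key discrete tool is the exponential martingale inequality (\ref{eq:Doob_martin_expo_ineq}) applied both to $M$ and to the compensated process $\delta\sum_w(\tilde{\Delta}^{\bar{\Delta}^Y}_w)$, together with a summation-by-parts identity relating $\delta\sum_t|M_t|^2$ to $[M]_T\cdot$(something small) minus a martingale term — exactly the discrete analogue of $\int_0^T M_s^2\,ds$ versus $\int_0^T M_s\,dM_s$ and $\langle M\rangle_T$. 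This is where the exponents $r$, the window $[\vert 2^{10}(1+T^3)\delta\vert^{44/(91-36r)}, \vert 2^8(1+T)\vert^{-11/(1-12r)}]$ on $\epsilon$, and the constant $q(r,p) = \max(4, 44p/(1-12r))$ are forced: one needs $\epsilon^r$ large compared to powers of $\epsilon$ coming from the martingale estimate but the lower endpoint $\delta^{44/(91-36r)}$ ensures the discrete error terms (those not present in continuous time, essentially of order $\delta$ relative to the energy) do not dominate. Third, once $[M]_T + \langle M\rangle_T$ is shown to be small with the stated probability, I combine this with the assumed lower bound $\delta\sum(\mathbb{E}[|\tilde{\Delta}^Y_{t-\delta}|^2\mid\mathcal{F}^Y_{t-\delta}] + |\bar{\Delta}^Y_{t-\delta}|^2) \geqslant \epsilon^r$: the martingale energy being small forces the \emph{drift} energy $\delta\sum|\bar{\Delta}^Y_{t-\delta}|^2$ to carry at least $\frac{1}{2}\epsilon^r$, and then one repeats the argument one order down — applying the same smallness-propagation to the process $\bar{\Delta}^Y$ in place of $Y$, using the second-order quantities $\bar{\Delta}^{\bar{\Delta}^Y}$ and $\tilde{\Delta}^{\bar{\Delta}^Y}$ that appear in the definition (\ref{eq:Quantite_lemme_Norris}) of $\mathfrak{N}_{Y,\mathbf{T}}$. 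A Taylor/finite-difference expansion of $\delta\sum_t|Y_t|^2$ in terms of $Y_{t-\delta}$, $\tilde{\Delta}^Y_{t-\delta}$, $\bar{\Delta}^Y_{t-\delta}$ shows that $\delta\sum|\bar{\Delta}^Y|^2$ small plus $\delta\sum|Y|^2$ small plus $\delta\sum|M|^2$ small together contradict the lower bound once $\epsilon$ is in the stated range — yielding the two terms on the right-hand side of (\ref{eq:ineg_Norris}), the polynomial term from the various $L^p$–Markov estimates and the exponential term $12\exp(-\epsilon^{-(1-12r)/22}/(2^{11}(1+T^2)))$ from the two applications of (\ref{eq:Doob_martin_expo_ineq}).

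The main obstacle will be the bookkeeping of exponents in the discrete setting: in continuous time the ``Norris'' mechanism is clean because $\int M\,dM$ and $\langle M\rangle$ are genuinely the quadratic variation, whereas here $[M]_T$, $\langle M\rangle_T$ and the drift-energy differ by terms of order $\delta$ times sums of $|\tilde{\Delta}^Y|^4$, $|\bar{\Delta}^Y|^2|\tilde{\Delta}^Y|^2$, etc., and one must verify that every such discrete defect is negligible precisely on the interval $\epsilon \in [\vert 2^{10}(1+T^3)\delta\vert^{44/(91-36r)}, \vert 2^8(1+T)\vert^{-11/(1-12r)}]$. Tracking these carefully is what pins down the seemingly arbitrary constants $\tfrac{44}{91-36r}$, $\tfrac{11}{1-12r}$, $q(r,p)$ and the exponent $\tfrac{1-12r}{22}$; the rest of the proof is a routine, if lengthy, chain of Burkholder, Minkowski, Hölder and Markov inequalities controlled by $\mathfrak{N}_{Y,\mathbf{T}}(q(r,p))$.
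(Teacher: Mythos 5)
Your overall skeleton does match the paper's: expand $\sum_{t}Y_t^2$ by a discrete Taylor identity in $\tilde{\Delta}^Y,\bar{\Delta}^Y$, control the martingale terms with the exponential inequality (\ref{eq:Doob_martin_expo_ineq}) on a localization event for the increments (whose complement is handled by Markov with $\mathfrak{N}_{Y,\mathbf{T}}(q(r,p))$), then run the same machinery one order down on $Y_t\bar{\Delta}^Y_t$ using $\bar{\Delta}^{\bar{\Delta}^Y}$ and $\tilde{\Delta}^{\bar{\Delta}^Y}$ to rule out the drift carrying the energy. But your opening reduction would fail as stated. You propose to show $\delta\sum_t|M_t|^2$ is small by writing $M=Y-\text{drift}$ and bounding $\delta\sum_t\vert\delta\sum_{w<t}\bar{\Delta}^Y_w\vert^2$ in $L^p$ via $\mathfrak{N}$ and Markov. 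The hypothesis only gives moment bounds on $\bar{\Delta}^Y$, so the cumulative drift is of order $T$ in $L^q$, not of order any power of $\epsilon$; Markov merely localizes it below a large threshold. Worse, in exactly the configuration the lemma must exclude — a drift of order one almost cancelled by the martingale part, so that $\delta\sum_t|Y_t|^2<\epsilon$ while the energy is $\geqslant\epsilon^r$ — the quantity $\delta\sum_t|M_t|^2$ is genuinely of order one. Hence your second step ("from a bound on $\delta\sum_t|M_t|^2$ deduce a bound on $[M]_T+\langle M\rangle_T$") starts from a premise that cannot be established. The correct mechanism, and the paper's Step 1, applies the square expansion to $Y$ itself: the quadratic variation enters with a positive sign, the dangerous cross term $\delta^{3/2}\sum_{w\leqslant t}\tilde{\Delta}^Y_{w-\delta}Y_{w-\delta}$ (summed over $t$) is a martingale whose bracket is bounded by $\delta\sum_t Y_{t-\delta}^2\cdot\sup_t\mathbb{E}[\vert\tilde{\Delta}^Y_{t-\delta}\vert^2\,\vert\,\mathcal{F}^Y_{t-\delta}]\leqslant\epsilon^{1-2u}$ up to powers of $T$, and the term $\delta^2\sum 2\bar{\Delta}^Y_{w-\delta}Y_{w-\delta}$ is killed by Cauchy--Schwarz on the localized event; the martingale part of $Y$ is never estimated pathwise.

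There is also a missing idea in the passage from the expansion to the conclusion. Expanding $\sum_t Y_t^2$ only yields smallness of the time-integrated, doubly summed quantities $\delta^2\sum_{w\leqslant t}\mathbb{E}[\vert\tilde{\Delta}^Y_{w-\delta}\vert^2\,\vert\,\mathcal{F}^Y_{w-\delta}]$ and $\delta^2\sum_{w\leqslant t}\vert\tilde{\Delta}^Y_{w-\delta}\vert^2$ (the paper's event $\mathfrak{A}_1$, at an intermediate scale $\epsilon^s$), not of the single sum $\delta\sum_t\mathbb{E}[\vert\tilde{\Delta}^Y_{t-\delta}\vert^2\,\vert\,\mathcal{F}^Y_{t-\delta}]$ appearing in the statement. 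Bridging the two is the genuinely discrete step (the paper's Step 2, repeated for $\vert\bar{\Delta}^Y\vert^2$ after the order-down step): on the localized event one counts the grid points where the conditional variance exceeds $\epsilon^r/(4\delta\vert\mathbf{T}\vert)$ — there are at least $\epsilon^{r+2u}/4$ of them in $\delta$-measure — so the double sum is at least of order $\epsilon^{3r+4u}/(\delta\vert\mathbf{T}\vert)$, contradicting $\epsilon^s$. This pigeonhole argument is what forces the intermediate exponents $3r<2h<s<\tfrac12$, hence the restriction $r<\tfrac1{12}$ and the shape of the upper endpoint $\vert2^{8}(1+T)\vert^{-11/(1-12r)}$ and of $q(r,p)$; the $\delta$-dependent lower endpoint then comes, as you correctly guessed, from requiring the $\delta^{3/2}$- and $\delta^{2}$-order remainders (estimated by Markov and H\"older with $\mathfrak{N}$) to be $\leqslant\epsilon^p$. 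Without the counting step, smallness of the averaged quadratic variation does not contradict the assumed lower bound, and the proof does not close.
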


\subsubsection{Proof of Theorem \ref{th:borne_Lp_inv_cov_Mal}}

\begin{proof}[Proof of Theorem \ref{th:borne_Lp_inv_cov_Mal}]

 \textbf{Step 1.} For every $i \in \mathbf{N}$, we introduce the $\mathbb{R}^{d}$-valued process $(\mathring{\Psi}_{i,t})_{t \in \mathbf{T}}$ defined for every $t \in \mathbf{T}$ by $\mathring{\Psi}_{i,t}=\mathring{X}_{t-\delta} \nabla_{x} \psi^{-1} \partial_{z^{i}} \psi (X^{\delta}_{t-\delta},t-\delta,\delta^{\frac{1}{2}} Z^{\delta}_{t},\delta)$. Notice that, for every $t \in \mathbf{T}$,
\begin{align*}
\mathring{X}_{t}  \partial_{z^{i}} \psi(X^{\delta}_{t-\delta},t-\delta,\delta^{\frac{1}{2}} Z^{\delta}_{t},\delta)  =\mathring{\Psi}_{i,t} .
\end{align*}
We introduce the notation $\dot{v}^{2}=\dot{v} \dot{v}^T \in \mathbb{R}^{d \times d}$ for a vector $\dot{v} \in \mathbb{R}^{d}$. Using the variation of constant formula (\ref{eq:Mal_var_of_constant_formula}), denoting $\tilde{\sigma} ^{\delta}_{X^{\delta}_{T},\mathbf{T}} = \delta \sum_{(t,i) \in \mathbf{T} \times \mathbf{N}} \chi^{\delta}_{t}(\mathring{\Psi}_{i,t})^{2}$, on the set $\{\Theta_{\eta_{2},\mathbf{T},t}>0\}$, we have


  \begin{align*}
\sigma ^{\delta}_{X^{\delta}_{T},\mathbf{T}} = & \delta \sum_{(t,i) \in \mathbf{T} \times \mathbf{N}}  (D^{\delta}_{(t,i)}X^{\delta}_{T} )^{2} = \delta \sum_{(t,i) \in \mathbf{T} \times \mathbf{N}}\chi^{\delta}_{t}(\dot{X}_T  \mathring{X}_{t}   \partial_{z^{i}} \psi(X^{\delta}_{t-\delta},t-\delta,\delta^{\frac{1}{2}} Z^{\delta}_{t},\delta))^{2}  \\
=&\delta \sum_{(t,i) \in \mathbf{T} \times \mathbf{N}} \chi^{\delta}_{t}(\dot{X}_T \mathring{\Psi}_{i,t})^{2}=\dot{X}_T\tilde{\sigma} ^{\delta}_{X^{\delta}_{T},\mathbf{T}}  \dot{X}_T^{T}.
 \end{align*}

We first show that the proof of (\ref{eq:borne_Lp_inv_cov_Mal}), boils down to prove that there exists  $\overline{\mathfrak{e}} \in (\eta_{1}^{-\frac{1}{d}}, \frac{2^{\frac{1}{2}}}{d^{\frac{1}{2}}}]$ and $\mathbf{C} \geqslant 1$ (which do not depend on $\delta$ and will be made explicit in the sequel) such that, for every $\epsilon \in (\eta_{1}^{-\frac{1}{d}}, \overline{\mathfrak{e}})$,
 \begin{align}
 \label{eq:borne:mall_matrice_mom_1}
\sup_{\xi \in \mathbb{R}^{d}; \vert \xi \vert_{\mathbb{R}^{d}} =1} \mathbb{P}( \xi^{T} \tilde{\sigma} ^{\delta}_{X^{\delta}_{T},\mathbf{T}}  \xi \leqslant 2 \epsilon, \Theta_{\eta_{2},\mathbf{T}}>0) \leqslant \mathbf{C} \epsilon^{d(p+4)}.
 \end{align}
 and
 \begin{align}
 \label{eq:borne:mall_matrice_mom_2}
  \mathbb{P}(\Vert\tilde{\sigma} ^{\delta}_{X^{\delta}_{T},\mathbf{T}}  \Vert_{\mathbb{R}^{d}}> \frac{1}{6 \epsilon}, \Theta_{\eta_{2},\mathbf{T}}>0) \leqslant \mathbf{C} \epsilon^{d(p+2)}.
 \end{align}
In this case 
\begin{align*}
\mathbb{E}[\vert \det \tilde{\gamma}^{\delta}
_{X^{\delta}_{T},\mathbf{T}} \vert^{p}  \mathbf{1}_{\Theta_{X^{\delta}_{T},\eta,\mathbf{T}}>0} ]\leqslant & C(d,p) \mathbf{C}+\lceil\overline{\mathfrak{e}}^{-d} \rceil^{p}.
\end{align*}
where $\tilde{\gamma}^{\delta}_{X^{\delta}_{T},\mathbf{T}}=\dot{X}_{T}^{T}\gamma^{\delta}_{X^{\delta}_{T} \mathbf{T}}\dot{X}_{T}$ and (\ref{eq:borne_Lp_inv_cov_Mal}) follows from the Cauchy-Schwarz inequality together with Lemma \ref{lemme:borne_flt_tangent}..
The result of \textbf{Step 1} is mainly a consequence of Lemma \ref{lemme:inversion_sup_Proba}. We begin by noticing that 

\begin{align*}
\mathbb{P}(\vert \det \tilde{\gamma}^{\delta}
_{X^{\delta}_{T},\mathbf{T}} \vert  \mathbf{1}_{\Theta_{X^{\delta}_{T},\eta,\mathbf{T}}>0} \geqslant  \epsilon^{-d}) =&\mathbb{P}(\vert \det \tilde{\sigma}^{\delta}
_{X^{\delta}_{T},\mathbf{T}} \vert  \leqslant \epsilon^{d} ,\Theta_{X^{\delta}_{T},\eta,\mathbf{T}}>0) 
\end{align*}

Since $\vert \det  \tilde{\sigma}^{\delta}_{X^{\delta}_{T},\mathbf{T}} \vert > \eta_{1}^{-1}$ on $\{\Theta_{X^{\delta}_{T},\eta_{1},\mathbf{T}}>0\}$, the quantity above is equal to zero as soon as $\epsilon^{d} \leqslant \eta_{1}^{-1}$ and for every $\epsilon^{d} >\eta_{1}^{-1}$,
 \begin{align*}
\mathbb{P}(\vert \det \tilde{\sigma}^{\delta}
_{X^{\delta}_{T},\mathbf{T}} \vert  \leqslant \epsilon^{d},\Theta_{X^{\delta}_{T},\eta,\mathbf{T}}>0)   
\leqslant & \mathbb{P}(\vert \det \tilde{\sigma}^{\delta}
_{X^{\delta}_{T},\mathbf{T}} \vert  \leqslant \epsilon^{d},\Theta_{\eta_{2},\mathbf{T}}>0)  \\
\leqslant & \mathbb{P}(\inf_{\xi \in \mathbb{R}^{d}; \vert \xi \vert_{\mathbb{R}^{d}} =1} \xi^{T} \tilde{\sigma}^{\delta}_{X^{\delta}_{T},\mathbf{T}}  \xi \leqslant  \epsilon ,\Theta_{\eta_{2},\mathbf{T}}>0) .
\end{align*}

Applying Lemma \ref{lemme:inversion_sup_Proba} (with (\ref{eq:borne:mall_matrice_mom_1}) and (\ref{eq:borne:mall_matrice_mom_2})), for every $\epsilon \in (\eta_{1}^{-\frac{1}{d}}, \overline{\mathfrak{e}})$,

 \begin{align*}
\mathbb{P}(\vert \det \tilde{\sigma}^{\delta}
_{X^{\delta}_{T},\mathbf{T}} \vert  \leqslant \epsilon^{d},\Theta_{X^{\delta}_{T},\eta,\mathbf{T}}>0)   
\leqslant & C(d) \mathbf{C} \epsilon^{d(p+2)}.
\end{align*}
Therefore
\begin{align*}
\mathbb{E}[\vert \det \tilde{\gamma}^{\delta}
_{X^{\delta}_{T},\mathbf{T}} \vert^{p}  \mathbf{1}_{\Theta_{X^{\delta}_{T},\eta,\mathbf{T}}>0} ]\leqslant & C(d) \mathbf{C} \sum_{k=\lceil\overline{\mathfrak{e}}^{-d} \rceil}^{\lceil \eta_{1} \rceil -1}\frac{(k+1)^{p}}{k^{p+2}}+\lceil\overline{\mathfrak{e}}^{-d} \rceil^{p} \\
\leqslant & C(d) \mathbf{C} \sum_{k=1}^{+\infty}\frac{(k+1)^{p}}{k^{p+2}}+\lceil\overline{\mathfrak{e}}^{-d} \rceil^{p} \leqslant C(d) \mathbf{C} 2^{p} \frac{\pi^{2}}{6}+\lceil\overline{\mathfrak{e}}^{-d} \rceil^{p}.
\end{align*}
and the proof of \textbf{Step 1} is completed.

\textbf{Step 2.} In this part, we focus on te proof of $(\ref{eq:borne:mall_matrice_mom_1})$.  More particularly, we demonstrate that, if $\eta_{1} \in (1,\delta^{-d\frac{44}{91-36r}} \min( 1,\frac{10^{d}}{m_{\ast}^{d} \vert 2^{10} (1+T^{3})  \vert ^{d\frac{44}{91-36r}}})]$ and $\eta_{2} \in (1,\delta^{-\frac{1}{2}} \eta_{1}^{-\frac{1}{d}}]$, then for every $r \in (0,\frac{1}{12})$,  if we fix,
\begin{align*}
\overline{\mathfrak{e}}\in [\eta_{1}^{-\frac{1}{d}}, \min(\frac{2^{\frac{1}{2}}}{d^{\frac{1}{2}}},(\frac{T \mathcal{V}_{L}(\mbox{\textsc{x}}^{\delta}_{0},0) m_{\ast}}{40(L+1) N^{\frac{L(L+1)}{2}} })^{r^{-L}} ,\mathbf{1}_{L=0}+\mathbf{1}_{L>0} \vert m_{\ast}\frac {\vert 2^{8} (1+T) \vert^{-\frac{11}{1-12r}}}{10N^{\frac{L(L-1)}{2}}} \vert^{r^{-L+1}} ))
\end{align*}
then, for every $\epsilon \in [\eta_{1}^{-\frac{1}{d}},\overline{\mathfrak{e}})$,
\begin{align}
\label{req:step_2_preuvermatMal}
\sup_{\xi \in \mathbb{R}^{d}; \vert \xi \vert_{\mathbb{R}^{d}} =1} \mathbb{P} & ( \xi^{T} \tilde{\sigma} ^{\delta}_{X^{\delta}_{t},\mathbf{T}}  \xi \leqslant 2 \epsilon, \Theta_{\eta_{2},\mathbf{T}}>0)  \\
\leqslant &  \epsilon^{d(p+4)} (1+ \mathcal{V}_{L}(\mbox{\textsc{x}}^{\delta}_{0})^{-\frac{3d(p+4)}{r^{L}} }  ) (1+\mathbf{1}_{\mathfrak{p}_{2 L +5}>0}  \vert \mbox{\textsc{x}}^{\delta}_{0} \vert_{\mathbb{R}^{d}}^{C(d,L,p,\mathfrak{p}_{2 L + 5},\frac{1}{r},\frac{1}{1-12r}) }   )   \nonumber \\
& \times  \mathfrak{D}^{C(d,L,p,\frac{1}{r},\frac{1}{1-12r}) } \mathfrak{D}_{2 L + 5}^{C(d,L,p,\frac{1}{r},\frac{1}{1-12r}) }  \mathfrak{M}_{C(d,L,p,\mathfrak{p},\mathfrak{p}_{2 L + 5},\frac{1}{r},\frac{1}{1-12r}) }(Z^{\delta})  \nonumber \\
& \times C(d,N,L,\frac{1}{m_{\ast}},p,\mathfrak{p}_{2 L + 5},\frac{1}{r},\frac{1}{1-12r})  \nonumber \\
& \times  \exp(C(d,L,p,\mathfrak{p}_{2 L + 5},\frac{1}{r},\frac{1}{1-12r}) T \mathfrak{M}_{C(d,L,p,\mathfrak{p},\mathfrak{p}_{2 L + 5},\mathfrak{q}^{\delta}_{\eta_{2}},\frac{1}{r},\frac{1}{1-12r}) }(Z^{\delta}) \mathfrak{D}^{4}) ). \nonumber
\end{align}

Notice that (\ref{eq:borne_Lp_inv_cov_Mal}) is obtained by taking $r=\frac{1}{13}$

 \textbf{Step 2.1.} For every $l \in \{0,\ldots,L\}$ and $\xi \in \mathbb{R}^{d}$, we introduce the $\mathbb{R}_{+}$-valued process $(\mathring{V}_{\xi,l,t})_{t \in \mathbf{T}}$ defined for every $t \in \mathbf{T}$ by $\mathring{V}_{\xi,l,t}= \sum_{\alpha \in \mathbf{N}^l} \sum_{i \in \mathbf{N}} \langle \xi, \mathring{X}_{t-\delta}V_{i}^{[\alpha]}(X^{\delta}_{t-\delta},t-\delta)\rangle_{\mathbb{R}^{d}}^{2}$. Let $r \in (0,\frac{1}{12})$, and denote $N_{l,r}= (\frac{10}{m_{\ast}})^{r^{l}}4^{\frac{1-r^{l}}{1-r}}\prod_{j=1}^{l}N^{jr^{l-j}}$.  Assume that $\eta_{1} \in (1,\delta^{-\frac{d}{2+v}}]$ and $\eta_{2} \in (1,\delta^{-\frac{1}{2}} \eta_{1}^{-\frac{1+\tilde{v}}{2d}}]$ with $v,\tilde{v}>0$. Then,  for every $\xi \in \mathbb{R}^{d}$ with $\vert \xi \vert =1$ and every $\epsilon \in [\eta_{1}^{-\frac{1}{d}},1)$
    \begin{align}
\label{eq:concl_Step22}
\mathbb{P}(\delta \sum_{(t,i) \in \mathbf{T} \times \mathbf{N}} &\chi^{\delta}_{t}\langle  \xi , \mathring{\Psi}_{i,t}\rangle_{\mathbb{R}^{d}}^{2} \leqslant 2 \epsilon,\Theta_{\eta_{2},\mathbf{T}}>0) \\ 
\leqslant & \sum_{l=0}^{L-1} \mathbb{P}( \delta \sum_{t \in \mathbf{T}} \mathring{V}_{\xi,l,t} \leqslant N_{l,r} \epsilon^{r^{l}},  \delta \sum_{t \in \mathbf{T}}\mathring{V}_{\xi,l+1,t}> N_{l+1,r}\epsilon^{r^{l+1}}, \Theta_{\eta_{2},\mathbf{T}}>0 ) \nonumber \\
&+  \mathbb{P} (\delta \sum_{t \in \mathbf{T}} \sum_{l=0}^{L} \mathring{V}_{\xi,l,t} \leqslant  (L+1)\frac{10}{m_{\ast}}  N^{\frac{L(L+1)}{2}} \epsilon^{r^L},   \Theta_{\eta_{2},\mathbf{T}}>0)  \nonumber \\
 &+\epsilon^{d(p+4)} \mathfrak{D}_{3}^{C(d,\frac{1}{v},\frac{1}{\tilde{v}}) } (1+\mathbf{1}_{\mathfrak{p}_{3}>0} \vert \mbox{\textsc{x}}^{\delta}_{0} \vert_{\mathbb{R}^{d}}^{C(d,p,\mathfrak{p}_{3},\frac{1}{v},\frac{1}{\tilde{v}}) }   ) \mathfrak{M}_{C(d,p,\mathfrak{p},\mathfrak{p}_{3},\frac{1}{v},\frac{1}{\tilde{v}}) }(Z^{\delta})  \nonumber \\
& \times C(d,p,\mathfrak{p}_{3},\frac{1}{v},\frac{1}{\tilde{v}})  \exp(C(d,p,\mathfrak{p}_{3},\frac{1}{v},\frac{1}{\tilde{v}}) T \mathfrak{M}_{C(d,p,\mathfrak{p},\mathfrak{p}_{3},\mathfrak{q}^{\delta}_{\eta_{2}},\frac{1}{v},\frac{1}{\tilde{v}}) }(Z^{\delta}) \mathfrak{D}^{4}) \nonumber  \\
&+2\exp(-\epsilon^{-\frac{v}{2}}). \nonumber
 \end{align}

 First, we notice that 
   \begin{align*}
\mathbb{P} & (\delta \sum_{(t,i) \in \mathbf{T} \times \mathbf{N}}  \chi^{\delta}_{t}\langle  \xi , \mathring{\Psi}_{i,t}\rangle_{\mathbb{R}^{d}}^{2} \leqslant   2\epsilon,\Theta_{\eta_{2},\mathbf{T}}>0) \\
\leqslant & \mathbb{P}(\delta \sum_{t \in \mathbf{T}}\chi^{\delta}_{t}   \mathring{V}_{\xi,0,t} \leqslant 8 \epsilon, \Theta_{\eta_{2},\mathbf{T}}>0) \\ 
&+  \mathbb{P}( \delta  \sum_{(t,i) \in \mathbf{T} \times \mathbf{N}} \langle\xi, \mathring{\Psi}_{i,t}-  \mathring{X}_{t-\delta}V_{i}(X^{\delta}_{t-\delta},t-\delta)  \rangle_{\mathbb{R}^{d}}^{2} >  2 \epsilon,\Theta_{\eta_{2},\mathbf{T}}>0)  ,
 \end{align*}
 with 
 \begin{align*}
\mathbb{P}(\delta \sum_{t \in \mathbf{T}}\chi^{\delta}_{t}   \mathring{V}_{\xi,0,t} \leqslant 4 \epsilon,\Theta_{\eta_2}>0)   \leqslant &  \mathbb{P}  (\delta \vert \sum_{t \in \mathbf{T}}(\chi^{\delta}_{t}-m_{\ast})   \mathring{V}_{\xi,0,t}  \vert >  2 \epsilon,\Theta_{\eta_{2},\mathbf{T}}>0)   \\
  &+ \mathbb{P}(\delta \sum_{t \in \mathbf{T}}  \mathring{V}_{\xi,0,t}  \leqslant \frac{10}{m_{\ast}} \epsilon,\Theta_{\eta_{2},\mathbf{T}}>0) .
 \end{align*}

Now we have
\begin{align*}
\mathbb{P}&(\delta \sum_{t \in \mathbf{T}}  \mathring{V}_{0,t}  \leqslant \frac{10}{m_{\ast}} \epsilon,\Theta_{\eta_{2},\mathbf{T}}>0)  \\
\leqslant & \sum_{l=0}^{L-1} \mathbb{P}( \delta \sum_{t \in \mathbf{T}} \mathring{V}_{\xi,l,t} \leqslant N_{l,r} \epsilon^{r^{l}},  \delta \sum_{t \in \mathbf{T}}\mathring{V}_{\xi,l+1,t}> N_{l+1,r}\epsilon^{r^{l+1}}, \Theta_{\eta_{2},\mathbf{T}}>0 ) \\
 &  + \mathbb{P}(\bigcap_{l=0}^{L} \delta \sum_{t \in \mathbf{T}}  \mathring{V}_{\xi,l,t}\leqslant N_{l,r} \epsilon^{r^l},   \Theta_{\eta_{2},\mathbf{T}}>0) ,
\end{align*}
with,  since $\sup_{l \in \{0,\ldots,L\}} N_{l,r} \leqslant N_{0,r} N^{\frac{L(L+1)}{2}}=\frac{10}{m_{\ast}}  N^{\frac{L(L+1)}{2}}$,
\begin{align*}
\mathbb{P}(\bigcap_{l=0}^{L} \delta \sum_{t \in \mathbf{T}}  \mathring{V}_{\xi,l,t}\leqslant N_{l,r} \epsilon^{r^l},   \Theta_{\eta_{2},\mathbf{T}}>0) \leqslant &   \mathbb{P} (\delta \sum_{t \in \mathbf{T}} \sum_{l=0}^{L}\mathring{V}_{\xi,l,t}\ \leqslant (L+1)\frac{10}{m_{\ast}} N^{\frac{L(L+1)}{2}} \epsilon^{r^L},   \Theta_{\eta_{2},\mathbf{T}}>0)  .
 \end{align*}

Moreover,  for $v^{\diamond} \in (0,v)$,
\begin{align*}
\mathbb{P} & (\delta \vert \sum_{t \in \mathbf{T}}(\chi^{\delta}_{t}-m_{\ast})   \mathring{V}_{\xi,0,t}  \vert >  2 \epsilon,\Theta_{\eta_{2},\mathbf{T}}>0)    \\
  \leqslant & \mathbb{P}  (\delta \vert \sum_{t \in \mathbf{T}}  (\chi^{\delta}_{t}-m_{\ast}) \mathbf{1}_{\Theta_{\eta_{2},\mathbf{T},t-\delta}>0} \mathring{V}_{\xi,0,t} \vert > 2 \epsilon ,\\
& \qquad\delta^{2} \sum_{t \in \mathbf{T}}( m_{\ast}(1-m_{\ast}) + (\chi^{\delta}_{t}-m_{\ast}) ^{2})  \mathbf{1}_{\Theta_{\eta_{2},\mathbf{T},t-\delta}>0}  \vert  \mathring{V}_{\xi,0,t}  \vert^{2} \leqslant  2  \epsilon^{2+v-v^{\diamond}})   \\
&+\mathbb{P}(\delta^{2} \vert \sum_{t \in \mathbf{T}} \mathbf{1}_{\Theta_{\eta_{2},\mathbf{T},t-\delta}>0}  \vert  \mathring{V}_{\xi,0,t}  \vert^{2}  > 2  \epsilon^{2+v-v^{\diamond}} )  .
 \end{align*}
 Using (\ref{eq:Doob_martin_expo_ineq}), with $M_{t}=\sum_{\underset{w \leqslant t}{w \in \pi^{\delta,\ast}}} \delta  (\chi^{\delta}_{t}-m_{\ast}) \mathbf{1}_{\Theta_{\eta_{2},\mathbf{T},t-\delta}>0} \mathring{V}_{\xi,0,t}$, the first term of the $r.h.s.$ of the inequality above is bounded by $2\exp(-\epsilon^{-(v-v^{\diamond})})$. 
In order to treat the second term, we remark that, $\mathring{V}_{\xi,0,t}=\sum_{i \in \mathbf{N}}  \langle \xi ,\mathring{X}_{t-\delta}V_{i}(X^{\delta}_{t-\delta}) \rangle_{\mathbb{R}^{d}}^{2}$ and using the Markov inequality, for every $a>0$,

 \begin{align*}
\mathbb{P} &(\delta^{2} \ \sum_{t \in \mathbf{T}} \vert \sum_{i=1}^{N} \langle \xi ,\mathring{X}_{t-\delta}V_{i}(X^{\delta}_{t-\delta},t-\delta) \rangle_{\mathbb{R}^{d}}^{2} \vert^{2} \mathbf{1}_{\Theta_{\eta_{2},\mathbf{T},t-\delta}>0} >  2 \epsilon^{2+v-v^{\diamond}}  )  \\
\leqslant & \delta^{a} \epsilon^{-a(v-v^{\diamond}+2)} \mathfrak{D}_{1}^{4 a} T^{a} \mathbb{E}[\sup_{t \in \mathbf{T}} \Vert \mathring{X}_{t-\delta} \Vert_{\mathbb{R}^{d}}^{4a}\mathbf{1}_{\Theta_{\eta_{2},\mathbf{T},t-\delta}>0}  (1+ \sup_{t \in \mathbf{T}} \vert X_{t-\delta} \vert_{\mathbb{R}^{d}}^{\mathfrak{p}_{1}})^{4a}  ] 
 \end{align*}
 

   In particular we chose $a=\frac{d(p+4) \ln(\eta_{1})}{-(v-v^{\diamond}+2)\ln(\eta_1)-d\ln(\delta)}$ (remember that $\delta \leqslant  \eta_{1}^{-\frac{2+v}{d}}$ so that $a \leqslant  \frac{d(p+4)}{v^{\diamond}} $) and apply Lemma \ref{lemme:borne_flt_tangent} (see (\ref{eq:borne_flot_tangent_inv})) and Lemma \ref{lemme:borne:moments_X} (when $4a <2$ we also use the H\"older inequality).

Now,  we study $\mathbb{P}( \delta  \sum_{(t,i) \in \mathbf{T} \times \mathbf{N}} \langle\xi, \mathring{\Psi}_{i,t}-  \mathring{X}_{t-\delta}V_{i}(X^{\delta}_{t-\delta},t-\delta)  \rangle_{\mathbb{R}^{d}}^{2} >  2 \epsilon,\Theta_{\eta_{2},\mathbf{T}}>0) $. Recall that, on the set $\{\Theta_{\eta_{2},\mathbf{T}}>0\}$, we have $\vert Z^{\delta,j}_{t} \vert \leqslant \eta_{2}$. We denote $\mathbf{D}_{\eta_{2}}=\{z\in \mathbb{R}^N, \vert z^{i} \vert \leqslant \delta^{\frac{1}{2}} \eta_{2}, i \in \{1,\ldots,N\} \}$. We fix $(x,t,z,y) \in \mathbb{R}^{d} \times \mathbf{T} \times \mathbf{D}_{\eta_{2}} \times (0,1]$. Using the Taylor expansion yields
\begin{align*}
\vert \nabla_{x} \psi^{-1} \partial_{z^{i}} \psi (x,t-\delta,z,y)- V_{i}(x,t-\delta)   \vert_{\mathbb{R}^{d}} \leqslant & \delta^{\frac{1}{2}} \eta_{2} \sum_{j \in \mathbb{N}} \vert \partial_{z^{j}}(\nabla_{x} \psi^{-1} \partial_{z^{i}} \psi)(x,t-\delta,z,y)  \vert_{\mathbb{R}^{d}} \\
&+\delta \vert \partial_{y}(\nabla_{x} \psi^{-1} \partial_{z^{i}} \psi )(x,t-\delta,z,y) \vert_{\mathbb{R}^{d}} ,
\end{align*}
with
\begin{align*}
 \partial_{y}(\nabla_{x} \psi^{-1} \partial_{z^{i}} \psi ) = \nabla_{x} \psi^{-1} \partial_{y}\nabla_{x} \psi \nabla_{x} \psi^{-1} \partial_{z^{i}} \psi +  \nabla_{x} \psi^{-1} \partial_{y} \partial_{z^{i}} \psi  \\
 \partial_{z^{j}}(\nabla_{x} \psi^{-1} \partial_{z^{i}} \psi ) = \nabla_{x} \psi^{-1} \partial_{z^{j}}\nabla_{x} \psi \nabla_{x} \psi^{-1} \partial_{z^{i}} \psi +  \nabla_{x} \psi^{-1} \partial_{z^{j},z^{i}} \psi .
\end{align*}

We focus on the study of the second term above. The study of the first one is similar and left to the reader. Remark that
\begin{align*}
 \sum_{i,j \in \mathbf{N}} \vert \partial_{z^{j}}(\nabla_{x} \psi^{-1} \partial_{z^{i}} \psi ) \vert_{\mathbb{R}^{d}} \leqslant &\Vert \nabla_{x} \psi^{-1} \Vert_{\mathbb{R}^{d}}^{2}  \sum_{j \in \mathbf{N}}   \Vert \partial_{z^{j}}\nabla_{x} \psi  \Vert_{\mathbb{R}^{d}}  \sum_{i \in \mathbf{N}}  \vert \partial_{z^{i}} \psi  \vert_{\mathbb{R}^{d}} \\
 &+ \Vert \nabla_{x} \psi^{-1} \Vert_{\mathbb{R}^{d}}  \sum_{i,j \in \mathbf{N}} \vert \partial_{z^{j},z^{i}} \psi \vert_{\mathbb{R}^{d}} .
\end{align*}
We show that, the function $\Vert \nabla_{x} \psi^{-1} \Vert_{\mathbb{R}^{d}}$ is bounded on $\mathbb{R}^{d} \times \mathbf{T} \times \mathbf{D}_{\eta_{2}} \times (0,1]$.  We consider the following decomposition
\begin{align*}
\nabla_{x} \psi^{-1}(x,t-\delta,z,\delta)=I_{d \times d}-(\nabla_{x} \psi(x,t-\delta,z,\delta)-I_{d \times d})\nabla_{x} \psi^{-1}(x,t-\delta,z,\delta).
\end{align*}
Now, assumption $\mathbf{A}_{1}$ (see (\ref{eq:hyp_3_Norme_adhoc_fonction_schema})) implies that (\ref{borne_grad_psi-Id}) holds. It follows that under the assumptions (\ref{hyp:delta_eta_inversibilite}),  for every $(x,t,z) \in \mathbb{R}^{d} \times \mathbf{T} \times \mathbf{D}_{\eta_{2}}$, $ \Vert \nabla_{x} \psi(x,t-\delta,z,\delta) -I_{d \times d} \Vert_{\mathbb{R}^{d}} \leqslant \frac{1}{2} $ and then $\Vert \nabla_{x} \psi^{-1} \Vert_{\mathbb{R}^{d}}\leqslant 2$. Moreover
\begin{align*}
 \sum_{j \in \mathbf{N}}   \Vert \partial_{z^{j}}\nabla_{x} \psi  \Vert_{\mathbb{R}^{d}}  \leqslant & \sum_{j \in \mathbf{N}}  \vert  \sum_{l=1}^{d}  \vert \partial_{z^{j}}\partial_{x^{l}} \psi \vert_{\mathbb{R}^{d}}^{2} \vert^{\frac{1}{2}} \\
\leqslant &  \sum_{j \in \mathbf{N}}  \sum_{l=1}^{d}  \vert \partial_{z^{j}}\partial_{x^{l}} \psi \vert_{\mathbb{R}^{d}} .
\end{align*}
Using similar estimates for the term $ \partial_{y}(\nabla_{x} \psi^{-1} \partial_{z^{i}} \psi ) $ together with $\mathbf{A}_{1}(3)$ (see (\ref{eq:hyp_1_Norme_adhoc_fonction_schema})), we obtain, for every $a \geqslant \frac{1}{2}$,

 \begin{align*}
\mathbb{P}( & \delta  \sum_{(t,i) \in \mathbf{T} \times \mathbf{N}} \langle\xi, \mathring{\Psi}_{i,t}-  \mathring{X}_{t-\delta}V_{i}(X^{\delta}_{t-\delta},t-\delta)  \rangle_{\mathbb{R}^{d}}^{2} >  2 \epsilon,\Theta_{\eta_{2},\mathbf{T}}>0)  \\
\leqslant & C(a) \delta^{a} \eta_{2}^{2a} \epsilon^{-a} \mathfrak{D}_{3}^{4 a} T^{a} ( \mathbb{E}[\sup_{t \in \mathbf{T}} \Vert \mathring{X}_{t-\delta} \Vert_{\mathbb{R}^{d}}^{2a}\mathbf{1}_{\Theta_{\eta_{2},\mathbf{T},t-\delta}>0}  (1+ \sup_{t \in \mathbf{T}} \vert X_{t-\delta} \vert_{\mathbb{R}^{d}}^{4a \mathfrak{p}_{3}} )] \\
& +  C(a) \delta^{a} \eta_{2}^{2a} \epsilon^{-a}  \mathbb{E}[\sup_{t \in \mathbf{T}} \Vert \mathring{X}_{t-\delta} \Vert_{\mathbb{R}^{d}}^{2a}\mathbf{1}_{\Theta_{\eta_{2},\mathbf{T},t-\delta}>0} \vert \delta  \sum_{t \in \mathbf{T}} \vert Z^{\delta}_{t} \vert_{\mathbb{R}^{N}}^{4 \mathfrak{p}_{3}} \vert^{a} ] .
 \end{align*}
Moreover, the H\"older inequality (since $2a \geqslant 1$) yields
\begin{align*}
\mathbb{E}[\vert \delta  \sum_{t \in \mathbf{T}} \vert Z^{\delta}_{t} \vert_{\mathbb{R}^{N}}^{4 \mathfrak{p}_{3}} \vert^{2 a} ] \leqslant  T^{2a-1} \mathbb{E}[\delta  \sum_{t \in \mathbf{T}} \vert Z^{\delta}_{t} \vert_{\mathbb{R}^{N}}^{8 a \mathfrak{p}_{3}} \vert] \leqslant T^{2a} \mathfrak{M}_{8 a \mathfrak{p}_{3}} (Z^{\delta}).
\end{align*}
We chose $a=\max( \frac{1}{2}, \lceil \frac{d(p+4)\ln(\eta_{1})}{-\ln(\eta_{1})-d\ln(\delta)-2d\ln(\eta_{2})} \rceil )$ (remember that $\delta \leqslant  \eta_{2}^{-2}\eta_{1}^{\frac{1+\tilde{v}}{d}}$ so that $a \leqslant \lceil \frac{d(p+4)}{\tilde{v}} \rceil$) and conclude using Cauchy-Schwarz inequality, Lemma \ref{lemme:borne_flt_tangent} (see (\ref{eq:borne_flot_tangent_inv})) and Lemma \ref{lemme:borne:moments_X}. Gathering all the upper bounds together, (take $v^{\diamond}=\frac{v}{2}$), we obtain (\ref{eq:concl_Step22}).

 \textbf{Step 2.2.}
Let us show that, for every $\epsilon \in (0,(\frac{T \mathcal{V}_{L}(\mbox{\textsc{x}}^{\delta}_{0},0)m_{\ast}}{40(L+1) N^{\frac{L(L+1)}{2}} })^{r^{-L}} ]$,

\begin{align*}
 \mathbb{P} (\delta & \sum_{t \in \mathbf{T}} \sum_{l=0}^{L} \mathring{V}_{\xi,l,t} \leqslant (L+1)\frac{10}{m_{\ast}}  N^{\frac{L(L+1)}{2}} \epsilon^{r^L},   \Theta_{\eta_{2},\mathbf{T}}>0)  \\
\leqslant &   \epsilon^{d(p+4)} \mathcal{V}_{L}(\mbox{\textsc{x}}^{\delta}_{0})^{- \frac{3d(p+4)}{r^{L}}}  (1+ \mathbf{1}_{\mathfrak{p}_{4 + 2 L}>0} \vert \mbox{\textsc{x}}^{\delta}_{0} \vert_{\mathbb{R}^{d}}^{C(d,L,p,\mathfrak{p}_{4+2 L},\frac{1}{r}) }   ) \\
& \times  \mathfrak{D}^{C(d,L,p,\frac{1}{r}) } \mathfrak{D}_{4+2 L}^{C(d,L,p,\frac{1}{r}) }  \mathfrak{M}_{C(d,L,p,\mathfrak{p},\mathfrak{p}_{4+2 L},\frac{1}{r}) }(Z^{\delta}) \\
& \times C(d,N,L,\frac{1}{m_{\ast}},p,\mathfrak{p}_{4+2 L},\frac{1}{r})  \exp(C(d,L,p,\mathfrak{p}_{4+2 L},\frac{1}{1-v},\frac{1}{r}) T \mathfrak{M}_{C(d,L,p,\mathfrak{p},\mathfrak{p}_{4+2 L},\mathfrak{q}^{\delta}_{\eta_{2}},\frac{1}{r}) }(Z^{\delta}) \mathfrak{D}^{4}) \nonumber  \\
&+ 2\exp(-\frac{\mathcal{V}_L(\mbox{\textsc{x}}^{\delta}_{0})}{32\epsilon^{\frac{r^{L}}{3}} N \binom{N+L}{N}})   \nonumber
\end{align*}
It is worth noting that, in case of uniform H\"ormander properties, we have a similar result but with $\mathcal{V}_{L}(\mbox{\textsc{x}}^{\delta}_{0})$ replaced by 1 in the r.h.s.  above. \\

Now let us focus on the proof. of \textbf{Step 2.2}. Let us denote $\epsilon_{r,L}=(L+1) \frac{10}{m_{\ast}} N^{\frac{L(L+1)}{2}}\epsilon^{r^L}$. Let $\mathbf{S} :=   \{\delta,\ldots, \lceil  \frac{4 \epsilon_{r,L}}{\delta \mathcal{V}_L(\mbox{\textsc{x}}^{\delta}_{0},0)} \rceil\delta \}.$ Since $\epsilon \leqslant (\frac{T \mathcal{V}_{L}(\mbox{\textsc{x}}^{\delta}_{0},0) m_{\ast}}{40(L+1)  N^{\frac{L(L+1)}{2}} })^{r^{-L}}$, then $\mathbf{S} \subset \mathbf{T}$.  Therefore,

\begin{align*}
 \mathbb{P} (\delta \sum_{t \in \mathbf{T}} & \sum_{l=0}^{L} \mathring{V}_{\xi,l,t} \leqslant (L+1)N_{L,r} \epsilon^{r^L},   \Theta_{\eta_{2},\mathbf{T}}>0)  \\
 \leqslant &   \mathbb{P} (\delta \sum_{t \in \mathbf{S}}  \sum_{l=0}^{L}  \mathring{V}_{\xi,l,t} \leqslant \epsilon_{r,L} , \Theta_{\eta_{2},\mathbf{T}}>0)  \\
\leqslant & \mathbb{P} (\frac{1}{2}\delta \vert \mathbf{S} \vert \sum_{\vert \alpha \vert \leqslant L} \sum_{i=1}^{N}  \langle \xi ,V_{i}^{[\alpha]}(\mbox{\textsc{x}}^{\delta}_{0})\rangle_{\mathbb{R}^{d}}^{2} - \epsilon_{r,L} \\
 & \qquad  \leqslant  \delta \sum_{t \in \mathbf{S}} \sum_{\vert \alpha \vert \leqslant L} \sum_{i=1}^{N}  \vert \langle \xi ,\mathring{X}_{t-\delta}V_{i}^{[\alpha]}(X^{\delta}_{t-\delta},t-\delta) - V_{i}^{[\alpha]}(\mbox{\textsc{x}}^{\delta}_{0})\rangle_{\mathbb{R}^{d}} \vert^{2},   \Theta_{\eta_{2},\mathbf{T}}>0) \\
  \leqslant & \mathbb{P} (  \sup_{t \in \mathbf{S}}  \sum_{\vert \alpha \vert \leqslant L} \sum_{i=1}^{N}   \vert   \langle \xi ,\mathring{X}_{\eta_{2},t-\delta}V_{i}^{[\alpha]}(X^{\delta}_{t-\delta},t-\delta) - V_{i}^{[\alpha]}(\mbox{\textsc{x}}^{\delta}_{0},0)\rangle_{\mathbb{R}^{d}}  \vert ^{2}  \geqslant  \frac{\mathcal{V}_L(\mbox{\textsc{x}}^{\delta}_{0})}{4} ) \\
\leqslant & \mathbb{P} ( \sup_{t \in \mathbf{S}}  \sum_{\vert \alpha \vert \leqslant L} \sum_{i=1}^{N}  \vert M_{\alpha,i,t-\delta} \vert^{2}  \geqslant   \frac{\mathcal{V}_L(\mbox{\textsc{x}}^{\delta}_{0})}{8} - \sup_{t \in \mathbf{S}}\sum_{\vert \alpha \vert \leqslant L} \sum_{i=1}^{N}    \vert B_{\alpha,i,t-\delta}  \vert^{2} ) \\
\leqslant &  \sum_{\vert \alpha \vert \leqslant L} \sum_{i=1}^{N} \mathbb{P} ( \sup_{t \in \mathbf{S}}   \vert  M_{\alpha,i,t-\delta} \vert^{2}  \geqslant  \frac{\mathcal{V}_L(\mbox{\textsc{x}}^{\delta}_{0})}{8 N \binom{N+L}{N}} - \sup_{t \in \mathbf{S}}   \vert B_{\alpha,i,t-\delta}  \vert^{2} ) 
\end{align*}
with for every $t \in \mathbf{T}$,
\begin{align*}
M_{\alpha,i,t}=&\delta^{\frac{1}{2}}\sum_{w \in \mathbf{T};0<w\leqslant t} \tilde{\Delta}^{Y_{\alpha,i}}_{w-\delta}, \qquad B_{\alpha,i,t}=\delta\sum_{w \in \mathbf{T};0<w\leqslant t} \bar{\Delta}^{Y_{\alpha,i}}_{w-\delta},
\end{align*}
where $Y_{\alpha,i,0}=0$ and for every $t \in \mathbf{T}$,
\begin{align*}
Y_{\alpha,i,t}=   \langle \xi,\mathring{X}_{\eta_{2},t} V_{i}^{[\alpha]}(X^{\delta}_{t},t)-V_{i}^{[\alpha]}(\mbox{\textsc{x}}^{\delta}_{0},0)\rangle_{\mathbb{R}^{d}} .
\end{align*}

Now we decompose our estimate in the following way
\begin{align*}
 \mathbb{P} (\delta \sum_{t \in \mathbf{T}} & \sum_{l=0}^{L} \mathring{V}_{\xi,l,t} \leqslant (L+1)N_{L,r} \epsilon^{r^L},   \Theta_{\eta_{2},\mathbf{T}}>0) \\
\leqslant &  \sum_{\vert \alpha \vert \leqslant L} \sum_{i=1}^{N} \mathbb{P} ( \sup_{t \in \mathbf{S}}   \vert  M_{\alpha,i,t-\delta} \vert^{2}  \geqslant  \frac{\mathcal{V}_{L}(\mbox{\textsc{x}}^{\delta}_{0})}{16 N \binom{N+L}{N}}  ,  \Theta_{\eta_{2},\mathbf{T}}>0) \\
&+\mathbb{P}( \sup_{t \in \mathbf{S}}   \vert B_{\alpha,i,t-\delta}  \vert^{2} > \frac{\mathcal{V}_{L}(\mbox{\textsc{x}}^{\delta}_{0})}{16 N \binom{N+L}{N}} ,   \Theta_{\eta_{2},\mathbf{T}}>0) .
\end{align*}
We study the second term of the $r.h.s.$ above. Using the Markov inequality, for every $a>0$, we have
\begin{align*}
\mathbb{P}( \sup_{t \in \mathbf{S}}   \vert B_{\alpha,i,t-\delta}  \vert^{2} > \frac{\mathcal{V}_{L}(\mbox{\textsc{x}}^{\delta}_{0},0)}{16 N \binom{N+L}{N}} ) \leqslant & \mathbb{P}(  \vert \delta \sum_{t \in \mathbf{S}} \vert \bar{\Delta}^{Y_{\alpha,i}}_{t-\delta}\vert \vert^{2} > \frac{\mathcal{V}_{L}(\mbox{\textsc{x}}^{\delta}_{0})}{16 N \binom{N+L}{N}} ,   \Theta_{\eta_{2},\mathbf{T}}>0)  \\
\leqslant & 4^{a} \delta^{a} \vert \mathbf{S} \vert^{a} \sup_{t \in \mathbf{S}} \mathbb{E}[ \vert \bar{\Delta}^{Y_{\alpha,i}}_{t-\delta}\vert^{a} ]   \vert \frac{N \binom{N+L}{N}}{\mathcal{V}_{L}(\mbox{\textsc{x}}^{\delta}_{0})} \vert^{\frac{a}{2}}.
\end{align*}
In particular, we chose $a= \frac{d(p+4)}{r^{L}} $ so that $\delta^{a} \vert \mathbf{S} \vert^{a} \leqslant C(a,N,L,\frac{1}{m_{\ast}},\frac{1}{r}) \mathcal{V}_{L}(\mbox{\textsc{x}}^{\delta}_{0})^{-a} \epsilon^{d(p+4)}$.

As a consequence of Lemma \ref{lemme:dvpt_Lie} with $V=V_{i}^{[\alpha]}$ and Cauchy-Schwarz inequality,  we have
\begin{align*}
\mathbb{E}[ \vert \bar{\Delta}^{Y_{\alpha,i}}_{t-\delta}\vert^{a} ] \leqslant & C(d,a)  \mathfrak{D}^{3a}  \mathfrak{D}_{4}^{7a}  \mathfrak{D}_{V_{i}^{[\alpha]},3}^{2a} \mathfrak{M}_{2 \max(3\mathfrak{p}+5\mathfrak{p}_{4}+14,\lceil- \frac{3 \ln(\delta)}{2 \ln(\eta_{2})} \rceil +2)}(Z^{\delta})^{a} \nonumber  \\
& \times  \mathbb{E}[  \Vert \mathring{X}_{t-\delta} \Vert_{\mathbb{R}^{d}}^{2a} \mathbf{1}_{\Theta_{\eta_{2},\mathbf{T}}>0}]^{\frac{1}{2}}  (1+ \mathbb{E}[\vert X^{\delta}_{t-\delta} \vert_{\mathbb{R}^{d}}^{2a(7 \mathfrak{p}_{4} + 2 \mathfrak{p}_{V_{i}^{[\alpha]},3 })}]^{\frac{1}{2}}) ,
\end{align*}
and we bound the $r.h.s.$ above using Lemma \ref{lemme:borne_flt_tangent} (see (\ref{eq:borne_flot_tangent_inv})) and Lemma \ref{lemme:borne:moments_X} (when $4a <2$ we also use the H\"older inequality to conclude).

Moreover, for $v'>0$ 
\begin{align*}
\mathbb{P} &( \sup_{t \in \mathbf{S}}   \vert  M_{\alpha,i,t-\delta} \vert^{2}  \geqslant  \frac{\mathcal{V}_L(\mbox{\textsc{x}}^{\delta}_{0})}{16 N \binom{N+L}{N}}  ) \\
\leqslant & \mathbb{P} ( \sup_{t \in \mathbf{S}}   \vert  M_{\alpha,i,t} \vert^{2}  \geqslant  \frac{\mathcal{V}_L(\mbox{\textsc{x}}^{\delta}_{0})}{16 N \binom{N+L}{N}}  , \delta \sum_{t \in \mathbf{S}} \mathbb{E}[ \vert \tilde{\Delta}^{M_{\alpha,i}}_{t-\delta}\vert^{2} \vert \mathcal{F}^{X^{\delta}}_{t-\delta}]+\vert \tilde{\Delta}^{M_{\alpha,i}}_{t-\delta}\vert^{2} < \epsilon^{\frac{r^L}{2+v'}})  \\
&+\mathbb{P}(\delta \sum_{t \in \mathbf{S}} \mathbb{E}[ \vert \tilde{\Delta}^{M_{\alpha,i}}_{t-\delta}\vert^{2} \vert \mathcal{F}^{X^{\delta}}_{t-\delta}]+\vert \tilde{\Delta}^{M_{\alpha,i}}_{t-\delta}\vert^{2}  \geqslant  \epsilon^{\frac{r^L}{2+v'}}).
\end{align*}
Using the Doob exponential inequality (\ref{eq:Doob_martin_expo_ineq}), the first term is bounded by $ 2\exp(-\frac{\mathcal{V}_L(\mbox{\textsc{x}}^{\delta}_{0})}{32\epsilon^{\frac{r^L}{2+v'}} N \binom{N+L}{N}})$.
In order to bound the second term we take $a\geqslant 1$ and using again the Markov and H\"older inequalities and that $\tilde{\Delta}^{Y_{\alpha,i}}=\tilde{\Delta}^{M_{\alpha,i}}$, yields
\begin{align*}
\mathbb{P}(\delta \sum_{t \in \mathbf{S}} \mathbb{E}[ \vert \tilde{\Delta}^{M_{\alpha,i}}_{t-\delta}\vert^{2} \vert \mathcal{F}^{X^{\delta}}_{t-\delta}] + \vert \tilde{\Delta}^{M_{\alpha,i}}_{t-\delta}\vert^{2} ) \geqslant \epsilon^{\frac{r^L}{2+v'}}) \leqslant \delta^{a} \vert \mathbf{S} \vert^{a} \epsilon^{-a\frac{r^L}{(2+v')}} 2^{a+1} \sup_{t \in \mathbf{S}} \mathbb{E}[ \vert \tilde{\Delta}^{Y_{\alpha,i}}_{t-\delta}\vert^{2a}] .
\end{align*}
At this point, we chose $a = \frac{(2+v')d(p+4)}{(1+v')r^{L}} $ so that $ \delta^{a} \vert \mathbf{S} \vert^{a}  \epsilon^{-a\frac{r^L}{2+v'}} \leqslant C(a,N,L,\frac{1}{m_{\ast}},\frac{1}{r}) L^{a} \mathcal{V}_{L}(\mbox{\textsc{x}}^{\delta}_{0},0)^{-a}  \epsilon^{d(p+4)}$. Remark that $a \leqslant  \frac{2d(p+4)}{r^{L}}$. In order to bound the $r.h.s.$ above we use Lemma \ref{lemme:dvpt_Lie}. Hence
\begin{align*}
\mathbb{E}[ \vert \tilde{\Delta}^{Y_{\alpha,i}}_{t-\delta}\vert^{2a} ] \leqslant & C(a,N,L,\frac{1}{r})  \mathfrak{D}^{6a}  \mathfrak{D}_{4}^{14a}  \mathfrak{D}_{V_{i}^{[\alpha]},3}^{4a} \mathfrak{M}_{8 \max(2a,\frac{1}{4})\max(3\mathfrak{p}+5\mathfrak{p}_{4}+14,\lceil- \frac{ \ln(\delta)}{ \ln(\eta_{2})} \rceil +2)}(Z^{\delta})^{2} \nonumber  \\
& \times  \mathbb{E}[  \Vert \mathring{X}_{t-\delta} \Vert_{\mathbb{R}^{d}}^{4a} \mathbf{1}_{\Theta_{\eta_{2},\mathbf{T}}>0}]^{\frac{1}{2}}  (1+ \mathbb{E}[\vert X^{\delta}_{t-\delta} \vert_{\mathbb{R}^{d}}^{8a(7 \mathfrak{p}_{4} + 2 \mathfrak{p}_{V_{i}^{[\alpha]},3 })}]^{\frac{1}{2}}) ,
\end{align*}
and then use Lemma \ref{lemme:borne_flt_tangent} (see (\ref{eq:borne_flot_tangent_inv})) and Lemma \ref{lemme:borne:moments_X}. Remarking that $\mathfrak{D}_{V_{i}^{[\alpha]},3} \leqslant C(\vert \alpha \vert) \mathfrak{D}_{4+2 \vert \alpha \vert}^{C(\vert \alpha \vert)}$ and $\mathfrak{p}_{V_{i}^{[\alpha]},3} \leqslant C(\vert \alpha \vert) \mathfrak{p}_{4+2 \vert \alpha \vert}$ and taking $v'=1$ concludes the proof of \textbf{Step 2.2}.

 \textbf{Step 2.3.} 
Consider the case $L \in \mathbb{N}^{\ast}$. Let $l \in \{0,\ldots,L-1\}$.  Assume that $\eta_{1} \in (1,\delta^{-\frac{d}{2}}]$. Let us show that for every 
\begin{align*}
\epsilon \in [ \max(\eta_{1}^{-\frac{1}{d}},\vert \frac{\vert 2^{10} (1+T^{3}) \delta \vert ^{\frac{44}{91-36r}}}{N_{l,r}} \vert^{r^{-l}}), \vert \frac {\vert 2^{8} (1+T) \vert^{-\frac{11}{1-12r}}}{N_{l,r}} \vert^{r^{-l}} ],
\end{align*}
then  
 \begin{align*}
 \mathbb{P}( &\delta \sum_{t \in \mathbf{T}} \mathring{V}_{\xi,l,t} \leqslant N_{l,r} \epsilon^{r^{l}},  \delta \sum_{t \in \mathbf{T}}\mathring{V}_{\xi,l+1,t}> N_{l+1,r}\epsilon^{r^{l+1}}, \Theta_{\eta_{2},\mathbf{T}}>0 ) \\
  \leqslant &  \epsilon^{d(p+4)}   \mathfrak{D}^{C(d,L,p,\frac{1}{1-v},\frac{1}{r},\frac{1}{1-12r}) } \mathfrak{D}_{2 l + 7}^{C(d,L,p,\frac{1}{r},\frac{1}{1-12r}) }  \mathfrak{M}_{C(d,L,p,\mathfrak{p},\mathfrak{p}_{2 l + 7},\frac{1}{r}) }(Z^{\delta})  \nonumber \\
& \times (1+ \mathbf{1}_{\mathfrak{p}_{2 l +7}>0} \vert \mbox{\textsc{x}}^{\delta}_{0} \vert_{\mathbb{R}^{d}}^{C(d,L,p,\mathfrak{p}_{2 l + 7},\frac{1}{r},\frac{1}{1-12r}) }   ) \\
& \times C(d,N,L,\frac{1}{m_{\ast}},p,\mathfrak{p}_{2 l + 7},\frac{1}{r},\frac{1}{1-12r}) \\
& \times  \exp(C(d,L,p,\mathfrak{p}_{2 l + 7},\frac{1}{r},\frac{1}{1-12r}) T \mathfrak{M}_{C(d,L,p,\mathfrak{p},\mathfrak{p}_{2 l + 7},\mathfrak{q}^{\delta}_{\eta_{2}},\frac{1}{r},\frac{1}{1-12r}) }(Z^{\delta}) \mathfrak{D}^{4}) \\
& + 12 \exp(-\frac{\vert N_{l,r}\epsilon^{r^{-l}} \vert^{-\frac{1-12r}{22}}}{2^{11} (1+T^{2})}).
 \end{align*}

First, for $\alpha \in \mathbf{N}^{l}$ and $i \in \mathbf{N}$, we introduce the $\mathbb{R}$-valued process $(Y^{\diamond}_{\alpha,i,t})_{t \in \pi^{\delta^{\delta}}}$ such that $Y^{\diamond}_{\alpha,i,0}=0$ and $Y^{\diamond}_{\alpha,i,t}=\langle \xi, \mathring{X}_{\eta_{2},t-\delta}V_{i}^{[\alpha]}(X^{\delta}_{t-\delta},t-\delta)\rangle_{\mathbb{R}^{d}}$,  $t \in \pi^{\delta,\ast}$. In particular, on the set $\{\Theta_{\eta_{2},\mathbf{T}}>0  \}$, $ \mathring{V}_{\xi,l,t}= \sum_{\alpha \in \mathbf{N}^l} \sum_{i \in \mathbf{N}} \vert Y^{\diamond}_{\alpha,i,t} \vert^{2}$, $t \in \pi^{\delta}$. In particular, it follows from Lemma \ref{lemme:dvpt_Lie} with $V=V_{i}^{[\alpha]}$, that, 
for $t \in \pi^{\delta,\ast}$,
\begin{align*}
Y^{\diamond}_{\alpha,i,t+\delta}-&Y^{\diamond}_{\alpha,i,t}= \delta^{\frac{1}{2}}\sum_{j=1}^N Z^{\delta,j}_{t} \langle \xi , \mathring{X}_{\eta_{2},t-\delta} V_{i}^{[(\alpha,j)]}(X^{\delta}_{t-\delta},t-\delta) \rangle_{\mathbb{R}^{d}} \\
&+\delta  \langle \xi , \mathring{X}_{\eta_{2},t-\delta} V_{i}^{[(\alpha,0)]}(X^{\delta}_{t-\delta},t-\delta) ) \rangle_{\mathbb{R}^{d}}+ \langle \xi , \mathring{X}_{\eta_{2},t-\delta} \mathbf{R}^{\delta}V_{i}^{[\alpha]}(X^{\delta}_{t-\delta},t-\delta,Z^{\delta}_{t}) \rangle_{\mathbb{R}^{d}} \\
=&  \delta^{\frac{1}{2}}\sum_{j=1}^N Z^{\delta,j}_{t} Y^{\diamond}_{(\alpha,j),i,t} +\delta Y^{\diamond}_{(\alpha,0),i,t}+ \langle \xi , \mathring{X}_{\eta_{2},t-\delta} \mathbf{R}^{\delta}V_{i}^{[\alpha]}(X^{\delta}_{t-\delta},t-\delta,Z^{\delta}_{t}) \rangle_{\mathbb{R}^{d}} 
\end{align*}
and
\begin{align*} \mathring{V}_{\xi,l+1,t}= & \sum_{\alpha \in \mathbf{N}^l} \sum_{i \in \mathbf{N}} \mathbb{E}[\vert\tilde{\Delta}^{Y^{\diamond}_{\alpha,i}}_{t} - \delta^{-\frac{1}{2}} \langle \xi , \mathring{X}_{\eta_{2},t-\delta} \tilde{\mathbf{R}}^{\delta}V_{i}^{[\alpha]}(X^{\delta}_{t-\delta},t-\delta,Z^{\delta}_{t}) \rangle_{\mathbb{R}^{d}} \vert^{2} \vert \mathcal{F}^{Y^{\diamond}_{\alpha,i}}_{t-\delta}] \\
&+ \vert \bar{\Delta}^{Y^{\diamond}_{\alpha,i}}_{t} - \delta^{-1}  \langle \xi , \mathring{X}_{\eta_{2},t-\delta}  \overline{\mathbf{R}}^{\delta}V_{i}^{[\alpha]}(X^{\delta}_{t-\delta},t-\delta)  \rangle_{\mathbb{R}^{d}}  \vert^{2}.
\end{align*}

Therefore,

 \begin{align*}
 \mathbb{P}( &\delta \sum_{t \in \mathbf{T}} \mathring{V}_{\xi,l,t} \leqslant N_{l,r} \epsilon^{r^{l}},  \delta \sum_{t \in \mathbf{T}}\mathring{V}_{\xi,l+1,t}> N_{l+1,r}\epsilon^{(1-v)r^{l+1}}, \Theta_{\eta_{2},\mathbf{T}}>0 ) \\
  \leqslant &  \mathbb{P}( \delta \sum_{t \in \mathbf{T}^{-}} \sum_{\alpha \in \mathbf{N}^l} \sum_{i \in \mathbf{N}} \vert Y^{\diamond}_{\alpha,i,t} \vert^{2} \leqslant N_{l,r} \epsilon^{r^{l}},  \\
&\delta \sum_{t \in \mathbf{T}}\sum_{\alpha \in \mathbf{N}^l} \sum_{i \in \mathbf{N}} \mathbb{E}[\vert\tilde{\Delta}^{Y^{\diamond}_{\alpha,i}}_{t} - \delta^{-\frac{1}{2}} \langle \xi , \mathring{X}_{\eta_{2},t-\delta} \tilde{\mathbf{R}}^{\delta}V_{i}^{[\alpha]}(X^{\delta}_{t-\delta},t-\delta,Z^{\delta}_{t}) \rangle_{\mathbb{R}^{d}} \vert^{2} \vert \mathcal{F}^{Y^{\diamond}_{\alpha,i}}_{t-\delta}] \\
&+ \vert \bar{\Delta}^{Y^{\diamond}_{\alpha,i}}_{t} - \delta^{-1}  \langle \xi , \mathring{X}_{\eta_{2},t-\delta}  \overline{\mathbf{R}}^{\delta}V_{i}^{[\alpha]}(X^{\delta}_{t-\delta},t-\delta)  \rangle_{\mathbb{R}^{d}}  \vert^{2}> N_{l+1,r}\epsilon^{r^{l+1}})   \\
  \leqslant &  \sum_{\alpha \in \mathbf{N}^l} \sum_{i \in \mathbf{N}}  \mathbb{P}( \delta \sum_{t \in \mathbf{T}^{-}} \vert Y^{\diamond}_{\alpha,i,t} \vert^{2} \leqslant N_{l,r} \epsilon^{(1-v)r^{l}},  \delta \sum_{t \in \mathbf{T}} \mathbb{E}[\vert\tilde{\Delta}^{Y^{\diamond}_{\alpha,i}}_{t} \vert^{2} ] + \vert \bar{\Delta}^{Y^{\diamond}_{\alpha,i}}_{t}  \vert^{2}> \frac{1}{4} N^{-l-1} N_{l+1,r}\epsilon^{r^{l+1}})   \\
& +   \sum_{\alpha \in \mathbf{N}^l} \sum_{i \in \mathbf{N}}  \mathbb{P}(\delta \sum_{t \in \mathbf{T}} \mathbb{E}[\vert \delta^{-\frac{1}{2}} \langle \xi , \mathring{X}_{\eta_{2},t-\delta} \tilde{\mathbf{R}}^{\delta}V_{i}^{[\alpha]}(X^{\delta}_{t-\delta},t-\delta,Z^{\delta}_{t}) \rangle_{\mathbb{R}^{d}} \vert^{2} ]\\
& \quad  + \vert  \delta^{-1}  \langle \xi , \mathring{X}_{\eta_{2},t-\delta}  \overline{\mathbf{R}}^{\delta}V_{i}^{[\alpha]}(X^{\delta}_{t-\delta},t-\delta)  \rangle_{\mathbb{R}^{d}}  \vert^{2}> \frac{1}{4}N^{-l-1} N_{l+1,r}\epsilon^{r^{l+1}}) 
 \end{align*}
where $\mathbf{T}^{-} = \mathbf{T} \setminus \{\sup\{t,t \in \mathbf{T}  \} \}$. We bound the the first term of the $r.h.s.$ above. Since $N_{l+1,r} = 4N^{l+1} N_{l,r}^{r}$, $r \in (0,\frac{1}{12})$, and $N_{l,r} \epsilon^{r^{l}} \in [\vert 2^{10} (1+T^{3}) \delta \vert ^{\frac{44}{91-36r}},\vert 2^{8} (1+T) \vert^{-\frac{11}{1-12r}} ]$, this bound is obtained by applying Lemma \ref{lem:Norris} with $Y^{\diamond}=Y^{\diamond}_{\alpha,i}$, $ \mathbf{T} = \mathbf{T}^{-}$, $\epsilon =N_{l,r} \epsilon^{r^{l}},$ and $p=\frac{d(p+6)}{r^{l}}$. In particular we have to bound $ \mathfrak{N}_{Y^{\diamond}_{\alpha,i},\mathbf{T}^{-}}(q(d,r,l,p))$ (this quantity being defined in (\ref{eq:Quantite_lemme_Norris})) with $q(d,r,l,p)=\max(4,\frac{44d(p+4)}{r^{l}-12r^{l+1}})$.
We notice that $\bar{\Delta}^{\bar{\Delta}^{Y^{\diamond}_{\alpha,i}}}_{0}=\langle \xi , V_{i}^{[\alpha]}(\mbox{\textsc{x}}^{\delta}_{0},0) \rangle_{\mathbb{R}^{d}}$, $\tilde{\Delta}^{\bar{\Delta}^{Y^{\diamond}_{\alpha,i}}}_{0}=0$ and that, for $t \in \pi^{\delta,\ast}$,  as a consequence of Lemma \ref{lemme:dvpt_Lie},

\begin{align*}
\bar{\Delta}^{\bar{\Delta}^{Y^{\diamond}_{\alpha,i}}}_{t}= &Y^{\diamond}_{(\alpha,0,0),i,t} + \delta^{-1}\langle \xi,  \mathring{X}_{\eta_{2},t-\delta} \overline{\mathbf{R}}^{\delta}V_{i}^{[(\alpha,0)]}(X^{\delta}_{t-\delta},t-\delta) \\
& +  \delta^{-1}\langle \xi,   \mathring{X}_{\eta_{2},t-\delta} (\overline{\mathbf{R}}^{\delta}V_{i}^{[\alpha]})^{[0]}(X^{\delta}_{t-\delta},t-\delta) \rangle_{\mathbb{R}^{d}} +\delta^{-2} \langle \xi, \mathring{X}_{\eta_{2},t-\delta}  \overline{\mathbf{R}}^{\delta} \overline{\mathbf{R}}^{\delta}V_{i}^{[\alpha]}(X^{\delta}_{t-\delta},t-\delta) \rangle_{\mathbb{R}^{d}},
\end{align*}
and

\begin{align*}
\tilde{\Delta}^{\bar{\Delta}^{Y^{\diamond}_{\alpha,i}}}_{t}= &\sum_{j=1}^{N} Z^{\delta,j}_{t}Y^{\diamond}_{(\alpha,0,j),i,t} + \delta^{-\frac{1}{2}}\langle \xi,  \mathring{X}_{\eta_{2},t-\delta} \tilde{\mathbf{R}}^{\delta}V_{i}^{[(\alpha,0)]}(X^{\delta}_{t-\delta},t-\delta,Z^{\delta}_{t}) \\
& +  \delta^{-1} Z^{\delta,j}_{t} \langle \xi,   \mathring{X}_{\eta_{2},t-\delta} (\overline{\mathbf{R}}^{\delta}V_{i}^{[\alpha]})^{[j]}(X^{\delta}_{t-\delta},t-\delta) \rangle_{\mathbb{R}^{d}} \\
&+\delta^{-\frac{3}{2}} \langle \xi, \mathring{X}_{\eta_{2},t-\delta}  \tilde{\mathbf{R}}^{\delta} \overline{\mathbf{R}}^{\delta}V_{i}^{[\alpha]}(X^{\delta}_{t-\delta},t-\delta,Z^{\delta}_{t}) \rangle_{\mathbb{R}^{d}}.
\end{align*}

Applying (\ref{eq:borne_reste_moy_dvpt_Lie}) and (\ref{eq:borne_reste_mart_dvpt_Lie}), we obtain
\begin{align*}
 \mathfrak{N}_{Y^{\diamond}_{\alpha,i},\mathbf{T}^{-}}(q(d,r,l,p)) \leqslant & C(d,l,p,\frac{1}{1-v},\frac{1}{1-12r},\frac{1}{r}) \mathfrak{M}_{4 q(d,r,l,p) \max(3\mathfrak{p}+6\mathfrak{p}_{7}+ 16,\lceil- \frac{3 \ln(\delta)}{2 \ln(\eta_{2})} \rceil +2)}(Z^{\delta}) \\
&\times   \mathfrak{D}^{6 q(d,r,l,p)}  \mathfrak{D}_{2 l + 7}^{C( l) q(d,r,l,p) }   \\
& \times  \mathbb{E}[\sup_{t \in \mathbf{T}} \Vert \mathring{X}_{\eta_{2},t-\delta} \Vert_{\mathbb{R}^{d}}^{2q(d,r,l,p)}]^{\frac{1}{2}} (1+ \mathbb{E}[\sup_{t \in \mathbf{T}} \vert X^{\delta}_{t-\delta} \vert_{\mathbb{R}^{d}}^{2C(l)q(d,r,l,p)\mathfrak{p}_{2l+7})}]^{\frac{1}{2}}).
\end{align*}

Using the Markov and Cauchy-Schwarz inequalities gives also, for every $a>0$,
\begin{align*}
  \mathbb{P}(\delta & \sum_{t \in \mathbf{T}} \mathbb{E}[\vert \delta^{-\frac{1}{2}} \langle \xi , \mathring{X}_{\eta_{2},t-\delta} \tilde{\mathbf{R}}^{\delta}V_{i}^{[\alpha]}(X^{\delta}_{t-\delta},t-\delta,Z^{\delta}_{t}) \rangle_{\mathbb{R}^{d}} \vert^{2} ]\\
& \quad  + \vert  \delta^{-1}  \langle \xi , \mathring{X}_{\eta_{2},t-\delta}  \overline{\mathbf{R}}^{\delta}V_{i}^{[\alpha]}(X^{\delta}_{t-\delta},t-\delta)  \rangle_{\mathbb{R}^{d}}  \vert^{2}> \frac{1}{4}N^{-l-1} N_{l+1,r}\epsilon^{r^{l+1}})  \\
\leqslant & \delta^{\frac{a}{2}} \epsilon^{-a r^{l+1}} T^{a} C(N,\frac{1}{m_{\ast}},l,r,a)  \\
& \times \mathfrak{M}_{4a  \max(6\mathfrak{p}+10 \mathfrak{p}_{4}+ 28,\lceil- \frac{3 \ln(\delta)}{2 \ln(\eta_{2})} \rceil +2)}(Z^{\delta}) \\
&\times   \mathfrak{D}^{3 a}  \mathfrak{D}_{2l+ 4}^{C( l) a}   \\
& \times  \mathbb{E}[\sup_{t \in \mathbf{T}} \Vert \mathring{X}_{\eta_{2},t-\delta} \Vert_{\mathbb{R}^{d}}^{2 a}]^{\frac{1}{2}} (1+ \mathbb{E}[\sup_{t \in \mathbf{T}} \vert X^{\delta}_{t-\delta} \vert_{\mathbb{R}^{d}}^{2 a C(l)\mathfrak{p}_{2l+4})}]^{\frac{1}{2}})
 \end{align*}

In particular, we chose $a = \frac{d(p+4) \ln(\eta_{1})}{-r^{l+1}\ln(\eta_{1})-\frac{d}{2}\ln(\delta)}$ so that $\delta^{\frac{a}{2}} \epsilon^{-a r^{l+1}} \leqslant \epsilon^{d(p+4)}$ (notice that since $\delta \leqslant \eta_{1}^{-\frac{2}{d}}$ and $r \in(0,\frac{1}{12})$, then $a \leqslant 2d(p+4) $) and then apply Lemma \ref{lemme:borne_flt_tangent} (see (\ref{eq:borne_flot_tangent_inv})) and Lemma \ref{lemme:borne:moments_X} to conclude the proof of \textbf{Step 2.3} (when $4a <2$ we also use the H\"older inequality).

\textbf{Step 2.4} We are now in a position to conclude the proof of \textbf{Step 2}. Gathering the estimates obtained in \textbf{Step 2.1}, \textbf{2.2} and \textbf{2.3}, we have proved that, if $\eta_{1} \in (1,\delta^{-\frac{d}{2+v}}]$ and $\eta_{2} \in (1,\delta^{-\frac{1}{2}} \eta_{1}^{\frac{1+\tilde{v}}{2d}}]$ with $v,\tilde{v}>0$, for every $r \in (0,\frac{1}{12})$ and for every
\begin{align*}
\epsilon \in &[ \max(\eta_{1}^{-\frac{1}{d}}, \mathbf{1}_{L>0}  \frac{m_{\ast} \vert 2^{10} (1+T^{3}) \delta \vert ^{\frac{44}{91-36r}}}{10}),\\
& \min(\frac{2^{\frac{1}{2}}}{d^{\frac{1}{2}}},(\frac{T \mathcal{V}_{L}(\mbox{\textsc{x}}^{\delta}_{0},0) m_{\ast}}{40(L+1) N^{\frac{L(L+1)}{2}} })^{r^{-L}} ,\mathbf{1}_{L=0}+\mathbf{1}_{L>0} \vert m_{\ast}\frac {\vert 2^{8} (1+T) \vert^{-\frac{11}{1-12r}}}{10N^{\frac{L(L-1)}{2}}} \vert^{r^{-L+1}} )),
\end{align*}
then
\begin{align*}
\sup&_{\xi \in \mathbb{R}^{d}; \vert \xi \vert_{\mathbb{R}^{d}} =1}  \mathbb{P}  ( \xi^{T} \tilde{\sigma} ^{\delta}_{X^{\delta}_{t},\mathbf{T}}  \xi \leqslant 2 \epsilon, \Theta_{\eta_{2},\mathbf{T}}>0)  \\
\leqslant &  \epsilon^{d(p+4)} (1+ \mathcal{V}_{L}(\mbox{\textsc{x}}^{\delta}_{0})^{- \frac{3d(p+4)}{r^{L}}}  ) (1+\mathbf{1}_{\mathfrak{p}_{2 L +5}>0}  \vert \mbox{\textsc{x}}^{\delta}_{0} \vert_{\mathbb{R}^{d}}^{C(d,L,p,\mathfrak{p}_{2 L + 5},\frac{1}{v},\frac{1}{\tilde{v}},\frac{1}{r},\frac{1}{1-12r}) }   )  \\
& \times  \mathfrak{D}^{C(d,L,p,\frac{1}{v},\frac{1}{\tilde{v}},\frac{1}{r},\frac{1}{1-12r}) } \mathfrak{D}_{2 L + 5}^{C(d,L,p,\frac{1}{v},\frac{1}{\tilde{v}},\frac{1}{r},\frac{1}{1-12r}) }  \mathfrak{M}_{C(d,L,p,\mathfrak{p},\mathfrak{p}_{2 L + 5},\frac{1}{v},\frac{1}{\tilde{v}},\frac{1}{r},\frac{1}{1-12r}) }(Z^{\delta})  \nonumber \\
& \times C(d,N,L,\frac{1}{m_{\ast}},p,\mathfrak{p}_{2 L + 5},\frac{1}{v},\frac{1}{\tilde{v}},\frac{1}{r},\frac{1}{1-12r}) \\
& \times  \exp(C(d,L,p,\mathfrak{p}_{2 L + 5},\frac{1}{v},\frac{1}{\tilde{v}},\frac{1}{r},\frac{1}{1-12r}) T \mathfrak{M}_{C(d,L,p,\mathfrak{p},\mathfrak{p}_{2 L + 5},\mathfrak{q}^{\delta}_{\eta_{2}},\frac{1}{v},\frac{1}{\tilde{v}},\frac{1}{r},\frac{1}{1-12r}) }(Z^{\delta}) \mathfrak{D}^{4}) )\\
&+2C(d) ( \exp(-\epsilon^{-\frac{v}{2}})+\exp(-\frac{\mathcal{V}_{L}(\mbox{\textsc{x}}^{\delta}_{0})}{32\epsilon^{\frac{r^{L}}{3}} N \binom{N+L}{N}})  +6\exp(-\frac{\vert  \frac{10}{m_{\ast}}  \epsilon \vert^{-\frac{1-12r}{22}}}{2^{11} (1+T^{2})})). 
\end{align*}
We fix $v=\frac{3-36r}{44}$ and $\tilde{v}=1$ and the proof of \textbf{Step 2} is completed.

 \textbf{Step 3.} We now focus on the proof of (\ref{eq:borne:mall_matrice_mom_2}). In particular, we show that for every $\epsilon \in \mathbb{R}^{\ast}$,
 \begin{align}
\label{eq:Step3CovMal}
   \mathbb{P}(\Vert \tilde{\sigma} ^{\delta}_{X^{\delta}_{t},\mathbf{T}}  \Vert_{\mathbb{R}^{d}}> \frac{1}{6 \epsilon})  
   \leqslant & \epsilon^{d(p+2)} ( \vert \mbox{\textsc{x}}^{\delta}_{0} \vert_{\mathbb{R}^{d}} \mathbf{1}_{\mathfrak{p}_{3} > 0}+\mathfrak{D}_{3} )^{C(\mathfrak{p}_{3})}  \\
 & \times   \exp (C(d,p,\mathfrak{p}_{3})(T+1)\mathfrak{M}_{C(d,p,\mathfrak{p},\mathfrak{p}_{3},\mathfrak{q}^{\delta}_{\eta_{2}})}(Z^{\delta})\mathfrak{D}^{4})   . \nonumber
 \end{align}
 
  First, we notice that, using Cauchy-Schwarz inequality, we have 
\begin{align*}
\Vert \tilde{\sigma} ^{\delta}_{X^{\delta}_{t},\mathbf{T}}  \Vert_{\mathbb{R}^{d}} \leqslant & \Vert \sigma^{\delta}_{X^{\delta}_{t},\mathbf{T}}  \Vert_{\mathbb{R}^{d}}  \Vert \mathring{X}_{T} \Vert_{\mathbb{R}^{d}}^{2}\\
\leqslant & \vert X^{\delta}_{t} \vert_{\mathbb{R}^{d},\delta,\mathbf{T},1,1}^{2} \Vert \mathring{X}_{T} \Vert_{\mathbb{R}^{d}}^{2}.
\end{align*}
As a consequence of the Markov inequality and again the Cauchy-Schwarz inequality, we obtain
 \begin{align*}
   \mathbb{P}(\Vert \tilde{\sigma}^{\delta}_{X^{\delta}_{t},\mathbf{T}}  &\Vert_{\mathbb{R}^{d}}> \frac{1}{6 \epsilon}, \Theta_{\eta_{2},\mathbf{T}}>0)  \\
   \leqslant & \epsilon^{d(p+2)} 6^{d(p+2)} \Vert X^{\delta}_{t} \Vert_{\mathbb{R}^{d},\delta,\mathbf{T},1,1,4d(p+2)}^{2} \mathbb{E}[\sup_{t \in \mathbf{T}}\Vert \mathring{X}_{t}  \Vert ^{4d(p+2)}_{\mathbb{R}^{d}} \mathbf{1}_{\Theta_{\eta_{2},\mathbf{T},t}>0}]^{\frac{1}{2}} .
 \end{align*}
To conclude the proof of \textbf{Step 3}, we then apply Proposition \ref{prop:borne_Sob_generique} (see (\ref{eq:norme_Sobolev_X_theo})) and Lemma \ref{lemme:borne_flt_tangent} and obtain (\ref{eq:Step3CovMal}).\\

 \textbf{Step 4.} In order to complete the proof of Theorem \ref{th:borne_Lp_inv_cov_Mal}, it remains to show that (\ref{eq:th_cov_mal_proba_espace_comp}) holds. Similarly as in \textbf{Step 1}, we have 
 \begin{align*}
\mathbb{P}(\vert \det \tilde{\gamma}^{\delta}
_{X^{\delta}_{T},\mathbf{T}} \vert  \geqslant \frac{\eta_{1}}{2},\Theta_{X^{\delta}_{T},\eta,\mathbf{T}}>0)   
\leqslant & \mathbb{P}(\vert \det \tilde{\sigma}^{\delta}
_{X^{\delta}_{T},\mathbf{T}} \vert  \leqslant 2 \eta_{1}^{-1},\Theta_{\eta_{2},\mathbf{T}}>0)  \\
\leqslant & \mathbb{P}(\inf_{\xi \in \mathbb{R}^{d}; \vert \xi \vert_{\mathbb{R}^{d}} =1} \xi^{T} \tilde{\sigma}^{\delta}_{X^{\delta}_{T},\mathbf{T}}  \xi \leqslant 2^{\frac{1}{d}} \eta_{1}^{-\frac{1}{d}},\Theta_{\eta_{2},\mathbf{T}}>0) .
\end{align*}

Using the result from \textbf{Step 2},  (see (\ref{req:step_2_preuvermatMal}) with $\epsilon=2^{\frac{1}{d}}\eta_{1}^{-\frac{1}{d}}$ and $r =\frac{1}{13}$), for $p \geqslant 0$, we have
 \begin{align*}
\mathbb{P}(&\vert \det \tilde{\gamma}^{\delta}
_{X^{\delta}_{T},\mathbf{T}} \vert  \geqslant  \frac{\eta_{1}}{2},\Theta_{\eta_{2},\mathbf{T}}>0)    \\
\leqslant &  \eta_{1}^{-(p+4)} (1+ \mathcal{V}_{L}(\mbox{\textsc{x}}^{\delta}_{0})^{-13^{L}6d(p+4)}  ) (1+ \mathbf{1}_{\mathfrak{p}_{2 L +5}>0}  \vert \mbox{\textsc{x}}^{\delta}_{0} \vert_{\mathbb{R}^{d}}^{C(d,L,p,\mathfrak{p}_{2 L + 5}) }   )  \\
& \times  \mathfrak{D}^{C(d,L,p) } \mathfrak{D}_{2 L + 5}^{C(d,L,p) }  \mathfrak{M}_{C(d,L,p,\mathfrak{p},\mathfrak{p}_{2 L + 5}) }(Z^{\delta})  \nonumber \\
& \times C(d,N,L,\frac{1}{m_{\ast}},p,\mathfrak{p}_{2 L + 5})   \exp(C(d,L,p,\mathfrak{p}_{2 L + 5}) T \mathfrak{M}_{C(d,L,p,\mathfrak{p},\mathfrak{p}_{2 L + 5},\mathfrak{q}^{\delta}_{\eta_{2}}) }(Z^{\delta}) \mathfrak{D}^{4}) ) . \nonumber
\end{align*}  
To conclude the proof, we simply observe that

\begin{align*}
\mathbb{P}(\Theta_{X^{\delta}_{T},\eta,\mathbf{T}} &< 1  ) \leqslant   \mathbb{P}(\vert \det \gamma^{\delta}_{X^{\delta}_{t},\mathbf{T}}  \vert \geqslant \eta_{1}-\frac{1}{2})+ \sum_{t \in \mathbf{T}}\mathbb{P}(\vert Z^{\delta}_{t} \vert_{\mathbb{R}^{N}}  \geqslant \eta_{2}-\frac{1}{2})\\
\leqslant & \mathbb{P}(\vert \det \gamma^{\delta}_{X^{\delta}_{t},\mathbf{T}}  \vert \geqslant\frac{\eta_{1}}{2},\Theta_{\eta_{2},\mathbf{T}}>0) +\sum_{t \in \mathbf{T}}\mathbb{P}(\vert Z^{\delta}_{t} \vert_{\mathbb{R}^{N}}  \geqslant \eta_{2})+\sum_{t \in \mathbf{T}}\mathbb{P}(\vert Z^{\delta}_{t} \vert_{\mathbb{R}^{N}}  \geqslant \frac{\eta_{2}}{2}) \\
\leqslant & \mathbb{P}(\vert \det \gamma^{\delta}_{X^{\delta}_{t},\mathbf{T}}  \vert \geqslant\frac{\eta_{1}}{2},\Theta_{\eta_{2},\mathbf{T}}>0)+ 2 \sum_{t \in \mathbf{T}}\mathbb{P}(\vert Z^{\delta}_{t} \vert_{\mathbb{R}^{N}}  \geqslant \frac{\eta_{2}}{2}).
\end{align*}

\end{proof}

\begin{appendix}
%

\label{Append}
\section{Proof of Lemma \ref{lemme:inversion_sup_Proba}}
 \begin{proof}
 First notice that, since $\epsilon \in (0,\sqrt{\frac{2}{d}})$, there exists $\{\xi_1,\ldots,\xi_{N(\epsilon)}\}$ with $\xi_i \in \mathbb{R}^{d}$, $N(\epsilon) \leqslant 7d^{3} 2^d \epsilon^{-2d}$ (see $e.g.$ \cite{VergerGaugry_2005} Theorem 1.1 or \cite{Rogers_1963} Theorem 2 for a refined constant) such that $\{ \xi \in \mathbb{R}^{d}, \vert \xi \vert_{\mathbb{R}^{d}} =1 \}   \subset \cup_{i=1}^{N(\epsilon)} \{ \xi \in \mathbb{R}^{d}, \vert \xi_i-\xi \vert_{\mathbb{R}^{d}} \leqslant \frac{\epsilon^{2}}{2} \}$. Moreover 
 \begin{align*}
  \mathbb{P}( \inf_{\xi \in \mathbb{R}^{d}; \vert \xi \vert_{\mathbb{R}^{d}} =1} \xi^T \Sigma \xi \leqslant \frac{1}{2} \epsilon)  = & \mathbb{P}( \inf_{\xi \in \mathbb{R}^{d}; \vert \xi \vert_{\mathbb{R}^{d}} =1} \xi^T \Sigma \xi \leqslant \frac{1}{2} \epsilon, \Vert \Sigma \Vert_{\mathbb{R}^{d}}  \leqslant \frac{1}{3 \epsilon}) +\mathbb{P}( \Vert \Sigma \Vert_{\mathbb{R}^{d}}  > \frac{1}{3 \epsilon}).
 \end{align*}

 In particular for every $\xi \in \mathbb{R}^{d}, \vert \xi \vert_{\mathbb{R}^{d}} =1$,
 \begin{align*}
 \xi^T \Sigma \xi =& \xi_{i}^T\Sigma \xi_{i} + (\xi-\xi_{i})^T (\Sigma \xi_{i} +\Sigma^T \xi  ) \\
\geqslant & \xi_{i}^T \Sigma \xi_{i} - 2\vert \xi_i-\xi \vert_{\mathbb{R}^{d}}  \Vert \Sigma \Vert_{\mathbb{R}^{d}} -\vert \xi_i-\xi \vert_{\mathbb{R}^{d}}^{2}  \Vert \mathfrak{C}  \Vert_{\mathbb{R}^{d}}.
 \end{align*}

 Therefore
 \begin{align*}
 \mathbb{P}( \inf_{\xi \in \mathbb{R}^{d}; \vert \xi \vert_{\mathbb{R}^{d}} =1} \xi^T \Sigma \xi \leqslant \frac{1}{2} \epsilon,   \Vert \Sigma\Vert_{\mathbb{R}^{d}} \leqslant \frac{1}{3 \epsilon}) \leqslant &  \mathbb{P}( \cup_{i=1}^{N(\epsilon)} \xi_{i}^T \Sigma \xi_{i} \leqslant \epsilon) 
 \end{align*}
 and the proof of (\ref{eq:inversion_sup_Proba}) is completed taking $C(d)=7d^{3} 2^d$.
 \end{proof}

\section{Proof of Lemma \ref{lemme:borne_flt_tangent}}

In this proof, we are going to use the Burkholder inequality (see (\ref{eq:burkholder_inequality})) on the Hilbert space $(\mathbb{R}^{d \times d}, \langle , \rangle_{\mathbb{R}^{d \times d}})$, with the scalar product defined by $\langle M , M^{\diamond} \rangle_{\mathbb{R}^{d \times d}} :=    \mbox{Trace}(M^{\diamond} M^{T})=\sum_{i=1}^{d}( M^{\diamond}M^{T})_{i,i}$, $M,M^{\diamond} \in \mathbb{R}^{d \times d}$. Recall that for $M \in \mathbb{R}^{d \times d}$, $\Vert M \Vert_{\mathbb{R}^{d}} \leqslant \vert M \vert_{ \mathbb{R}^{d \times d}}$.
 
\begin{proof}

\textbf{Step 1.}
First we show that
\begin{align*}
\mathbb{E}[\sup_{t \in \mathbf{T}}  \vert \mathring{X}_{t}  \vert_{\mathbb{R}^{d \times d}}^{p} \mathbf{1}_{\Theta_{\eta_{2},\mathbf{T},t}>0}]^{\frac{1}{p}} \leqslant &d+ \mathbb{E}[\sup_{t \in \mathbf{T}} \vert \sum_{w \in \pi^{\delta} \cap (0,t]} \hat{\Upsilon}_w \vert_{\mathbb{R}^{d \times d}}^{p}]^{\frac{1}{p}} \\
&+ \mathbb{E}[\sup_{t \in \mathbf{T}} \vert \sum_{w \in \pi^{\delta} \cap (0,t]} \tilde{\Upsilon}_w \vert_{\mathbb{R}^{d \times d}}^{p} ]^{\frac{1}{p}}.
\end{align*}
where we have introduced $\Upsilon_t =  \mathbf{1}_{\Theta_{\eta_{2},\mathbf{T},t}>0} \mathring{X}_{t-\delta} (I_{d \times d}-\nabla_{x}\psi^{-1})(X^{\delta}_{t-\delta},t-\delta,\delta^{\frac{1}{2}}Z^{\delta}_{t},\delta)$, $\hat{\Upsilon}_t= \mathbb{E}[ \Upsilon_{t} \vert \mathcal{F}^{Z^{\delta}}_{t-\delta}]$ and $\tilde{\Upsilon}_{t}=\Upsilon_{t}-\hat{\Upsilon}_{t}$, $t \in \pi^{\delta,\ast}$. On the set $\{\Theta_{\eta_{2},\mathbf{T},t}>0\}$, we have
\begin{align*}
\mathring{X}_{t}=&I_{d \times d} - \sum_{w \in \pi^{\delta} \cap (0,t]}\mathring{X}_{w-\delta} (I_{d \times d}-\nabla_{x} \psi^{-1}(X^{\delta}_{w-\delta},w-\delta,\delta^{\frac{1}{2}}Z^{\delta}_w,\delta)) .
\end{align*}

Now, using the triangle inequality yields

\begin{align*}
\mathbb{E}[\sup_{t \in \mathbf{T}} \vert \mathring{X}_{t}  \vert_{\mathbb{R}^{d \times d}}^{p} \mathbf{1}_{\Theta_{\eta_{2},\mathbf{T}}>0}  ]^{\frac{1}{p}} \leqslant & \sqrt{d}+\mathbb{E}[\sup_{t \in \mathbf{T}} \vert \sum_{w \in \pi^{\delta} \cap (0,t]} \Upsilon_w \vert_{\mathbb{R}^{d \times d}}^{p} \mathbf{1}_{\Theta_{\eta_{2},\mathbf{T}}>0} ]^{\frac{1}{p}} \\
 \leqslant & \sqrt{d}+\mathbb{E}[\sup_{t \in \mathbf{T}} \vert \sum_{w \in \pi^{\delta} \cap (0,t]} \Upsilon_w \vert_{\mathbb{R}^{d \times d}}^{p}  ]^{\frac{1}{p}} ,
\end{align*}
and, using the triangle inequality once again, the proof of \textbf{Step 1} is completed.

\textbf{Step 2.} Let us show that, for $t \in \mathbf{T}$,

\begin{align*}
\vert \hat{\Upsilon}_t \vert_{\mathbb{R}^{d \times d}}\leqslant &\delta \vert\mathring{X}_{t-\delta} \vert_{\mathbb{R}^{d \times d}} \mathbf{1}_{\Theta_{\eta_{2},\mathbf{T},t-\delta}>0} 39\mathfrak{D}^{2} \mathfrak{M}_{\mathfrak{q}^{\delta}_{\eta_{2}} \vee (2\mathfrak{p}+2)}(Z^{\delta}) 
\end{align*}

We begin by noticing that, since $ \mathbf{1}_{\Theta_{\eta_{2},\mathbf{T},t}>0}=\mathbf{1}_{\delta^{\frac{1}{2}}Z^{\delta}_{t} \in \mathbf{D}_{\eta_{2}}} \mathbf{1}_{\Theta_{\eta_{2},\mathbf{T},t-\delta}>0}$ (with  $\mathbf{D}_{\eta_{2}}=\{z\in \mathbb{R}^N, \vert z^{i} \vert \leqslant \delta^{\frac{1}{2}} \eta_{2}, i \in \mathbf{N} \}$ introduced in the proof of Theorem \ref{th:borne_Lp_inv_cov_Mal}), for every $t \in \pi^{\delta,\ast}$,
\begin{align*}
\vert \hat{\Upsilon}_t \vert_{\mathbb{R}^{d \times d}}= & \vert\mathring{X}_{t-\delta} \mathbf{1}_{\Theta_{\eta_{2},\mathbf{T},t-\delta}>0} \mathbb{E}[ I_{d \times d}-\nabla_{x}\psi^{-1}(X^{\delta}_{t-\delta},t-\delta,\delta^{\frac{1}{2}}Z^{\delta}_{t},\delta)  \mathbf{1}_{\delta^{\frac{1}{2}}Z^{\delta}_{t} \in \mathbf{D}_{\eta_{2}}}\vert \mathcal{F}^{Z^{\delta}}_{t-\delta}] \vert_{\mathbb{R}^{d \times d}}
\end{align*}
Now we remark that,using the Neumann series, we have, on the set $\{\delta^{\frac{1}{2}}Z^{\delta}_{t} \in \mathbf{D}_{\eta_{2}}\}$
\begin{align*}
 \vert (\nabla_{x} \psi^{-1} -2I_{d\times d}+ \nabla_{x} \psi)(X^{\delta}_{t-\delta},t-\delta,& \delta^{\frac{1}{2}}Z^{\delta}_{t},\delta) \vert_{\mathbb{R}^{d \times d}} \\
 \leqslant  & \sum_{k=2}^{\infty} \vert I_{d\times d} - \nabla_{x} \psi (X^{\delta}_{t-\delta},t-\delta,\delta^{\frac{1}{2}}Z^{\delta}_{t},\delta) \vert_{\mathbb{R}^{d \times d}}^k
\end{align*}
so that
\begin{align*}
\vert \hat{\Upsilon}_t \vert_{\mathbb{R}^{d \times d}}\leqslant & \vert\mathring{X}_{t-\delta} \vert_{\mathbb{R}^{d \times d}}  \mathbf{1}_{\Theta_{\eta_{2},\mathbf{T},t-\delta}>0} ( \vert \mathbb{E}[ (I_{d\times d}-\nabla_{x} \psi)(X^{\delta}_{t-\delta},t-\delta,\delta^{\frac{1}{2}}Z^{\delta}_{t},\delta)  \mathbf{1}_{\delta^{\frac{1}{2}}Z^{\delta}_{t} \in \mathbf{D}_{\eta_{2}}}\vert \mathcal{F}^{Z^{\delta}}_{t-\delta}] \vert_{\mathbb{R}^{d \times d}} \\
&+ \mathbb{E}[ \sum_{k=2}^{\infty} \vert I_{d\times d} - \nabla_{x} \psi (X^{\delta}_{t-\delta},t-\delta,\delta^{\frac{1}{2}}Z^{\delta}_{t},\delta)  \vert_{\mathbb{R}^{d \times d}}^k  \mathbf{1}_{\delta^{\frac{1}{2}}Z^{\delta}_{t} \in \mathbf{D}_{\eta_{2}}} \vert \mathcal{F}^{Z^{\delta}}_{t-\delta}].) 
\end{align*}

On the one hand,  using the Taylor expansion of $\nabla_{x} \psi$,
\begin{align*}
 \nabla_{x} \psi(X^{\delta}_{t-\delta},t-\delta, & \delta^{\frac{1}{2}}Z^{\delta}_t,\delta)= I_{d \times d}+ \delta^{\frac{1}{2}}  \sum_{i=1}^{N}   Z^{\delta,i}_{t} \nabla_{x} V_{i}(X^{\delta}_{t-\delta},t-\delta)  \\
& +  \delta \int_{0}^{1}  \partial_{y} \nabla_{x} \psi(X^{\delta}_{t-\delta},t-\delta,\delta^{\frac{1}{2}} Z^{\delta}_t,\lambda \delta) \mbox{d} \lambda \\
&+\delta  \sum_{i,l=1}^{N} Z^{\delta,i}_{t} Z^{\delta,l}_{t} \int_{0}^{1} (1-\lambda) \partial_{z^{i}} \partial_{z^{l}} \nabla_{x} \psi(X^{\delta}_{t-\delta},t-\delta, \lambda \delta^{\frac{1}{2}} Z^{\delta}_t,0) \mbox{d} \lambda.
\end{align*}

Now, we remakr that
\begin{align*}
\mathbb{E}[\delta^{\frac{1}{2}}  \sum_{l=1}^{N}   Z^{\delta,l}_{t} \nabla_{x} V_{l}(X^{\delta}_{t-\delta},t-\delta) (\mathbf{1}_{\delta^{\frac{1}{2}}Z^{\delta}_{t} \in \mathbf{D}_{\eta_{2}}} +  \mathbf{1}_{\delta^{\frac{1}{2}}Z^{\delta}_{t} \notin \mathbf{D}_{\eta_{2}}}) \vert \mathcal{F}^{Z^{\delta}}_{t-\delta}]=0.
\end{align*}
The Markov inequality, combined with (\ref{eq:hyp_3_Norme_adhoc_fonction_schema}) implies that, 

\begin{align*}
\mathbb{E}[ \delta^{\frac{1}{2}} \vert  \sum_{l=1}^{N}  Z^{\delta,l}_{t} \nabla_{x} V_{l}(X^{\delta}_{t-\delta},t-\delta)  \mathbf{1}_{\delta^{\frac{1}{2}}Z^{\delta}_{t} \notin \mathbf{D}_{\eta_{2}}} \vert_{\mathbb{R}^{d \times d}}\  \vert \mathcal{F}^{Z^{\delta}}_{t-\delta}] \leqslant &  \delta 
  \mathfrak{D} \mathbb{E}[\vert Z^{\delta}_{t} \vert_{\mathbb{R}^{N}}^{\mathfrak{q}^{\delta}_{\eta_{2}}}].
\end{align*}
In particular

\begin{align*}
\vert \mathbb{E}[ (I_{d\times d}-\nabla_{x} \psi) & (X^{\delta}_{t-\delta},t-\delta,\delta^{\frac{1}{2}}Z^{\delta}_{t},\delta)  \mathbf{1}_{\delta^{\frac{1}{2}}Z^{\delta}_{t} \in \mathbf{D}_{\eta_{2}}}\vert \mathcal{F}^{Z^{\delta}}_{t-\delta}] \vert_{\mathbb{R}^{d \times d}} \\
\leqslant &\delta    \mathfrak{D}  \mathbb{E}[\vert Z^{\delta}_{t} \vert_{\mathbb{R}^N}^{\mathfrak{q}^{\delta}_{\eta_{2}}}]  + \delta   6  \mathfrak{D} \mathbb{E}[1+\vert Z^{\delta}_{t} \vert_{\mathbb{R}^{N}}^{\mathfrak{p}+2} ].
\end{align*}


On the other hand, using (\ref{borne_grad_psi-Id}), for every $k \in \mathbb{N},k\geqslant 2$, we have
\begin{align*}
\mathbb{E}[\vert I_{d\times d} - \nabla_{x} \psi (X^{\delta}_{t-\delta},t-\delta,\delta^{\frac{1}{2}}Z^{\delta}_{t},\delta) &  \vert_{\mathbb{R}^{d \times d}}^k   \mathbf{1}_{\delta^{\frac{1}{2}}Z^{\delta}_{t} \in \mathbf{D}_{\eta_{2}}} \vert \mathcal{F}^{Z^{\delta}}_{t-\delta}] \\
 \leqslant & \delta^{\frac{k}{2}} \eta_{2}^{(k-2)(\mathfrak{p}+1)}  4^{k}  \mathfrak{D} ^{k} \mathbb{E}[\max(\vert Z^{\delta}_{t} \vert_{\mathbb{R}^{N}}^{2(\mathfrak{p}+1)},1) ] .
\end{align*}

Since $\delta^{\frac{1}{2}} \eta_{2}^{\mathfrak{p}+1} 4 \mathfrak{D}< \frac{1}{2}$, the geometric series converge and
\begin{align*}
\mathbb{E}[ \sum_{k=2}^{\infty}  \vert I_{d\times d} - \nabla_{x} \psi (X^{\delta}_{t-\delta},t-\delta,\delta^{\frac{1}{2}}Z^{\delta}_{t},\delta)  \vert_{\mathbb{R}^{d \times d}}^k & \mathbf{1}_{\delta^{\frac{1}{2}}Z^{\delta}_{t} \in \mathbf{D}_{\eta_{2}}}  \vert \mathcal{F}^{Z^{\delta}}_{t-\delta}] \\
\leqslant & \delta 32 \mathfrak{D}^{2} \mathfrak{M}_{2(\mathfrak{p}+1)}(Z^{\delta})  .
\end{align*}

We gather all the terms together and the proof of \textbf{Step 2} is completed.

\textbf{Step 3.} 
Let us show that

\begin{align*}
 \mathbb{E}[\vert \tilde{\Upsilon}_{t} \vert_{\mathbb{R}^{d \times d}}^{p} \mathbf{1}_{\Theta_{\eta_{2},\mathbf{T},t-\delta}>0}]^{\frac{1}{p}} \leqslant  \delta^{\frac{1}{2}} \mathbb{E}[ \vert\mathring{X}_{t-\delta}  \vert_{\mathbb{R}^{d \times d}}^{p}]^{\frac{1}{p}} 101 \mathfrak{D}^{2} \mathfrak{M}_{p(\mathfrak{q}^{\delta}_{\eta_{2}} \vee (2\mathfrak{p}+2))}(Z^{\delta})^{\frac{1}{p}}).
\end{align*}

First, we remark that
\begin{align*}
\vert \tilde{\Upsilon}_{t} \vert_{\mathbb{R}^{d \times d}} \leqslant \vert \Upsilon_{t} \vert_{\mathbb{R}^{d \times d}}+\vert \hat{\Upsilon}_{t} \vert_{\mathbb{R}^{d \times d}}.
\end{align*}

We have already studied the second term of the $r.h.s.$ in \textbf{Step 2} so we focus on the first one. Proceeding similarly as in \textbf{Step 2}, we have
\begin{align*}
\vert \Upsilon_t \vert_{\mathbb{R}^{d \times d}}\leqslant & \vert\mathring{X}_{t-\delta} \vert_{\mathbb{R}^{d \times d}}  \mathbf{1}_{\Theta_{\eta_{2},\mathbf{T},t-\delta}>0}(\vert (I_{d\times d}-\nabla_{x} \psi)(X^{\delta}_{t-\delta},t-\delta,\delta^{\frac{1}{2}}Z^{\delta}_{t},\delta)  \mathbf{1}_{\delta^{\frac{1}{2}}Z^{\delta}_{t} \in \mathbf{D}_{\eta_{2}}}\vert_{\mathbb{R}^{d \times d}} \\
&+\sum_{k=2}^{\infty} \vert I_{d\times d} - \nabla_{x} \psi (X^{\delta}_{t-\delta},t-\delta,\delta^{\frac{1}{2}}Z^{\delta}_{t},\delta)  \vert_{\mathbb{R}^{d \times d}}^k  \mathbf{1}_{\delta^{\frac{1}{2}}Z^{\delta}_{t} \in \mathbf{D}_{\eta_{2}}} ).
\end{align*}

Using (\ref{borne_grad_psi-Id}), it follows that
\begin{align*}
\mathbb{E}[ \vert\mathring{X}_{t-\delta}  \vert_{\mathbb{R}^{d \times d}}^{p}  \mathbf{1}_{\Theta_{\eta_{2},\mathbf{T},t-\delta}>0} &  \vert I_{d\times d} - \nabla_{x} \psi (X^{\delta}_{t-\delta},t-\delta,\delta^{\frac{1}{2}}Z^{\delta}_{t},\delta)  \vert_{\mathbb{R}^{d \times d}}^{p}  \mathbf{1}_{\delta^{\frac{1}{2}}Z^{\delta}_{t} \in \mathbf{D}_{\eta_{2}}}   ]  \\
\leqslant & \delta^{\frac{p}{2}}\mathbb{E}[ \vert\mathring{X}_{t-\delta}  \vert_{\mathbb{R}^{d \times d}}^{p}  \mathbf{1}_{\Theta_{\eta_{2},\mathbf{T},t-\delta}>0} ]  \mathfrak{D}^{p} 4^{p} 2 \mathfrak{M}_{p(\mathfrak{p}+1)}(Z^{\delta}).
\end{align*}
Moreover, since $\delta^{\frac{1}{2}} \eta_{2}^{\mathfrak{p}+1}4\mathfrak{D}< \frac{1}{2}$, on the space $\{\delta^{\frac{1}{2}}Z^{\delta}_{t} \in \mathbf{D}_{\eta_{2}} \}$, we have
\begin{align*}
\sum_{k=2}^{\infty} \vert I_{d\times d} - \nabla_{x} \psi (X^{\delta}_{t-\delta},t-\delta,\delta^{\frac{1}{2}}Z^{\delta}_{t},\delta)  \vert_{\mathbb{R}^{d \times d}}^k \leqslant \delta 32 \mathfrak{D}^{2} ( 1 \vee  \vert Z^{\delta}_{t} \vert_{\mathbb{R}^{N}}^{2(\mathfrak{p}+1)} )  
\end{align*}

and 
\begin{align*}
\mathbb{E}[ \vert\mathring{X}_{t-\delta}  \vert_{\mathbb{R}^{d \times d}}^{p}  \mathbf{1}_{\Theta_{\eta_{2},\mathbf{T},t-\delta}>0} & \vert \sum_{k=2}^{\infty} \vert I_{d\times d} - \nabla_{x} \psi (X^{\delta}_{t-\delta},t-\delta,\delta^{\frac{1}{2}}Z^{\delta}_{t},\delta)  \vert_{\mathbb{R}^{d \times d}}^k  \vert^{p}  \mathbf{1}_{\delta^{\frac{1}{2}}Z^{\delta}_{t} \in \mathbf{D}_{\eta_{2}}}  ] \\
\leqslant & \delta^{p}  \mathbb{E}[ \vert\mathring{X}_{t-\delta}  \vert_{\mathbb{R}^{d \times d}}^{p}  \mathbf{1}_{\Theta_{\eta_{2},\mathbf{T},t-\delta}>0}] 32^{p} \mathfrak{D}^{2p} 2\mathfrak{M}_{2p(\mathfrak{p}+1)}(Z^{\delta}) .
\end{align*}

Gathering all the terms concludes the proof of \textbf{Step 3}.

\textbf{Step 4.}
We are now in a position to conclude the proof. First, employing the Burkholder inequality (\ref{eq:burkholder_inequality}), we have for every $p \geqslant 2$,

\begin{align*}
\mathbb{E}[\sup_{t \in \mathbf{T}}\vert \sum_{w \in \pi^{\delta} \cap (0,t]} \tilde{\Upsilon}_{t} \vert_{\mathbb{R}^{d \times d}}^{p} ] \leqslant & \mathfrak{b}_{p} \mathbb{E}[(\sum_{t \in \mathbf{T}} \vert \tilde{\Upsilon}_{t} \vert_{\mathbb{R}^{d \times d}}^{2})^{\frac{p}{2}} ] \\
\leqslant & \mathfrak{b}_{p} (\sum_{t \in \mathbf{T}} \mathbb{E}[\vert \tilde{\Upsilon}_{t} \vert_{\mathbb{R}^{d \times d}}^{p}]^{\frac{2}{p}} )^{\frac{p}{2}}.
\end{align*}
We deduce from \textbf{Step 1,2,3} that
\begin{align*}
\mathbb{E}[\sup&_{t \in \mathbf{T} \cup \{ 0 \}}  \vert \mathring{X}_{t}  \vert_{\mathbb{R}^{d \times d}}^{p} \mathbf{1}_{\Theta_{\eta_{2},\mathbf{T},t}>0}  ]^{\frac{1}{p}} \\
\leqslant &d+ \mathbb{E}[\sup_{t \in \mathbf{T}} \vert \sum_{w \in \pi^{\delta} \cap (0,t]} \hat{\Upsilon}_w \vert_{\mathbb{R}^{d \times d}}^{p}]^{\frac{1}{p}} +\mathfrak{b}_{p} (\sum_{t \in \mathbf{T}} \mathbb{E}[\vert \tilde{\Upsilon}_{t} \vert_{\mathbb{R}^{d \times d}}^{p}]^{\frac{2}{p}} )^{\frac{1}{2}}\\
  \leqslant & d+39 \mathfrak{D}^{2} \mathfrak{M}_{\mathfrak{q}^{\delta}_{\eta_{2}} \vee (2\mathfrak{p}+2)}(Z^{\delta}) \mathbb{E}[\vert \sum_{t \in \mathbf{T}} \delta \vert\mathring{X}_{t-\delta} \vert_{\mathbb{R}^{d \times d}} \vert^{p}  \mathbf{1}_{\Theta_{\eta_{2},\mathbf{T},t-\delta}>0}]^{\frac{1}{p}} \\
  &+\mathfrak{b}_{p} 101 \mathfrak{D}^{2}\mathfrak{M}_{p(\mathfrak{q}^{\delta}_{\eta_{2}} \vee (2\mathfrak{p}+2))}(Z^{\delta})^{\frac{1}{p}}(\sum_{t \in \mathbf{T}} \delta \mathbb{E}[ \vert\mathring{X}_{t-\delta}  \vert_{\mathbb{R}^{d \times d}}^{p}  \mathbf{1}_{\Theta_{\eta_{2},\mathbf{T},t-\delta}>0}]^{\frac{2}{p}} )^{\frac{1}{2}} \\
  \leqslant & d+\mathfrak{b}_{p}140\mathfrak{D}^{2}\mathfrak{M}_{p(\mathfrak{q}^{\delta}_{\eta_{2}} \vee (2\mathfrak{p}+2))}(Z^{\delta})^{\frac{1}{p}} (\sum_{t \in \mathbf{T}} \delta \mathbb{E}[ \sup_{w \in \mathbf{T}  \cup \{ 0 \},w < t}\vert \mathring{X}_{w} \vert_{\mathbb{R}^{d \times d}}^{p}  \mathbf{1}_{\Theta_{\eta_{2},\mathbf{T},t-\delta}>0} ]^{\frac{2}{p}} )^{\frac{1}{2}}.
\end{align*}
Therefore, as a consequence of the Gronwall lemma,
\begin{align*}
\mathbb{E}[\sup_{t \in \mathbf{T}}\Vert \mathring{X}_{t}  \Vert ^{p}_{\mathbb{R}^{d}} \mathbf{1}_{\Theta_{\eta_{2},\mathbf{T},t}>0}]^{\frac{1}{p}}  \leqslant \sqrt{2}d\exp(\mathfrak{b}_{p}^{2} 140^{2} \mathfrak{D}^{4} T\mathfrak{M}_{p(\mathfrak{q}^{\delta}_{\eta_{2}} \vee (2\mathfrak{p}+2))}(Z^{\delta})^{\frac{2}{p}}),
\end{align*}
with $\mathfrak{b}_{p}$ defined in (\ref{eq:burkholder_inequality}) and the proof of (\ref{eq:borne_flot_tangent_inv}) is completed.

\end{proof}

\section{Proof of Lemma \ref{lemme:dvpt_Lie}}

\begin{proof}

\textbf{Step 1.} Let us show that for every $t \in \pi^{\delta,\ast}$,

\begin{align*}
 V(X^{\delta}_{t},t) & -  V(X^{\delta}_{t-\delta},t-\delta) = \delta^{\frac{1}{2}}\sum_{i=1}^{N}   Z^{\delta,i}_{t}  \nabla_{x} V(X^{\delta}_{t-\delta},t-\delta) V_{l}(X^{\delta}_{t-\delta},t-\delta)   \\
 &+ \delta  \nabla_{x} V(X^{\delta}_{t-\delta},t-\delta)( \tilde{V}_{0}(X^{\delta}_{t-\delta},t-\delta) +\frac{1}{2}   \sum_{i=1}^{N} V_{i}(X^{\delta}_{t-\delta},t-\delta)  ) \\
 &+\delta\partial_{t} V(X^{\delta}_{t-\delta},t-\delta)\\
 & +  \delta \frac{1}{2}\sum_{l=1}^{N}  V_{l}(X^{\delta}_{t-\delta},t-\delta)^{T}  \mbox{\textbf{H}}_{x}  V(X^{\delta}_{t-\delta},t-\delta)  V_{l}(X^{\delta}_{t-\delta}, t-\delta)   \\
&+R^{\delta,1}(X^{\delta}_{t-\delta},t-\delta,Z^{\delta}_{t}),
\end{align*}
with for every $(x,t,z) \in \mathbb{R}^{d}\times \pi^{\delta} \times \mathbb{R}^{N}$,
\begin{align*}
R^{\delta,1}(x,t,z) = & R^{\delta,1,3}(x,t,z) + \nabla_{x}V(x,t) R^{\delta,1,2}(x,t,z) \\
&+ \frac{1}{2}  \delta \sum_{i,l=1}^{N}(z^{i} z^{l} - \mathbf{1}_{i=l})V_{i}(x,t)^{T}  \mbox{\textbf{H}}_{x}  V(x,t)  V_{l}(x,t) \\
 & +2  \delta^{\frac{1}{2}}  \sum_{l=1}^{N}   z^{l}  V_{l}(x,t)  ^{T}\mbox{\textbf{H}}_{x} V(x,t)R^{\delta,1,1}(x,t,z)    \\
   & + R^{\delta,1,1}(x,t,z)^{T}\mbox{\textbf{H}}_{x} V(x,t) R^{\delta,1,1}(x,t,z)  
\end{align*}

where

\begin{align*}
R^{\delta,1,1}(x,t,z)= & \delta \int_{0}^{1}  \partial_{y} \psi(x,t,\delta^{\frac{1}{2}}z,\lambda \delta) \mbox{d} \lambda +  \delta  \sum_{i,l=1}^{N}  z^{i} z^{l} \int_{0}^{1} (1-\lambda) \partial_{z^{i}} \partial_{z^{l}} \psi(x, t,\lambda \delta^{\frac{1}{2}}z,0) \mbox{d} \lambda ,
\end{align*}

\begin{align*}
R^{\delta,1,2}(x,t,z)=& \delta \frac{1}{2} \sum_{i,l=1}^{N} (z^{i} z^{l}-\mathbf{1}_{i=l})  \partial_{z^{i}} \partial_{z^{l}} \psi(x,t,0,0)  + \delta^{2} \int_{0}^{1} (1-\lambda) \partial_{y}^{2} \psi(x,t,\delta^{\frac{1}{2}}z,\lambda \delta) \mbox{d} \lambda \\
&+  \delta^{\frac{3}{2}} \sum_{l=1}^{N}   z^{l} \int_{0}^{1}  \partial_{z^{l}} \partial_{y} \psi(x,t,\lambda \delta^{\frac{1}{2}}z,0) \mbox{d} \lambda \\
&+\delta^{\frac{3}{2}} \frac{1}{2} \sum_{i,j,l=1}^{N} z^{i} z^{l} z^{l} \int_{0}^{1} (1-\lambda)^{2} \partial_{z^{i}} \partial_{z^{j}} \partial_{z^{l}} \psi(x,t,\lambda \delta^{\frac{1}{2}}z,0) \mbox{d} \lambda,
\end{align*}

and
\begin{align*}
R^{\delta,1,3}&(x,t,z)= \delta^{2} \int_{0}^{1} \partial_{t}^{2}V(x,t+\lambda \delta) \mbox{d}\lambda, \\
&+ \sum_{i=1}^{d}   \int_{0}^{1} \partial_{x^{i}} \mathcal{T} V(x+ \lambda R^{\delta,1,0}(x,t,z),t) \mbox{d} \lambda R^{\delta,1,0}(x,t,z)_{i}  \\
&+\frac{1}{2} \sum_{i,j,k=1}^{d} R^{\delta,1,0}(x,t,z,y)_{i \otimes j \otimes k}  \int_{0}^{1} (1-\lambda)^{2} \partial_{x^{i}}\partial_{x^{j}} \partial_{x_{k}} V(x+ \lambda 
R^{\delta,1,0}(x,t,z),t) \mbox{d} \lambda
\end{align*}
with

\begin{align*}
R^{\delta,1,0}(x,t,z)= & \delta \int_{0}^{1}  \partial_{y} \psi(x,t,z,\lambda \delta) \mbox{d} \lambda +  \delta^{\frac{1}{2}}  \sum_{i=1}^{N}  z^{i}  \int_{0}^{1} (1-\lambda) \partial_{z^{i}} \psi(x,t, \lambda z,0) \mbox{d} \lambda ,
\end{align*}
and
\begin{align*}
\mathcal{T}V(x,t) :=   \delta \int_{0}^{1} \partial_{t}V(x,t+\lambda \delta) \mbox{d}\lambda=\delta \partial_{t}V(x,t) +\delta^{2} \int_{0}^{1} \partial_{t}^{2}V(x,t+\lambda \delta) \mbox{d}\lambda.
\end{align*}

We begin by noticing that, using the Taylor expansion of $\psi$ with respect to its third and fourth variables, we have

\begin{align*}
 \psi(X^{\delta}_{t-\delta},t-\delta,  \delta^{\frac{1}{2}}Z^{\delta}_t,\delta)=&  X^{\delta}_{t-\delta}+  R^{\delta,1,0}(X^{\delta}_{t-\delta},t-\delta,Z^{\delta}_{t}) \\
 =& X^{\delta}_{t-\delta}+ \delta^{\frac{1}{2}}  \sum_{l=1}^{N}   Z^{\delta,l}_{t}  V_{l}(X^{\delta}_{t-\delta},t-\delta) + R^{\delta,1,1}(X^{\delta}_{t-\delta},t-\delta,Z^{\delta}_{t}),\\
 =& X^{\delta}_{t-\delta}+ \delta^{\frac{1}{2}}  \sum_{l=1}^{N}   Z^{\delta,l}_{t}  V_{l}(X^{\delta}_{t-\delta},t-\delta) + \delta  \tilde{V}_{0}(X^{\delta}_{t-\delta},t-\delta)  \\
&+\delta \frac{1}{2} \sum_{i=1}^{N} \partial_{z^{i}}^{2}\psi(X^{\delta}_{t-\delta},t-\delta, 0,0) + R^{\delta,1,2}(X^{\delta}_{t-\delta},Z^{\delta}_{t}).
\end{align*}

Now, using again the Taylor expansion on the function $V$ $w.r.t.$ its second variable,
\begin{align*}
 V(X^{\delta}_{t},t)  -  V(X^{\delta}_{t-\delta},t-\delta) =&  \mathcal{T}V(X^{\delta}_{t-\delta},t-\delta) \\
 &+ (\mathcal{T}V+V)(X^{\delta}_{t},t-\delta)  -  (\mathcal{T}V+V)(X^{\delta}_{t-\delta},t-\delta) .
\end{align*} 

The Taylor expansion on the function $\mathcal{T}V$ $w.r.t$ its first variable yields
\begin{align*}
 \mathcal{T}V(X^{\delta}_{t},t-\delta)  =& \mathcal{T}V(X^{\delta}_{t-\delta},t-\delta)  \\
 &+   \sum_{i=1}^{d}  R^{\delta,1,0}(X^{\delta}_{t},t-\delta,z)_{i}  \int_{0}^{1} \partial_{x^{i}} \mathcal{T} V(X^{\delta}_{t}+ \lambda R^{\delta,1,0}(X^{\delta}_{t},t-\delta,z),t) \mbox{d} \lambda.
\end{align*} 
Finally, from the the Taylor expansion on the function $V$ $w.r.t.$ its first variable, we have also

\begin{align*}
 V(X^{\delta}_{t},t-\delta)  =& V(X^{\delta}_{t-\delta},t-\delta)  \\
  & + \nabla_{x} V(X^{\delta}_{t-\delta},t-\delta) (X^{\delta}_{t} - X^{\delta}_{t-\delta}) \\
 & + \frac{1}{2} (X^{\delta}_{t} - X^{\delta}_{t-\delta}) ^{T}\mbox{\textbf{H}}_{x} V(X^{\delta}_{t-\delta},t-\delta) (X^{\delta}_{t} - X^{\delta}_{t-\delta})    \\
&+\frac{1}{2} \sum_{i,j,k=1}^{d} R^{\delta,1,0}(X^{\delta}_{t-\delta},t-\delta,Z^{\delta}_t)_{i \otimes j \otimes k} \\
& \quad \times \int_{0}^{1} (1-\lambda)^{2} \partial_{x^{i}}\partial_{x^{j}} \partial_{x_{k}} V(X^{\delta}_{t-\delta}+ \lambda 
R^{\delta,1,0}(X^{\delta}_{t-\delta},t-\delta,Z^{\delta}_t,\delta)) \mbox{d} \lambda,
\end{align*}

and gathering the terms completes the proof of \textbf{Step 1}.

\textbf{Step 2.} Let us show that for every $t \in \pi^{\delta,\ast}$, on the set $\{\delta^{\frac{1}{2}}Z^{\delta}_{t} \in \mathbf{D}_{\eta_{2}}\}$ (with  $\mathbf{D}_{\eta_{2}}=\{z\in \mathbb{R}^N, \vert z^{i} \vert \leqslant \delta^{\frac{1}{2}} \eta_{2}, i \in \mathbf{N} \}$ introduced in the proof of Theorem \ref{th:borne_Lp_inv_cov_Mal}), we have

\begin{align*}
  \nabla_{x} \psi^{-1}(X^{\delta}_{t-\delta},t-\delta,&\delta^{\frac{1}{2}}Z^{\delta}_t,\delta)=I_{d \times d}  - \delta^{\frac{1}{2}}  \sum_{i=1}^{N}   Z^{\delta,i}_{t} \nabla_{x} V_{l}(X^{\delta}_{t-\delta},t-\delta)  \\
& - \delta \left( \nabla_{x} \tilde{V}_{0}(X^{\delta}_{t-\delta},t-\delta) - \sum_{i=1}^{N} \nabla_{x} V_{i}(X^{\delta}_{t-\delta},t-\delta)^{2} \right)\\
&-\delta \frac{1}{2}  \sum_{i,=1}^{N} \nabla_{x} \partial_{z^{i}}^{2} \psi(X^{\delta}_{t-\delta},t-\delta,0,0) +\mathcal{R}^{\delta,2}(X^{\delta}_{t-\delta},t-\delta,Z^{\delta}_{t}) ,
\end{align*}
with, for every $(x,t,z) \in \mathbb{R}^{d}\times \pi^{\delta} \times \mathbb{R}^{N}$,

\begin{align*}
\mathcal{R}^{\delta,2}(&x,t,z) = \mathcal{R}^{\delta,2,3}(x,t,z)-\mathcal{R}^{\delta,2,2}(x,t,z)  + \delta \sum_{i,l=1}^{N}(z^{i} z^{l} - \mathbf{1}_{i=l}) \nabla_{x} V_{i}(x,t) \nabla_{x} V_{l}(x,t)  \\
&- \delta^{\frac{1}{2}}  \sum_{l=1}^{N}   z^{l}( \nabla_{x} V_{l}(x,t)R^{\delta,2,1}(x,t,z)  + R^{\delta,2,1}(x,t,z)  \nabla_{x} V_{l}(x,t) ) -\mathcal{R}^{\delta,2,1}(x,t,z)^{2}\\
\end{align*}

where
\begin{align*}
\mathcal{R}^{\delta,2,1}(x,t,z)=&  \delta \int_{0}^{1} \nabla_{x}  \partial_{y} \psi(x,t,\delta^{\frac{1}{2}}z,\lambda \delta) \mbox{d} \lambda \\
&+\delta  \sum_{i,l=1}^{N} z^{i} z^{l} \int_{0}^{1} (1-\lambda) \nabla_{x} \partial_{z^{i}} \partial_{z^{l}}  \psi(x,t, \lambda \delta^{\frac{1}{2}} z,0) \mbox{d} \lambda 
\end{align*}

and
\begin{align*}
\mathcal{R}^{\delta,2,2}(x,t,z)=&  \delta^{2} \int_{0}^{1} (1-\lambda) \nabla_{x}  \partial_{y}^{2} \psi(x,t,\delta^{\frac{1}{2}}z,\lambda \delta) \mbox{d} \lambda \\
&+\delta \frac{1}{2}  \sum_{i,l=1}^{N} (z^{i} z^{l}- \mathbf{1}_{i=l})  \nabla_{x} \partial_{z^{i}} \partial_{z^{l}}  \psi(x,t,0,0)\\
&+\delta^{\frac{3}{2}} \frac{1}{2} \sum_{i,j,l=1}^{N} (z^{i} z^{j} z^{l} \int_{0}^{1} (1-\lambda)^{2} \nabla_{x} \partial_{z^{i}} \partial_{z^{j}} \partial_{z^{l}}  \psi(x,t, \lambda \delta^{\frac{1}{2}} z,0) \mbox{d} \lambda \\
&+  \delta^{\frac{3}{2}} \sum_{l=1}^{N}   z^{l} \int_{0}^{1} \nabla_{x}  \partial_{z^{l}} \partial_{y}  \psi(x,t, \lambda \delta^{\frac{1}{2}} z,0) \mbox{d} \lambda 
\end{align*}

%
and
\begin{align*}\mathcal{R}^{\delta,2,3}(x,t,z)=(\nabla_{x} \psi^{-1} - I_{d\times d}-(I_{d\times d}-\nabla_{x} \psi )-(I_{d\times d}-\nabla_{x} \psi)^{2})(x,t,\delta^{\frac{1}{2}}z,\delta)
\end{align*}
where for a matrix $M \in \mathbb{R}^{d \times d}$, $M^{2}=MM$. The proof simply boils down to notice that we have both


\begin{align*}
 \nabla_{x} \psi(X^{\delta}_{t-\delta},t-\delta, & \delta^{\frac{1}{2}}Z^{\delta}_t,\delta)= I_{d \times d}+ \delta^{\frac{1}{2}}  \sum_{l=1}^{N}   Z^{\delta,l}_{t} \nabla_{x} V_{l}(X^{\delta}_{t-\delta},t-\delta)  + \mathcal{R}^{\delta,2,1}(X^{\delta}_{t-\delta},t-\delta,Z^{\delta}_{t}) \\
\end{align*}
and
\begin{align*}
 \nabla_{x} \psi(X^{\delta}_{t-\delta},t-\delta, & \delta^{\frac{1}{2}}Z^{\delta}_t,\delta)= I_{d \times d}+ \delta^{\frac{1}{2}}  \sum_{l=1}^{N}   Z^{\delta,l}_{t} \nabla_{x} V_{l}(X^{\delta}_{t-\delta},t-\delta)  +  \delta \nabla_{x} \tilde{V}_{0}(X^{\delta}_{t-\delta},t-\delta)  \\
 &+\delta \frac{1}{2}  \sum_{i=1}^{N} \nabla_{x} \partial_{z^{i}}^{2}  \psi(X^{\delta}_{t-\delta},t-\delta,0,0)+ \mathcal{R}^{\delta,2,2}(X^{\delta}_{t-\delta},t-\delta,Z^{\delta}_{t}) .
\end{align*}

We gather all the terms together and the proof of \textbf{Step 2} is completed.

\textbf{Step 3.}
 Let us show that for every $t \in \pi^{\delta,\ast}$,  on the set $\{\Theta_{\eta_{2},\mathbf{T},t}>0\}$, we have
 \begin{align*}
\mathring{X}_{t}V(X^{\delta}_{t},t)  =&  \mathring{X}_{t-\delta}V(X^{\delta}_{t-\delta},t-\delta)  \\
 &+ \delta^{\frac{1}{2}}\sum_{i=1}^N Z^{\delta,i}_{t} \mathring{X}_{t-\delta} V^{[i]}(X^{\delta}_{t-\delta},t-\delta) +\delta \mathring{X}_{t-\delta} V^{[0]}(X^{\delta}_{t-\delta},t-\delta) )\\
&+\mathring{X}_{t-\delta} R^{\delta}V(X^{\delta}_{t-\delta},t-\delta,Z^{\delta}_{t})
\end{align*}

with, for every $(x,t,z) \in \mathbb{R}^{d}\times \pi^{\delta} \times \mathbb{R}^{N}$,
\begin{align*}
R^{\delta}V(x,t,z)=R^{\delta,1}(x,t,z) +R^{\delta,2}(x,t,z) +R^{\delta,3}(x,t,z),
\end{align*}
with $R^{\delta,2}(x,t,z) =\mathcal{R}^{\delta,2}(x,t,z) V(x,t)$ and 
\begin{align*}
R^{\delta,3}&(x,t,z)=
- \delta  \sum_{i,l=1}^{N}  (z^{i} z^{l}-\mathbf{1}_{i=l}) \nabla_{x} V_{i}(x,t) \nabla_{x} V(x,t) V_{l}(x,t) \\
+&(- \delta ( \nabla_{x} \tilde{V}_{0}(x,t) - \sum_{l=1}^{N} (\nabla_{x} V_{l}(x,t))^{2} )+\mathcal{R}^{\delta,2}(x,t,z) ) \\
&(\delta^{\frac{1}{2}}\sum_{i=1}^{N}   z^{i}  \nabla_{x} V(x,t) V_{l}(x,t)   + \delta  \nabla_{x} V(x,t) \tilde{V}_{0}(x,t) +\delta\partial_{t} V(x,t)  \\
& +  \delta \frac{1}{2}\sum_{i=1}^{N}  V_{i}(x,t)^{T}  \mbox{\textbf{H}}_{x}  V(x,t)  V_{i}(x,t)  +R^{\delta,1}(x,t,z)) \\
&- (\delta^{\frac{1}{2}}  \sum_{i=1}^{N}  z^{i} \nabla_{x} V_{i}(x,t))   \\
& \times ( \delta  \nabla_{x} V(x,t) \tilde{V}_{0}(x,t)    +\delta\partial_{t} V(x,t,t)+  \delta \frac{1}{2}\sum_{i=1}^{N}  V_{i}(x,t)^{T}  \mbox{\textbf{H}}_{x}  V(x,t)  V_{i}(x,t)  +R^{\delta,1}(x,t,z)).
\end{align*}
First, we write
\begin{align*}
\mathring{X}_{t}  & V(X^{\delta}_{t},t) - \mathring{X}_{t-\delta}  V(X^{\delta}_{t-\delta},t-\delta)   \\
= &  \mathring{X}_{t-\delta}  \nabla_{x} \psi^{-1}(X^{\delta}_{t-\delta},t-\delta,\delta^{\frac{1}{2}}Z^{\delta}_t,\delta) \left(  V(X^{\delta}_{t},t)  -  V(X^{\delta}_{t-\delta},t-\delta)  \right) \\
& + \mathring{X}_{t-\delta}  \left(  \nabla_{x} \psi(X^{\delta}_{t-\delta},t-\delta,\delta^{\frac{1}{2}}Z^{\delta}_t,\delta)^{-1} -   I_{d \times d} \right)   V(X^{\delta}_{t-\delta},t-\delta) ,
\end{align*}

Using \textbf{Step 1} and \textbf{Step 2},
\begin{align*}
\nabla_{x} & \psi^{-1}(X^{\delta}_{t-\delta},t-\delta,Z^{\delta}_t,\delta) (  V(X^{\delta}_{t},t)  -  V(X^{\delta}_{t-\delta},t-\delta) ) \\
=&\delta^{\frac{1}{2}}\sum_{l=1}^{N}   Z^{\delta,l}_{t}  \nabla_{x} V(X^{\delta}_{t-\delta},t-\delta) V_{l}(X^{\delta}_{t-\delta},t-\delta)   \\
 &+ \delta  \nabla_{x} V(X^{\delta}_{t-\delta},t-\delta) \tilde{V}_{0}(X^{\delta}_{t-\delta},t-\delta)   \\
 &+\delta\partial_{t} V(X^{\delta}_{t-\delta},t-\delta)\\
& +  \delta \frac{1}{2}\sum_{l=1}^{N}  V_{l}(X^{\delta}_{t-\delta},t-\delta)^T  \mbox{\textbf{H}}_{x}  V(X^{\delta}_{t-\delta},t-\delta)  V_{l}(X^{\delta}_{t-\delta},t-\delta)  \\
&- \delta \sum_{l=1}^{N}  \nabla_{x} V_{l}(X^{\delta}_{t-\delta},t-\delta) \nabla_{x} V(X^{\delta}_{t-\delta},t-\delta) V_{l}(X^{\delta}_{t-\delta},t-\delta) \\
& +\delta \frac{1}{2} \nabla_{x} V(X^{\delta}_{t-\delta},t-\delta)  \sum_{i=1}^{N} \partial_{z^{i}}^{2}\psi(X^{\delta}_{t-\delta},t-\delta, 0,0)    \\
&+R^{\delta,3}(X^{\delta}_{t-\delta},t-\delta,Z^{\delta}_{t}) +R^{\delta,1}(X^{\delta}_{t-\delta},t-\delta,Z^{\delta}_{t}) .
\end{align*}

The study of the other term was done in \textbf{Step 2} and the proof of \textbf{Step 3} is completed.

\textbf{Step 4.}
Let us prove (\ref{eq:borne_reste_moy_dvpt_Lie}) and (\ref{eq:borne_reste_mart_dvpt_Lie}). In the sequel, for $i \in \{1,2,3\}$, $t \in \pi^{\delta,\ast}$, we introduce the functions defined for every $x \in \mathbb{R}^{d}$ by $\overline{R}_{t}^{i}(x)= \mathbb{E}[ R^{\delta,i}(x,t-\delta,Z^{\delta}_{t}) \mathbf{1}_{\delta^{\frac{1}{2}}Z^{\delta}_{t} \in \mathbf{D}_{\eta_{2}}}]$ and for $i \in \{1,2\},j\in \{1,2,3\}$, $\overline{R}_{t}^{i,j}(x)=\mathbb{E}[ R^{\delta,i,j}(x,t-\delta,Z^{\delta}_{t}) \mathbf{1}_{\delta^{\frac{1}{2}}Z^{\delta}_{t} \in \mathbf{D}_{\eta_{2}}}]$ (with the notation $R^{\delta,2,j}=\mathcal{R}^{\delta,2,j}V$). In particular, since $\{\Theta_{\eta_{2},\mathbf{T},t}>0\}=\{\Theta_{\eta_{2},\mathbf{T},t-\delta}>0\} \cap \{ \delta^{\frac{1}{2}}Z^{\delta}_{t} \in \mathbf{D}_{\eta_{2}} \}$, then $\overline{\mathbf{R}}^{\delta}V(x,t-\delta) = \sum_{i=1}^{5}\overline{R}_{t}^{i}(x)= \mathbb{E}[ R^{\delta}(x,t-\delta,Z^{\delta}_{t}) \mathbf{1}_{\delta^{\frac{1}{2}}Z^{\delta}_{t} \in \mathbf{D}_{\eta_{2}}}]+\overline{R}_{t}^{4}(x)+\overline{R}_{t}^{5}(x)$ with

\begin{align*}
\overline{R}_{t}^{4}(x)=& - \delta^{\frac{1}{2}}\sum_{i=1}^N V^{[i]}(x,t-\delta)  \mathbb{E}[ Z^{\delta,i}_{t} \mathbf{1}_{\delta^{\frac{1}{2}}Z^{\delta}_{t} \notin \mathbf{D}_{\eta_{2}}} ] ,\\
\overline{R}_{t}^{5}(x)=& -\delta V^{[0]}(x,t-\delta) \mathbb{P}( \delta^{\frac{1}{2}}Z^{\delta}_{t} \notin \mathbf{D}_{\eta_{2}} ). 
\end{align*}


We first study $\partial^{\alpha^{x}}_{x}  \overline{R}_{t}^{1} $ for $\alpha^{x} \in \mathbb{N}^{d}$. 

We observe that, for every $t \in \pi^{\delta,\ast}$,

\begin{align*}
\sum_{i,l=1}^{N} (Z^{\delta,i}_{t} Z^{\delta,l}_{t}-\mathbf{1}_{i=l}) &  \partial_{z^{i}} \partial_{z^{l}} \psi(x,t-\delta,0,0)\mathbf{1}_{\delta^{\frac{1}{2}}Z^{\delta}_{t} \in \mathbf{D}_{\eta_{2}}} \\
 =&\sum_{i,l=1}^{N} (Z^{\delta,i}_{t} Z^{\delta,l}_{t}\mathbf{1}_{\delta^{\frac{1}{2}}Z^{\delta}_{t} \in \mathbf{D}_{\eta_{2}}}-\mathbb{E}[Z^{\delta,i}_{t} Z^{\delta,l}_{t} \mathbf{1}_{\delta^{\frac{1}{2}}Z^{\delta}_{t} \in \mathbf{D}_{\eta_{2}}}])  \partial_{z^{i}} \partial_{z^{l}} \psi(x,t,0,0)  \\
 & - \sum_{i,l=1}^{N} \mathbb{E}[Z^{\delta,i}_{t} Z^{\delta,l}_{t} \mathbf{1}_{\delta^{\frac{1}{2}}Z^{\delta}_{t} \notin \mathbf{D}_{\eta_{2}}}]  \partial_{z^{i}} \partial_{z^{l}} \psi(x,t,0,0) ,
\end{align*}

with $\vert \sum_{i,l=1}^{N} \mathbb{E}[Z^{\delta,i}_{t} Z^{\delta,l}_{t} \mathbf{1}_{\delta^{\frac{1}{2}}Z^{\delta}_{t} \notin \mathbf{D}_{\eta_{2}}}] \vert \leqslant \eta_{2}^{-q} \mathbb{E}[ \vert Z^{\delta}_{t} \vert_{\mathbb{R}^{N}}^{2+q} ] $, for every $q >0$. In paritcular we take $q= \lceil- \frac{3 \ln(\delta)}{2 \ln(\eta_{2})} \rceil$ (recall that we have necessarily $\frac{3 \ln(\delta)}{2 \ln(\eta_{2})}<0$). Using standard calculus together with hypothesis $\mathbf{A}_{1}(\vert \alpha^{x} \vert+3)$ (see (\ref{eq:hyp_1_Norme_adhoc_fonction_schema})) and $\mathbf{A}_{3}^{\delta}(\max(\mathfrak{p}_{\vert \alpha^{x} \vert+3}+3,\lceil- \frac{3 \ln(\delta)}{2 \ln(\eta_{2})} \rceil +2))$ (see (\ref{eq:hyp:moment_borne_Z})),  we obtain, for every $x \in \mathbb{R}^{d}$,

9
\begin{align*}
\vert \partial^{\alpha^{x}}_{x}  \overline{R}^{1,2}_{t}(x) \vert_{\mathbb{R}^{d}} \leqslant &  \delta^{\frac{3}{2}} C \mathfrak{M}_{\max(\mathfrak{p}_{\vert \alpha^{x} \vert+3}+3,\lceil- \frac{3 \ln(\delta)}{2 \ln(\eta_{2})} \rceil +2)}(Z^{\delta}) \mathfrak{D}_{\vert \alpha^{x} \vert+3}(1+ \vert x \vert_{\mathbb{R}^{d}}^{\mathfrak{p}_{\vert \alpha^{x} \vert + 3}})
\end{align*}

By similar arguments, it follows from $\mathbf{A}_{1}(\vert \alpha^{x} \vert+2)$ (see (\ref{eq:hyp_1_Norme_adhoc_fonction_schema})), that
\begin{align*}
\vert \partial^{\alpha^{x}}_{x}  \overline{R}^{1,3}_{t}(x) \vert_{\mathbb{R}^{d}} \leqslant &  \delta^{\frac{3}{2}} C(\vert \alpha^{x} \vert) \mathfrak{M}_{\max(\mathfrak{p}_{\vert \alpha^{x} \vert+2}(\vert \alpha^{x} \vert  +3),\lceil- \frac{3 \ln(\delta)}{2 \ln(\eta_{2})} \rceil +2)}(Z^{\delta}) \\
& \times  \mathfrak{D}_{\vert \alpha^{x} \vert+2}^{\vert \alpha^{x} \vert+3}  \mathfrak{D}_{V,\vert \alpha^{x} \vert+3}(1+ \vert x \vert_{\mathbb{R}}^{\mathfrak{p}_{\vert \alpha^{x} \vert + 2} (\vert \alpha^{x} \vert +3)  +\mathfrak{p}_{V,\vert \alpha^{x} \vert + 3}}).
\end{align*}

At this point, we remark that
\begin{align*}
\overline{R}_{t}^{1}   =&  \overline{R}_{t}^{1,3}  + \nabla_{x}V \overline{R}_{t}^{1,2}  + \overline{R}_{t}^{1,4} ,
 \end{align*}
 with, for every $x \in \mathbb{R}^{d}$ and $t \in \pi^{\delta,\ast}$,
 \begin{align*}
 \overline{R}_{t}^{1,4} (x)=& \mathbb{E}[ 2  \delta^{\frac{1}{2}}  \sum_{l=1}^{N}   Z^{\delta,l}_{t}  V_{l}(x,t-\delta)^{T}\mbox{\textbf{H}}_{x} V(x,t-\delta)R^{\delta,1,1}(x,t-\delta,Z^{\delta}_{t}) \mathbf{1}_{\delta^{\frac{1}{2}}Z^{\delta}_{t} \in \mathbf{D}_{\eta_{2}}}]  \\
&+  \mathbb{E}[ R^{\delta,1,1}(x,t-\delta,Z^{\delta}_{t})^{T}\mbox{\textbf{H}}_{x} V(x,t-\delta)R^{\delta,1,1}(x,t-\delta,Z^{\delta}_{t}) \mathbf{1}_{\delta^{\frac{1}{2}}Z^{\delta}_{t} \in \mathbf{D}_{\eta_{2}}}] ,
 \end{align*}
which satisfies, using hypothesis $\mathbf{A}_{1}(\vert \alpha^{x} \vert+2)$ (see (\ref{eq:hyp_1_Norme_adhoc_fonction_schema})) and $\mathbf{A}_{3}^{\delta}(2 \mathfrak{p}_{\vert \alpha^{x} \vert+2} + 4)$ (see (\ref{eq:hyp:moment_borne_Z})), 
\begin{align*}
\vert \partial^{\alpha^{x}}_{x} \overline{R}_{t}^{1,4}(x) \vert_{\mathbb{R}^{d}} \leqslant &  \delta^{\frac{3}{2}} C(\vert \alpha^{x} \vert) \mathfrak{M}_{2 \mathfrak{p}_{\vert \alpha^{x} \vert+2} + 4}(Z^{\delta}) \\
& \times  \mathfrak{D}_{\vert \alpha^{x} \vert+2}^{2}  \mathfrak{D}_{V,\vert \alpha^{x} \vert+2}(1+ \vert x \vert_{\mathbb{R}}^{2 \mathfrak{p}_{\vert \alpha^{x} \vert + 2}  +\mathfrak{p}_{V,\vert \alpha^{x} \vert + 2}}).
\end{align*}
We conclude that, under the assumptions $\mathbf{A}_{1}(\vert \alpha^{x} \vert+3)$ (see (\ref{eq:hyp_1_Norme_adhoc_fonction_schema})) and $\mathbf{A}_{3}^{\delta}(\max(\mathfrak{p}_{\vert \alpha^{x} \vert+3}+3,\lceil- \frac{3 \ln(\delta)}{2 \ln(\eta_{2})} \rceil +2))$ (see (\ref{eq:hyp:moment_borne_Z})), then, for every $x \in \mathbb{R}^{d}$, 
\begin{align*}
\vert \partial^{\alpha^{x}}_{x}  \overline{R}^{1}_{t}(x) \vert_{\mathbb{R}^{d}} \leqslant &  \delta^{\frac{3}{2}} C(\vert \alpha^{x} \vert) \mathfrak{M}_{\max(\mathfrak{p}_{\vert \alpha^{x} \vert+3}(\vert \alpha^{x} \vert  +3)+4,\lceil- \frac{3 \ln(\delta)}{2 \ln(\eta_{2})} \rceil +2)}(Z^{\delta}) \\
& \times  \mathfrak{D}_{\vert \alpha^{x} \vert+3}^{\vert \alpha^{x} \vert+3}  \mathfrak{D}_{V,\vert \alpha^{x} \vert+3} (1+ \vert x \vert_{\mathbb{R}^{d}}^{\mathfrak{p}_{\vert \alpha^{x} \vert + 3} (\vert \alpha^{x} \vert +3)  + \mathfrak{p}_{V,\vert \alpha^{x} \vert + 3}}) .
\end{align*}

Now, we focus on the study of $\overline{R}_{t}^{2} $.

Using similar arguments as in the study of $\partial^{\alpha^{x}}_{x}  \overline{R}^{1,2}_{t}$, under the assumptions $\mathbf{A}_{1}(\vert \alpha^{x} \vert+4)$ (see (\ref{eq:hyp_1_Norme_adhoc_fonction_schema})) and $\mathbf{A}_{3}^{\delta}(\max(\mathfrak{p}_{\vert \alpha^{x} \vert+4}+3,\lceil- \frac{3 \ln(\delta)}{2 \ln(\eta_{2})} \rceil +2))$ (see (\ref{eq:hyp:moment_borne_Z})), then, for every $x \in \mathbb{R}^{d}$,
\begin{align*}
\vert \partial^{\alpha^{x}}_{x}  \overline{R}^{2,2}_{t}(x) \vert_{\mathbb{R}^{d}} \leqslant &  \delta^{\frac{3}{2}} C(d,\vert \alpha^{x} \vert) \mathfrak{M}_{\max(\mathfrak{p}_{\vert \alpha^{x} \vert+4}+3,\lceil- \frac{3 \ln(\delta)}{2 \ln(\eta_{2})} \rceil +2)}(Z^{\delta}) \mathfrak{D}_{\vert \alpha^{x} \vert+4}  \mathfrak{D}_{V,\vert \alpha^{x} \vert} \\
& \times (1+ \vert x \vert_{\mathbb{R}}^{\mathfrak{p}_{\vert \alpha^{x} \vert + 4}+ \mathfrak{p}_{V,\vert \alpha^{x} \vert}})
\end{align*}

We then bound the derivatives of $\overline{R}_{t}^{2,3}$. For every $x \in \mathbb{R}^{d}$,

\begin{align*}\mathcal{R}^{\delta,2,3}(x,t,z)V(x,t-\delta)=&(\nabla_{x} \psi^{-1} - I_{d\times d}-(I_{d\times d}-\nabla_{x} \psi )-(I_{d\times d}-\nabla_{x} \psi)^{2})(x,t,\delta^{\frac{1}{2}}z,\delta)\\
= & \sum_{k=3}^{\infty} ( I_{d\times d} - \nabla_{x} \psi(x,t-\delta,\delta^{\frac{1}{2}}Z^{\delta}_{t},\delta))^k,
\end{align*}
where for a matrix $M \in \mathbb{R}^{d \times d}$, $M^{k+1}=MM^{k}$, $k \in \mathbb{N}$.  If $\vert \alpha^{x} \vert =1$, then 

\begin{align*}
\partial^{\alpha^{x}}_{x}&  \overline{R}^{2,3}_{t}(x)  \\
= & -\mathbb{E}[ \sum_{k=3}^{\infty} \sum_{l=1}^{k}( ( I_{d\times d} - \nabla_{x} \psi)^{l-1}  \partial^{\alpha^{x}}_{x} \nabla_{x} \psi (I_{d\times d} -\nabla_{x} \psi)^{k-l})(x,t-\delta,\delta^{\frac{1}{2}}Z^{\delta}_{t},\delta)  ]  V (x,t-\delta) \\
&+\mathbb{E}[ \sum_{k=3}^{\infty} ( I_{d\times d} - \nabla_{x} \psi(x,t-\delta,\delta^{\frac{1}{2}}Z^{\delta}_{t},\delta))^{k} ] \partial^{\alpha^{x}}_{x} V(x,t-\delta).
\end{align*}
We consider now $\alpha^{x} \in \mathbb{N}^{d}$, with $\vert \alpha^{x} \vert \in \mathbb{N}^{\ast}$. Iterating the formula above and observing that we have also

\begin{align*}
\vert \partial^{\alpha^{x}}_{x}  \nabla_{x} \psi(x,t-\delta, & \delta^{\frac{1}{2}}Z^{\delta}_t,\delta) \vert_{\mathbb{R}^{d \times d}}= \delta^{\frac{1}{2}}  \vert \sum_{l=1}^{N}   Z^{\delta,l}_{t} \partial^{\alpha^{x}}_{x}  \nabla_{x} V_{l}(x,t-\delta)  + \partial^{\alpha^{x}}_{x} \mathcal{R}^{\delta,2,1}(x,t-\delta,Z^{\delta}_{t}) \vert_{\mathbb{R}^{d \times d}}\\
\leqslant &  \delta^{\frac{1}{2}}  \mathfrak{D}_{\vert \alpha^{x} \vert+2}(1+\vert x \vert_{\mathbb{R}^{d}}^{\mathfrak{p}_{\vert \alpha^{x} \vert+2}}+\vert Z^{\delta}_{t} \vert_{\mathbb{R}^{N}} ^{\mathfrak{p}_{\vert \alpha^{x} \vert+2}})  \vert Z^{\delta}_{t} \vert_{\mathbb{R}^{N}}  \\
& + \delta   \mathfrak{D}_{\vert \alpha^{x} \vert+3}(1+\vert x \vert_{\mathbb{R}^{d}}^{\mathfrak{p}_{\vert \alpha^{x} \vert+3}}+\vert Z^{\delta}_{t} \vert_{\mathbb{R}^{N}} ^{\mathfrak{p}_{\vert \alpha^{x} \vert)+3}}) (1+\vert Z^{\delta}_{t} \vert_{\mathbb{R}^{N}}^{2})\\
\leqslant & 2 \delta^{\frac{1}{2}}  \mathfrak{D}_{\vert \alpha^{x} \vert+3}(1+\vert x \vert_{\mathbb{R}^{d}}^{\mathfrak{p}_{\vert \alpha^{x} \vert+3}}+\vert Z^{\delta}_{t} \vert_{\mathbb{R}^{N}} ^{\mathfrak{p}_{\vert \alpha^{x} \vert+3}}) (1+\vert Z^{\delta}_{t} \vert_{\mathbb{R}^{N}}^{2}).
\end{align*}
Therefore,
\begin{align*}
\vert \partial^{\alpha^{x}}_{x}  \overline{R}^{2,3}_{t}(x) \vert_{\mathbb{R}^{d}} \leqslant & C(d,\vert \alpha^{x} \vert)\mathbb{E}[\mathfrak{D}_{\vert \alpha^{x} \vert + 3}^{\vert \alpha^{x} \vert}(1+\vert x \vert_{\mathbb{R}^{d}}^{\mathfrak{p}_{\vert \alpha^{x} \vert +3} \vert \alpha^{x} \vert}+\vert Z^{\delta}_{t} \vert_{\mathbb{R}^{N}}^{\mathfrak{p}_{\vert \alpha^{x} \vert+3}  \vert \alpha^{x} \vert}) \\
& \times (1+\vert Z^{\delta}_{t} \vert_{\mathbb{R}^{N}}^{2 \vert \alpha^{x} \vert})\mathfrak{D}_{V,\vert \alpha^{x} \vert}(1+\vert x \vert_{\mathbb{R}^{d}}^{\mathfrak{p}_{V,\vert \alpha^{x} \vert}} \vert) \\
& \times  \sum_{k=0}^{\infty}  \delta^{\frac{\max(3-k,0)}{2}}  (k+1)^{\vert \alpha^{x} \vert} \vert I_{d\times d} - \nabla_{x} \psi \vert_{\mathbb{R}^{d \times d}}^{k}   (x,t-\delta,\delta^{\frac{1}{2}}Z^{\delta}_{t},\delta)  \mathbf{1}_{\delta^{\frac{1}{2}}Z^{\delta}_{t} \in \mathbf{D}_{\eta_{2}}}  ]  .
\end{align*}

Using $\mathbf{A}_{1}^{\delta}$ (see (\ref{eq:hyp_3_Norme_adhoc_fonction_schema})), we have (\ref{borne_grad_psi-Id}). Moreover, when $k \geqslant 3$, we use $\vert Z^{\delta}_{t} \vert_{\mathbb{R}^{N}}^{k} \mathbf{1}_{\delta^{\frac{1}{2}}Z^{\delta}_{t} \in \mathbf{D}_{\eta_{2}}}\leqslant \vert Z^{\delta}_{t} \vert_{\mathbb{R}^{N}}^{3}\eta_{2}^{k-3} $ and we obtain
\begin{align*}
\mathbb{E}[ & \delta^{\frac{\max(3-k,0)}{2}}  (k+1)^{\vert \alpha^{x} \vert} (1+\vert Z^{\delta}_{t} \vert_{\mathbb{R}^{N}}^{(\mathfrak{p}_{\vert \alpha^{x} \vert+3} +2)\vert \alpha^{x} \vert})\vert I_{d\times d} - \nabla_{x} \psi \vert_{\mathbb{R}^{d \times d}}^{k}   (x,t-\delta,\delta^{\frac{1}{2}}Z^{\delta}_{t},\delta)  \mathbf{1}_{\delta^{\frac{1}{2}}Z^{\delta}_{t} \in \mathbf{D}_{\eta_{2}}}     ] \\
 \leqslant & \delta^{\frac{\max(k,3)}{2}} \eta_{2}^{\max(k-3,0)(\mathfrak{p}+1)}   (k+1)^{\vert \alpha^{x} \vert}  4^{k}  \mathfrak{D} ^{k} \mathbb{E}[1+ \vert Z^{\delta}_{t} \vert_{\mathbb{R}^{N}}^{3(\mathfrak{p}+1)+(\mathfrak{p}_{\vert \alpha^{x} \vert+3} +2) \vert \alpha^{x} \vert} ] .
\end{align*}
Since $\delta^{\frac{1}{2}} \eta_{2}^{\mathfrak{p}+1} 4 \mathfrak{D}< \frac{1}{2}$ (see (\ref{hyp:delta_eta_inversibilite})), we obtain the estimate
\begin{align*}
\vert \partial^{\alpha^{x}}_{x}  \overline{R}^{2,3}_{t}(x) \vert_{\mathbb{R}^{d}} \leqslant &  \delta^{\frac{3}{2}} C(d,\vert \alpha^{x} \vert)\mathfrak{M}_{3(\mathfrak{p}+1)+(\mathfrak{p}_{\vert \alpha^{x} \vert+3} +2)\vert \alpha^{x} \vert}(Z^{\delta} )\\
& \times \mathfrak{D}^{3} \mathfrak{D}_{\vert \alpha^{x} \vert + 3}^{\vert \alpha^{x} \vert}  \mathfrak{D}_{V,\vert \alpha^{x} \vert}(1+\vert x \vert_{\mathbb{R}^{d}}^{\mathfrak{p}_{\vert \alpha^{x} \vert +3} \vert \alpha^{x} \vert + \mathfrak{p}_{V,\vert \alpha^{x} \vert}}).
\end{align*}

At this point, we observe that, 
\begin{align*}
\overline{R}^{2}_{t}  = &\overline{R}^{2,3}_{t}-\overline{R}^{2,2}_{t} - \overline{R}^{2,4}_{t},
\end{align*}

where we have introduced the function $\overline{R}^{2,4}_{t}$ defined for every $x \in \mathbb{R}^{d}$ by 
\begin{align*}
\overline{R}^{2,4}_{t}(x)= & \mathbb{E}[\mathcal{R}^{\delta,2,1}(x,t-\delta,Z^{\delta,}_{t})^{2} V(x,t-\delta) \mathbf{1}_{\delta^{\frac{1}{2}}Z^{\delta}_{t} \in \mathbf{D}_{\eta_{2}}} \\
&+  \delta^{\frac{1}{2}}  \sum_{l=1}^{N}   Z^{\delta,l}_{t} \nabla_{x} V_{l}(x,t-\delta)\mathcal{R}^{\delta,2,1}(x,t-\delta,Z^{\delta}_{t}) V(x,t-\delta) \mathbf{1}_{\delta^{\frac{1}{2}}Z^{\delta}_{t} \in \mathbf{D}_{\eta_{2}}}]  \\
&+  \delta^{\frac{1}{2}}  \sum_{l=1}^{N}   Z^{\delta,l}_{t} \mathcal{R}^{\delta,2,1}(x,t-\delta,Z^{\delta}_{t})  \nabla_{x} V_{l}(x,t-\delta) ) V(x,t-\delta) \mathbf{1}_{\delta^{\frac{1}{2}}Z^{\delta}_{t} \in \mathbf{D}_{\eta_{2}}}],
\end{align*}
which satisfies, using hypothesis $\mathbf{A}_{1}^{\delta}(\vert \alpha^{x} \vert+3)$ (see (\ref{eq:hyp_1_Norme_adhoc_fonction_schema})) and $\mathbf{A}_{3}^{\delta}(2 \mathfrak{p}_{\vert \alpha^{x} \vert+3} + 4)$ (see (\ref{eq:hyp:moment_borne_Z})), 
\begin{align*}
\vert \partial^{\alpha^{x}}_{x} \overline{R}^{2,4}_{t}(x) \vert_{\mathbb{R}^{d}} \leqslant &  \delta^{\frac{3}{2}} C(\vert \alpha^{x} \vert) \mathfrak{M}_{2 \mathfrak{p}_{\vert \alpha^{x} \vert+3} + 4}(Z^{\delta}) \\
& \times  \mathfrak{D}_{\vert \alpha^{x} \vert+3}^{2}  \mathfrak{D}_{V,\vert \alpha^{x} \vert}(1+ \vert x \vert_{\mathbb{R}^{d}}^{2 \mathfrak{p}_{\vert \alpha^{x} \vert + 3}  +\mathfrak{p}_{V,\vert \alpha^{x} \vert}}).
\end{align*}

We conclude that, under the assumptions $\mathbf{A}_{1}^{\delta}(\vert \alpha^{x} \vert+4)$ (see (\ref{eq:hyp_1_Norme_adhoc_fonction_schema}) and (\ref{eq:hyp_3_Norme_adhoc_fonction_schema})) and $\mathbf{A}_{3}^{\delta}(\max(3(\mathfrak{p}+1)+(\mathfrak{p}_{\vert \alpha^{x} \vert+4}+ 2) \max(\vert \alpha^{x} \vert,2)+1,\lceil- \frac{3 \ln(\delta)}{2 \ln(\eta_{2})} \rceil +2))$ (see (\ref{eq:hyp:moment_borne_Z})), and $\delta^{\frac{1}{2}} \eta_{2}^{\mathfrak{p}+1} 4 \mathfrak{D}< \frac{1}{2}$, then, for every $x \in \mathbb{R}^{d}$,
\begin{align*}
\vert \partial^{\alpha^{x}}_{x}  \overline{R}^{2}_{t}(x) \vert_{\mathbb{R}^{d}} \leqslant &  \delta^{\frac{3}{2}} C(d,\vert \alpha^{x} \vert) \mathfrak{M}_{\max(3(\mathfrak{p}+1)+(\mathfrak{p}_{\vert \alpha^{x} \vert+4}+ 2) \max(\vert \alpha^{x} \vert,2)+1,\lceil- \frac{3 \ln(\delta)}{2 \ln(\eta_{2})} \rceil +2)}(Z^{\delta}) \\
& \times \mathfrak{D}^{3}  \mathfrak{D}_{\vert \alpha^{x} \vert+4}^{\max(\vert \alpha^{x} \vert,2)}  \mathfrak{D}_{V,\vert \alpha^{x} \vert}(1+ \vert x \vert_{\mathbb{R}^{d}}^{\mathfrak{p}_{\vert \alpha^{x} \vert + 4}\max(\vert \alpha^{x} \vert ,2)  + \mathfrak{p}_{V,\vert \alpha^{x} \vert }}) .
\end{align*}


We now focus on the study of $\overline{R}_{t}^{3} $.

\begin{align*}
\overline{R}^{3}_{t}=&\overline{R}^{3,1}_{t}-\overline{R}^{3,2}_{t}+\overline{R}^{3,3}_{t}-\overline{R}^{3,4}_{t}
\end{align*}
where we have introduced 
\begin{align*}
\overline{R}^{3,1}_{t}(x)=&\delta^{\frac{1}{2}}\sum_{l=1}^{N}   \mathbb{E}[ Z^{\delta,l}_{t}  \mathcal{R}^{\delta,2}(x,t-\delta,Z^{\delta}_{t})  \nabla_{x} V(x,t-\delta) V_{l}(x,t-\delta) \mathbf{1}_{\delta^{\frac{1}{2}}Z^{\delta}_{t} \in \mathbf{D}_{\eta_{2}}} ] \\
\overline{R}^{3,2}_{t}(x)=&\delta^{\frac{1}{2}}\sum_{l=1}^{N}   \mathbb{E}[  Z^{\delta,l}_{t} \nabla_{x} V_{l}(x,t-\delta)R^{\delta,1}(x,t-\delta,Z^{\delta}_{t})) \mathbf{1}_{\delta^{\frac{1}{2}}Z^{\delta}_{t} \in \mathbf{D}_{\eta_{2}}} ] \\
\overline{R}^{3,3}_{t}(x)=&   \mathbb{E}[\mathbf{1}_{\delta^{\frac{1}{2}}Z^{\delta}_{t} \in \mathbf{D}_{\eta_{2}}}  \mathcal{R}^{\delta,2}(x,t-\delta,Z^{\delta}_{t}) \\
& \quad \times (  \delta  \nabla_{x} V(x,t-\delta) \tilde{V}_{0}(x,t-\delta)+\delta  \partial_t V(x,t-\delta)   \\
& \qquad \;+  \delta \frac{1}{2}\sum_{l=1}^{N}  V_{l}(x,t-\delta)^{T}  \mbox{\textbf{H}}_{x}  V(x,t-\delta)  V_{l}(x,t-\delta)  +R^{\delta,1}(x,t-\delta,Z^{\delta}_{t}))  ] \\
\overline{R}^{3,4}_{t}(x)=& \delta^{2} ( \nabla_{x} \tilde{V}_{0}(x,t-\delta) - \sum_{l=1}^{N} \nabla_{x} V_{l}(x,t-\delta)^{2} )  \\
& \times(   \nabla_{x} V(x,t-\delta) \tilde{V}_{0}(x,t-\delta)+  \partial_t V(x,t-\delta)  \\
&\qquad +  \frac{1}{2}\sum_{l=1}^{N}  V_{l}(x,t-\delta)^{T}  \mbox{\textbf{H}}_{x}  V(x,t-\delta)  V_{l}(x,t-\delta) ) .
\end{align*}

Using standard computations together with hypothesis $\mathbf{A}_{1}^{\delta}(\vert \alpha^{x} \vert+2)$ (see (\ref{eq:hyp_1_Norme_adhoc_fonction_schema})) yields
\begin{align*}
\vert \partial^{\alpha^{x}}_{x} \overline{R}^{3,4} _{t}(x) \vert_{\mathbb{R}^{d}} \leqslant &  \delta^{2} C(\vert \alpha^{x} \vert) \mathfrak{D}_{\vert \alpha^{x} \vert+2}^{4}  \mathfrak{D}_{V,\vert \alpha^{x} \vert +2}(1+ \vert x \vert_{\mathbb{R}^{d}}^{4 \mathfrak{p}_{\vert \alpha^{x} \vert + 2}  +\mathfrak{p}_{V,\vert \alpha^{x} \vert +2}}).
\end{align*}
Using a similar approach as in the study of $\overline{R}^{1}_{t}$, as a consequence of $\mathbf{A}_{1}^{\delta}(\vert \alpha^{x} \vert + 3)$ (see (\ref{eq:hyp_1_Norme_adhoc_fonction_schema})) and $\mathbf{A}_{3}^{\delta}(\max(\mathfrak{p}_{\vert \alpha^{x} \vert+3}(\vert \alpha^{x} \vert  +3)+4,\lceil- \frac{3 \ln(\delta)}{2 \ln(\eta_{2})} \rceil +2)+1)$, we derive
\begin{align*}
\vert \partial^{\alpha^{x}}_{x}  \overline{R}^{3,2}_{t}(x) \vert_{\mathbb{R}^{d}} \leqslant &  \delta^{\frac{3}{2}} C(d,\vert \alpha^{x} \vert) \mathfrak{M}_{\max(\mathfrak{p}_{\vert \alpha^{x} \vert+3}(\vert \alpha^{x} \vert  +3)+4,\lceil- \frac{3 \ln(\delta)}{2 \ln(\eta_{2})} \rceil +2)+1}(Z^{\delta}) \\
& \times  \mathfrak{D}_{\vert \alpha^{x} \vert+3}^{\vert \alpha^{x} \vert+4}  \mathfrak{D}_{V,\vert \alpha^{x} \vert+3} (1+ \vert x \vert_{\mathbb{R}^{d}}^{\mathfrak{p}_{\vert \alpha^{x} \vert + 3} (\vert \alpha^{x} \vert +4)  + \mathfrak{p}_{V,\vert \alpha^{x} \vert + 3}}) .
\end{align*}

From the same reasonning as in the study of $\overline{R}^{2}_{t}$, since (\ref{hyp:delta_eta_inversibilite}) holds, it follows from $\mathbf{A}_{1}^{\delta}(\vert \alpha^{x} \vert + 4)$ (see (\ref{eq:hyp_1_Norme_adhoc_fonction_schema}) and (\ref{eq:hyp_3_Norme_adhoc_fonction_schema})) and $\mathbf{A}_{3}^{\delta}(2 \max(3(\mathfrak{p}+1)+(\mathfrak{p}_{\vert \alpha^{x} \vert+4}+ 2) (\max(\vert \alpha^{x} \vert,2)+3)+1,\lceil- \frac{3 \ln(\delta)}{2 \ln(\eta_{2})} \rceil +2))$ (see (\ref{eq:hyp:moment_borne_Z})) that
\begin{align*}
\vert \partial^{\alpha^{x}}_{x} \overline{R}^{3,3}_{t}(x) \vert_{\mathbb{R}^{d}} \leqslant &  \delta^{\frac{5}{2}} C(d,\vert \alpha^{x} \vert) \mathfrak{M}_{2 \max(3(\mathfrak{p}+1)+(\mathfrak{p}_{\vert \alpha^{x} \vert+4}+ 2) (\max(\vert \alpha^{x} \vert,2)+3)+1,\lceil- \frac{3 \ln(\delta)}{2 \ln(\eta_{2})} \rceil +2)}(Z^{\delta}) \\
& \times \mathfrak{D}^{3}  \mathfrak{D}_{\vert \alpha^{x} \vert+4}^{2 \max(\vert \alpha^{x} \vert,2) +3 }  \mathfrak{D}_{V,\vert \alpha^{x} \vert +3}^{2} (1+ \vert x \vert_{\mathbb{R}^{d}}^{\mathfrak{p}_{\vert \alpha^{x} \vert + 4}( 2 \max(\vert \alpha^{x} \vert ,2) +3) + 2 \mathfrak{p}_{V,\vert \alpha^{x} \vert + 3 }}) .
\end{align*}

Similarly, since (\ref{hyp:delta_eta_inversibilite}) holds, it follows from $\mathbf{A}_{1}^{\delta}(\vert \alpha^{x} \vert + 4)$ (see (\ref{eq:hyp_1_Norme_adhoc_fonction_schema}) and (\ref{eq:hyp_3_Norme_adhoc_fonction_schema})) and $\mathbf{A}_{3}^{\delta}( \max(3(\mathfrak{p}+1)+(\mathfrak{p}_{\vert \alpha^{x} \vert+4}+ 2) \max(\vert \alpha^{x} \vert,2)+1,\lceil- \frac{3 \ln(\delta)}{2 \ln(\eta_{2})} \rceil +2)+1)$ (see (\ref{eq:hyp:moment_borne_Z})) that

\begin{align*}
\vert \partial^{\alpha^{x}}_{x}  \overline{R}^{3,1}_{t}(x) \vert_{\mathbb{R}^{d}} \leqslant &  \delta^{\frac{3}{2}} C(d,\vert \alpha^{x} \vert) \mathfrak{M}_{\max(3(\mathfrak{p}+1)+(\mathfrak{p}_{\vert \alpha^{x} \vert+4}+ 2) \max(\vert \alpha^{x} \vert,2)+1,\lceil- \frac{3 \ln(\delta)}{2 \ln(\eta_{2})} \rceil +2)+1}(Z^{\delta}) \\
& \times \mathfrak{D}^{3}  \mathfrak{D}_{\vert \alpha^{x} \vert+4}^{\max(\vert \alpha^{x} \vert,2)+1}  \mathfrak{D}_{V,\vert \alpha^{x} \vert +1}^{2}(1+ \vert x \vert_{\mathbb{R}^{d}}^{\mathfrak{p}_{\vert \alpha^{x} \vert + 4}(\max(\vert \alpha^{x} \vert ,2)+1) + 2 \mathfrak{p}_{V,\vert \alpha^{x} \vert +1}}) .
\end{align*}

We conclude that under the assumptions (\ref{hyp:delta_eta_inversibilite}) it follows from $\mathbf{A}_{1}^{\delta}(\vert \alpha^{x} \vert +4 )$ (see (\ref{eq:hyp_1_Norme_adhoc_fonction_schema}) and (\ref{eq:hyp_3_Norme_adhoc_fonction_schema})) and $\mathbf{A}_{3}^{\delta}(2 \max(3(\mathfrak{p}+1)+(\mathfrak{p}_{\vert \alpha^{x} \vert+4}+ 2)(\max(\vert \alpha^{x} \vert,2)+3)+1,\lceil- \frac{3 \ln(\delta)}{2 \ln(\eta_{2})} \rceil +2))$ (see (\ref{eq:hyp:moment_borne_Z})) that

\begin{align*}
\vert \partial^{\alpha^{x}}_{x}  \overline{R}^{3}_{t}(x) \vert_{\mathbb{R}^{d}} \leqslant &  \delta^{\frac{3}{2}} C(d,\vert \alpha^{x} \vert) \mathfrak{M}_{2 \max(3(\mathfrak{p}+1)+(\mathfrak{p}_{\vert \alpha^{x} \vert+4}+ 2)( \max(\vert \alpha^{x} \vert,2)+3)+1,\lceil- \frac{3 \ln(\delta)}{2 \ln(\eta_{2})} \rceil +2)}(Z^{\delta}) \\
& \times \mathfrak{D}^{3}  \mathfrak{D}_{\vert \alpha^{x} \vert+4}^{2 \max(\vert \alpha^{x} \vert,2) +3 }  \mathfrak{D}_{V,\vert \alpha^{x} \vert +3}^{2} (1+ \vert x \vert_{\mathbb{R}^{d}}^{\mathfrak{p}_{\vert \alpha^{x} \vert + 4}( 2 \max(\vert \alpha^{x} \vert ,2) +3) + 2 \mathfrak{p}_{V,\vert \alpha^{x} \vert + 3 }}) .
\end{align*}

To complete the proof, it remains to study $\overline{R}_{t}^{4} $ and $\overline{R}_{t}^{5} $. As a direct consequence of the Markov  inequality,
\begin{align*}
\mathbb{E}[ \sum_{i=1}^{N}Z^{\delta,i}_{t} \mathbf{1}_{\delta^{\frac{1}{2}}Z^{\delta}_{t} \notin \mathbf{D}_{\eta_{2}}} ]  \leqslant \delta^{\frac{3}{2}} \mathfrak{M}_{\lceil- \frac{ \ln(\delta)}{ \ln(\eta_{2})} \rceil+1}(Z^{\delta})
\end{align*}
and 
\begin{align*}
 \mathbb{P}( \delta^{\frac{1}{2}}Z^{\delta}_{t} \notin \mathbf{D}_{\eta_{2}} )\leqslant \delta^{\frac{3}{2}} \mathfrak{M}_{\lceil- \frac{ \ln(\delta)}{2 \ln(\eta_{2})} \rceil}(Z^{\delta}).
\end{align*}
Consequently
\begin{align*}
\vert \partial^{\alpha^{x}}_{x}  \overline{R}^{4}_{t}(x) \vert_{\mathbb{R}^{d}} \leqslant & \delta^{\frac{3}{2}}  C(\vert \alpha^{x} \vert) \mathfrak{M}_{\lceil- \frac{ \ln(\delta)}{ \ln(\eta_{2})} \rceil+1}(Z^{\delta})   \mathfrak{D}_{\vert \alpha^{x} \vert+2} \mathfrak{D}_{V,\vert \alpha^{x} \vert+1}(1+ \vert x \vert_{\mathbb{R}}^{\mathfrak{p}_{\vert \alpha^{x} \vert + 2} +\mathfrak{p}_{V,\vert \alpha^{x} \vert + 1}})
\end{align*}
and
\begin{align*}
\vert \partial^{\alpha^{x}}_{x}  \overline{R}^{5}_{t}(x) \vert_{\mathbb{R}^{d}} \leqslant & \delta^{\frac{3}{2}}  C(\vert \alpha^{x} \vert) \mathfrak{M}_{\lceil- \frac{ \ln(\delta)}{2 \ln(\eta_{2})} \rceil+1}(Z^{\delta})   \mathfrak{D}_{\vert \alpha^{x} \vert+3}^{2} \mathfrak{D}_{V,\vert \alpha^{x} \vert+2}(1+ \vert x \vert_{\mathbb{R}}^{2 \mathfrak{p}_{\vert \alpha^{x} \vert + 3} +\mathfrak{p}_{V,\vert \alpha^{x} \vert + 2}}).
\end{align*}

We conclude that under the assumptions (\ref{hyp:delta_eta_inversibilite}), it follows from $\mathbf{A}_{1}^{\delta}(\vert \alpha^{x} \vert +4 )$ (see (\ref{eq:hyp_1_Norme_adhoc_fonction_schema}) and (\ref{eq:hyp_3_Norme_adhoc_fonction_schema})) and $\mathbf{A}_{3}^{\delta}(2 \max(3(\mathfrak{p}+1)+(\mathfrak{p}_{\vert \alpha^{x} \vert+4}+ 2)(\max(\vert \alpha^{x} \vert,2)+3)+1,\lceil- \frac{3 \ln(\delta)}{2 \ln(\eta_{2})} \rceil +2))$ (see (\ref{eq:hyp:moment_borne_Z})) that

\begin{align*}
\vert \partial^{\alpha^{x}}_{x}  \overline{\mathbf{R}}(x,t-\delta) \vert_{\mathbb{R}^{d}} \leqslant &  \delta^{\frac{3}{2}} C(d,\vert \alpha^{x} \vert) \mathfrak{M}_{2 \max(3(\mathfrak{p}+1)+(\mathfrak{p}_{\vert \alpha^{x} \vert+4}+ 2)( \max(\vert \alpha^{x} \vert,2)+3)+1,\lceil- \frac{3 \ln(\delta)}{2 \ln(\eta_{2})} \rceil +2)}(Z^{\delta}) \\
& \times \mathfrak{D}^{3}  \mathfrak{D}_{\vert \alpha^{x} \vert+4}^{2 \max(\vert \alpha^{x} \vert,2) +3 }  \mathfrak{D}_{V,\vert \alpha^{x} \vert +3}^{2} (1+ \vert x \vert_{\mathbb{R}^{d}}^{\mathfrak{p}_{\vert \alpha^{x} \vert + 4}( 2 \max(\vert \alpha^{x} \vert ,2) +3) + 2 \mathfrak{p}_{V,\vert \alpha^{x} \vert + 3 }}) .
\end{align*}

%

Finally, let us remark that
$\tilde{\mathbf{R}}^{\delta}V(x,t-\delta) =(R^{\delta}(x,t-\delta,Z^{\delta}_{t}) \mathbf{1}_{\delta^{\frac{1}{2}}Z^{\delta}_{t} \in \mathbf{D}_{\eta_{2}}}- \mathbb{E}[ R^{\delta}(x,t-\delta,Z^{\delta}_{t}) \mathbf{1}_{\delta^{\frac{1}{2}}Z^{\delta}_{t} \in \mathbf{D}_{\eta_{2}}}])+\tilde{R}_{t}^{4}(x)+\tilde{R}_{t}^{5}(x)$, with

\begin{align*}
\tilde{R}_{t}^{4}(x,z)=& - \delta^{\frac{1}{2}}\sum_{i=1}^N V^{[i]}(x,t-\delta) ( z^{i} \mathbf{1}_{\delta^{\frac{1}{2}}z \notin \mathbf{D}_{\eta_{2}}}- \mathbb{E}[ z^{i}\mathbf{1}_{\delta^{\frac{1}{2}}z \notin \mathbf{D}_{\eta_{2}}} ] ),\\
\tilde{R}_{t}^{5}(x,z)=& -\delta V^{[0]}(x,t-\delta) (\mathbf{1}_{\delta^{\frac{1}{2}}z \notin \mathbf{D}_{\eta_{2}}}-\mathbb{P}( \delta^{\frac{1}{2}}Z^{\delta}_{t} \notin \mathbf{D}_{\eta_{2}}) ). 
\end{align*}

Using $\mathbf{A}_{1}^{\delta}(2)$ (see (\ref{eq:hyp_1_Norme_adhoc_fonction_schema})) and $\mathbf{A}_{3}^{\delta}( \lceil - \frac{ \ln(\delta)}{ \ln(\eta_{2})} \rceil + 1 )$ (see (\ref{eq:hyp:moment_borne_Z})),
\begin{align*}
\vert \tilde{R}_{t}^{4}(x,z) \vert_{\mathbb{R}^{d}}=& \vert \delta^{\frac{1}{2}}\sum_{i=1}^N V^{[i]}(x,t-\delta)(z^{i} \mathbf{1}_{\delta^{\frac{1}{2}} z \notin \mathbf{D}_{\eta_{2}}} -  \mathbb{E}[ Z^{\delta,i}_{t} \mathbf{1}_{\delta^{\frac{1}{2}} Z^{\delta}_{t}  \notin \mathbf{D}_{\eta_{2}}} ] ) \vert_{\mathbb{R}^{d}} \\
\leqslant &  \delta C \mathfrak{D}_{2}\mathfrak{D}_{V,1}(1+\vert x\vert_{\mathbb{R}^{d}}^{\mathfrak{p}_{2}+\mathfrak{p}_{V,1}} )( \vert z \vert_{\mathbb{R}^{N}}^{\lceil - \frac{ \ln(\delta)}{ \ln(\eta_{2})} \rceil + 1}+\mathfrak{M}_{ \lceil - \frac{ \ln(\delta)}{ \ln(\eta_{2})} \rceil + 1}(Z^{\delta})) \\
\leqslant &  \delta C\mathfrak{D}_{2}\mathfrak{D}_{V,1} \mathfrak{M}_{ \lceil - \frac{ \ln(\delta)}{ \ln(\eta_{2})} \rceil + 1}(Z^{\delta}) (1+\vert x\vert_{\mathbb{R}^{d}}^{2(\mathfrak{p}_{2}+\mathfrak{p}_{V,1})} + \vert z \vert_{\mathbb{R}^{N}}^{2\lceil - \frac{ \ln(\delta)}{ \ln(\eta_{2})} \rceil + 2}) .
\end{align*}
and using $\mathbf{A}_{1}^{\delta}(3)$ (see (\ref{eq:hyp_1_Norme_adhoc_fonction_schema})),
\begin{align*}
\vert \tilde{R}_{t}^{5}(x,z) \vert_{\mathbb{R}^{d}}
\leqslant &  \delta C \mathfrak{D}_{3}^{2}\mathfrak{D}_{V,2}  (1+\vert x\vert_{\mathbb{R}^{d}}^{2\mathfrak{p}_{3}+\mathfrak{p}_{V,2}} ) .
\end{align*}
We treat the other terms by a similar but simpler (since it does not involves derivatives) method used to study $\overline{\mathbf{R}}$, we finally obtain
\begin{align*}
\vert \tilde{\mathbf{R}}(x,t-\delta,z) \vert_{\mathbb{R}^{d}} \leqslant &  \delta C\mathfrak{M}_{2\max(3\mathfrak{p}+5\mathfrak{p}_{4}+ 14,\lceil - \frac{ \ln(\delta)}{ \ln(\eta_{2})} \rceil + 1)}(Z^{\delta}) \\
& \times \mathfrak{D}^{3}  \mathfrak{D}_{4}^{7}  \mathfrak{D}_{V,3}^{2} (1+ \vert x \vert_{\mathbb{R}^{d}}^{14 \mathfrak{p}_{4} + 4 \mathfrak{p}_{V, 3 }}+ \vert z \vert_{\mathbb{R}^{N}}^{4\max(3\mathfrak{p}+5\mathfrak{p}_{4}+ 14,\lceil - \frac{ \ln(\delta)}{ \ln(\eta_{2})} \rceil + 1)}) .
\end{align*}

\end{proof}

\section{Proof of Lemma \ref{lem:Norris}}

\begin{proof}

   \textbf{Step 1.} 
First we show that for every $\epsilon \in [ \underline{\epsilon}_{1}(\delta),\overline{\epsilon}_{1}(\delta) ]$, every $s \in (3r,\frac{1}{2})$, $u\in(0,\frac{1}{2}-s)$, every $v,v^{\diamond}>0$, and every $q\geqslant 4$,
 \begin{align*}
  \mathbb{P}(\delta & \sum_{t \in \mathbf{T}} \vert Y_t \vert^{2} < \epsilon,  \delta\sum_{t \in \mathbf{T}} \mathbb{E}[\vert\tilde{\Delta}^{Y}_{t-\delta} \vert^{2} \vert \mathcal{F}^Y_{t-\delta}] + \vert \bar{\Delta}^{Y}_{t-\delta} \vert^{2} \geqslant \epsilon^r, \mathcal{A}_{Y,u,q})     \\
 \leqslant  &  \epsilon^{p} \mathbb{E}[\vert Y_{0} \vert^{\frac{p}{v}}]) +\mathbb{P}(\delta \vert Y_{0} \vert^{2} \geqslant \epsilon) \\
&+ \delta^{\frac{q}{4}} ( \delta^{\frac{q}{4}} \epsilon^{-q(s+2u)} +\epsilon^{-q(s+u)}+\epsilon^{-q\frac{(2+v^{\diamond})}{4}})2^{\frac{3q}{2}+2} (1+T^{2q})(1+\sup_{t \in \mathbf{T}} \mathbb{E}[\vert Y_{t-\delta} \vert^{q} ])  \\
& + 2\exp(-\frac{\epsilon^{-4s}}{16})+2\exp(-\frac{\epsilon^{-v^{\diamond}}}{2})  +  2 \exp(-\frac{\epsilon^{2(s+u)-1}}{2^{11} T^{2}})  \\
 &+   \mathbb{P}(\delta\sum_{t \in \mathbf{T}} \vert Y_t \vert^{2} < \epsilon,  \delta\sum_{t \in \mathbf{T}} \mathbb{E}[\vert\tilde{\Delta}^{Y}_{t-\delta} \vert^{2} \vert \mathcal{F}^Y_{t-\delta}] + \vert \bar{\Delta}^{Y}_{t-\delta} \vert^{2} \geqslant \epsilon^r, \mathfrak{A}_{1},\vert Y_{0}\vert^{2} < \frac{\epsilon^{s}}{\delta \vert \mathbf{T} \vert} ,\mathcal{A}_{Y,u,q})  ,
  \end{align*}
with
\begin{align*}
\underline{\epsilon}_{1}(\delta)= & \max( \vert 16 \delta T^{2} \vert^{\frac{1}{s+2u}},\vert 2^{10} \delta T^{3}  \vert ^{\frac{1}{2u+2s+2v}}  )\\
\overline{\epsilon}_{1}(\delta)=& \min (\vert 32 T^{\frac{3}{2}} \vert ^{-\frac{1}{\frac{1}{2}-s-u}},2^{-\frac{1}{1-s}} ),
\end{align*}

\begin{align*}
   \mathcal{A}_{Y,u,q} = & \{  \sup_{t \in \mathbf{T}} \vert \bar{\Delta}^{Y}_{t-\delta}\vert  \leqslant \epsilon^{-u}\} \cap  \{  \sup_{t \in \mathbf{T}}  \mathbb{E}[ \vert \tilde{\Delta}^{Y}_{t-\delta} \vert^{q} \vert \mathcal{F}^Y_{t-\delta}] \leqslant \epsilon^{-qu} \} \\
   & \cap  \{  \sup_{t \in \mathbf{T}} \vert \bar{\Delta}^{\bar{\Delta}^{Y}}_{t-\delta}\vert \leqslant \epsilon^{-u}  \} \cap   \ \{  \sup_{t \in \mathbf{T}} \mathbb{E}[ \vert \tilde{\Delta}^{\bar{\Delta}^{Y}}_{t-\delta} \vert^q \vert \mathcal{F}^Y_{t-\delta}] \leqslant \epsilon^{-qu} \} .
 \end{align*} 
and

\begin{align*}
	\mathfrak{A}_{1}: = \left\{ \delta^{2} \sum_{\underset{w\leqslant t}{w, t \in \mathbf{T}}} \mathbb{E}[\vert \tilde{\Delta}^{Y}_{w-\delta} \vert^{2}\vert \mathcal{F}^Y_{w-\delta}]< \epsilon^{s} \right\} \cap \left\{ \delta^{2}\sum_{\underset{w\leqslant t}{w, t \in \mathbf{T}}} \vert \tilde{\Delta}^{Y}_{w-\delta} \vert^{2} < \epsilon^{s}\right\}.
\end{align*}

We begin by writing that, for every $t \in \mathbf{T}$, we have
  \begin{align*}
 Y_{t}^{2}=&Y_{t-\delta}^{2}+\delta^{\frac{1}{2}} 2 \tilde{\Delta}^{Y}_{t-\delta}Y_{t-\delta}+\delta (2 \bar{\Delta}^{Y}_{t-\delta} Y_{t-\delta}+\vert \tilde{\Delta}^{Y}_{t-\delta} \vert^{2})+\delta^{\frac{3}{2}} 2\tilde{\Delta}^{Y}_{t-\delta} \bar{\Delta}^{Y}_{t-\delta} +\delta^{2} \vert \bar{\Delta}^{Y}_{t-\delta} \vert^{2} \\
 =& Y_{0}^{2} + \sum_{\underset{w\leqslant t}{w \in \mathbf{T}}} \delta^{\frac{1}{2}} 2 \tilde{\Delta}^{Y}_{w-\delta}Y_{w-\delta}+\delta (2\bar{\Delta}^{Y}_{w-\delta}Y_{w-\delta}+\vert \tilde{\Delta}^{Y}_{w-\delta} \vert^{2})  \\
 & \qquad \qquad  \qquad+\delta^{\frac{3}{2}}2\tilde{\Delta}^{Y}_{w-\delta} \bar{\Delta}^{Y}_{w-\delta}+\delta^{2} \vert \bar{\Delta}^{Y}_{w-\delta} \vert ^{2} .
 \end{align*} 
 and we introduce

\begin{align*}
\mathfrak{A}_{2}  :=    & \left\{  \delta^{\frac{3}{2}} \vert \sum_{\underset{w\leqslant t}{w, t \in \mathbf{T}}} 2\tilde{\Delta}^{Y}_{w-\delta}Y_{w-\delta}\vert <  \frac{\epsilon^{s}}{8} \right\} \cap \left\{  \vert \delta^{2} \sum_{\underset{w\leqslant t}{w, t \in \mathbf{T}}} 2 \bar{\Delta}^{Y}_{w-\delta}Y_{w-\delta}\vert <  \frac{\epsilon^{s}}{8} \right\} \\
& \cap \left\{\vert \delta^{2} \sum_{\underset{w\leqslant t}{w, t \in \mathbf{T}}} \vert \tilde{\Delta}^{Y}_{w-\delta} \vert^{2} - \mathbb{E}[\vert \tilde{\Delta}^{Y}_{w-\delta} \vert^{2}\vert \mathcal{F}^Y_{w}]\vert <  \frac{\epsilon^{s}}{8} \right \} \\
& \cap \left\{ \delta^{3} \sum_{\underset{w\leqslant t}{w, t \in \mathbf{T}}} \vert \delta^{\frac{1}{2}} 2\tilde{\Delta}^{Y}_{w-\delta} \bar{\Delta}^{Y}_{w-\delta}+\delta \vert \bar{\Delta}^{Y}_{w-\delta} \vert^{2} \vert <  \frac{\epsilon^{s}}{8} \right\}.
\end{align*}

In the sequel, for $t \in \pi^{\delta}$ we will denote $\mathfrak{n}_{\mathbf{T},\delta,t} = (\vert \mathbf{T} \vert -t \delta^{-1})$. Now we notice that, for every $s \in (3r,\frac{1}{2})$, $u\in(0,\frac{1}{2}-s)$,  we have
\begin{align*}
\mathbb{P}( \vert \delta^{\frac{3}{2}}  & \sum_{\underset{w\leqslant t}{w, t \in \mathbf{T}}} 2\tilde{\Delta}^{Y}_{w-\delta}Y_{w-\delta}\vert \geqslant \frac{\epsilon^{s}}{8},  \delta  \sum_{t \in \mathbf{T}} \vert Y_t \vert^{2}< \epsilon, \mathcal{A}_{Y,u,q})    \\
\leqslant& \mathbb{P}( \vert \delta^{\frac{3}{2}}  \sum_{t \in \mathbf{T}} \mathfrak{n}_{\mathbf{T},\delta,t-\delta} \tilde{\Delta}^{Y}_{t-\delta}Y_{t-\delta}\vert \geqslant \frac{\epsilon^{s}}{16}, \delta  \sum_{t \in \mathbf{T}} \vert Y_{t-\delta} \vert^{2}< 2 \epsilon, \mathcal{A}_{Y,u,q}) +\mathbb{P}(\delta \vert Y_{0} \vert^{2} \geqslant \epsilon) \\
\leqslant& \mathbb{P}( \vert \delta^{\frac{3}{2}}  \sum_{t \in \mathbf{T}}  \mathfrak{n}_{\mathbf{T},\delta,t-\delta} \tilde{\Delta}^{Y}_{t-\delta}Y_{t-\delta}\vert \geqslant \frac{\epsilon^{s}}{16},  \\
& \qquad \delta^{3}  \sum_{t \in \mathbf{T}}  \vert \mathfrak{n}_{\mathbf{T},\delta,t-\delta} \vert^{2}(\vert \tilde{\Delta}^{Y}_{t-\delta} \vert^{2}+\mathbb{E}[\vert \tilde{\Delta}^{Y}_{t-\delta} \vert^{2}\vert\mathcal{F}^Y_{t-\delta}] )\vert Y_{t-\delta} \vert^{2}< 8  \vert \delta \vert \mathbf{T} \vert \vert^{2} \epsilon^{1-2u}) \\
& + \mathbb{P}(\delta^{3}  \sum_{t \in \mathbf{T}}  \vert \mathfrak{n}_{\mathbf{T},\delta,t-\delta} \vert^{2}(\vert \tilde{\Delta}^{Y}_{t-\delta} \vert^{2}+\mathbb{E}[\vert \tilde{\Delta}^{Y}_{t-\delta} \vert^{2}\vert\mathcal{F}^Y_{t-\delta}] )\vert Y_{t-\delta} \vert^{2}\geqslant 8  \vert \delta \vert \mathbf{T} \vert \vert^{2} \epsilon^{1-2u},   \\
& \qquad \qquad\delta  \sum_{t \in \mathbf{T}} \vert Y_{t-\delta} \vert^{2}< 2 \epsilon, \mathcal{A}_{Y,u,q})  \\
&+\mathbb{P}(\delta \vert Y_{0} \vert^{2} \geqslant \epsilon) .
\end{align*}
Using the martingale exponential inequality (\ref{eq:Doob_martin_expo_ineq}), the first term of the $r.h.s. $ above is bounded by $ 2 \exp(-\frac{\epsilon^{2(s+u)-1}}{2^{11} \vert  \delta \vert \mathbf{T} \vert \vert^{2}}) $. We now study the second term of the $r.h.s.$ above. Let us denote $H_{t}=\vert \tilde{\Delta}^{Y}_{t} \vert^{2} - \mathbb{E}[\vert \tilde{\Delta}^{Y}_{t} \vert^{2}\vert\mathcal{F}^Y_{t}] $ so that $(H_{t})_{t \in \pi^{\delta}}$ is a martingale.  We have
\begin{align*}
\mathbb{P} & (  \delta^{3}  \sum_{t \in \mathbf{T}}  \vert \mathfrak{n}_{\mathbf{T},\delta,t-\delta} \vert^{2}(\vert \tilde{\Delta}^{Y}_{t-\delta} \vert^{2}+\mathbb{E}[\vert \tilde{\Delta}^{Y}_{t-\delta} \vert^{2}\vert\mathcal{F}^Y_{t-\delta}])\vert Y_{t-\delta} \vert^{2} \geqslant  8 \vert \delta \vert \mathbf{T} \vert \vert^{2}  \epsilon^{1-2u} ,\\
&\qquad   \delta  \sum_{t \in \mathbf{T}} \vert Y_{t-\delta} \vert^{2}<  2\epsilon, \mathcal{A}_{Y,u,q}) \\
\leqslant & \mathbb{P}  (  \delta^{3}  \sum_{t \in \mathbf{T}}  \vert \mathfrak{n}_{\mathbf{T},\delta,t-\delta} \vert^{2} H_{t-\delta} \vert Y_{t-\delta} \vert^{2} \geqslant 4 \vert \delta \vert \mathbf{T} \vert \vert^{2}  \epsilon^{1-2u} , \mathcal{A}_{Y,u,q}) \\
& +  \mathbb{P}  (  \delta^{3}  \sum_{t \in \mathbf{T}}  \vert \mathfrak{n}_{\mathbf{T},\delta,t-\delta} \vert^{2}\vert \mathbb{E}[\vert \tilde{\Delta}^{Y}_{t-\delta} \vert^{2}\vert\mathcal{F}^Y_{t-\delta}]\vert Y_{t-\delta} \vert^{2} \geqslant 2 \vert \delta \vert \mathbf{T} \vert \vert^{2}  \epsilon^{1-2u}, \delta  \sum_{t \in \mathbf{T}} \vert Y_{t-\delta} \vert^{2}< 2 \epsilon , \mathcal{A}_{Y,u,q})  .
\end{align*}

Since since $\mathfrak{n}_{\mathbf{T},\delta,t} \leqslant \vert \mathbf{T} \vert $ for every $t \in \mathbf{T}$, the second term of the $r.h.s.$ above is equal to zero. We then focus to the first term of the $r.h.s.$ above. Let $v^{\diamond}>0$. Then

\begin{align*}
\mathbb{P} & (  \delta^{3}  \sum_{t \in \mathbf{T}}  \vert \mathfrak{n}_{\mathbf{T},\delta,t-\delta} \vert^{2}H_{t-\delta}\vert Y_{t-\delta} \vert^{2} \geqslant  4 \vert \delta \vert \mathbf{T} \vert \vert^{2}  \epsilon^{1-2u} , \mathcal{A}_{Y,u,q}) \\
\leqslant & \mathbb{P}  (  \delta^{3}  \sum_{t \in \mathbf{T}}  \vert \mathfrak{n}_{\mathbf{T},\delta,t-\delta} \vert^{2}H_{t-\delta}\vert Y_{t-\delta} \vert^{2} \geqslant  4 \vert \delta \vert \mathbf{T} \vert \vert^{2}  \epsilon^{1-2u} ,   \\
&   \delta^{6}  \sum_{t \in \mathbf{T}}  \vert \mathfrak{n}_{\mathbf{T},\delta,t-\delta} \vert^{4} (\vert H_{t-\delta}\vert^{2}+ \mathbb{E}[\vert H_{t-\delta} \vert^{2}\vert\mathcal{F}^Y_{t-\delta}] ) \vert Y_{t-\delta} \vert^{4} <  \epsilon^{2+v^{\diamond}-4u} , \mathcal{A}_{Y,u,q})  \\
& +  \mathbb{P}  (  \delta^{6}  \sum_{t \in \mathbf{T}}  \vert \mathfrak{n}_{\mathbf{T},\delta,t-\delta} \vert^{4} (\vert H_{t-\delta}\vert^{2}+ \mathbb{E}[\vert H_{t-\delta} \vert^{2}\vert\mathcal{F}^Y_{t-\delta}] ) \vert Y_{t-\delta} \vert^{4} \geqslant     \epsilon^{2+v^{\diamond}-4u} , \mathcal{A}_{Y,u,q}) 
\end{align*}
Using (\ref{eq:Doob_martin_expo_ineq}), the first term of the $r.h.s.$ above is bounded by $2\exp(-\frac{\epsilon^{-v^{\diamond}})}{2}$. To study the second term, we use the Markov and the H\"older inequalities and for every $q^{\diamond}  \geqslant 1$ (more specifically, triangle inequality when $q^{\diamond}=1$),  we obtain
\begin{align*}
\mathbb{P} & (  \delta^{6}  \sum_{t \in \mathbf{T}}  \vert \mathfrak{n}_{\mathbf{T},\delta,t-\delta} \vert^{4} (\vert H_{t-\delta}\vert^{2}+ \mathbb{E}[\vert H_{t-\delta} \vert^{2}\vert\mathcal{F}^Y_{t-\delta}] ) \vert Y_{t-\delta} \vert^{4} \geqslant  \epsilon^{2+v^{\diamond}-4u}, \mathcal{A}_{Y,u,q})  \\
\leqslant & \mathbb{P}  (  \vert \delta \vert \mathbf{T} \vert \vert^{q^{\diamond}-1} \delta  \sum_{t \in \mathbf{T}} \vert \vert H_{t-\delta}\vert^{2}+ \mathbb{E}[\vert H_{t-\delta} \vert^{2}\vert\mathcal{F}^Y_{t-\delta}] \vert^{q^{\diamond}} \vert Y_{t-\delta} \vert^{4q^{\diamond}} \geqslant \delta^{-q^{\diamond}}  \frac{\epsilon^{q^{\diamond}(2+v^{\diamond}-4u)}}{ \vert \delta \vert \mathbf{T} \vert \vert^{4q^{\diamond}}}, \mathcal{A}_{Y,u,q})  \\
\leqslant & \delta^{q^{\diamond}}  \epsilon^{-q^{\diamond}(2+v^{\diamond}-4u)}\vert \delta \vert \mathbf{T} \vert \vert^{5q^{\diamond}-1} \delta  \sum_{t \in \mathbf{T}} 2 ^{q^{\diamond}+1} \mathbb{E}[\vert H_{t-\delta} \vert^{2q^{\diamond}}  \vert Y_{t-\delta} \vert^{4q^{\diamond}} \mathbf{1}_{\sup_{t \in \mathbf{T}}  \mathbb{E}[ \vert \tilde{\Delta}^{Y}_{t-\delta} \vert^{q} \vert \mathcal{F}^Y_{t-\delta}] \leqslant \epsilon^{-qu} }  ]  \\
\leqslant &  \delta^{q^{\diamond}}  \epsilon^{-q^{\diamond}(2+v^{\diamond}-4u)}2^{3q^{\diamond}+1} \vert \delta \vert \mathbf{T} \vert \vert^{5q^{\diamond}-1} (\delta  \sum_{t \in \mathbf{T}}  \mathbb{E}[\vert Y_{t-\delta} \vert^{4q^{\diamond}} \mathbb{E}[\vert \tilde{\Delta}^{Y}_{t-\delta} \vert^{4q^{\diamond}}\vert\mathcal{F}^Y_{t}]  \mathbf{1}_{\mathbb{E}[ \vert \tilde{\Delta}^{Y}_{t-\delta} \vert^{q} \vert \mathcal{F}^Y_{t-\delta}] \leqslant \epsilon^{-qu} }  ] \\
& +  \delta  \sum_{t \in \mathbf{T}}  \mathbb{E}[\vert Y_{t-\delta} \vert^{4q^{\diamond}} \vert \tilde{\Delta}^{Y}_{t-\delta} \vert^{4q^{\diamond}} \mathbf{1}_{\mathbb{E}[ \vert \tilde{\Delta}^{Y}_{t-\delta} \vert^{q} \vert \mathcal{F}^Y_{t-\delta}] \leqslant \epsilon^{-qu} }  ])
\end{align*}

with, as soon as $q^{\diamond} \leqslant q/4$,

\begin{align*}
 \mathbb{E}[\vert Y_{t-\delta} \vert^{4q^{\diamond}} \vert \tilde{\Delta}^{Y}_{t} \vert^{4q^{\diamond}} \mathbf{1}_{\mathbb{E}[ \vert \tilde{\Delta}^{Y}_{t-\delta} \vert^{q} \vert \mathcal{F}^Y_{t-\delta}] \leqslant \epsilon^{-qu} }  ] = & \mathbb{E}[\vert Y_{t-\delta} \vert^{4q^{\diamond}} \mathbb{E}[\vert \tilde{\Delta}^{Y}_{t} \vert^{4q^{\diamond}}\vert\mathcal{F}^Y_{t}]  \mathbf{1}_{\mathbb{E}[ \vert \tilde{\Delta}^{Y}_{t-\delta} \vert^{4q^{\diamond}} \vert \mathcal{F}^Y_{t-\delta}] \leqslant \epsilon^{-4q^{\diamond}u} }  ]  \\
\leqslant & \epsilon^{-4q^{\diamond}u}  \mathbb{E}[\vert Y_{t-\delta} \vert^{4q^{\diamond}} ] .
\end{align*}
Hence, 
\begin{align*}
\mathbb{P} (  \delta^{6}  \sum_{t \in \mathbf{T}}  \vert \mathfrak{n}_{\mathbf{T},\delta,t-\delta} \vert^{4} (\vert H_{t-\delta}\vert^{2}+ \mathbb{E}[\vert H_{t-\delta} \vert^{2} & \vert\mathcal{F}^Y_{t-\delta}] ) \vert Y_{t-\delta} \vert^{4} \geqslant  \epsilon^{2+v^{\diamond}-4u}, \mathcal{A}_{Y,u,q})  \\
\leqslant &  \delta^{q^{\diamond}}  \epsilon^{-(2+v^{\diamond})q^{\diamond}}2^{3q^{\diamond}+2} \vert \delta \vert \mathbf{T} \vert \vert^{5q^{\diamond}-1}  \sup_{t \in \mathbf{T}} \mathbb{E}[\vert Y_{t-\delta} \vert^{4q^{\diamond}} ] .
\end{align*}

  Notice, that from the same approach we obtain
   \begin{align*}
\mathbb{P}(\vert & \delta^{2} \sum_{\underset{w\leqslant t}{w, t \in \mathbf{T}}}  \vert \tilde{\Delta}^{Y}_{w-\delta} \vert^{2} -  \mathbb{E}[\vert \tilde{\Delta}^{Y}_{w-\delta} \vert^{2}\vert \mathcal{F}^Y_{w-\delta}]\vert \geqslant \frac{\epsilon^{s}}{8},\mathcal{A}_{Y,u,q}) =  \mathbb{P}  (  \delta^{2} \vert  \sum_{t \in \mathbf{T}} \mathfrak{n}_{\mathbf{T},\delta,t-\delta} H_{t-\delta} \vert \geqslant \frac{\epsilon^{s}}{8}, \mathcal{A}_{Y,u,q}) \\
\leqslant & \mathbb{P}  (  \delta^{2}  \sum_{t \in \mathbf{T}}  \mathfrak{n}_{\mathbf{T},\delta,t-\delta} H_{t-\delta}\geqslant  \frac{\epsilon^{s}}{8},    \delta^{4}  \sum_{t \in \mathbf{T}}  \vert \mathfrak{n}_{\mathbf{T},\delta,t-\delta} \vert^{2} (\vert H_{t-\delta}\vert^{2}+ \mathbb{E}[\vert H_{t-\delta} \vert^{2}\vert\mathcal{F}^Y_{t-\delta}] ) <  \frac{\epsilon^{4s}}{8}, \mathcal{A}_{Y,u,q})  \\
& +  \mathbb{P}  (  \delta^{4}  \sum_{t \in \mathbf{T}}  \vert \mathfrak{n}_{\mathbf{T},\delta,t-\delta} \vert^{2} (\vert H_{t-\delta}\vert^{2}+ \mathbb{E}[\vert H_{t-\delta} \vert^{2}\vert\mathcal{F}^Y_{t-\delta}] )  \geqslant   \frac{\epsilon^{4s}}{8}, \mathcal{A}_{Y,u,q}) ,
  \end{align*}

where the first term is bounded, using (\ref{eq:Doob_martin_expo_ineq}), by $2\exp(-\frac{1}{16}\epsilon^{-4s})$. Moreover, it follows from the H\"older inequality that, for every $q^{\diamond} \in [1,\frac{q}{4}]$

\begin{align*}
\mathbb{P}  (  \delta^{4}  \sum_{t \in \mathbf{T}}  \vert \mathfrak{n}_{\mathbf{T},\delta,t-\delta} \vert^{2} (\vert H_{t-\delta}\vert^{2}+ & \mathbb{E}[\vert H_{t-\delta} \vert^{2}\vert\mathcal{F}^Y_{t-\delta}] )  \geqslant   \frac{\epsilon^{4s}}{8}, \mathcal{A}_{Y,u,q}) \\
 \leqslant   & \delta^{q^{\diamond}}  \epsilon^{-4(s+u)q^{\diamond}}2^{6q^{\diamond}+2} \vert \delta \vert \mathbf{T} \vert \vert^{3q^{\diamond}-1}  .
\end{align*}

  We also remark that, since $\sup_{t \in \mathbf{T}} \vert \bar{\Delta}^{Y}_{t-\delta}\vert \mathbf{1}_{\mathcal{A}_{Y,u,q}}  \leqslant \epsilon^{-u}$, it follows from the Cauchy-Schwarz inequality that
  \begin{align*}
  \vert  \sum_{\underset{w\leqslant t}{w, t \in \mathbf{T}}}  \bar{\Delta}^{Y}_{w-\delta} Y_{w-\delta}\vert \mathbf{1}_{\mathcal{A}_{Y,u,q}}  \mathbf{1}_{\vert Y_{0} \vert <\epsilon^{-v} } < \vert \mathbf{T} \vert^{\frac{3}{2}} \epsilon^{-u}(\epsilon^{-2v}+ \sum_{t \in \mathbf{T}} \vert Y_{t} \vert^{2})^{\frac{1}{2}},
  \end{align*}
  and, for $v>0$, as soon as $\epsilon \in [ \vert 32 \vert \delta \vert \mathbf{T} \vert \vert^{\frac{3}{2}} \delta^{\frac{1}{2}} \vert ^{\frac{1}{u+s+v}} ,\vert 32 \vert \delta \vert \mathbf{T} \vert \vert^{\frac{3}{2}} \vert ^{-\frac{1}{\frac{1}{2}-s-u}} ]$,
   \begin{align*}
  \mathbb{P}&(\delta\sum_{t \in \mathbf{T}} \vert Y_{t} \vert^{2} < \epsilon,\vert \delta^{2} \sum_{\underset{w\leqslant t}{w, t \in \mathbf{T}}} 2 \bar{\Delta}^{Y}_{w-\delta} Y_{w-\delta}\vert \geqslant \frac{\epsilon^{s}}{8},\mathcal{A}_{Y,u,q},\vert Y_{0} \vert < \epsilon^{-v})=0
  \end{align*}
Moreover, from Markov inequality, for every $q^{\diamond} \geqslant \frac{p}{v}$, $\mathbb{P}(\vert Y_{0} \vert \geqslant \epsilon^{-v})\leqslant \epsilon^{p} \mathbb{E}[\vert Y_{0} \vert^{q^{\diamond}}]$

Now for every $\epsilon \geqslant  \vert 16 \delta^{3} \vert \mathbf{T} \vert^{2} \vert^{\frac{1}{s+2u}}$, using the Markov and H\"older inequalities yields
\begin{align*}
\mathbb{P}( \delta^{2} \sum_{\underset{w\leqslant t}{w \in \mathbf{T}}} &\vert \delta^{\frac{1}{2}} 2\tilde{\Delta}^{Y}_{w-\delta} \bar{\Delta}^{Y}_{w-\delta}+\delta \vert \bar{\Delta}^{Y}_{w-\delta} \vert^{2} \vert \geqslant \frac{\epsilon^{s}}{8},\mathcal{A}_{Y,u,q}) \\
\leqslant & \mathbb{P}( \delta^{5/2} \sum_{\underset{w\leqslant t}{w \in \mathbf{T}}} \vert 2\tilde{\Delta}^{Y}_{w-\delta} \bar{\Delta}^{Y}_{w-\delta} \vert \geqslant \frac{\epsilon^{s}}{8}-\delta^{3}\vert \mathbf{T} \vert^{2} \epsilon^{-2u},\mathcal{A}_{Y,u,q}) \\
\leqslant & \mathbb{E}[ 32^q \vert \mathbf{T} \vert^{2q-2} \delta^{5q/2} \sum_{\underset{w\leqslant t}{w \in \mathbf{T}}}\vert \tilde{\Delta}^{Y}_{w-\delta} \vert^q \epsilon^{-q(s+u)}\mathbf{1}_{\mathcal{A}_{Y,u,q}}]\\
\leqslant &32^q \vert \mathbf{T} \vert^{2q-2}  \epsilon^{-q(s+u)} \delta^{5q/2}    \sum_{\underset{w\leqslant t}{w \in \mathbf{T}}} \mathbb{E}[ \vert \tilde{\Delta}^{Y}_{w-\delta} \vert^q\mathbf{1}_{\mathbb{E}[ \vert \tilde{\Delta}^{Y}_{w-\delta} \vert^q \vert \mathcal{F}^Y_{w-\delta}] \leqslant \epsilon^{-qu}}] \\
\leqslant & 32^q \delta^{\frac{q}{2}} \epsilon^{-q(s+2u)} \vert\delta \vert \mathbf{T} \vert \vert^{2q} .
\end{align*}

In particular,  taking $q^{\diamond}=4q$, we have proved that for every $\epsilon \in [ \underline{\epsilon}_{1}(\delta),  \vert 32 \vert \delta \vert \mathbf{T} \vert \vert^{\frac{3}{2}} \vert ^{-\frac{1}{\frac{1}{2}-s-u}} ]$,
\begin{align*}
\mathbb{P}(\delta\sum_{t \in \mathbf{T}} \vert Y_t \vert^{2} < & \epsilon,\mathfrak{A}_{2} ^c,\mathcal{A}_{Y,u,q}) \leqslant  \epsilon^{p} \mathbb{E}[\vert Y_{0} \vert^{\frac{p}{v}}]) +\mathbb{P}(\delta \vert Y_{0} \vert^{2} \geqslant \epsilon) + \delta^{\frac{q}{4}}  \epsilon^{-q(s+u)}2^{\frac{3q}{2}+2} \vert \delta \vert \mathbf{T} \vert \vert^{\frac{3q}{4}-1} \\
&+32^q \delta^{\frac{q}{2}} \epsilon^{-q(s+2u)} \vert\delta \vert \mathbf{T} \vert \vert^{2q}  +\delta^{\frac{q}{4}}  \epsilon^{-\frac{(2+v^{\diamond})q}{4}}2^{\frac{3q}{4}+2} \vert \delta \vert \mathbf{T} \vert \vert^{\frac{5q}{4}-1}  \sup_{t \in \mathbf{T}} \mathbb{E}[\vert Y_{t-\delta} \vert^{q} ]  \\
& + 2\exp(-\frac{\epsilon^{-4s}}{16})+2\exp(-\frac{\epsilon^{-v^{\diamond}}}{2})  +  2 \exp(-\frac{\epsilon^{2(s+u)-1}}{2^{11} \vert  \delta \vert \mathbf{T} \vert \vert^{2}}) .
\end{align*}

%
%
%
%

At this point,  we remark that
\begin{align*}
\mathfrak{A}_{2}  \subset & \left\{ \delta\sum_{t \in \mathbf{T}} \vert Y_{0}\vert^{2}+ \delta^{2}\sum_{\underset{w\leqslant t}{w, t \in \mathbf{T}}} \vert \tilde{\Delta}^{Y}_{w-\delta} \vert^{2} <  \delta\sum_{t \in \mathbf{T}} \vert Y_t \vert^{2}+\frac{\epsilon^s}{2} \right\} \\
& \cap \left\{ \delta\sum_{t \in \mathbf{T}} \vert Y_{0}\vert^{2}+ \delta^{2}\sum_{\underset{w\leqslant t}{w, t \in \mathbf{T}}} \mathbb{E}[\vert \tilde{\Delta}^{Y}_{w-\delta} \vert^{2}\vert \mathcal{F}^Y_{w-\delta}] <  \delta\sum_{t \in \mathbf{T}} \vert Y_t \vert^{2}+\frac{\epsilon^s}{2} \right\}
\end{align*}
It follows that, for every $\epsilon  \leqslant 2^{-\frac{1}{1-s}} $,
\begin{align*}
\left\{ \delta\sum_{t \in \mathbf{T}} \vert Y_t \vert^{2} < \epsilon \right\} \cap \mathfrak{A}_{2}   \subset \left\{ \delta\sum_{t \in \mathbf{T}} \vert Y_t \vert^{2} < \epsilon \right\} \cap \mathfrak{A}_{1} \cap \{ \vert Y_{0}\vert^{2} < \frac{\epsilon^{s}}{\delta \vert \mathbf{T} \vert} \},
  \end{align*}
%

  and the proof of \textbf{Step 1} is completed.

  \textbf{Step 2.} We show that for every $\epsilon \in(0,\overline{\epsilon}_{2}(\delta)]$  and $u \in (0,\frac{s}{4}-\frac{3r}{4})$,
 \begin{align*}
  \mathbb{P}(\delta\sum_{t \in \mathbf{T}} \vert Y_t \vert^{2} < \epsilon, & \delta\sum_{t \in \mathbf{T}} \mathbb{E}[\vert\tilde{\Delta}^{Y}_{t-\delta} \vert^{2} \vert \mathcal{F}^Y_{t-\delta}] \geqslant \frac{\epsilon^{r}}{2},\mathfrak{A}_{1},\mathcal{A}_{Y,u,q})  =0.
  \end{align*}
with $\overline{\epsilon}_{2}(\delta) =   \vert 2^{7} \delta \vert \mathbf{T} \vert \vert^{-\frac{1}{s-3r-4u}} $. First, we notice that,

on the set $\{\delta \sum_{t \in \mathbf{T}} \mathbb{E}[\vert\tilde{\Delta}^{Y}_{t-\delta} \vert^{2} \vert \mathcal{F}^Y_{t-\delta}] \geqslant \frac{\epsilon^{r}}{2} \} \cap \mathcal{A}_{Y,u,q}$, we have
    \begin{align*}
\delta  \sum_{t \in \mathbf{T}} \mathbf{1}_{\mathbb{E}[ \vert \tilde{\Delta}^{Y}_{t-\delta} \vert^{2} \vert \mathcal{F}^{Y}_{t-\delta}] \geqslant \frac{\epsilon^{r}}{4 \delta \vert \mathbf{T} \vert}}  \geqslant \frac{\epsilon^{r+2u}}{4}
  \end{align*}
  
  and it follows that    
\begin{align*}
 \delta^{2} \sum_{\underset{w\leqslant t}{w, t \in \mathbf{T}}}\mathbb{E}[ \vert \tilde{\Delta}^{Y}_{w-\delta} \vert^{2} \vert \mathcal{F}^{Y}_{w-\delta}]    \geqslant & \delta^{2}  \sum_{\underset{w\leqslant t}{w, t \in \mathbf{T}}} \mathbb{E}[ \vert \tilde{\Delta}^{Y}_{w-\delta} \vert^{2} \vert \mathcal{F}^{Y}_{w-\delta}]   \mathbf{1}_{\mathbb{E}[ \vert \tilde{\Delta}^{Y}_{w-\delta} \vert^{2} \vert \mathcal{F}^{Y}_{w-\delta}]  \geqslant \frac{\epsilon^{r}}{4 \delta \vert \mathbf{T} \vert}}  \\
 \geqslant & \frac{\epsilon^{r}}{4 \delta \vert \mathbf{T} \vert} \delta^{2} \sum_{\underset{w\leqslant t}{w, t \in \mathbf{T}}}  \mathbf{1}_{\mathbb{E}[ \vert \tilde{\Delta}^{Y}_{w-\delta} \vert^{2} \vert \mathcal{F}^{Y}_{w-\delta}]  \geqslant \frac{\epsilon^{r}}{4 \delta \vert \mathbf{T} \vert}}   \\
 \geqslant &   \frac{\epsilon^{r}}{4 \delta \vert \mathbf{T} \vert}  \frac{1}{2}  \frac{\epsilon^{r+2u}}{4}(\frac{\epsilon^{r+2u}}{4}+1)   
 \geqslant  \frac{\epsilon^{3r+4u}}{2^{7} \delta \vert \mathbf{T} \vert} .
 \end{align*}
 
    In particular for every $\epsilon  \in (0 ,  \vert 2^{7} \delta \vert \mathbf{T} \vert \vert^{-\frac{1}{s-3r-4u}} ] $
 
 \begin{align*}
 \left\{ \delta\sum_{t \in \mathbf{T}} \mathbb{E}[ \vert \tilde{\Delta}^{Y}_{t-\delta} \vert^{2} \vert \mathcal{F}^{Y}_{t-\delta}] \geqslant \frac{\epsilon^r}{2} \right\} \cap \left\{  \delta^{2}  \sum_{\underset{w\leqslant t}{w, t \in \mathbf{T}}}  \mathbb{E}[ \vert \tilde{\Delta}^{Y}_{w-\delta} \vert^{2} \vert \mathcal{F}^{Y}_{w-\delta}] <\epsilon^s \right\} \cap \mathcal{A}_{Y,u,q}=\emptyset.
 \end{align*}

and the proof of \textbf{Step 2} is completed.

\textbf{Step 3.}
In this part we show that for every $\epsilon \in(\underline{\epsilon}_{3}(\delta),\overline{\epsilon}_{3}(\delta))$, every $h,s \in (3r,\frac{1}{2})$ with $2h<s$, $u \in(0,\min(\frac{s}{2}-h,\frac{h}{4}-\frac{3r}{4}))$,
 \begin{align*}
  \mathbb{P}(\delta\sum_{t \in \mathbf{T}} \vert Y_t \vert^{2} < \epsilon, & \delta\sum_{t \in \mathbf{T}} \mathbb{E}[\vert\tilde{\Delta}^{Y}_{t-\delta} \vert^{2} \vert \mathcal{F}^Y_{t}] < \frac{\epsilon^{r}}{2}, \delta\sum_{t \in \mathbf{T}} \vert\bar{\Delta}^{Y}_{t-\delta} \vert^{2}\geqslant \frac{\epsilon^{r}}{2},\mathbf{A}_{1},\vert Y_{0}\vert^{2} < \frac{\epsilon^{s}}{\delta \vert \mathbf{T} \vert} ,\mathcal{A}_{Y,u,q}) \\
\leqslant & \delta^{\frac{q}{4}}( \delta^{\frac{q}{4}} \epsilon^{-q(h+2u)} +  \epsilon^{-\frac{(2+v^{\diamond})q}{4}}) 2^{5q+1}(1+T^{2q}) (1+ \sup_{t \in \mathbf{T}} \mathbb{E}[\vert Y_{t-\delta} \vert^{q} ]  )\\
&+ \epsilon^{\frac{q(s-2h-2u)}{2}}    2^{3q}  T^{q} +\mathbb{P}(\delta \vert Y_{0} \vert^{2} \geqslant \epsilon)   \\
&+2 \exp(-\frac{\epsilon^{2(h+u)-1}}{2^{9} T^{2}}) +2\exp(-\frac{\epsilon^{-v^{\diamond}}}{2})+  2 \exp(-\frac{\epsilon^{2h+2u-s}}{ 2^{7} T })
  \end{align*}
 
 with
\begin{align*}
\underline{\epsilon}_{3}(\delta) = & \vert 16 \delta T^{2} \vert^{\frac{1}{h+2u}}, \\
\overline{\epsilon}_{3}(\delta) =& \min ( \vert 2^{8} \delta \vert \mathbf{T} \vert \vert^{-\frac{1}{h-3r-4u}} ,(4 \delta \vert \mathbf{T} \vert )^{-\frac{1}{\frac{1}{2}-h-u}}, \vert 4 \delta \vert \mathbf{T} \vert \vert^{-\frac{1}{s-2h-2u}},1).
\end{align*}

We begin by writing for every $t \in \mathbf{T}$ 
\begin{align*}
 Y_{t}  \bar{\Delta}^{Y}_{t}= & Y_0 \bar{\Delta}^{Y}_{0}+ \sum_{\underset{w\leqslant t}{w \in \mathbf{T}}} \delta^{\frac{1}{2}} (\tilde{\Delta}^{Y}_{w-\delta} \bar{\Delta}^{Y}_{w-\delta}+\tilde{\Delta}^{\bar{\Delta}^{Y}}_{w-\delta} Y_{w-\delta}) \\
 & + \delta (\vert \bar{\Delta}^{Y}_{w-\delta} \vert^{2}+\tilde{\Delta}^{\bar{\Delta}^{Y}}_{w-\delta}\tilde{\Delta}^{Y}_{w-\delta}) \\
 &+\delta^{\frac{3}{2}}(\tilde{\Delta}^{\bar{\Delta}^{Y}}_{w-\delta}\bar{\Delta}^{Y}_{w-\delta}+\tilde{\Delta}^{Y}_{w-\delta}  \bar{\Delta}^{\bar{\Delta}^{Y}}_{w-\delta})+\delta^{2} \bar{\Delta}^{\bar{\Delta}^{Y}}_{w-\delta}\bar{\Delta}^{Y}_{w-\delta}
\end{align*}

and  we define for $h \in(3r,\frac{s}{2})$

\begin{align*}
\mathfrak{A}_{3}   :=    &\left\{  \vert \sum_{\underset{w\leqslant t}{w,t \in \mathbf{T}}} \delta^{\frac{3}{2}} \tilde{\Delta}^{\bar{\Delta}^{Y}}_{w-\delta} Y_{w-\delta} \vert < \frac{\epsilon^{h}}{8} \right\} \cap \left\{ \delta^{\frac{3}{2}} \vert \sum_{\underset{w\leqslant t}{w,t \in \mathbf{T}}} \tilde{\Delta}^{Y}_{w-\delta} \bar{\Delta}^{Y}_{w-\delta}\vert  < \frac{\epsilon^{h}}{8} \right\} \\
&\cap \left\{  \vert  \delta^{2} \sum_{\underset{w\leqslant t}{w,t \in \mathbf{T}}} \tilde{\Delta}^{\bar{\Delta}^{Y}}_{w-\delta}\tilde{\Delta}^{Y}_{w-\delta} \vert < \frac{\epsilon^{h}}{8} \right\}   \\
& \cap \left\{ \delta^{2} \sum_{\underset{w\leqslant t}{w,t \in \mathbf{T}}} \vert \delta^{\frac{1}{2}}(\tilde{\Delta}^{\bar{\Delta}^{Y}}_{w-\delta}\bar{\Delta}^{Y}_{w-\delta}+\tilde{\Delta}^{Y}_{w-\delta}  \bar{\Delta}^{\bar{\Delta}^{Y}}_{w-\delta})+\delta \bar{\Delta}^{\bar{\Delta}^{Y}}_{w-\delta}\bar{\Delta}^{Y}_{w-\delta} \vert <  \frac{\epsilon^{h}}{8} \right\}
\end{align*}

We take $u \in (0,\frac{s}{2}-h)$.  Using the exact same approach as in \textbf{Step 1},  (\ref{eq:Doob_martin_expo_ineq}) together with the Markov and H\"older inequalities imply that, for every $v^{\diamond}>0$,

 \begin{align*}
  \mathbb{P}(\vert \sum_{\underset{w\leqslant t}{w,t \in \mathbf{T}}} & \delta^{\frac{3}{2}} \tilde{\Delta}^{\bar{\Delta}^{Y}}_{w-\delta} Y_{w-\delta} \vert \geqslant \frac{\epsilon^{h}}{8}, \delta\sum_{t \in \mathbf{T} }  \vert Y_{t} \vert^{2} <  \epsilon,\mathcal{A}_{Y,u,q})  \\
 \leqslant & \delta^{\frac{q}{4}}  \epsilon^{-\frac{(2+v^{\diamond})q}{4}}2^{\frac{3q}{4}+2} \vert \delta \vert \mathbf{T} \vert \vert^{\frac{5q}{4}-1}  \sup_{t \in \mathbf{T}} \mathbb{E}[\vert Y_{t-\delta} \vert^{q} ]  \\
&+\mathbb{P}(\delta \vert Y_{0} \vert^{2} \geqslant \epsilon)   +2 \exp(-\frac{\epsilon^{2(h+u)-1}}{2^{9} \vert  \delta \vert \mathbf{T} \vert \vert^{2}}) +2\exp(-\frac{\epsilon^{-v^{\diamond}})}{2} .
 \end{align*}

 In the same way, the inequality (\ref{eq:Doob_martin_expo_ineq}) yields

     \begin{align*}
    \mathbb{P}&( \delta^{\frac{3}{2}} \vert \sum_{\underset{w\leqslant t}{w,t \in \mathbf{T}}} \tilde{\Delta}^{Y}_{w-\delta} \bar{\Delta}^{Y}_{w-\delta}\vert  \geqslant \frac{\epsilon^{h}}{8},\delta^{2}  \sum_{\underset{w\leqslant t}{w, t \in \mathbf{T}}}  \mathbb{E}[ \vert \tilde{\Delta}^{Y}_{w-\delta} \vert^{2} \vert \mathcal{F}^{Y}_{w-\delta}] <\epsilon^s,\mathcal{A}_{Y,u,q})  
     \leqslant &  2 \exp(-\frac{\epsilon^{2h+2u-s}}{ 2^{7} \delta \vert \mathbf{T} \vert }) .
 \end{align*}

%
%
%

 Moreover,
 
  \begin{align*}
 \mathbb{P}( \vert \delta^{2} \sum_{\underset{w\leqslant t}{w,t \in \mathbf{T}}} & \tilde{\Delta}^{\bar{\Delta}^{Y}}_{w-\delta}\tilde{\Delta}^{Y}_{w-\delta} \vert  \geqslant \frac{\epsilon^{h}}{8},   \delta^{2}  \sum_{\underset{w\leqslant t}{w,t \in \mathbf{T}}} \vert \tilde{\Delta}^{Y}_{w-\delta} \vert^{2} <\epsilon^s  , \mathcal{A}_{Y,u,q}) \\
 \leqslant &  \mathbb{P}( \delta^{2} \sum_{\underset{w\leqslant t}{w,t \in \mathbf{T}}} \vert \tilde{\Delta}^{\bar{\Delta}^{Y}}_{w-\delta} \vert^{2} \geqslant \frac{\epsilon^{2h-s}}{64},  \delta^{2}  \sum_{\underset{w\leqslant t}{w,t \in \mathbf{T}}}\vert \tilde{\Delta}^{Y}_{w-\delta}\vert^{2} <\epsilon^s  , \mathcal{A}_{Y,u,q})  .
 \end{align*}
 
  From Markov and H\"older inequalities, we have
  
  \begin{align*}
   \mathbb{P}( \delta^{2} \sum_{\underset{w\leqslant t}{w,t \in \mathbf{T}}} \vert \tilde{\Delta}^{\bar{\Delta}^{Y}}_{w-\delta} \vert^{2}  \geqslant  \frac{\epsilon^{2h-s}}{64} ,  \mathcal{A}_{Y,u,q})  \leqslant & \epsilon^{\frac{q(s-2h)}{2}}  2^{3q}    \mathbb{E}[\vert \delta^{2} \sum_{\underset{w\leqslant t}{w,t \in \mathbf{T}}} \vert \tilde{\Delta}^{\bar{\Delta}^{Y}}_{w-\delta} \vert^{2} \vert^{\frac{q}{2}} \mathbf{1}_{\mathcal{A}_{Y,u,q}} ]\\
   \leqslant & \epsilon^{\frac{q(s-2h)}{2}}  2^{3q}   \delta^{2}\sum_{\underset{w\leqslant t}{w,t \in \mathbf{T}}} \mathbb{E}[\vert \tilde{\Delta}^{\bar{\Delta}^{Y}}_{w-\delta} \vert^{q} \mathbf{1}_{\mathcal{A}_{Y,u,q}}]   \vert \delta^{2} \vert  \mathbf{T} \vert^{2} \vert^{q/2-1} \\
   \leqslant & \epsilon^{\frac{q(s-2h-2u)}{2}}    2^{3q}   \vert \delta\vert  \mathbf{T} \vert \vert^{q} .
  \end{align*}

Besides, for every $\epsilon 
\geqslant   \vert 16 \delta^{3} \vert \mathbf{T} \vert^{2} \vert^{\frac{1}{h+2u}}$, using Markov and H\"older inequalities yields
\begin{align*}
\mathbb{P}( \delta^{2} \sum_{\underset{w\leqslant t}{w,t \in \mathbf{T}}}\vert \delta^{\frac{1}{2}}(\tilde{\Delta}^{\bar{\Delta}^{Y}}_{w-\delta}\bar{\Delta}^{Y}_{w-\delta} & +\tilde{\Delta}^{Y}_{w-\delta}  \bar{\Delta}^{\bar{\Delta}^{Y}}_{w-\delta})+\delta \bar{\Delta}^{\bar{\Delta}^{Y}}_{w-\delta}\bar{\Delta}^{Y}_{w-\delta} \vert \geqslant \frac{\epsilon^{h}}{8}, \mathcal{A}_{Y,u,q} ) \\
\leqslant & \mathbb{P}( \delta^{5/2} \sum_{\underset{w\leqslant t}{w,t \in \mathbf{T}}} \vert \tilde{\Delta}^{\bar{\Delta}^{Y}}_{w-\delta}\bar{\Delta}^{Y}_{w-\delta} +\tilde{\Delta}^{Y}_{w-\delta}  \bar{\Delta}^{\bar{\Delta}^{Y}}_{w-\delta} \vert \geqslant \frac{\epsilon^{h}}{16}, \mathcal{A}_{Y,u,q}) 
 \\
\leqslant & 2^{5q+1}\delta^{\frac{q}{2}} \epsilon^{-q(h+2u)} \vert\delta \vert \mathbf{T} \vert \vert^{2q}  .
\end{align*}

In particular, for every $\epsilon \geqslant \underline{\epsilon}_{3}(\delta)$,

\begin{align*}
\mathbb{P}(\delta\sum_{t \in \mathbf{T}} \vert Y_t \vert^{2} < & \epsilon, \delta\sum_{t \in \mathbf{T}} \mathbb{E}[\vert\tilde{\Delta}^{Y}_{t-\delta} \vert^{2} \vert \mathcal{F}^Y_{t-\delta}] < \frac{\epsilon^{r}}{2} ,  \mathfrak{A}_{1},\mathfrak{A}_{3} ^c,\mathcal{A}_{Y,u,q}) \leqslant  \\
&\delta^{\frac{q}{4}}( 2^{5q+1}\delta^{\frac{q}{4}} \epsilon^{-q(h+2u)} T^{2q}  +  \epsilon^{-\frac{(2+v^{\diamond})q}{4}}2^{\frac{3q}{4}+2} T^{\frac{5q}{4}-1}  \sup_{t \in \mathbf{T}} \mathbb{E}[\vert Y_{t-\delta} \vert^{q} ]  )\\
&+ \epsilon^{\frac{q(s-2h-2u)}{2}}    2^{3q}  T^{q} \\
&+\mathbb{P}(\delta \vert Y_{0} \vert^{2} \geqslant \epsilon)   \\
&+2 \exp(-\frac{\epsilon^{2(h+u)-1}}{2^{9} \vert  \delta \vert \mathbf{T} \vert \vert^{2}}) +2\exp(-\frac{\epsilon^{-v^{\diamond}}}{2})+  2 \exp(-\frac{\epsilon^{2h+2u-s}}{ 2^{7} \delta \vert \mathbf{T} \vert }) .
\end{align*}

We notice that, similarly as in \textbf{Step 1},  
\begin{align*}
\mathfrak{A}_{3}  \subset \left\{ \delta\sum_{t \in \mathbf{T}}  Y_0 \bar{\Delta}^{Y}_{0} + \delta^{2} \sum_{\underset{w\leqslant t}{w,t \in \mathbf{T}}} \vert \bar{\Delta}^{Y}_{w-\delta} \vert^{2} <  \delta\vert  \sum_{t \in \mathbf{T}} Y_t  \bar{\Delta}^{Y}_{t} \vert  +\frac{\epsilon^{h}}{2}, \right\}.
\end{align*}

 It follows from the Cauchy-Schwarz inequality, that for every $\epsilon \leqslant  \vert 4 \delta \vert \mathbf{T} \vert \vert^{\frac{1}{1-2u-2h}}$, 
 
 \begin{align*}
\mathbb{P}(\delta\sum_{t \in \mathbf{T}} \vert Y_t \vert^{2} < & \epsilon, \delta\sum_{t \in \mathbf{T}} \vert \bar{\Delta}^{Y}_{t-\delta} \vert^{2} \geqslant \frac{\epsilon^{r}}{2},  \mathfrak{A}_{1} ,\mathfrak{A}_{3},\vert Y_{0}\vert^{2} < \frac{\epsilon^{s}}{\delta \vert \mathbf{T} \vert}  ,\mathcal{A}_{Y,u,q})  \\
\leqslant &  \mathbb{P}( \delta\sum_{t \in \mathbf{T}} \vert \bar{\Delta}^{Y}_{t-\delta} \vert^{2}> \frac{\epsilon^{r}}{2}, \delta^{2}  \sum_{\underset{w\leqslant t}{w,t \in \mathbf{T}}}  \vert \bar{\Delta}^{Y}_{w-\delta} \vert^{2} <\epsilon^h+\vert \delta \vert \mathbf{T} \vert \vert^{\frac{1}{2}} \epsilon^{-u} (\epsilon^{\frac{s}{2}}+\epsilon^{\frac{1}{2}}), \mathcal{A}_{Y,u,q} ).
  \end{align*}
  
In particular, for $\epsilon \leqslant 1 \wedge  \vert 4 \delta \vert \mathbf{T} \vert \vert^{-\frac{1}{s-2h-2u}}$, the $r.h.s.$ of the above inequality is bounded by
  
   \begin{align*}
 \mathbb{P}( \delta\sum_{t \in \mathbf{T}} \vert \bar{\Delta}^{Y}_{t-\delta} \vert^{2}> \frac{\epsilon^{r}}{2},  \delta^{2}  \sum_{\underset{w\leqslant t}{w,t \in \mathbf{T}}}  \vert \bar{\Delta}^{Y}_{w-\delta} \vert^{2} <2 \epsilon^h, \mathcal{A}_{Y,u,q} ).
  \end{align*}

	Similarly as in \textbf{Step 2}, we notice that, on the set $\{\delta\sum_{t \in \mathbf{T}} \vert \bar{\Delta}^{Y}_{t-\delta} \vert^{2}> \frac{\epsilon^{r}}{2}\} \cap \mathcal{A}_{Y,u,q}$ then 
    \begin{align*}
 \delta  \sum_{t \in \mathbf{T}} \mathbf{1}_{\vert \bar{\Delta}^{Y}_{t} \vert^{2} \geqslant  \frac{\epsilon^{r}}{4 \delta \vert \mathbf{T} \vert}} \geqslant \frac{\epsilon^{r+2u}}{4} ,
  \end{align*}
whence
\begin{align*}
 \delta^{2} \sum_{\underset{w\leqslant t}{w,t \in \mathbf{T}}}\vert \bar{\Delta}^{Y}_{w-\delta} \vert^{2} 
 \geqslant & \frac{\epsilon^{3r+4u}}{2^{7} \delta \vert \mathbf{T} \vert} .
 \end{align*}
 
 In particular for every $\epsilon  \leqslant  \vert 2^{8} \delta \vert \mathbf{T} \vert \vert^{-\frac{1}{h-3r-4u}} $,
 
 \begin{align*}
 \left\{ \delta\sum_{t \in \mathbf{T}} \vert \bar{\Delta}^{Y}_{t-\delta} \vert^{2} \geqslant \frac{\epsilon^{r}}{2} \right\} \cap \left\{  \delta^{2}  \sum_{\underset{w\leqslant t}{w,t \in \mathbf{T}}} \vert \bar{\Delta}^{Y}_{w-\delta} \vert^{2} <2 \epsilon^h \right\} =\emptyset.
 \end{align*}

and the proof of \textbf{Step 3} is completed.

\textbf{Step 4.} We now show (\ref{eq:ineg_Norris}). In the first three \textbf{Steps}, we have proved that for every $\epsilon \in [\max(\underline{\epsilon}_{1}(\delta),\underline{\epsilon}_{3}(\delta)),\min(1,\overline{\epsilon}_{1}(\delta),\overline{\epsilon}_{2}(\delta),\overline{\epsilon}_{3}(\delta))]$, and every $h,s \in (3r,\frac{1}{2})$ with $2h<s$, $u\in(0,\min(\frac{1}{2}-s,\frac{s}{2}-h,\frac{h}{4}-\frac{3r}{4}))$, every $v,v^{\diamond}>0$,  and every $q \geqslant 4$,

 \begin{align*}
  \mathbb{P}(\delta & \sum_{t \in \mathbf{T}} \vert Y_t \vert^{2} < \epsilon,  \delta\sum_{t \in \mathbf{T}} \mathbb{E}[\vert\tilde{\Delta}^{Y}_{t-\delta} \vert^{2} \vert \mathcal{F}^Y_{t-\delta}] + \vert \bar{\Delta}^{Y}_{t-\delta} \vert^{2} \geqslant \epsilon^r, \mathcal{A}_{Y,u,q})     \\
 \leqslant  &  \epsilon^{p} \mathbb{E}[\vert Y_{0} \vert^{\frac{p}{v}}]) +2\mathbb{P}(\delta \vert Y_{0} \vert^{2} \geqslant \epsilon) +\epsilon^{\frac{q(s-2h-2u)}{2}}    2^{3q}  T^{q} \\
&+ \delta^{\frac{q}{4}} (2 \delta^{\frac{q}{4}} \epsilon^{-q(s+2u)} +3\epsilon^{-q\frac{(2+v^{\diamond})}{4}})2^{5q+1} (1+T^{2q})(1+\sup_{t \in \mathbf{T}} \mathbb{E}[\vert Y_{t-\delta} \vert^{q} ] )\\
&+  2\exp(-\frac{\epsilon^{-4s}}{16})
+4\exp(-\frac{\epsilon^{-v^{\diamond}}}{2})  +  6 \exp(-\frac{\epsilon^{2s+2u-1}}{2^{11} (1+T^{2})})  
  \end{align*}

with 
\begin{align*}
\underline{\epsilon}(\delta)= & \max(\vert 2^{10} (1+T^{3})  \delta \vert ^{\frac{1}{2u+2s+2v}} ,\delta^{\frac{1}{2+v^{\diamond}+\frac{p}{q}}}) \\
\overline{\epsilon}(\delta)=& \min (\vert 32 \vert T \vert^{\frac{3}{2}} \vert ^{-\frac{1}{\frac{1}{2}-s-u}},\vert 2^{8} T \vert^{-\frac{1}{h-3r-4u}} , \vert 4 T \vert^{-\frac{1}{s-2h-2u}} ,2^{-\frac{1}{1-s}} ).
\end{align*}

We observe that


\begin{align*}
\mathbb{P}(\mathcal{A}_{Y,u,q}^{c}) \leqslant & \epsilon^{p}( \mathbb{E}[\sup_{t \in \mathbf{T}} \vert \bar{\Delta}^{\bar{\Delta}^{Y}}_{t-\delta}\vert ^{\frac{p}{u}}] + \mathbb{E}[\sup_{t \in \mathbf{T}}  \mathbb{E}[ \vert \tilde{\Delta}^{Y}_{t-\delta} \vert^{q} \vert \mathcal{F}^Y_{t-\delta}]^{\frac{p}{qu}}] \\
& +  \mathbb{E}[\sup_{t \in \mathbf{T}}  \vert \bar{\Delta}^{\bar{\Delta}^{Y}}_{t-\delta}\vert ^{\frac{p}{u}}] + \mathbb{E}[\sup_{t \in \mathbf{T}}  \mathbb{E}[ \vert \tilde{\Delta}^{\bar{\Delta}^{Y}}_{t-\delta} \vert^q \vert \mathcal{F}^Y_{t-\delta}]^{\frac{p}{qu}}]).
\end{align*}
At this point, we assume that $q \geqslant \frac{2p}{s-2h-2u}$.  Since $\epsilon \geqslant \delta^{\frac{1}{2+v^{\diamond}+\frac{p}{q}}}$,  then

 \begin{align*}
  \mathbb{P}(\delta & \sum_{t \in \mathbf{T}} \vert Y_t \vert^{2} < \epsilon,  \delta\sum_{t \in \mathbf{T}} \mathbb{E}[\vert\tilde{\Delta}^{Y}_{t-\delta} \vert^{2} \vert \mathcal{F}^Y_{t-\delta}] + \vert \bar{\Delta}^{Y}_{t-\delta} \vert^{2} \geqslant \epsilon^r, \mathcal{A}_{Y,u,q})     \\
 \leqslant  &  \epsilon^{p} \mathbb{E}[\vert Y_{0} \vert^{\frac{p}{v}}]) +2\mathbb{P}(\delta \vert Y_{0} \vert^{2} \geqslant \epsilon) \\
&+ \epsilon^{p} 2^{5q+4} (1+T^{2q})(1+\sup_{t \in \mathbf{T}} \mathbb{E}[\vert Y_{t-\delta} \vert^{q} ] )\\
&+  2\exp(-\frac{\epsilon^{-4s}}{16})
+4\exp(-\frac{\epsilon^{-v^{\diamond}}}{2})  +  6 \exp(-\frac{\epsilon^{2s+2u-1}}{2^{11} (1+T^{2})})  
  \end{align*}
Moreover, for every $q^{\diamond}>0$ such that $\epsilon \geqslant \delta^{\frac{q^{\diamond}}{q^{\diamond}+2p}}$, $\mathbb{P}(\delta \vert Y_{0} \vert^{2} \geqslant \epsilon) \leqslant \epsilon^{p} \mathbb{E}[\vert Y_{0} \vert^{q^{\diamond}}]$. In particular, we take $q^{\diamond}=\frac{2pq}{q(1+v^{\diamond})+p}$ so that this inequality is satisfied for $\epsilon \geqslant \delta^{\frac{1}{2+v^{\diamond}+\frac{p}{q}}}$.

Now we fix $s=s(r) :=   \frac{5}{11}+\frac{6}{11} r$, $h=h(r) :=   \frac{2}{11}+\frac{9}{11}r$ and take $u<\frac{1}{22}-\frac{6}{11}r$. Since $r \in (0,\frac{1}{12})$, $s(r) \in (6r,\frac{1}{2})$, $h(r) \in(3r,\frac{s(r)}{2})$ and $\min(\frac{1}{2}-s(r),\frac{s(r)}{2}-h,\frac{h(r)}{4}-\frac{3r}{4}))>0$. Moreover, taking $v=\frac{6}{11}-\frac{6}{11}r-u+\frac{v^{\diamond}}{2}+\frac{p}{2q}$, and $q\geqslant \max(4,\frac{2p}{\frac{1}{11}-\frac{12}{11}r-2u})$, we have, for every $\epsilon \in [\vert 2^{10} (1+T^{3})  \delta \vert ^{\frac{1}{2+v^{\diamond}+\frac{p}{q}}},\min (\vert 2^{8} T \vert^{-\frac{1}{\frac{2}{11}-\frac{24}{11}r-4u}} , 2^{-\frac{1}{\frac{6}{11}-\frac{6}{11} r}} )]$ ,

 \begin{align*}
  \mathbb{P}(\delta & \sum_{t \in \mathbf{T}} \vert Y_t \vert^{2} < \epsilon,  \delta\sum_{t \in \mathbf{T}} \mathbb{E}[\vert\tilde{\Delta}^{Y}_{t-\delta} \vert^{2} \vert \mathcal{F}^Y_{t-\delta}] + \vert \bar{\Delta}^{Y}_{t-\delta} \vert^{2} \geqslant \epsilon^r,     \\
 \leqslant  &  \epsilon^{p} \mathbb{E}[\vert Y_{0} \vert^{\frac{p}{\frac{6}{11}-\frac{6}{11}r-u+\frac{v^{\diamond}}{2}+\frac{p}{2q}}}]) +2  \epsilon^{p} \mathbb{E}[\vert Y_{0} \vert^{\frac{2pq}{q(1+v^{\diamond})+p}}]\\
&+ \epsilon^{p} 2^{5q+4} (1+T^{2q})(1+\sup_{t \in \mathbf{T}} \mathbb{E}[\vert Y_{t-\delta} \vert^{q} ] )\\
&+\epsilon^{p}( \mathbb{E}[\sup_{t \in \mathbf{T}} \vert \bar{\Delta}^{\bar{\Delta}^{Y}}_{t-\delta}\vert ^{\frac{p}{u}}] + \mathbb{E}[\sup_{t \in \mathbf{T}}  \mathbb{E}[ \vert \tilde{\Delta}^{Y}_{t-\delta} \vert^{q} \vert \mathcal{F}^Y_{t-\delta}]^{\frac{p}{qu}}] \\
& +  \mathbb{E}[\sup_{t \in \mathbf{T}}  \vert \bar{\Delta}^{\bar{\Delta}^{Y}}_{t-\delta}\vert ^{\frac{p}{u}}] + \mathbb{E}[\sup_{t \in \mathbf{T}}  \mathbb{E}[ \vert \tilde{\Delta}^{\bar{\Delta}^{Y}}_{t-\delta} \vert^q \vert \mathcal{F}^Y_{t-\delta}]^{\frac{p}{qu}}])\\
&+4\exp(-\frac{\epsilon^{-v^{\diamond}}}{2})  +  8 \exp(-\frac{\epsilon^{-\frac{1}{11}+\frac{12}{11}r+2u}}{2^{11} (1+T^{2})})  .
  \end{align*}

Now we take $u=\frac{1}{44}-\frac{3}{11}r$ and $q=q(r,p)=\max(4,\frac{2p}{\frac{1}{11}-\frac{12}{11}r-2u},\frac{p}{u})=\max(4,\frac{44p}{1-12r})$ (in particular $q(r,p) \geqslant \frac{2p}{s-2h-2u}$). It follows that, for every $\epsilon \in [\vert 2^{10} (1+T^{3})  \delta \vert ^{\frac{1}{2+v^{\diamond}+\frac{p}{q(r,p)}}},\vert 2^{8} (1+T) \vert^{-\frac{11}{1-12r}} ]$, 

 \begin{align*}
  \mathbb{P}(\delta & \sum_{t \in \mathbf{T}} \vert Y_t \vert^{2} < \epsilon,  \delta\sum_{t \in \mathbf{T}} \mathbb{E}[\vert\tilde{\Delta}^{Y}_{t-\delta} \vert^{2} \vert \mathcal{F}^Y_{t-\delta}] + \vert \bar{\Delta}^{Y}_{t-\delta} \vert^{2} \geqslant \epsilon^r,     \\
 \leqslant  &  3  \epsilon^{p} \mathbb{E}[\vert Y_{0} \vert^{\frac{2pq(r,p)}{q(r,p)(1+v^{\diamond})+p}}]\\
&+ \epsilon^{p} 2^{5q(r,p)+4} (1+T^{2q(r,p)})(1+\sup_{t \in \mathbf{T}} \mathbb{E}[\vert Y_{t-\delta} \vert^{q(r,p)} ] )\\
&+\epsilon^{p}(2+ \mathbb{E}[\sup_{t \in \mathbf{T}} \vert \bar{\Delta}^{\bar{\Delta}^{Y}}_{t-\delta}\vert ^{q(r,p)}] + \mathbb{E}[\sup_{t \in \mathbf{T}}  \mathbb{E}[ \vert \tilde{\Delta}^{Y}_{t-\delta} \vert^{q(r,p)} \vert \mathcal{F}^Y_{t-\delta}]] \\
& +  \mathbb{E}[\sup_{t \in \mathbf{T}}  \vert \bar{\Delta}^{\bar{\Delta}^{Y}}_{t-\delta}\vert ^{q(r,p)}] + \mathbb{E}[\sup_{t \in \mathbf{T}}  \mathbb{E}[ \vert \tilde{\Delta}^{\bar{\Delta}^{Y}}_{t-\delta} \vert^{q(r,p)} \vert \mathcal{F}^Y_{t-\delta}]])\\
&+4\exp(-\frac{\epsilon^{-v^{\diamond}}}{2})  +  8 \exp(-\frac{\epsilon^{-\frac{1}{22}+\frac{6}{11}r}}{2^{11} (1+T^{2})})  .
  \end{align*}
Since $q(r,p)>p$ and $v^{\diamond}>0$, $ \mathbb{E}[\vert Y_{0} \vert^{\frac{2pq(r,p)}{q(r,p)(1+v^{\diamond})+p}}] \leqslant 1+ \mathbb{E}[\vert Y_{0} \vert^{q(r,p)}]$. We fix $v^{\diamond}=\frac{1}{22}-\frac{6}{11}r$ and the proof of (\ref{eq:ineg_Norris}) is completed.

\end{proof}

\end{appendix}

\bibliography{Biblio}
\bibliographystyle{plain}

\end{document}